\tikzset{inner sep=0pt, node distance=5mm,
  root/.style={circle,draw,minimum size=5pt,thick},
  broot/.style={circle,draw,minimum size=5pt,thick,fill},
  xroot/.style={circle,draw,minimum size=5pt,thick,fill=gray!70!white},
  crossroot/.style={circle,draw,minimum size=5pt,thick,label=below:$\times$},
  doublearrow/.style={postaction={decorate},   decoration={markings,mark=at position .6 with {\arrow[line width=1.2pt]{>}}},double distance=1.6pt,thick},
  doublenoarrow/.style={double distance=1.6pt,thick},
  rdoublearrow/.style={postaction={decorate},   decoration={markings,mark=at position .4 with {\arrowreversed[line width=1.2pt]{>}}},double distance=1.6pt,thick},
  rtriplearrow/.style={postaction={decorate},   decoration={markings,mark=at position .4 with {\arrowreversed[line width=1.2pt]{>}}},double distance=2.5pt,thick},
  curvedline/.style={bend=right}
}
\tikzset{main node/.style={rectangle,rounded corners,fill=blue!20,draw,minimum size=1cm,inner sep=0pt},
            }
\tikzset{sec node/.style={rectangle,rounded corners,fill=green!20,draw,minimum size=1cm,inner sep=0pt},
            }
\tikzset{cross/.style={cross out, draw=black, minimum size=2*(#1-\pgflinewidth), inner sep=0pt, outer sep=0pt},
cross/.default={3pt}}
\definecolor{mygray}{gray}{0.7}
\tikzset{inner sep=0pt, node distance=5mm,
  root/.style={circle,draw,minimum size=5pt,thick},
  broot/.style={circle,draw,minimum size=5pt,thick,fill},
  xroot/.style={circle,draw,minimum size=5pt,thick,label=below:$\times$},
  doublearrow/.style={postaction={decorate},   decoration={markings,mark=at position .6 with {\arrow[line width=1.2pt]{>}}},double distance=1.6pt,thick},
  rdoublearrow/.style={postaction={decorate},   decoration={markings,mark=at position .4 with {\arrowreversed[line width=1.2pt]{>}}},double distance=1.6pt,thick},
	rtriplearrow/.style={postaction={decorate},   decoration={markings,mark=at position .4 with {\arrowreversed[line width=1.2pt]{>}}},double distance=2.5pt,thick},
	ltriplearrow/.style={postaction={decorate},   decoration={markings,mark=at position .6 with {\arrow[line width=1.2pt]{>}}},double distance=2.5pt,thick},
  curvedline/.style={bend=right}
}
\numberwithin{equation}{section}
\theoremstyle{plain}
\newtheorem{theorem}{Theorem}[section]
\newtheorem{proposition}[theorem]{Proposition}
\newtheorem{prop}[theorem]{Proposition}
\newtheorem{corollary}[theorem]{Corollary}
\newtheorem{cor}[theorem]{Corollary}
\newtheorem{lemma}[theorem]{Lemma}
\newtheorem{lem}[theorem]{Lemma}
\theoremstyle{definition}
\newtheorem{rem}[theorem]{Remark}
\newtheorem{rk}[theorem]{Remark}
\newtheorem{example}[theorem]{Example}
\newtheorem{definition}[theorem]{Definition}
\newcommand{\Hom}{\operatorname{Hom}}
\newcommand{\Aut}{\operatorname{Aut}}
\newcommand{\GL}{\operatorname{GL}}
\newcommand{\Spin}{\operatorname{Spin}}
\renewcommand{\Im}{\operatorname{Im}}
\newcommand{\der}{\mathfrak{der}}
\newcommand{\fgl}{\mathfrak{gl}}
\newcommand{\fsl}{\mathfrak{sl}}
\newcommand{\fso}{\mathfrak{so}}
\newcommand{\fm}{\mathfrak{m}}
\newcommand{\fp}{\mathfrak{p}}
\newcommand{\fs}{\mathfrak{s}}
\newcommand{\fg}{\mathfrak{g}}
\newcommand{\fh}{\mathfrak{h}}
\renewcommand{\1}{\mathbbm{1}}
\newcommand{\RR}{\mathbb{R}}
\newcommand{\gr}{\operatorname{gr}}
\renewcommand{\a}{\alpha}
\renewcommand{\b}{\beta}
\newcommand{\one}{\mathbbm{1}}
\newenvironment{psm}
  {\left(\begin{smallmatrix}}
  {\end{smallmatrix}\right)}
 \newcommand\cA{\mathcal{A}}
 \newcommand\cO{\mathcal{A}}
 \newcommand\bbC{\mathbb{C}}
 \newcommand\sfZ{\mathsf{Z}}
 \newcommand{\fe}{\mathfrak{e}}
 \newcommand{\fF}{\mathfrak{F}}
 \newcommand\op{\oplus}
 \newcommand\fpe{\mathfrak{pe}}
 \newcommand\fspe{\mathfrak{spe}}
 \newcommand\fcpe{\mathfrak{cpe}}
 \newcommand\fcspe{\mathfrak{cspe}}
 \newcommand\id{\operatorname{id}}
 \newcommand\cU{\mathcal{U}}
 \newcommand\cV{\mathcal{V}}
 \newcommand{\fG}{\mathfrak{G}}
 \newcommand\cC{\mathcal{C}}
 \newcommand\cD{\mathcal{D}}
 \newcommand\cF{\mathcal{F}}
 \newcommand\cG{\mathcal{G}}
 \newcommand\cH{\mathcal{H}}
 \newcommand\cL{\mathcal{L}}
 \newcommand\cP{\mathcal P}
 \newcommand\cQ{\mathcal Q}
 \newcommand\bS{\mathbf{S}}
 \newcommand\cE{\mathcal{E}}
 \newcommand\pr{\operatorname{pr}}
 \newcommand\Lie{\opp{Lie}}
 \newcommand\g{\mathfrak{g}}
 \newcommand\p{\partial}
 \newcommand\N{{\mathbb N}}
 \newcommand\R{{\mathbb R}}
 \newcommand\Z{{\mathbb Z}}
 \newcommand{\comm}[1]{}
 \newcommand\opp[1]{\mathop{\rm #1}\nolimits}
\begin{document}
\title[Symmetries of superdistributions and supergeometries]{Symmetries
of supergeometries\\ related to nonholonomic superdistributions}
\author{Boris Kruglikov}
\address{Department of Mathematics and Statistics, UiT The Arctic University of Norway, Troms\o{} 90-37, Norway}
\email{boris.kruglikov@uit.no}

\author{Andrea Santi}
\address{Department of Mathematics and Statistics, UiT The Arctic University of Norway, Troms\o{} 90-37, Norway}
\email{andrea.santi@uit.no}

\author{Dennis The}
\address{Department of Mathematics and Statistics, UiT The Arctic University of Norway, Troms\o{} 90-37, Norway}
\email{dennis.the@uit.no}

\thanks{}

 \begin{abstract}
We extend Tanaka theory to the context of supergeometry and obtain an upper bound
on the supersymmetry dimension of geometric structures related to strongly regular bracket-generating
distributions on supermanifolds and their structure reductions.
 \end{abstract}

\maketitle


\section{Introduction and the main results}\label{S1}
\vskip0.3cm\par

A theorem of Kobayashi states that $G$-structures of finite type have
finite-dimensional symmetry algebras and automorphism groups, and that the dimension of both is bounded via
Sternberg prolongation \cite{Ko,St}.
This also applies to the class of Cartan geometries that allow higher order reductions of the structure group.
Similarly, the Tanaka prolongation dimension bounds the symmetry dimension in the case of strongly regular bracket-generating
nonholonomic distributions and related geometric structures \cite{T,Y}.

 An analog of the Tanaka prolongation in the super-setting
is well-defined and was used by Kac in his classification \cite{Kac},
based on the ideas of Weisfeiler filtration \cite{W}.
We use this algebraic
{\em Tanaka--Weisfeiler prolongation} to construct a super-analog
of the Cartan--Tanaka frame bundle, with normalization conditions induced
from the generalized Spencer complexes. This in turn implies a bound
on the dimension of the symmetry superalgebra.

As we will recall in \S\ref{sec:superdistributions}, to every distribution\footnote{We will sometimes refer to a ``distribution on a supermanifold'' as simply a ``superdistribution'' for short.} $\cD$ on a supermanifold $M=(M_o,\cA_M)$,
one can associate a sheaf of negatively-graded Lie superalgebras $\opp{gr}(\mathcal T M)$ over $\cA_M$, which are fundamental for a bracket-generating $\cD$.
Under a strong regularity assumption, the sheaf is associated to a
classical bundle over the reduced manifold $M_o$ with fiber given by the symbol $\fm$ of $\cD$ (also called
the Carnot algebra of $\cD$).
Assume the Tanaka--Weisfeiler prolongation $\g=\opp{pr}(\fm)$ of $\fm=\g_-$
is finite-dimensional and let $G$ be a Lie supergroup with Lie superalgebra $\opp{Lie}(G) = \fg$. We will then construct the Cartan--Tanaka prolongation of the structure, which is a
fiber bundle over $M$ with typical fiber diffeomorphic to a subsupergroup $P\subset G$ having $\opp{Lie}(P)=\g_{\ge0}$.

Sometimes the geometric structure $(M,\cD)$ allows reductions, resulting in a reduction of $\g$.
In this paper, we will mainly consider reductions of the (graded) automorphism supergroup $\Aut_{gr}(\fm)$ to a smaller $G_0$, which implies a reduction of $\opp{pr}_0(\fm)=\der_{gr}(\fm)$ to $\Lie(G_0) = \fg_0$, but we will also briefly discuss higher order reductions.
In many important cases, the reduction is given by a choice of auxiliary geometric structure $q$
on the superdistribution $\cD$, which corresponds to $\fm_{-1}$. (For instance a tensor or a span of those. For higher order reductions $q$ is not
tensorial.) We will refer to $(M,\cD,q)$ as a filtered $G_0$-structure. In this case the Tanaka--Weisfeiler prolongation is denoted by
$\g=\opp{pr}(\fm,\g_0)
$  and
pure prolongation
$\g_0=\opp{pr}_0(\fm)$ corresponds to the structure $q$ void.

 \begin{theorem}\label{T1}
Let $\fs$ be the symmetry superalgebra of a bracket-generating, strongly regular filtered $G_0$-structure $(M,\cD,q)$,
with Tanaka--Weisfeiler prolongation $\g=\opp{pr}(\fm,\g_0)$ of $(\fm,\fg_0)$. Assume the reduced manifold $M_o$ is connected. Then $\dim\fs\leq\dim\g$ in the strong sense:
the inequality applies to the dimensions of the even and odd parts respectively.
 \end{theorem}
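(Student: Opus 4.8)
The plan is to realise the symmetry superalgebra $\fs$ inside the finite-dimensional $\g=\opp{pr}(\fm,\g_0)$ by prolonging $(M,\cD,q)$ to a frame bundle carrying a canonical absolute parallelism, and then invoking the rigidity of such parallelisms. First I would recall the Cartan--Tanaka prolongation of $(M,\cD,q)$ constructed above: using strong regularity one builds a tower of fibre bundles
\[
\cdots\longrightarrow M^{(2)}\longrightarrow M^{(1)}\longrightarrow M^{(0)}\longrightarrow M,
\]
where $M^{(0)}\to M$ is the $G_0$-reduction of the graded frame bundle determined by $q$ and, for $k\ge1$, $M^{(k)}\to M^{(k-1)}$ is an affine bundle modelled on $\g_k$. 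Since $\g=\opp{pr}(\fm,\g_0)$ is finite-dimensional the tower stabilises, and the resulting total space $\hat M$ is a fibre bundle over $M$ with fibre diffeomorphic to the subsupergroup $P\subset G$, $\Lie(P)=\g_{\ge0}$, whose reduced manifold $P_o$ is connected (the prolongation fibres are affine spaces over vector spaces). Hence
\[
\dim\hat M=\dim M+\dim\g_{\ge0}=\dim\fm+\dim\g_{\ge0}=\dim\g,
\]
with equality of even and of odd parts separately; moreover the normalization conditions coming from the generalized Spencer complexes equip $\hat M$ with a canonical \emph{absolute parallelism}, i.e.\ an even $\g$-valued $1$-form $\omega$ inducing a parity-preserving isomorphism $T\hat M\xrightarrow{\ \sim\ }\hat M\times\g$.

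The second step uses naturality. Every homogeneous $X\in\fs$ is a derivation of $\cA_M$ preserving $\cD$ and the reduction $q$, hence an infinitesimal automorphism of the filtered $G_0$-structure; as the whole prolongation tower is functorial with respect to local automorphisms of such structures, $X$ lifts canonically to a vector field $\hat X$ on $\hat M$ of the same parity, projecting onto $X$. Since $\omega$ is canonically attached to the structure, $\mathcal L_{\hat X}\,\omega=0$, and $X\mapsto\hat X$ is an injective, parity-preserving homomorphism of Lie superalgebras (if $\hat X=0$ then its projection $X=0$). Thus $\fs$ embeds, compatibly with parity, into the Lie superalgebra $\mathfrak{aut}(\hat M,\omega)$ of $\omega$-preserving vector fields on $\hat M$.

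For the third step I would invoke the rigidity of absolute parallelisms in the super-setting. Writing a homogeneous $\hat X\in\mathfrak{aut}(\hat M,\omega)$ as $\hat X=\sum_i f^i e_i$ in the global frame $\{e_i\}$ dual to the components $\{\omega^i\}$ of $\omega$, the structure equations for $\omega$ turn $\mathcal L_{\hat X}\,\omega=0$ into a determined first-order linear system $e_i(f^j)=\sum_k a^j_{ik}f^k$ for the coefficient superfunctions $f^j$, with coefficients built from the structure and curvature functions of $\omega$. Because $M_o$ and the fibre $P_o$ are connected, $\hat M_o$ is connected, so by the super-analogue of the classical uniqueness theorem for such systems (applied through the functor of points, so as to control the odd directions) a solution is determined by its initial value at a point of $\hat M$, an element of $\g$ of the same parity as $\hat X$. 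Composing the two embeddings,
\[
\fs\ \hookrightarrow\ \mathfrak{aut}(\hat M,\omega)\ \hookrightarrow\ \g,
\]
which are parity-preserving, we get $\dim\fs\le\dim\g$, the inequality holding for the even and the odd parts respectively.

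The main obstacle is not this dimension count but the two geometric inputs it rests on. The delicate point is that the higher prolongation steps $M^{(k)}\to M^{(k-1)}$, and with them the parallelism $\omega$, must be defined by a \emph{canonical} normalization — a natural choice of complement to the Spencer coboundaries in the relevant cochain spaces — so that automorphisms of $(M,\cD,q)$ genuinely lift to $\omega$-preserving automorphisms of $\hat M$; in the super-setting this requires controlling the parity behaviour of the generalized Spencer cohomology and of the structure equations for $\omega$. The second input is the super-version of the uniqueness statement in the third step, where the odd, nilpotent directions must be handled via families rather than naively over $\hat M_o$. Both are supplied by the construction recalled above; granted them, the argument is as outlined.
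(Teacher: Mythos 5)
Your proposal follows essentially the same route as the paper: construct the Tanaka--Weisfeiler prolongation tower with Spencer-type normalizations, obtain on the stabilized total space $P$ (of dimension $\dim\g$) a natural absolute parallelism to which symmetries lift, and conclude from the rigidity statement that an infinitesimal automorphism of an absolute parallelism on a supermanifold with connected reduced space is determined, parity by parity, by its value at a single point (the paper proves this by combining injectivity of evaluation via the nilpotent filtration, Kobayashi's classical argument for the even part, and parallel transport for the odd part). The only minor inaccuracy is your claim that the fibre of $P\to M$ has connected reduced space --- $(G_0)_o$ may be disconnected --- but this is harmless, since the determination-at-a-point lemma can be applied on a connected component of $P_o$, which still surjects onto the connected $M_o$.
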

The Lie superalgebra $\fs$ can be considered
as a superalgebra of supervector fields localized in a fixed neighborhood $U_o\subset M_o$ or as germs of
those - the result holds in both cases.

Assuming $\dim \fg$ is finite, the above bound is sharp, meaning that there exists a standard model
with symmetry superalgebra $\fs$ equal to $\g$:
the homogeneous supermanifold $G/P$
gives a geometric structure of type $(\cD,q)$ with a maximal space of automorphisms, meaning that $G$ is the automorphism supergroup
(or differs from it by a discrete quotient) and $\dim G$ is the maximal possible dimension of such a supergroup.

 \begin{theorem}\label{T2}
Let $(M,\cD,q)$ be a bracket-generating, strongly regular filtered $G_0$-structure with a finite-dimensional Tanaka--Weisfeiler prolongation $\g=\opp{pr}(\fm,\fg_0)$.
If $M_o$ has finitely many connected components,
then $\opp{Aut}(M,\cD,q)$ is a Lie supergroup.
If $M_o$ is connected, then $\dim\opp{Aut}(M,\cD,q)\leq\dim\g$ in the strong sense as above.
 \end{theorem}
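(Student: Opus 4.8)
The plan is to deduce Theorem~\ref{T2} from Theorem~\ref{T1} together with the super-analog of Kobayashi's theorem that a connected manifold equipped with an absolute parallelism has a Lie automorphism group of dimension at most that of the manifold.

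First I would pass to the Cartan--Tanaka prolongation constructed above. Since $\g=\opp{pr}(\fm,\fg_0)$ is finite-dimensional, the prolongation tower stabilises after finitely many steps and yields a principal bundle $\pi\colon\hat P\to M$ with structure supergroup $P$, $\opp{Lie}(P)=\g_{\geq0}$, carrying a canonical $\g$-valued super $1$-form $\omega$ whose restriction $T_u\hat P\to\g$ is a linear isomorphism at each point $u$; thus $(\hat P,\omega)$ is a supermanifold with absolute parallelism and $\dim\hat P=\dim\g$ in the strong (even/odd) sense. Because the normalisation conditions coming from the generalized Spencer complexes make the prolongation functorial, every automorphism of $(M,\cD,q)$ lifts uniquely to an $\omega$-preserving automorphism of $\hat P$; conversely an $\omega$-preserving automorphism of $\hat P$ preserves the vertical bundle $\omega^{-1}(\g_{\geq0})$ of $\pi$, hence descends to $M$ and is $P$-equivariant. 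Therefore $\opp{Aut}(M,\cD,q)\cong\opp{Aut}(\hat P,\omega)$, and similarly $\fs\cong\fs(\hat P,\omega)$ on the infinitesimal level.

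The key step is the rigidity of absolute parallelisms, which I would prove in the super category in two parallel forms. Fix a point $\hat x_0\in\hat P_o$ in each connected component, and write $\{\omega^{-1}(v)\}_{v\in\g}$ for the global frame dual to $\omega$. On the infinitesimal side, a super vector field $X$ with $\mathcal{L}_X\omega=0$ has components $\omega(X)$ satisfying a linear first-order system of ODEs along this frame, so if $\omega(X)(\hat x_0)=0$ then $X\equiv0$ near $\hat x_0$ and hence on the whole component; this gives the strong-sense linear injection $\fs(\hat P,\omega)\hookrightarrow\g$, $X\mapsto\omega(X)(\hat x_0)$, of Theorem~\ref{T1}, so $\fs$ is finite-dimensional with $\dim\fs\leq\dim\g$ in the strong sense. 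On the group side, the analogous argument shows that an $\omega$-preserving automorphism whose reduced map fixes $\hat x_0$ has differential the identity there and so equals the identity on the component of $\hat x_0$. Evaluating at one base point per component — finitely many by hypothesis, and possibly permuted by an automorphism — realises the even automorphism group as an injective image in a finite product of copies of $\hat P_o$, and a super-version of the Bochner--Montgomery argument then endows $\opp{Aut}_{\bar0}(\hat P,\omega)$ with the structure of a Lie group acting smoothly on $\hat P$, hence on $M$, with Lie algebra $\fs_{\bar0}$.

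Finally I would assemble the Lie supergroup in the Harish--Chandra-pair picture: the pair $\bigl(\opp{Aut}_{\bar0}(M,\cD,q),\,\fs\bigr)$ is a Harish--Chandra pair, since $\fs$ is finite-dimensional by Theorem~\ref{T1}, $\fs_{\bar0}=\opp{Lie}\bigl(\opp{Aut}_{\bar0}(M,\cD,q)\bigr)$ by the previous step, and the compatibility of the $\opp{Aut}_{\bar0}$-action on $\fs_{\bar1}$ with the bracket holds because it is just the pushforward of vector fields; this data defines the Lie supergroup $\opp{Aut}(M,\cD,q)$, whose $S$-points are the $S$-families of automorphisms of $(M,\cD,q)$ and are again pinned down by super rigidity. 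If $M_o$ is connected, then $\dim\opp{Aut}(M,\cD,q)=\dim\fs\leq\dim\g$ in the strong sense. I expect the main obstacle to be precisely this rigidity-and-globalisation step: in the supermanifold category a morphism is not determined by its action on points, so one must justify carefully that an $\omega$-preserving automorphism is rigid, then promote the injective evaluation map to an honest Lie supergroup acting smoothly — i.e.\ establish the super-analogs of the Bochner--Montgomery and Kobayashi theorems — all while keeping track of the finitely many connected components of $M_o$, which an automorphism may permute.
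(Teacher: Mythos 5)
Your overall route coincides with the paper's: build the natural bundle $P\to M$ with absolute parallelism $\Phi$ (Theorem \ref{thm:absolute-parallelism}), lift automorphisms and infinitesimal symmetries to it, prove super-rigidity of the parallelism, and package the result as a super Harish--Chandra pair. Two steps, however, are genuinely problematic as written.

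First, your Harish--Chandra pair $\bigl(\opp{Aut}(M,\cD,q)_{\bar0},\fs\bigr)$ together with the claim $\fs_{\bar0}=\opp{Lie}\bigl(\opp{Aut}(M,\cD,q)_{\bar0}\bigr)$ is wrong in general: infinitesimal automorphisms need not be complete, so $\opp{Lie}(\opp{Aut}_{\bar0})$ is only the subalgebra of \emph{complete} fields in $\fs_{\bar0}$ and is often strictly smaller (e.g.\ the flat structure restricted to a proper superdomain of $\R^{m|n}$). The correct pair is $\bigl(\opp{Aut}(M,\cD,q)_{\bar0},\mathfrak{aut}(M,\cD,q)\bigr)$ with $\mathfrak{aut}$ the complete infinitesimal automorphisms, and one must then show that these actually form a Lie superalgebra --- not automatic, since sums and brackets of complete supervector fields are in general not complete; the paper gets this from Kobayashi's classical result applied to the reduced parallelism $\Phi_{\bar0}$ together with the fact that a supervector field is complete iff its reduction is. For the same reason your concluding equality $\dim\opp{Aut}(M,\cD,q)=\dim\fs$ must be replaced by $\dim\opp{Aut}(M,\cD,q)\le\dim\fs$ (the inequality is often strict); the bound by $\dim\g$ survives, but the equality does not.

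Second, you assert that the prolongation yields a \emph{principal} $P$-bundle with an equivariant, Cartan-connection-like $\omega$, and deduce $\opp{Aut}(M,\cD,q)\cong\opp{Aut}(\hat P,\omega)$ by descending $\omega$-preserving automorphisms along the vertical distribution. The construction does not produce a principal bundle or an equivariant parallelism in general, because the normalizations need not be invariant under the structure supergroup (Remark \ref{rem:equivariancy}); so the converse descent-and-equivariance argument is unavailable, and the relation between the two automorphism supergroups is subtler (a closed subsupergroup, in fact a quotient by issues of connectedness of the structure groups). What you actually need --- and what the paper uses --- is only the lifting direction plus closedness, realizing $\opp{Aut}(M,\cD,q)$ inside $\opp{Aut}(\Phi)$. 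Finally, the super-rigidity you defer (an $\omega$-preserving field or automorphism is pinned down by its value at a single reduced point, even though morphisms of supermanifolds are not determined by their action on points) is exactly Lemma \ref{StZ}, proved via the filtration by the nilpotent ideal; and for the Lie group structure on the even part the paper invokes Ostermayr's results on automorphisms of absolute parallelisms (injectivity and closedness of the forgetful map to $\Aut(\Phi_{\bar0})$) rather than a super Bochner--Montgomery theorem, which you would otherwise have to establish.
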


As noted above the dimension bound is sharp. 
In fact we have $\dim\opp{Aut}(M,\cD,q)\leq\dim\fs$ and the Lie superalgebra
$Lie(\opp{Aut}(M,\cD,q))$ is the subalgebra of $\fs$ consisting of
the complete supervector fields. (We recall that any supervector field
possesses a local flow in a suitable sense and it is called complete if its maximal flow domain is $\R^{1|1}\times M$, cf. \cite{MSV,GW}.
Moreover, it is complete if and only if the associated vector field on the reduced manifold $M_o$ is so.) Thus in many cases the inequality is strict.

The structure of the paper is as follows.
After introducing the main tools
for working with geometric structures on supermanifolds in \S\ref{S2},
we will show that the main ideas behind the classical results can be
carried over to the super-setting.
However, special care should be taken with the reduction of the structure group and usage of superpoints in frame bundles.
We manage this through a geometric-algebraic correspondence, elaborated for
principal bundles.
In \S\ref{S3}, we recall the algebraic prolongation following the ideas of Tanaka--Weisfeiler and construct the prolonged frame bundle with an absolute parallelism. Introduction of normalization conditions via the generalized
Spencer complex is inspired by a previous work by Zelenko \cite{Z}.
One of our main technical features is the geometric realization as
supermanifolds of the sheaves of frames introduced in \cite{A}
(in the context of $G$-structures). This is crucial to carry out
the inductive geometric prolongation argument.

In \S\ref{S4},
we give the proof of the main theorems, using the constructed frame bundles,
and discuss supersymmetry dimension bounds.
Furthermore we exploit a relation of the prolongation to the Lie equation
and note that the symmetry algebra $\fs$ of a filtered geometric structure
can be obtained by a filtered subdeformation of $\g$, i.e., by passing to
a graded Lie subsuperalgebra and changing its filtered structure while
 preserving its associated-graded.
We also discuss the maximal supersymmetry models there.
Some applications, in particular new symmetry bounds,
are given in \S\ref{S5}.
This covers holonomic supermanifolds, equipped with affine, metric,
symplectic, periplectic and projective structures, as well as
nonholonomic ones such as exceptional $G(3)$-contact structures, equations
of super Hilbert--Cartan type, super-Poincar\'e structures and
some scalar odd ODEs.

The automorphism supergroup in the case of $G$-structures was studied
by Ostermayr \cite{O} though this reference does not contain
the supersymmetry dimension bounds. Our class of geometries is
considerably larger, and in addition we consider the infinitesimal
symmetry superalgebra that gives a finer dimension bound.
We can also vary smoothness in the real case, to which we restrict for simplicity,
and our results hold true in the complex analytic or algebraic cases too, as well as in the mixed case
(cs manifolds, allowing for real bodies and complex odd directions) considered in \cite{O}.
Indeed, our arguments do not rely on any Batchelor realization of $M=(M_o,\cA_M)$ \cite{B}, which is well-known to fail for most classes of supermanifolds.

 For algebraic computations of prolongations,
related to certain geometric structures, we refer to the works of
Leites et al \cite{LSV,P} (see also references therein).

Finally, we remark that while Theorems \ref{T1} and \ref{T2} are formulated for strongly regular
distributions (with possible reductions),
we expect them to hold in the general case, allowing singularities.
This would superize the result of \cite{K1}.
One should only require the existence of a dense set of
localizations where the derived sheaves give rise to distributions.

\section{Bundles on supermanifolds and geometric-algebraic correspondence}\label{S2}
For details on the background material on supermanifolds we refer to \cite{CCF,DM,LSV,Sa,V}.
Here we elaborate the geometric-algebraic correspondence
for the description of fiber, vector and principal bundles.
To illustrate it, here are three definitions of tangent bundles:
 \begin{itemize}
\item[(i)]
Tangent bundle: This is the datum of an appropriate supermanifold $TM$ with
a surjective submersion $\pi:TM\to M$ of supermanifolds and typical fiber
$\simeq T_xM$, $x\in M_o$.
\item[(ii)]
Reduced tangent bundle: $\imath^*TM=TM|_{M_o}=\bigcup_{x\in M_o}T_xM$
is a classical $\Z_2$-graded vector bundle over $\imath:M_o\hookrightarrow
M$. The supervector space $T_xM$ is the fiber over $x\in M_o$.
\item[(iii)]
Tangent sheaf: superderivations $\mathcal TM=\opp{Der}(\cO_M)$
form a sheaf on $M_o$ of $\cO_M$-modules, whose global sections $\mathfrak X(M)=
\mathcal TM(M_o)$ consist of the supervector fields on $M$.
 \end{itemize}
Approaches (ii) and (iii) appear e.g.\ in \cite{A,G}; we will elaborate
upon (i) below. It will be shown that the geometric approach (i)
and the algebraic approach (iii) are equivalent, while the reduced tangent bundle $TM|_{M_o}$ encodes less information than $TM$ or $\mathcal TM$. We will also establish a similar geometric-algebraic correspondence for principal bundles, crucial for our developments.

\subsection{Supermanifolds}\label{sec:supermanifolds}
A {\it supermanifold} is understood in the sense of Berezin--Kostant--Leites, i.e., a ringed space $M=(M_o,\cO_M)$ such that
$\cO_M|_{\cU_o}\cong \mathcal{C}^{\infty}_{M_o}|_{\cU_o}\otimes\Lambda^\bullet \mathbb S^*$ as sheaves of superalgebras
for any sufficiently small open subset $\cU_o\subset M_o$.
Here $\mathbb S$ is a vector space  of fixed dimension. We set $\dim(M)=(m|n)=(\dim M_o|\dim \mathbb S)$,
call $M_o$ the reduced manifold and $\cO_M$ the structure sheaf, which is
$\Z_2$-graded: $\cO_M=(\cO_M)_{\bar0}\oplus(\cO_M)_{\bar1}$.
We shortly call {\it superdomain} the supermanifold $\cU=(\cU_o,\cO_M|_{\cU_o})$
associated to any open subset $\cU_o\subset M_o$, even if $\cU_o$ is not connected.
This terminology is also used for
$\varphi^{-1}(\cU)=(\varphi_o^{-1}(\cU_o),\cO_N|_{\varphi_o^{-1}(\cU_o)})$,
where $\varphi=(\varphi_o,\varphi^*):N=(N_o,\cO_N)\to M=(M_o,\cO_M)$ is a morphism of supermanifolds.
Despite its notation, the superdomain $\varphi^{-1}(\cU)$ is defined only in terms of $\cU_o$ and $\varphi_o$.
 Finally, an open cover of $M$ is a family of superdomains $\big\{\cU_i:i\in I\big\}$
such that $\bigcup_{i\in I}(\cU_i)_o=M_o$ and $\cU_{ij}=\cU_i\cap \cU_j$ is the superdomain with reduced manifold
$(\cU_{ij})_o=(\cU_i)_o\cap (\cU_j)_o$, for all $i,j\in I$.


For any sheaf $\cE$ over $M_o$, its restriction to an open subset $\cU_o\subset M_o$
will be denoted by $\cE|_{\cU_o}$, the space of its sections on $\cU_o$
simply by $\cE(\cU)$ and the stalk at $x\in M_o$ by
$\cE_x=\lim\limits_{\longrightarrow}{\!}_{\cU_o\ni x}\cE(\cU)$.
In particular,
we set
$\cO(\cU):=\cO_M(\cU_o)$ and $\cO_{M,x}:=(\cO_M)_x$ for the structure sheaf.

Let $\mathcal{J}=\langle\cO_{\bar1}\rangle$ be the subsheaf generated
by nilpotents: $\mathcal{J}=\mathcal{J}_{\bar0}\oplus\mathcal{J}_{\bar 1}$ with $\mathcal{J}_{\bar1}=\cO_{\bar1}$ and $\mathcal{J}_{\bar0}=\cO^2_{\bar1}$.
For any sheaf $\cE$  of $\cO_M$-modules on $M_o$ we consider the evaluation
$\opp{ev}:\cE\to\cE/(\mathcal{J}\cdot\cE)$. In particular
we get the reduction of superfunctions
$\opp{ev}:\cO_M\to C^\infty_{M_o}$, $f\mapsto\opp{ev}(f)$, and in turn the canonical morphism of supermanifolds $\imath=(\1_{M_o},\opp{ev}):M_o=(M_o,C^\infty_{M_o})\hookrightarrow M=(M_o,\cA_M)$.
Evaluation of
the classical function $\opp{ev}(f)$ at $x\in M_o$ is denoted by $\opp{ev}_x(f)$.
We stress, however, that there is no
{\it canonical} morphism from $M$ to $M_o$ -- this is a key feature of supergeometry.

For any supermanifold $S$, we will denote the {\it set of $S$-points} of $M$ by $M[S]=\Hom(S,M)_{\bar 0}$,
the set of all morphisms of supermanifolds from $S$ to $M$. (By definition morphisms are even, so the subscript ``$\bar 0$''
might look redundant. However, we reserve symbols like $\Hom$ and $\Aut$
for {\it superspaces} of morphisms, see \S\ref{sec:supergroups} below.) The functor of points $M[-]:SMan^{op}\to Set$
from the category of supermanifolds to the category of sets is
a (contravariant) functor that fully determines $M$.
However, there exist functors that are not representable, i.e.,
do not necessarily arise as the functor of points of a supermanifold.
We refer to them as {\it superspaces} (also known as generalized supermanifolds,
and not to be confused with the superspaces introduced by Manin \cite{Ma}).

A very useful criterion to check identities involving morphisms is the {\it Yoneda lemma}: each morphism $\varphi:M\to N$ defines a natural transformation $\varphi[-]:M[-]\to N[-]$ (i.e., a family of maps between sets $\varphi[S]:M[S]\to N[S]$ that depends functorially on $S$) and {\it any} natural transformation between $M[-]$ and $N[-]$ arises from a unique morphism in this way.

For any point $x\in M_o$ and supermanifold $S$, we let
$\hat x=(\hat x_o,\hat x^*):S\to M$ be the unique morphism such that
$\hat x^*(f)=\opp{ev}_x(f)\cdot 1\in\cO(S)$ for all $f\in\cO(M)$.
This gives $\hat x_o(S_o)=x\in M_o$.

\subsection{Lie supergroups and their actions}\label{sec:supergroups}

A Lie supergroup is a supermanifold $G=(G_o,\cO_G)$ endowed with a multiplication morphism $m:G\times G\to G$, an inverse morphism
$i:G\to G$ and a unit morphism $e:\R^{0|0}\to G$ with
usual compatibilities, which make $G$
a group object in the category of supermanifolds.
The reduced manifold $G_o$ is a classical Lie group.

The associated functor of points $G[-]:SMan^{op}\to Group$ is particularly useful in the case of linear Lie supergroups.
For example, consider the general linear Lie supergroup
$G=\GL(V)$ associated to a supervector space
$V=V_{\bar 0}\oplus V_{\bar 1}$ of $\dim V=(p|q)$.
The set of $S$-points $G[S]=\Hom(S,G)_{\bar 0}$ of $G$ is the group
 \begin{multline}\label{eq:Liesupergroup-functor}
G[S]=\Big\{\text{invertible}\;\begin{pmatrix} A & B \\ C & D \end{pmatrix}\mid A=(a^i_{j}), B=(b^i_{\beta}), C=(c^\alpha_{j}), D=(d^\alpha_{\beta})\\
\quad\text{with}\; a^i_{j}, d^\alpha_{\beta}\in \cO_{\bar 0}(S),\; b^i_{\beta},c^\alpha_{j}\in\cO_{\bar 1}(S)\;\text{for all}\;1\leq i,j\leq p,\;1\leq  \alpha,\beta\leq q\Big\}
 \end{multline}
of the even invertible $(p|q)\times (p|q)$ matrices with entries in $\cO(S)$. This group acts on the set of $S$-points of $V$
\begin{equation}
V[S]\cong \big(V\otimes \cO(S)\big)_{\bar 0}=\left\{
\begin{pmatrix}
v_1\\
\vdots\\
v_{p+q}\end{pmatrix}
\mid v_1,\ldots, v_p\in \cO_{\bar 0}(S),\;v_{p+1},\ldots, v_{p+q}\in \cO_{\bar 1}(S)
\right\}\;,
\end{equation}
where $V=V_{\bar 0}\oplus V_{\bar 1}$ is thought as the linear supermanifold
$V=(V_{\bar 0},\cC^{\infty}_{V_{\bar0}}\otimes\Lambda^\bullet V_{\bar 1}^*)$.
By Yoneda, we then have an action morphism of supermanifolds $\alpha:G\times V\to V$.
We note that $G$ is a superdomain of the linear supermanifold
$\fgl(V)=\fgl(V)_{\bar 0}\oplus\fgl(V)_{\bar 1}$, with
extended morphism of supermanifolds $\alpha:\fgl(V)\times V\to V$.

 One may similarly define a linear supergroup $G\subset\GL(V)$
with a morphism $\alpha:G\times V\to V$
that satisfies the usual properties of a linear action. See \cite{CCF} for more details.

\rem\label{Pi-rep}
If $\Pi V=V\otimes\mathbb R^{0|1}$ is the parity change supervector space, then $GL(V)\cong GL(\Pi V)$ as Lie supergroups via
the natural transformation
\begin{small}
$\begin{pmatrix} A & B \\ C & D \end{pmatrix}\to \begin{pmatrix} D & C \\ B & A \end{pmatrix}$,
\end{small}
however $V$ and $\Pi V$ are not equivalent as representations. 
We also note for later use that the action of $G[S]$ on $V[S]$ defined above extends to the whole $\cO(S)^{p|q}\cong V\otimes \cO(S)$, whence $G[S]\cong \Aut_{\cO(S)}(\cO(S)^{p|q})_{\bar 0}$.

\medskip
We may define an action of a supergroup on a supermanifold in the same vein,
namely via the functor of points $G[S]\times M[S]\to M[S]$, which by
Yoneda yields a morphism $G\times M\to M$ with the usual properties
of a (nonlinear) action. If the action is effective, this leads to an embedding $G\subset\Aut(M)$
to the ``supergroup of diffeomorphisms", but the sheaf approach is not
sufficient to define the superstructure of an infinite-dimensional
supermanifold \cite[\S2.6]{DM}, so we treat $\Aut(M)$ as a superspace
via the functor of points. More precisely, given two supermanifolds $M$ and $N$, the {\it superspace} of morphisms $\Hom(M,N)$ is required to satisfy the usual adjunction formula
$$
\Hom(M,N)[S]=\Hom(S,\Hom(M,N))_{\bar 0}\cong \Hom(S\times M,N)_{\bar 0}
$$
for all supermanifolds $S$. See \cite[\S 5.2]{SW} for an explicit description. If $M=N$, the supergroup of diffeomorphisms $\Aut(M)$ is defined as a subfunctor of $\Hom(M,M)$, and its ``reduced space'' $Aut(M)_{\bar 0}\subset \Hom(M,M)_{\bar 0}$ is the group of all diffeomorphisms of $M$
\cite[\S 5.1, \S 6.1]{SW}. It is then a straightforward task to define, e.g.,
the supergroup $\Aut(M,M')$ of diffeomorphisms of $M$ preserving a subsupermanifold $M'\subset M$ in terms of commutative diagrams.

The chart approach developed
below is better adapted, but we will mainly be interested in the
structures of finite type, where the automorphisms form a genuine Lie
supergroup.

\subsection{Fiber bundles and sections}
Recall that a morphism $\pi:E\to M$ is a submersion if
$d\pi|_{E_o}:TE|_{E_o}\to TM|_{M_o}$ is surjective \cite{L}.
Locally, this is a product of supermanifolds, and this is the basis of the following.

 \begin{definition}\label{def:geometric-fiber-bundle}
A morphism $\pi:E\to M$ is a {\em fiber bundle} with typical fiber $F$ if
$\forall x\in M_o$ $\exists$ a superdomain $\cU\subset M$, $x\in\cU_o$,
and a diffeomorphism (local trivialization)
$\varphi:\pi^{-1}(\cU)\rightarrow \cU\times F$ such that $\pr_\cU\circ\;\varphi=\pi$.
A morphism of fiber bundles	$\pi_1:E_1\to M_1$ and $\pi_{2}:E_2\to M_2$
is defined via the commutative diagram
 \begin{equation*}\xymatrix{
E_1 \ar[r]^{\psi}\ar[d]^{\pi_1} & E_2 \ar[d]^{\pi_2}\\
M_1 \ar[r]^{\psi_\flat} & M_2
 }\end{equation*}
Locally this means $\psi=\big(\psi_\flat\circ\pi_1,\varphi\big):\cU_1\times F_1\to \cU_2\times F_2$ for a morphism $\varphi:\cU_1\times F_1\to F_2$.
	\end{definition}

Let $\big\{\cU_i:i\in I\big\}$ be an open cover of $M$. A family $\big\{\varphi_{ij}:\cU_{ij}\times F\to \cU_{ij}\times F\big\}_{i,j\in I}$ of isomorphisms of trivial fiber bundles over the identity is called a {\em cocycle} if
 $
\varphi_{ii}=\1_{\cU_{ii}\times F}$ and $\varphi_{ij}=\varphi_{ik}\circ\varphi_{kj}$ where all three are defined.
Since the $\varphi_{ij}$ cover the identity in the
first component, they can be equivalently written as
$\varphi_{ij}:\cU_{ij}\times F\to F$, by abusing the notation, or even as morphisms
$\widetilde\varphi_{ij}:\cU_{ij}\to \Aut(F)$. We refer to the latter as ``transition morphisms''.

 \begin{proposition}\label{prop:fiber-bundles-cocycles}
Let $\pi:E\to M$ be a fiber bundle with local trivializations $(\cU_i,\varphi_i)$, $i\in I$. Then
the family $\big\{\varphi_{ij}=\varphi_i\circ\varphi_j^{-1}\big\}_{i,j\in I}$ is a cocycle.
Conversely any cocycle determines a unique fiber bundle. 
 \end{proposition}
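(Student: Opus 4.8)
The plan is to prove the two directions separately, both by reducing the statement about fiber bundles to the corresponding (known) statement about supermanifolds glued along an open cover, which in turn follows from the sheaf-theoretic nature of the structure sheaf. For the first assertion, given a fiber bundle $\pi:E\to M$ with trivializations $\varphi_i:\pi^{-1}(\cU_i)\to\cU_i\times F$, I would first observe that each composite $\varphi_{ij}=\varphi_i\circ\varphi_j^{-1}$ is defined on $\cU_{ij}\times F$ (after restricting $\varphi_i,\varphi_j$ to $\pi^{-1}(\cU_{ij})$), is an isomorphism of fiber bundles over $\cU_{ij}$, and covers the identity on $\cU_{ij}$ since both $\varphi_i$ and $\varphi_j$ intertwine $\pi$ with the projection $\pr_{\cU}$. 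The cocycle identities $\varphi_{ii}=\1$ and $\varphi_{ij}=\varphi_{ik}\circ\varphi_{kj}$ are then immediate from cancellation, $\varphi_i\circ\varphi_j^{-1}=(\varphi_i\circ\varphi_k^{-1})\circ(\varphi_k\circ\varphi_j^{-1})$, valid on the triple overlap where all morphisms are defined; here one uses that a morphism of supermanifolds is determined by its restrictions to an open cover, so equality of the two sides on $(\cU_{ijk})\times F$ is a genuine equality of morphisms.

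For the converse, suppose we are given an open cover $\{\cU_i\}$ of $M$ and a cocycle $\{\varphi_{ij}:\cU_{ij}\times F\to\cU_{ij}\times F\}$. The idea is to build $E$ by gluing the trivial pieces $E_i:=\cU_i\times F$ along the $\varphi_{ij}$. Concretely, on the reduced level one forms the topological space $E_o=\bigl(\bigsqcup_i (\cU_i)_o\times F_o\bigr)/\!\sim$, where the identification is via the reduced maps $(\varphi_{ij})_o$; the cocycle conditions guarantee $\sim$ is an equivalence relation and that $E_o$ is a (Hausdorff, second countable) manifold with open sets $(E_i)_o\hookrightarrow E_o$. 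For the structure sheaf, one defines $\cO_E$ on $E_o$ by declaring a section over an open $\cW_o\subset E_o$ to be a compatible family of sections $s_i\in\cO_{E_i}(\cW_o\cap (E_i)_o)$ satisfying $\varphi_{ij}^*(s_i)=s_j$ on the overlaps; gluing of sheaves then produces $\cO_E$, and locally $\cO_E|_{(E_i)_o}\cong\cO_{\cU_i\times F}$ so $E$ is a genuine supermanifold. The projections $\cU_i\times F\to\cU_i\hookrightarrow M$ agree on overlaps because the $\varphi_{ij}$ cover the identity, hence glue to a morphism $\pi:E\to M$, which is a submersion and locally trivial by construction, i.e.\ a fiber bundle with typical fiber $F$. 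Uniqueness up to isomorphism follows since any two fiber bundles with the same trivializing cover and cocycle admit, by the first part and the gluing property of morphisms, an isomorphism matching the trivializations.

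The main technical point -- and the only place requiring genuine care in the super-setting rather than citing the classical argument verbatim -- is the gluing of the structure sheaves and the verification that the glued ringed space is locally isomorphic to $\cC^\infty\otimes\Lambda^\bullet\SS^*$: one must check that the transition morphisms $\varphi_{ij}$, being morphisms of supermanifolds rather than mere continuous maps, induce compatible sheaf isomorphisms $\varphi_{ij}^*$ with $\varphi_{ij}^*\circ\varphi_{jk}^*=\varphi_{ik}^*$ as maps of sheaves, so that the equalizer defining $\cO_E$ is well-behaved; this is where the transition morphisms $\widetilde\varphi_{ij}:\cU_{ij}\to\Aut(F)$ and the superspace $\Aut(F)$ introduced above are convenient bookkeeping, though for the proof one only needs them as honest morphisms of supermanifolds into $\cU_{ij}\times F$. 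Everything else is a routine transcription of the classical cocycle construction, using throughout that supermanifold morphisms form a sheaf on the reduced space.
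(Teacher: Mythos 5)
Your proposal is correct and follows essentially the same route as the paper: the direct part by cancellation on overlaps, and the converse by first gluing the reduced manifold $E_o$ from the reduced data and then gluing the structure sheaf $\cO_E$ as compatible families of local sections satisfying $\varphi_{ij}^*(s_i)=s_j$, with the cocycle conditions ensuring local triviality. The paper simply cites the vector-bundle case and sketches this same gluing; your added details on the first direction and uniqueness are routine and consistent with it.
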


 \begin{proof}
This is \cite[Prop 4.1.2]{Ke} for the case of vector bundles,
see also \cite[Prop. 4.9]{BCC}.
The idea is to glue $E$ from the local data $E_i=\cU_i\times F$
through the cocycles $\varphi_{ij}$ on $E_{ij}=E_i\cap E_j$.
First, glue $E_o$ from the reduced data $(E_i)_o$ and $(\varphi_{ij})_o$
as in the classical theory with $\pi_o:E_o\to M_o$.
Then glue the sheaves of superfunctions into
a sheaf $\cO_E:\cV_o\mapsto\cO_E(\cV_o)$ over $E_o$ defined as
 $$
\cO_E(\cV_o)=\big\{\big(s_i\in\cO_{E_i}((E_i)_o\cap\cV_o)\big)_{i\in I}\,:\,
\varphi_{ij}^*\big(s_i|_{(E_{ij})_o\cap\cV_o}\big)=s_j|_{(E_{ij})_o\cap\cV_o}\;
\forall\;i,j\in I\big\}.
 $$
We note that $\cO_E|_{\pi_o^{-1}(\cU_i)_o}\cong\cO_{E_i}$ by virtue of the cocycle conditions, hence the ringed space $E=(E_o,\cO_E)$ is a supermanifold.
\end{proof}	

 \begin{corollary}\label{cor:fiber-bundles-pull-back}
Let $\pi:E\to M$ be a fiber bundle with cocycle $\{\varphi_{ij}\}_{i,j\in I}$ and $\psi:N\to M$ a morphism of supermanifolds. Then the pull-back fiber bundle $p:\psi^*E\to N$ exists and is determined by the cocycle
$\varphi_{ij}\circ(\psi\times\one_F):\cV_{ij}\times F\to F$, where
$\cV_i=\psi^{-1}(\cU_{i})$ and $\cV_{ij}=\cV_i\cap\cV_j$ for all $i,j\in I$.
 \end{corollary}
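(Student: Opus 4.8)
The plan is to reduce the statement to Proposition~\ref{prop:fiber-bundles-cocycles}, i.e.\ to verify that the prescribed family
$\{\varphi_{ij}\circ(\psi\times\one_F)\}_{i,j\in I}$ really is a cocycle relative to the open cover $\{\cV_i\}_{i\in I}$ of $N$, and that the fiber bundle it determines deserves to be called $\psi^*E$. First I would note that $\{\cV_i\}_{i\in I}$ is indeed an open cover of $N$: since $\{\cU_i\}_{i\in I}$ covers $M$ we have $\bigcup_i(\cU_i)_o=M_o$, so $\bigcup_i(\cV_i)_o=\bigcup_i\psi_o^{-1}((\cU_i)_o)=\psi_o^{-1}(M_o)=N_o$, and $(\cV_{ij})_o=(\cV_i)_o\cap(\cV_j)_o$ by construction. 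Next I would check the two cocycle axioms. The normalization $\varphi_{ii}=\one$ pulls back to $\varphi_{ii}\circ(\psi\times\one_F)=\one_{\cV_{ii}\times F}$ directly. For the compatibility relation, writing $\widetilde\varphi_{ij}:=\varphi_{ij}\circ(\psi\times\one_F)$, one composes over $\cV_{ijk}:=\cV_i\cap\cV_j\cap\cV_k$ and uses that $(\psi\times\one_F)$ intertwines the relevant restrictions with $\psi\times\one_F$ on $\cU_{ijk}$; then $\widetilde\varphi_{ik}\circ\widetilde\varphi_{kj}=(\varphi_{ik}\circ\varphi_{kj})\circ(\psi\times\one_F)=\varphi_{ij}\circ(\psi\times\one_F)=\widetilde\varphi_{ij}$, where the middle equality is the cocycle condition for $\{\varphi_{ij}\}$. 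By Proposition~\ref{prop:fiber-bundles-cocycles} this cocycle determines a unique fiber bundle $p:\psi^*E\to N$ with typical fiber $F$ and local trivializations over the $\cV_i$.

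It then remains to justify the name, i.e.\ to exhibit a morphism $\Psi:\psi^*E\to E$ lying over $\psi$ and realizing $\psi^*E$ as a pull-back in the appropriate categorical sense (the commutative square with $\psi$ and $p$ is cartesian among morphisms of supermanifolds, equivalently among fiber bundles as in Definition~\ref{def:geometric-fiber-bundle}). Locally $\Psi$ is forced: over $\cV_i$ it is $\psi\times\one_F:\cV_i\times F\to\cU_i\times F$ followed by the trivialization $\varphi_i^{-1}$, and one checks these local pieces agree on overlaps precisely because the transition data of $\psi^*E$ was defined by composing with $\psi\times\one_F$. Concretely, on $\cV_{ij}\times F$ the two candidate maps differ by $\varphi_i\circ\varphi_j^{-1}=\varphi_{ij}$ on the target side and by $\widetilde\varphi_{ij}=\varphi_{ij}\circ(\psi\times\one_F)$ on the source side, and these match by definition; so $\Psi$ glues to a global morphism. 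Checking the universal property is then routine unwinding: given $T\to N$ and $T\to E$ agreeing after projection to $M$ via $\psi$, one produces the unique lift $T\to\psi^*E$ by working in each $\cV_i\times F$ chart, the gluing being automatic from the cocycle identities.

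The main obstacle — really the only nontrivial point — is bookkeeping with the domains of definition: the cocycle relations for $\{\varphi_{ij}\}$ hold only ``where all three are defined'', i.e.\ over $\cU_{ijk}$, and one must be careful that pulling back along $\psi$ sends $\cU_{ijk}$-data to $\cV_{ijk}$-data compatibly, using $\psi_o^{-1}(\cU_{ijk})_o=(\cV_{ijk})_o$ and that $\psi^{-1}$ of an intersection is the intersection of the $\psi^{-1}$'s (which holds at the level of reduced spaces, and the superdomains are determined by their reduced spaces together with $\psi_o$, as stressed in \S\ref{sec:supermanifolds}). Once this is kept straight, everything reduces mechanically to Proposition~\ref{prop:fiber-bundles-cocycles}, which already did the real work of gluing supermanifolds from local data and cocycles. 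I would therefore keep the proof short: verify the cover and the two cocycle axioms, invoke Proposition~\ref{prop:fiber-bundles-cocycles} for existence and uniqueness, and remark that the evident local morphisms to $E$ glue to give the pull-back square, with the universal property being a chartwise check.
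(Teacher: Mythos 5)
Your argument is correct and is exactly the route the paper intends: transport the cocycle along $\psi$, check the cocycle axioms (which are immediate since $\psi\times\one_F$ intertwines the transition automorphisms), and invoke Proposition~\ref{prop:fiber-bundles-cocycles} to glue the bundle, with the morphism $\psi^\sharp$ to $E$ obtained chartwise as you describe. The additional verification of the universal property is a harmless elaboration beyond what the corollary asserts.
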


Pull-back fits in a commutative diagram,
where the fiber bundle morphism $\psi^\sharp$ is determined by $\psi$:
\begin{equation*}	
\label{def:pull-back-fiber-bundles}
\xymatrix{
\psi^*E\ar[r]^{\psi^\sharp}\ar[d]^{p} & E \ar[d]^{\pi}\\
N \ar[r]^{\psi} & M
}
\end{equation*}

Recall that a subsupermanifold is called {\it closed} if its reduction is a closed submanifold.
 \begin{definition}
The {\em fiber} $E_x=\pi^{-1}(x)\hookrightarrow E$ at $x\in M_o$ is the closed subsupermanifold
given as the pullback
 $$
\xymatrix{ \pi^{-1}(x)  \ar[r]^{}\ar[d] & E \ar[d]^\pi\\
\mathbb R^{0|0} \ar[r]^{\hat x} & M }
 $$
 \end{definition}

\rem It can be specified more concretely via \cite[Prop. 3.4]{BCC}:
the algebra of global superfunctions of the fiber is $\cO(E_x)\cong\cO(E)/\cO(E)\pi^*(\mu_x)$, where
$\mu_x=\bigl\{f\in\cO(M)\mid\opp{ev}_x(f)=0\bigr\}$ is the maximal ideal.
The fiber $E_x$ is (non-canonically) diffeomorphic to the typical fiber $F$.

\medskip

The reduced fiber bundle is defined by
 $$
\imath^*E=E|_{M_o}=\displaystyle\bigcup_{x\in M_o}E_x
 $$
and it is a fiber bundle over $M_o$ with typical fiber $F$, according to
Definition \ref{def:geometric-fiber-bundle}. Its defining cocycle
$(\widetilde\varphi_{ij})_o:
(\cU_{ij})_o\to\Aut(F)_{\bar 0}$ is the reduced morphism of the cocycle of $E$.

 \begin{definition}\label{def:section}
An {\em even section} of a fiber bundle $\pi:E\to M$ is a morphism of
supermanifolds $\sigma:M\to E$ such that $\pi\circ\sigma=\one_{M}$.
Locally $\sigma=\big(\1_{\cU}, s\big):\cU\to \pi^{-1}(\cU)\cong\cU\times F$
with $s:\cU\to F$.
 \end{definition}

The same notion applies to superdomains $\cU\subset M$ and
we denote the set of all even sections by
$\Gamma_E(\cU)_{\bar 0}=\big\{\sigma:\cU\to\pi^{-1}(\cU)\mid\pi\circ\sigma=\one_\cU\big\}$.
(A superspace of sections 
can also be introduced
using the functor of points \cite[\S 4.3]{SW}, but we won't need that for our arguments.)


\subsection{Vector bundles on supermanifolds}\label{sec:superVB}
Let $V$ be a finite-dimensional supervector space.

 \begin{definition}[{\it Geometric approach}]\label{def:geometric-VB}
A {\em geometric vector bundle} is a fiber bundle $\pi:E\to M$ with
typical fiber $V$ such that the
transition morphisms are fiberwise linear, i.e., they take values in a linear supergroup:
$\widetilde\varphi_{ij}:\cU_{ij}\to G\subseteq\GL(V)$.
If $G$ is a proper subsupergroup, then $\pi$ is called a $G$-vector bundle.
 \end{definition}

More concretely, any cocycle $\varphi_{ij}:\cU_{ij}\times V\to V$ acts
on linear coordinates $(x^a,\theta^\alpha)$ of $V$ by
 \begin{equation}\label{eq:change-trivialization}
 \begin{aligned}
\varphi_{ij}^*(x^a)&=\widetilde\varphi_{ij}^*(A^{a}_b)\cdot x^b
+\widetilde\varphi_{ij}^*(B^{a}_\beta)\cdot\theta^\beta,\\
\varphi_{ij}^*(\theta^\alpha)&=\widetilde\varphi_{ij}^*(C^{\alpha}_b)\cdot x^b
+\widetilde\varphi_{ij}^*(D^{\alpha}_\beta)\cdot \theta^\beta,
 \end{aligned}
 \end{equation}
where $\begin{pmatrix}A^a_b,B^a_\beta, C^\alpha_b,D^\alpha_\beta\end{pmatrix}$ are coordinates on $GL(V)$, so that
 $
\begin{pmatrix} \widetilde\varphi_{ij}^*(A^a_b) &
\widetilde\varphi_{ij}^*(B^a_\beta) \\
\widetilde\varphi_{ij}^*(C^{\alpha}_b) &
\widetilde\varphi_{ij}^*(D^{\alpha}_\beta) \end{pmatrix}\in GL(V)[\cU_{ij}].
 $
In other words, $\varphi_{ij}=\alpha\circ\big(\widetilde\varphi_{ij}
\times\one_V\big):\cU_{ij}\times V\to V$. Note also that fiberwise linearity for the reduced bundle is a weaker condition
than that for the geometric vector bundle because the off-diagonal blocks
of the above matrix are odd, hence vanish
upon evaluation \cite[Ex. 4.15]{BCC}.

\medskip

A morphism of geometric vector bundles $E_1,E_2$ is a fiber bundle morphism
$\psi:E_1\to E_2$ that is fiberwise linear. More concretely,
let $\pi_1:E_1\to M_1$ and $\pi_{2}:E_2\to M_2$ be geometric vector bundles
with typical fibers $V$ and $W$, respectively.
Denote by $\alpha:\Hom(V,W)\times V\to W$ the natural composition morphism of supermanifolds.
Then, similar to \cite[Def. 4.12]{BCC}, we define a morphism of vector bundles
in a local component $\varphi:\cU_1\times V\to W$ as
$\varphi=\alpha\circ\big(\rho\times\one_V\big):\cU_1\times V\to W$
for some morphism $\rho:\cU_1\to\Hom(V,W)$.

Proposition \ref{prop:fiber-bundles-cocycles} and Corollary
\ref{cor:fiber-bundles-pull-back} specialize straightforwardly
to geometric vector bundles.
Next, formula \eqref{eq:change-trivialization} shows that
the assignment $\cU_o\to\Gamma_E(\cU)_{\bar 0}$
describing local even sections,
gives a sheaf of right $(\cO_M)_{\bar 0}$-modules on $M_o$.
(This can be converted to a left module with the usual rule of signs.)
This sheaf however is {\it not} locally free
(e.g., for $E=TM$, $M=\R^{0|2}(\theta^1,\theta^2)$, the module of even supervector fields
$\langle\theta^\alpha\p_{\theta^\beta}\rangle$ is not free over $(\cO_M)_{\bar 0}=\langle1,\theta^1\theta^2\rangle$),
but it can be enlarged to a locally free sheaf $\cU_o\to\Gamma_E(\cU)=\Gamma_E(\cU)_{\bar 0}\oplus \Gamma_E(\cU)_{\bar 1}$
of (right) $\cO_M$-modules of rank $(p|q)$ as follows.

First we define the parity change vector bundle $\pi:\Pi E\to M$ as the geometric vector bundle with typical fiber $\Pi V$ determined by the
transition morphisms
$\Pi\widetilde\varphi_{ij}:\cU_{ij}\to GL(V)\cong GL(\Pi V)$. Then we let
$\Gamma_E(\cU)_{\bar 1}=\Gamma_{\Pi E}(\cU)_{\bar 0}$ and henceforth
$\Gamma_{E}(\cU)=\Gamma_E(\cU)_{\bar 0}\oplus \Gamma_E(\cU)_{\bar 1}$.
 \begin{definition}
The sheaf $\Gamma_E$ on $M_o$ is called the {\em sheaf of sections}
of $\pi:E\to M$.
 \end{definition}

This leads to an alternative definition of a vector bundle. For simplicity of exposition, we assume that $M_o$ is connected for the remaining part of \S\ref{S2}.

 \begin{definition}[{\it Algebraic approach}]\label{def:algebraic-VB}\hfill
\begin{enumerate}
\item A 
locally free sheaf $\mathcal E$ on $M_o$ of (right)
$\cO_M$-modules of finite rank is called an {\em algebraic vector bundle} over $M$.

\item A morphism $\psi :\cG\to \cF$ of sheaves on $M_o$ of $\cO_M$-modules
consists of a family of morphisms $\psi _{\cU_o}:\cG(U_o)\to \cF(U_o)$
of $\cO(\cU)$-modules for each open subset $\cU_o\subset M_o$, subject to the natural
compatibility conditions with restrictions to $\cV_o\subset\cU_o$.
\end{enumerate}
 \end{definition}

 \begin{proposition}\label{prop:equivalence-vector-bundles}
Assume $M_o$ is connected. Then the category of geometric vector bundles on $M$ with morphisms of vector bundles covering the identity $\one_M:M\to M$
is equivalent to the category of algebraic vector bundles over $M$ with morphisms of sheaves of $\cO_M$-modules.
 \end{proposition}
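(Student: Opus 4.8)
The plan is to construct the two functors and a natural isomorphism between them, following the pattern already used for fiber bundles. First I would describe the functor $\Gamma$ from geometric vector bundles to algebraic vector bundles: it sends $\pi:E\to M$ to the sheaf of sections $\Gamma_E$ introduced above, which is locally free of rank $(p|q)$ by construction (locally $\Gamma_E|_{\cU_o}\cong\cO_M|_{\cU_o}^{\oplus p}\oplus\Pi\cO_M|_{\cU_o}^{\oplus q}$ via a trivialization $\varphi_i$), and it sends a vector bundle morphism $\psi:E_1\to E_2$ covering $\one_M$ to the induced map on sections $\sigma\mapsto\psi\circ\sigma$, which by the local description $\varphi=\alpha\circ(\rho\times\one_V)$ is $\cO_M$-linear. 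Functoriality is immediate.

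Next I would go the other way. Given a locally free sheaf $\mathcal E$ of finite rank over a cover $\{\cU_i\}$ with trivializing isomorphisms $\mathcal E|_{(\cU_i)_o}\cong(\cO_M|_{(\cU_i)_o})^{p|q}$, the change-of-trivialization data are $\cO_M$-linear automorphisms of $(\cO_M|_{(\cU_{ij})_o})^{p|q}$, i.e.\ elements of $\GL(V)[\cU_{ij}]$ by the remark following \eqref{eq:Liesupergroup-functor} (where it is noted that $G[S]\cong\Aut_{\cO(S)}(\cO(S)^{p|q})_{\bar0}$); by Yoneda these are transition morphisms $\widetilde\varphi_{ij}:\cU_{ij}\to\GL(V)$ satisfying the cocycle identity, so Proposition \ref{prop:fiber-bundles-cocycles} (in its vector-bundle form) produces a geometric vector bundle $E(\mathcal E)\to M$. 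On morphisms: a sheaf map $\psi:\mathcal G\to\mathcal F$ of $\cO_M$-modules is, in local trivializations, given by matrices over $\cO(\cU_{ij})$ intertwining the cocycles; these are exactly morphisms $\rho_i:\cU_i\to\Hom(V,W)$ patching to a vector bundle morphism. One then checks the two constructions are mutually inverse up to natural isomorphism: starting from $E$, the cocycle recovered from $\Gamma_E$ is the original one (this is where formula \eqref{eq:change-trivialization} is used to identify the two), and starting from $\mathcal E$, the sections of $E(\mathcal E)$ reproduce $\mathcal E$ because both are obtained by gluing free modules along the same cocycle.

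I expect the main subtlety to lie in two places. First, the bookkeeping with the parity change: the sheaf of sections only sees even sections of $E$, which is why $\Gamma_E$ is assembled from $\Gamma_E{}_{\bar0}$ and $\Gamma_{\Pi E}{}_{\bar0}$, and one must check that under $\mathcal E\mapsto E(\mathcal E)$ the $\bar1$-part of $\mathcal E$ correctly matches $\Gamma_{\Pi E(\mathcal E)}{}_{\bar0}$ — this uses $\GL(V)\cong\GL(\Pi V)$ (Remark \ref{Pi-rep}) at the level of cocycles. Second, one must verify that an arbitrary $\cO_M$-module sheaf morphism between \emph{locally free} sheaves is automatically given in local frames by a matrix over $\cO_M$, hence arises from a morphism to $\Hom(V,W)$; this is the analog of \cite[Def.\ 4.12]{BCC} and is where connectedness of $M_o$ (assumed in the statement) ensures the rank is globally well-defined. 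Both points are routine once set up carefully, and the connectedness hypothesis is exactly what removes any locally-constant ambiguity in the rank, so the equivalence holds on the nose.
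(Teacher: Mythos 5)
Your argument is correct and is essentially the standard one: the paper itself gives no proof beyond citing \cite[Prop.~4.22]{BCC}, and your section-functor/cocycle construction (sections with the $\Pi$-trick in one direction, reconstruction via $\GL(V)$-valued transition morphisms and Proposition \ref{prop:fiber-bundles-cocycles} in the other, with connectedness guaranteeing constant rank) is exactly the argument that reference carries out. So you have supplied the details the paper delegates, rather than a genuinely different route.
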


 \begin{proof}
This is \cite[Prop. 4.22]{BCC}, to which we refer the reader.
%
%
%
%
\end{proof}

Recall that a {\em coherent sheaf} on $M=(M_o,\cO_M)$ is
a sheaf $\cF$ on $M_o$ of $\cO_M$-modules that has
a {\it finite local presentation}: every point $x\in M_o$ has an open neighborhood $\cU_o$ in which there is an exact sequence
$\cO_M^r|_{\cU_o}\to\cO_M^s|_{\cU_o}\to\cF|_{\cU_o}\to0$ for some $r,s\in\N$.
The algebraic vector bundles are therefore the locally free coherent sheaves.

Let $\varphi:M\to N$ be a morphism and $\cF$ a sheaf on $M_o$ of $\cO_M$-modules. Then the {\em direct image sheaf} $\varphi_*\cF$
is the sheaf of $\cO_N$-modules over $N_o$ given by the law
 $
\varphi_*\cF:N_o\supset\cU_o\mapsto\cF(\varphi_o^{-1}(\cU_o))
 $
and the homomorphism $\varphi^*:\cO_N\to(\varphi_o)_*\cO_M$.
The kernel, cokernel and direct image of a morphism of coherent sheaves
are coherent sheaves. The inverse image sheaf $\varphi_o^{-1}\mathcal G$ of a sheaf $\mathcal G$
on $N_o$ exhibits some relatively subtle features and it is easier to define
directly in terms of stalks: given $x\in M_o$, one has
$\bigl(\varphi_o^{-1}\cG\bigr)_x\cong\cG_{\varphi_o(x)}$.
If $\cG$ is a sheaf of $\cO_N$-modules, then $\varphi_o^{-1}\cG$ is only
a sheaf of $\varphi_o^{-1}\cO_N$-modules.
In this situation, the {\em inverse image sheaf} is defined as the sheaf
of $\cO_M$-modules by the formula
 $
\varphi^*\cG=\varphi_o^{-1}\cG\otimes_{\varphi_o^{-1}\cO_N}\cO_M
 $,
where the left action of $\varphi_o^{-1}\cO_N$ on $\cO_M$ is defined
by the map $\varphi_o^{-1}\cO_N\to\varphi_o^{-1}(\varphi_o)_*\cO_M\twoheadrightarrow\cO_M$ induced by $\varphi^*:\cO_N\to(\varphi_o)_*\cO_M$. If $\mathcal G$ is locally free, then the sheaf $ \varphi^{*}\mathcal G$
is locally free as well.

There is a natural adjunction correspondence between morphisms of sheaves of modules:
 $
\Hom_{\cO_M}(\varphi^*\cG,\cF)_{\bar0}\cong
\Hom_{\cO_N}(\cG,\varphi_*\cF)_{\bar0}
 $.

 \begin{definition}
A morphism $\cE_1\to\cE_2$ of locally free coherent sheaves $\cE_i$
of $\cO_{M_i}$-modules, $i=1,2$, is a pair
$(\psi,\psi_\flat)$, where $\psi_\flat:M_1\to M_2$ is a morphism of supermanifolds and $\psi:\cE_1\to\psi_\flat^{*}\cE_2$ is a morphism  of sheaves of $\cO_{M_1}$-modules.
 \end{definition}

Now Proposition \ref{prop:equivalence-vector-bundles} extends to morphisms
of vector bundles over different bases:
 \begin{theorem}\label{thm:equivalence-VB}
The category of the geometric vector bundles with morphisms of vector bundles
is equivalent to the category of algebraic vector bundles with morphisms of locally free coherent sheaves, provided the reduced manifolds of the bases of the bundles are connected.
 \end{theorem}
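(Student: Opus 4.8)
The plan is to combine the base-identity case, which is already established in Proposition \ref{prop:equivalence-vector-bundles} (equivalently \cite[Prop. 4.22]{BCC}), with the pull-back construction for cocycles from Corollary \ref{cor:fiber-bundles-pull-back}, thereby upgrading the functor to handle morphisms covering an arbitrary $\psi_\flat:M_1\to M_2$. First I would fix the functor on objects: a geometric vector bundle $\pi:E\to M$ is sent to its sheaf of sections $\Gamma_E$, which is a locally free coherent sheaf of $\cO_M$-modules; this is literally Definition \ref{def:algebraic-VB} and needs no new argument. The content is entirely in the morphisms.

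Next I would define the functor on a morphism of geometric vector bundles $(\psi,\psi_\flat):E_1\to E_2$ with $\psi_\flat:M_1\to M_2$. The key observation is that such a $\psi$ factors canonically: the outer square in the pull-back diagram after Corollary \ref{cor:fiber-bundles-pull-back} shows that $\psi$ is the composition of $\psi_\flat^\sharp:\psi_\flat^*E_2\to E_2$ with a vector bundle morphism $\tilde\psi:E_1\to\psi_\flat^*E_2$ \emph{covering the identity} $\one_{M_1}$. Concretely, using the trivializations of $E_2$ and the induced cocycle $\varphi_{ij}\circ(\psi_\flat\times\one_V)$ defining $\psi_\flat^*E_2$ from Corollary \ref{cor:fiber-bundles-pull-back}, the local expression $\varphi=\alpha\circ(\rho\times\one_V)$ for a morphism of geometric vector bundles (see the paragraph preceding Proposition \ref{prop:equivalence-vector-bundles}) shows $\tilde\psi$ is glued from its local data. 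Then I would apply Proposition \ref{prop:equivalence-vector-bundles}, which is an equivalence over the fixed base $M_1$ (here I use that the reduced base is connected), to get a morphism of $\cO_{M_1}$-modules $\Gamma_{E_1}\to\Gamma_{\psi_\flat^*E_2}$. Finally I would identify $\Gamma_{\psi_\flat^*E_2}\cong\psi_\flat^*\Gamma_{E_2}$ as $\cO_{M_1}$-modules — the sheaf of sections of a pull-back bundle is the inverse-image sheaf of the sheaf of sections — which follows by comparing the two constructions on stalks and using $(\varphi_o^{-1}\cG)_x\cong\cG_{\varphi_{\flat,o}(x)}$ together with the tensoring formula $\varphi_\flat^*\cG=\varphi_{\flat,o}^{-1}\cG\otimes_{\varphi_{\flat,o}^{-1}\cO_{M_2}}\cO_{M_1}$ recalled above. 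Composing, $(\psi,\psi_\flat)\mapsto(\Gamma_{E_1}\to\psi_\flat^*\Gamma_{E_2},\psi_\flat)$, which is exactly a morphism of locally free coherent sheaves in the sense of the Definition preceding this theorem.

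It remains to check that this assignment is functorial (compatible with composition and identities) and that it is full and faithful, i.e., an equivalence. Faithfulness and fullness reduce, via the canonical factorization through the pull-back, to the base-identity case already handled by Proposition \ref{prop:equivalence-vector-bundles}, once one knows that every morphism $\cE_1\to\psi_\flat^*\cE_2$ of coherent sheaves is itself realized by a bundle morphism $E_1\to\psi_\flat^*E_2$ over $\one_{M_1}$ — again Proposition \ref{prop:equivalence-vector-bundles}. Essential surjectivity on objects is immediate since $\Gamma$ is already known to be essentially surjective over each fixed base. The one genuinely delicate point — and the step I expect to be the main obstacle — is the clean identification $\Gamma_{\psi_\flat^*E}\cong\psi_\flat^*\Gamma_E$ and its naturality in $\psi_\flat$: the inverse-image sheaf $\varphi_o^{-1}$ is defined through stalks and the subsequent tensor product over $\varphi_{\flat,o}^{-1}\cO_{M_2}$ has, as the text warns, ``relatively subtle features,'' so one must verify that the parity-change enlargement $\Gamma_E=\Gamma_E{}_{\bar 0}\oplus\Gamma_E{}_{\bar 1}$ commutes with this tensoring and that the cocycle $\varphi_{ij}\circ(\psi_\flat\times\one_V)$ produces precisely the transition data of $\psi_\flat^*\Gamma_E$. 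I would dispatch this by working locally over a cover $\{\cV_i=\psi_\flat^{-1}(\cU_i)\}$ trivializing both sides and matching the two module structures chart by chart, the cocycle conditions ensuring the local identifications glue. With that in hand the equivalence follows formally, and the proof can simply cite \cite[Prop. 4.22]{BCC} for the base-fixed core, as in the proof of Proposition \ref{prop:equivalence-vector-bundles}.
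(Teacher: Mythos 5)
Your proposal is correct and follows essentially the same route as the paper: factor an arbitrary morphism through the pull-back bundle $\psi_\flat^*E_2$, identify the pull-back of geometric bundles with the inverse image of locally free coherent sheaves (your $\Gamma_{\psi_\flat^*E_2}\cong\psi_\flat^*\Gamma_{E_2}$ step, which the paper states as observation (i)), and then invoke Proposition \ref{prop:equivalence-vector-bundles} for the base-identity case. The only difference is that you spell out the local/cocycle verification of that identification, which the paper asserts without detail.
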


 \begin{proof}
Only the part of the proof on morphisms has to be modified and this
is based on the following observations.

\smallskip

(i)
The pullback of geometric vector bundles correspond to the inverse image of locally free coherent sheaves.

\smallskip

(ii)
A morphism of geometric vector bundles from $E_1$ to $E_2$ always factorizes through the pull-back bundle as
 $$\xymatrix{
E_1 \ar[r]^{\psi}\ar[d]^{\pi_1} & \psi_\flat^*E_2\ar[d]^{p_2}\ar[r]^{(\psi_\flat)^\sharp}& E_2\ar[d]^{\pi_2}\\
M_1 \ar[r]^{\one_{M_1}} & M_1\ar[r]^{\psi_\flat}& M_2
 }$$
Therefore it can be viewed as a morphism of supermanifolds
$\psi_\flat:M_1\to M_2$ paired with a morphism of geometric vector bundles
$\psi:E_1\to \psi_\flat^* E_2$ covering the identity $\one_{M_1}$. The claim then follows from the claim on morphisms of Proposition \ref{prop:equivalence-vector-bundles}.
 \end{proof}

Definitions \ref{def:geometric-VB} and \ref{def:algebraic-VB} can therefore be
used interchangeably, but care must be given to distinguish between morphisms
of vector bundles and sheaves. For instance the kernel and the direct image of
a morphism of vector bundles can only be interpreted as coherent sheaves in
general. Henceforth we will use the nomenclature vector bundle without specification.

\subsection{Principal bundles on supermanifolds}
Let $G=(G_o,\cO_G)$ be  a Lie supergroup.

 \begin{definition}[{\it Geometric approach}]\label{def:geometric-PB}
A {\em geometric principal bundle} with structure group $G$ is a fiber bundle $\pi:P\to M$ with
typical fiber $G$ such that the
transition morphisms take values in the Lie supergroup
 acting on itself by left multiplication: $\widetilde\varphi_{ij}:\cU_{ij}\to G\subset \Aut(G)$.
 \end{definition}

The right action of $G$ on a local trivialization $\cU_{i}\times G$ is given by $\one_{\cU_i}\times m:\cU_{i}\times G\times G\to\cU_{i}\times G$, and
it extends to a well-defined action morphism of supermanifolds
$\alpha:P\times G\to P$ satisfying $\pi\circ\alpha=\pi\circ \pr_P$.
Moreover the $\varphi_{ij}$ are $G$-equivariant, i.e., we have the commutative diagram
 $$\xymatrix{
\cU_{ij}\times G\times G\;\; \ar[r]^{\;\;\;\;\;\;\varphi_{ij}\times\one_G}\ar[d]^{\one_{\cU_{ij}}\times m}
& \;\;G\times G \ar[d]^{m}\\
\cU_{ij}\times G \ar[r]^{\varphi_{ij}} & G
 }$$
and by the Yoneda lemma this is equivalent to the identity
$\varphi_{ij}=m\circ\big(\widetilde\varphi_{ij}\times\one_G\big)$.
\begin{definition}
The {\em fundamental vector field} $\zeta_X\in\mathfrak{X}(P)$ associated to $X\in \fg$ is the supervector field  defined by
$(\1_{P}\otimes X)\circ \alpha^*=\alpha^*\circ \zeta_X$. Equivalently, given any local trivialization $\pi^{-1}(U)\cong \cU\times G$, it is the left-invariant supervector field on $G$ corresponding to $X$.
\end{definition}

Let $\pi_1:P_1\to M_1$, $\pi_2:P_2\to M_2$ be geometric principal bundles
with structure groups $G_1$ and $G_2$, respectively, and let
$\gamma:G_1\to G_2$ be a homomorphism of Lie supergroups.
We define a {\em $\gamma$-morphism of principal bundles} to be
a fiber bundle morphism $\psi:P_1\to P_2$ that is $\gamma$-equivariant.
More concretely, this is expressed as the commutative diagram
 $$\xymatrix{
P_1\times G_1 \ar[r]^{\psi\times\gamma}\ar[d]^{\alpha_1} & P_2\times G_2 \ar[d]^{\alpha_2}\\
P_1 \ar[r]^{\psi} & P_2
 }$$
Equivalently a local component $\varphi:\cU_1\times G_1\to G_2$
of a $\gamma$-morphism has the following form
$\varphi=m_2\circ\big(g\times\gamma\big):\cU_1\times G_1\to G_2$,
for some morphism $g:\cU_1\to G_2$.

If $G=G_1=G_2$ and $\gamma=\one_{G}$, we simply say that $\psi$ is a morphism of $G$-principal bundles.

 \begin{example}\label{ExRed}
Let $\gamma:G_1\hookrightarrow G_2$ be a subsupergroup.
A $\gamma$-morphism $\psi:P_1\to P_2$ of principal bundles over the same base
$M$ with $\psi_\flat=\one_M$ is called a {\em reduction} of the
structure group.
 \end{example}

Proposition \ref{prop:fiber-bundles-cocycles} and Corollary
\ref{cor:fiber-bundles-pull-back} specialize straightforwardly
for principal bundles.

\medskip

Now we consider the algebraic approach. Restricting the functor of points
of $G$ to superdomains of $M$, we get a sheaf of classical groups  $
\cG_M:\cU_o\mapsto\cG_M(\cU_o):=G[\cU]
 $ over $M_o$.

 \begin{definition}
A {\em sheaf of right $\cG_M$-sets} is a sheaf of sets $\mathcal P$ on $M_o$
on which the sheaf $\cG_M$ acts on the right:
$\forall$ open subset $\cU_o\subset M_o$ we have an action
$\alpha_{\cU_o}^\cP:\cP(\cU_o)\times\cG_M(\cU_o)\to\cP(\cU_o)$
and these actions are compatible with restrictions to open subsets $\cV_o\subset\cU_o$.
 \end{definition}

Let $\cP$ be a sheaf of right $\cG_M$-sets and $\cQ$ a sheaf of right
$\cH_M$-sets on $M_o$, and let $\gamma:G\to H$ be a homomorphism of Lie supergroups. A {\em $\gamma$-morphism} $\psi:\cP\to\cQ$
associates morphisms of sets $\psi_{\cU_o}:\cP(\cU_o)\to\cQ(\cU_o)$
compatible with restrictions
to open subsets $\cV_o\subset\cU_o$ and that are $\gamma$-equivariant:
 $$\xymatrix{
\cP(\cU_o)\times \cG_M(\cU_o)\;\; \ar[r]^{\small{\psi_{\cU_o}\times\gamma[\cU]}}
\ar[d]^{\alpha_{\cU_o}^\cP} &
\;\;\;\cQ(\cU_o)\times\cH_M(\cU_o) \ar[d]^{\alpha_{\cU_o}^\cQ}\\
\cP(\cU_o) \ar[r]^{\psi_{\cU_o}} & \cQ(\cU_o)
 }$$

 \begin{definition}[{\it Algebraic approach}]\hskip-2pt%
An {\em algebraic principal bundle} over $M$ with structure group $G$
is a sheaf $\mathcal P$ of right $\cG_M$-sets that is {\it locally
simply transitive} in the following sense: $\forall x\in M_o$ $\exists$
open neighborhood $\cU_o$ for which
$\cG_M(\cU_o)$ acts simply transitively on $\mathcal P(\cU_o)$.
\end{definition}


Let $\varphi:M\to N$ be a morphism of supermanifolds and
$\mathcal P$ a sheaf on $N_o$ of right $\cH_N$-sets.
Similar to the case of algebraic vector bundles we note that
the inverse image sheaf $\varphi_o^{-1}\cP$ is only a sheaf
of $\varphi_o^{-1}\cH_N$-sets.
In this situation, the {\em inverse image sheaf} is defined as the sheaf
of $\cH_M$-sets by the formula
 $
\varphi^*\cP=\varphi_o^{-1}\cP\times_{\varphi_o^{-1}\cH_N}\cH_M
 $,
where the left action of $\varphi_o^{-1}\cH_N$ on $\cH_M$ is defined via $\varphi_o^{-1}\cH_N\to\varphi_o^{-1}(\varphi_o)_*\cH_M\twoheadrightarrow\cH_M$. 
If $\mathcal P$ is locally simply transitive, then
$\varphi^*\cP$ is locally simply transitive as well.

\begin{definition}
A {\em $\gamma$-morphism of algebraic principal bundle} $\cP_1\to \cP_2$ is a
pair $(\psi,\psi_\flat)$ where $\psi_\flat:M_1\to M_2$ is a morphism of
supermanifolds and $\psi:\cP_1\to\psi_\flat^*\cP_2$ a $\gamma$-morphism.
\end{definition}

To link geometric principal bundles with algebraic principal bundles, it is sufficient to consider even sections as defined in
Definition \ref{def:section}.
For any morphism $g:\cU\to G$ and any even section $\sigma\in\Gamma_P(\cU)_{\bar0}$
we define another section by $\sigma\cdot g=\alpha\circ\big(\sigma, g\big)$,
so the assignment $\cU_o\mapsto\Gamma_P(\cU)_{\bar 0}$ is sheaf of
right $\cG_M$-sets.
By the Yoneda lemma, one easily sees the following:
 \begin{lemma}
For any $\sigma,\tau\in \Gamma_P(\cU)_{\bar 0}$ there exists a unique morphism $g:\cU\to G$ such that $\tau=\sigma\cdot g$.
 \end{lemma}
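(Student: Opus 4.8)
The plan is to use the Yoneda lemma to reduce the claim to a statement about $S$-points, where it becomes essentially the classical fact that a torsor is simply transitive. First I would fix an open cover $\{\cU_i\}$ trivialising $P$, so that over each $\cU_i$ we have $\pi^{-1}(\cU_i)\cong\cU_i\times G$ with the right $G$-action being $\one_{\cU_i}\times m$. Over such a trivialising superdomain, an even section $\sigma\in\Gamma_P(\cU_i)_{\bar0}$ is by Definition \ref{def:section} the same as a morphism $s:\cU_i\to G$, and the right action $\sigma\cdot g$ corresponds under the trivialisation to $m\circ(s,g):\cU_i\to G$. Hence over $\cU_i$, given two sections corresponding to $s,t:\cU_i\to G$, the unique candidate is $g=m\circ(i\circ s,\,t):\cU_i\to G$ (i.e. ``$g=s^{-1}t$''), and uniqueness is immediate because $m\circ(s,-)$ has the inverse $m\circ(i\circ s,-)$. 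This settles existence and uniqueness locally.

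Next I would globalise. Suppose $g_i:\cU_i\to G$ and $g_j:\cU_j\to G$ are the locally-constructed morphisms with $\tau|_{\cU_i}=\sigma|_{\cU_i}\cdot g_i$ and similarly over $\cU_j$. On the overlap $\cU_{ij}$ both $g_i|_{\cU_{ij}}$ and $g_j|_{\cU_{ij}}$ satisfy $\sigma\cdot g=\tau$, so by the local uniqueness just established they agree on $\cU_{ij}$. Therefore the $g_i$ glue to a single morphism $g:\cU\to G$; here one uses that morphisms of supermanifolds form a sheaf, i.e. that a compatible family of morphisms on an open cover of $\cU_o$ patches to a morphism, which is standard for ringed spaces. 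By construction $\sigma\cdot g=\tau$, and uniqueness of the global $g$ again follows from uniqueness on each $\cU_i$.

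The cleanest way to phrase the local step, and the one I would actually write, is via the functor of points: for every supermanifold $S$, the set $\Gamma_P(\cU)_{\bar0}[S]$ — or more concretely the evaluation of sections on $S$-points — is a torsor under the group $G[\cU\times S]$ in the trivialised picture, simply because $G[\cU\times S]$ acts on itself simply transitively by right multiplication. Thus there is a unique $S$-point of $\underline{\Hom}(\cU,G)$ carrying $\sigma$ to $\tau$, and these choices are natural in $S$; the Yoneda lemma then produces the unique morphism $g:\cU\to G$. The only mild subtlety — and the step I expect to need the most care — is checking that the local morphisms built in different trivialisations (not merely on overlaps of one cover, but if one re-trivialises) still agree; this is handled by the $G$-equivariance of the transition morphisms $\varphi_{ij}=m\circ(\widetilde\varphi_{ij}\times\one_G)$, which is precisely the compatibility recorded in the definition of a geometric principal bundle, so that ``$s^{-1}t$'' is independent of the chosen trivialisation. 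Everything else is a routine unwinding of definitions.
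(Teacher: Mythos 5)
Your argument is correct and is essentially the elaboration the paper intends: the paper's proof is just the remark that the claim follows "by the Yoneda lemma", i.e.\ reduce to the trivialized picture where the statement is simple transitivity of $G[\cU]$ acting on itself by right multiplication. Your extra care with gluing the locally constructed $g_i$ over a trivializing cover (using local uniqueness on overlaps and the sheaf property of morphisms) is a sound way to handle non-trivializing $\cU$, so nothing is missing.
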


In other words $\cG_M(\cU_o)=G[\cU]$ acts simply transitively on $\Gamma_P(\cU)_{\bar 0}$ if it is nonempty. Since this condition is always satisfied for small superdomains $\cU\subset M$ we conclude the following.
 \begin{proposition}
The sheaf of even sections of a geometric principal bundle $\pi:P\to M$ is an algebraic principal bundle.
 \end{proposition}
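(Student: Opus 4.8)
The plan is to verify the two defining conditions of an algebraic principal bundle for the sheaf $\cU_o\mapsto\Gamma_P(\cU)_{\bar 0}$: first that it is a sheaf of right $\cG_M$-sets, and second that it is locally simply transitive. The first point was essentially already observed just before the statement: for $g\colon\cU\to G$ and $\sigma\in\Gamma_P(\cU)_{\bar0}$ the section $\sigma\cdot g=\alpha\circ(\sigma,g)$ is again an even section because $\pi\circ\alpha=\pi\circ\pr_P$ forces $\pi\circ(\sigma\cdot g)=\pi\circ\sigma=\one_\cU$, and compatibility with restrictions to $\cV_o\subset\cU_o$ is automatic since all the structure morphisms $\alpha$, $\sigma$, $g$ restrict. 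The associativity and unit axioms of the action follow from those of $m\colon G\times G\to G$ and the action property $\alpha\circ(\alpha\times\one_G)=\alpha\circ(\one_P\times m)$, which can be checked on $S$-points via Yoneda.

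For local simple transitivity, I would argue as follows. Fix $x\in M_o$ and choose a trivializing superdomain $\cU\subset M$ with $x\in\cU_o$ and $\varphi\colon\pi^{-1}(\cU)\xrightarrow{\sim}\cU\times G$. Shrinking $\cU_o$ if necessary we may also assume it is the domain of a single trivialization, so in particular $\Gamma_P(\cU)_{\bar0}\neq\varnothing$: the composition of $\varphi^{-1}$ with the section $(\one_\cU,\hat e)\colon\cU\to\cU\times G$ (where $\hat e$ picks out the unit of $G$, i.e.\ the constant morphism to $e\in G_o$) is an even section. Once nonemptiness is known, the previous Lemma says that for any two $\sigma,\tau\in\Gamma_P(\cU)_{\bar0}$ there is a unique morphism $g\colon\cU\to G$ with $\tau=\sigma\cdot g$; and by definition of $\cG_M$ such morphisms $g$ are precisely the elements of $\cG_M(\cU_o)=G[\cU]$. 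Hence $\cG_M(\cU_o)$ acts simply transitively on $\Gamma_P(\cU)_{\bar0}$, which is exactly the required condition.

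The main point requiring care -- the only real obstacle -- is the identification, supplied by the preceding Lemma, between morphisms $g\colon\cU\to G$ and $S$-points of $G$ over $\cU$, together with the fact that distinct even sections differ by a \emph{unique} such $g$. This is where the Yoneda lemma does the work: one observes that on a trivialization $\cU\times G$, even sections correspond to morphisms $\cU\to G$ (the second component $s$ in Definition \ref{def:section}), right translation by $g$ corresponds to pointwise multiplication $s\mapsto s\cdot g$ in the group $G[\cU]$, and uniqueness of $g=s^{-1}\cdot t$ is then the uniqueness statement in Yoneda's lemma applied to the natural transformation these data define. Everything else -- sheaf axioms, compatibility with restrictions, the action axioms -- is formal and follows from the corresponding properties of $m$, $\alpha$ and the cocycle already established for $P$.
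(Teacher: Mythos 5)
Your proof is correct and follows essentially the same route as the paper: the right $\cG_M$-action on even sections, the Lemma (via Yoneda) giving simple transitivity whenever $\Gamma_P(\cU)_{\bar 0}$ is nonempty, and local nonemptiness supplied by a trivialization (the paper simply notes this holds for small superdomains, while you spell out the unit section $\varphi^{-1}\circ(\one_\cU,\hat e)$). No gaps.
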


Conversely, given an algebraic principal bundle $\mathcal P$,
there is an open cover $\big\{\cU_i\big\}_{i\in I}$ of $M$ such that
$\cG_M\big((\cU_i)_o\big)$ acts simply transitively on $\mathcal P\big((\cU_i)_o\big)$
for all $i\in I$. In other words, we have an identification
$t_i:\mathcal P\big((\cU_i)_o\big)\to \cG_M\big((\cU_i)_o\big)$ of right $\cG_M\big((\cU_i)_o\big)$-sets for all $i\in I$.
On intersections $\cU_{ij}$ we get isomorphisms $
\varphi_{ij}=t_i\circ t_j^{-1}:\cG_M\big((\cU_{ij})_o\big)\to\cG_M\big((\cU_{ij})_o\big)
 $
of simply transitive right $\cG_M\big((\cU_{ij})_o\big)$-sets.
These are left multiplication by some
$g_{ij}\in\cG_M\big((\cU_{ij})_o\big)=G[\cU_{ij}]$,
whence the morphisms $\widetilde\varphi_{ij}=g_{ij}:\cU_{ij}\to G$
that satisfy the cocycle conditions.

Therefore we obtain a geometric principal bundle $\pi:P\to M$.

 \begin{theorem}\label{thm:equivalence-PB}
The category of geometric principal bundles with $\gamma$-morphisms
is equivalent to the category of algebraic principal bundles with $\gamma$-morphisms, for homomorphisms of supergroups $\gamma$.
 \end{theorem}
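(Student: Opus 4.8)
The plan is to establish the equivalence by exhibiting functors in both directions and checking they are mutually quasi-inverse, reusing as much of the already-developed machinery as possible. The key point is that almost everything for objects has already been proved: the discussion preceding the statement shows how to pass from a geometric principal bundle $\pi:P\to M$ to its sheaf of even sections $\cU_o\mapsto\Gamma_P(\cU)_{\bar0}$ (an algebraic principal bundle by the last Proposition), and conversely how to reconstruct a geometric principal bundle from an algebraic one by choosing a trivializing cover, obtaining transition morphisms $\widetilde\varphi_{ij}=g_{ij}:\cU_{ij}\to G$ satisfying the cocycle conditions, and then invoking Proposition \ref{prop:fiber-bundles-cocycles}. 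So the content left to verify is: (a) these two constructions are inverse to each other on objects up to natural isomorphism, and (b) they extend functorially to $\gamma$-morphisms and are inverse there too.

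First I would treat the object level. Starting from a geometric $P$, trivializing via $(\cU_i,\varphi_i)$ produces the cocycle $\{\varphi_{ij}\}$; the sheaf of even sections over $(\cU_i)_o$ is identified via the trivialization with $\cG_M((\cU_i)_o)=G[\cU_i]$, and the resulting transition morphisms are exactly the original $\widetilde\varphi_{ij}$, so the reconstructed geometric bundle is canonically isomorphic to $P$ (this is the analogue of the round-trip argument in Proposition \ref{prop:fiber-bundles-cocycles}). Conversely, starting from an algebraic principal bundle $\cP$, the reconstructed geometric bundle $P$ has, over each $(\cU_i)_o$, even sections canonically identified with $\cG_M((\cU_i)_o)$, hence with $\cP((\cU_i)_o)$ via $t_i$; one checks these local identifications agree on overlaps (because both are transported by left multiplication by $g_{ij}$), so they glue to a natural isomorphism of sheaves of right $\cG_M$-sets $\Gamma_P \cong \cP$. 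Here I would also note that the functor of even sections is well-defined independently of the chosen cover, so no coherence choices intervene.

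Next I would handle morphisms. Given a $\gamma$-morphism $\psi:P_1\to P_2$ of geometric principal bundles covering $\psi_\flat:M_1\to M_2$, it induces a map on even sections: for $\sigma\in\Gamma_{P_1}(\cU)_{\bar0}$ one uses the factorization through the pull-back $\psi_\flat^*P_2$ (exactly as in the proof of Theorem \ref{thm:equivalence-VB}, and noting pull-back of geometric principal bundles corresponds to the inverse image $\varphi^*\cP$ of sheaves of $\cG$-sets, so that it suffices to treat $\psi_\flat=\one$) to obtain $\psi_*\sigma:=\psi\circ\sigma\in\Gamma_{\psi_\flat^*P_2}(\cU)_{\bar0}$, and $\gamma$-equivariance of $\psi$ translates into $\gamma$-equivariance of the induced map of sheaves of $\cG_M$-sets. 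In the reverse direction, a $\gamma$-morphism of algebraic principal bundles $(\psi,\psi_\flat)$ is, after reducing to $\psi_\flat=\one$ via $\varphi^*\cP$, a $\gamma$-equivariant map of sheaves of locally simply transitive $\cG$-sets; on a common trivializing cover it is given by multiplication data compatible with the cocycles, and by the Yoneda lemma this assembles to a $\gamma$-morphism of the associated geometric bundles. Finally I would record that these assignments are functorial (respect identities and composition) and mutually inverse, completing the equivalence.

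The main obstacle I expect is bookkeeping with the structure-group homomorphism $\gamma$ and the base morphism $\psi_\flat$ simultaneously: one must make sure that the reduction ``$\psi_\flat=\one$'' step (via the pull-back / inverse image sheaf) is compatible with $\gamma$-equivariance, i.e.\ that pull-back of a $\gamma$-morphism is again a $\gamma$-morphism and that $\varphi^*$ on the algebraic side matches pull-back on the geometric side as $\gamma$-equivariant data — this is the super-analogue of observations (i)--(ii) in the proof of Theorem \ref{thm:equivalence-VB}, but now with the noncommutative structure group acting, so the signs are absent but the left/right action conventions in $\varphi^*\cP=\varphi_o^{-1}\cP\times_{\varphi_o^{-1}\cH_N}\cH_M$ must be tracked carefully. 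Everything else is a direct transcription of the vector bundle case (Proposition \ref{prop:equivalence-vector-bundles} and Theorem \ref{thm:equivalence-VB}) with $\GL(V)$ replaced by $G$ and modules replaced by $\cG_M$-sets, so I would keep the write-up brief and refer back to those proofs for the parallel steps.
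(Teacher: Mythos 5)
Your proposal is correct and follows essentially the same route as the paper: the object-level correspondence is taken from the preceding constructions (sheaf of even sections in one direction, cocycle reconstruction in the other), and morphisms are handled by factoring a $\gamma$-morphism through the pull-back bundle, matching geometric pull-back with the inverse image sheaf, and reducing to the case $\psi_\flat=\one$, exactly as in the proof of Theorem \ref{thm:equivalence-VB}. The extra detail you supply on the round-trip natural isomorphisms is a fleshing-out of what the paper summarizes as ``we already proved the correspondence between objects,'' not a different argument.
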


 \begin{proof}
We already proved the correspondence between objects.
The correspondence between morphisms is similar to that of the proof
of Theorem \ref{thm:equivalence-VB}: 

\smallskip

(i)
The pullback of a geometric principal bundle correspond to the inverse image sheaf.

\smallskip

(ii)
A $\gamma$-morphism of geometric principal bundles from $P_1$ to $P_2$ always factorizes through the pull-back bundle as
 $$\xymatrix{
P_1 \ar[r]^{\psi}\ar[d]^{\pi_1} & \psi_\flat^*P_2\ar[d]^{p_2}\ar[r]^{(\psi_\flat)^\sharp}& P_2\ar[d]^{\pi_2}\\
M_1 \ar[r]^{\one_{M_1}} & M_1\ar[r]^{\psi_\flat}& M_2
 }$$
where $(\psi_\flat)^\sharp: \psi_\flat^*P_2\to P_2$ is a morphism of $G_2$-principal bundles.
Therefore it can be viewed as a morphism of supermanifolds $\psi_\flat:M_1\to M_2$ paired with a $\gamma$-morphism of geometric principal bundles $\psi:P_1\to \psi_\flat^* P_2$ covering the identity $\one_{M_1}:M_1\to M_1$. One then concludes as in Theorem \ref{thm:equivalence-VB}.
 \end{proof}

\subsection{Subbundles}\label{subbundles}

Let $F'\subset F$ be a subsupermanifold.

 \begin{definition}\hskip-2pt%
Let $\pi:E\to M$ be a fiber bundle with typical fiber $F$.
A {\em subbundle} with typical fiber $F'$ is a subsupermanifold $E'\subset E$
such that for some open cover $\{\cU_i:i\in I\}$ of $M$:
\begin{enumerate}
	\item the
transition morphisms $\widetilde\varphi_{ij}:\cU_{ij}\to\Aut(F)$ of $E$ factor
through $\Aut(F,F')$,
	\item $\pi:E'\to M$ is itself a fiber bundle
with typical fiber $F'$, whose transition morphisms are obtained by
postcomposing with the restriction map $\Aut(F,F')\to\Aut(F')$.
\end{enumerate}
\end{definition}

This has a clear specification for vector and principal bundles:
 \begin{itemize}
\item For vector subbundles the typical fibers $V'\subset V$ are supervector spaces
   and we consider $\GL(V)\supset\GL(V,V')\to\GL(V')$ instead of general automorphisms;
\item For principal subbundles we have instead a Lie subsupergroup $G'\subset G$.
 \end{itemize}
These are geometric subbundles. There are also algebraic subbundles,
of which we specify only the vector version, leaving algebraic
principal subbundles to the reader.

 \begin{definition}
An {\em algebraic vector subbundle} is subsheaf $\cE'$ of $\cE$ that
is locally a direct factor, i.e.\ $\forall x\in M_o$ $\exists$ a
neighborhood $\cU_o$ and a subsheaf $\cE''\subset\cE|_{\cU_o}$
such that $\cE_y=\cE'_y\oplus\cE''_y$ $\forall y\in\cU_o$.
\end{definition}

An algebraic vector subbundle is itself an algebraic vector bundle.
Indeed (cf.\ \cite[\S 4.7]{V}) by Nakayama's lemma $\forall x\in\cU_o$
the stalk $\cE'_x$ is a free module over the local ring $\cO_{M,x}$ of dimension
$\dim_{\cO_{M,x}}(\cE'_x)=\dim_\R(\cE'_x/\mu_x\cE'_x)$,
where $\mu_x\subset\cO_{M,x}$ is the maximal ideal. Since $\cE'$ is locally a direct factor, we have:
 \begin{itemize}
\item[$(i)$] the fiber $E'_x\cong\cE'_x/\mu_x\cE'_x$ embeds into the fiber
    $E_x\cong\cE_x/\mu_x\cE_x$;
\item[$(ii)$] the rank $\dim_\R E'_x=\dim_{\cO_{M,x}}(\cE'_x)$ is locally constant, thus constant.
 \end{itemize}
Consequently $\cE'$ is a a locally free coherent sheaf, proving our claim.

\smallskip

Note that a vector subbundle is not the same as an injective morphism of
vector bundles. Indeed, the latter may not be locally a direct factor.
Similarly to the previous subsections one can prove the equivalence of algebraic
and geometric definitions of subbundles.

We can also define associated bundles. Let $\pi:P\to M$ be a geometric principal bundle determined by transition morphisms $g_{ij}:\cU_{ij}\to G$. For any representation
$\rho:G\to \GL(V)$ the associated geometric vector bundle $E=P\times_{\rho} V$ is defined through the transition morphisms $\rho_{ij}=\rho\circ g_{ij}:\cU_{ij}\to \GL(V)$. Its sections are in bijective correspondence with $G$-equivariant morphisms
$f:P\to V\oplus \Pi V$.

\section{Prolongation Structures}\label{S3}

We begin by revising prolongations of $G$-structures on supermanifolds
using our setup and then will pass to the filtered version.

\subsection{Frame bundles}
\label{sec:frame-bundles}
Let $V_M= M\times V\to M$ be the trivial vector bundle over $M$ with the fiber
$V=\R^{m|n}$, where $\dim (M)=(m|n)$, and $\mathcal V_M$ is the associated locally free sheaf on $M_o$.
The frame bundle $\pi:Fr_M\to M$ is defined via
the geometric-algebraic correspondence as the sheaf $\mathcal{F}r_M:M_o\supset\mathcal{U}_o\mapsto\mathcal{F}r_M(\mathcal{U}_o)$
of $\mathcal{A}_M$-linear isomorphisms from $\mathcal{V}_M$ to $\mathcal{T}M$:
\begin{equation}
\label{eq:frame-sheaf}
\mathcal{F}r_M(\mathcal{U}_o)=\big\{\text{$\cA|_{\cU_o}$-linear isomorphism}\;F:\mathcal{V}_M|_{\cU_o}\to\mathcal{T}M|_{\cU_o}\big\}\,.
\end{equation}
We prefer to think of $\mathcal{V}_M$ and $\mathcal{T}M$ as locally free sheaves of {\it left} $\cA_M$-modules and define the linearity of $F$ accordingly; such a linear isomorphism still yields, via the sign rule, a local isomorphism $V_M\to TM$ of vector bundles covering the identity of $M$.

Setting $\mathcal{T}_M^{m|n}=\mathcal{T}M^{\oplus m}\bigoplus
\Pi\mathcal{T}M^{\oplus n}$,
we have an embedding of sheaves $\mathcal{F}r_M\hookrightarrow(\mathcal{T}_M^{m|n})_{\bar 0}$ whose image, by the
Nakayama lemma, consists of $(m|n)$-tuples of even and odd supervector fields such that their reductions to $M_o$ give a basis of $T_xM=(T_xM)_{\bar 0}\oplus (T_xM)_{\bar 1}$ at each point $x\in M_o$.
In other words, by passing to the reduced  bundle $Fr_M|_{M_o}\to M_o$ we get
the usual frame bundle for classical $\Z_2$-graded vector bundles over $M_o$.
More concretely, $F\in\mathcal{F}r_M(\cU_o)$ is
a frame $(X_1,\dots,X_m\,|\,Y_1,\dots,Y_n)$ with $X_i\in\mathfrak{X}(\cU)_{\bar0}$ and $Y_j\in\mathfrak{X}(\cU)_{\bar1}$ for $1\leq i\leq m$,
$1\leq j\leq n$, considered as an isomorphism
\begin{equation}
\label{eq:sign-I}
F:\mathcal{V}_M|_{\mathcal{U}_0}\to\mathcal{T}M|_{\mathcal{U}_0}\;,\qquad
(a_i|b_j)\mapsto\sum_{i=1}^ma_iX_i+\sum_{j=1}^nb_jY_j\,.
\end{equation}
The sheaf of groups $\mathcal{GL}_M:\cU_o\mapsto\GL(V)[\cU]$ acts naturally
on the right on the set of frame fields via
\begin{equation}
\label{eq:sign-II}
\big(X_1,\ldots,X_m\,|\,Y_1,\dots,Y_n\big)\cdot g=\Bigl(\sum_{i=1}^m a_1^i X_i
-\sum_{\alpha=1}^n c_1^\alpha Y_\alpha,\,\ldots\,\big|\,\ldots\,,
\sum_{i=1}^m b_n^i X_i+ \sum_{\alpha=1}^n d_n^\alpha Y_\alpha\Bigr).
\end{equation}
where $g\in \GL(V)[\cU]$ is parametrized as in \S\ref{eq:Liesupergroup-functor}.
This action is locally simply transitive,
hence $\mathcal{F}r_M$ is an algebraic principal bundle over
$M$ with structure group $GL(V)$. (The minus sign in \eqref{eq:sign-II} follows from the sign rule, so that \eqref{eq:sign-II} is indeed an action.)

Higher order frame bundles $Fr^k_M\to M$ ($k\geq 1$, with $Fr^1_M=Fr_M$)
can be introduced via the description of jet superbundles of \cite{HR-MM}, which does not rely neither on (topological) points or the functor of points.
Let $J^k(\mathbb R^{m|n}, M)$ be the vector bundle of jets from $\mathbb R^{m|n}$ to $M$,
which is defined as a supermanifold of homomorphisms of appropriate algebras in \cite[\S 6]{HR-MM}. This is a geometric vector bundle over
the product $\mathbb R^{m|n}\times M$. The open subsupermanifold $J^k_{inv}(\mathbb R^{m|n}, M)$ of $J^k(\mathbb R^{m|n}, M)$ of invertible jets is easily defined via local coordinates and its pull-back to $M$ by the natural injection $M\cong \{0\}\times M\hookrightarrow \mathbb R^{m|n}\times M$ is the higher order frame bundle $Fr^k_M\to M$.
The corresponding sheaf is obtained via the geometric-algebraic correspondence.
Below we adapt a different approach (which is though similar in spirit)
to introduce higher frame bundles in the non-holonomic situation.


\subsection{$G$-structures on supermanifolds}
\label{subsec:Gstru}
In geometric language, a $G$-structure for $G\subset\mathop{GL}(V)$ is a reduction of the
frame bundle as in Example \ref{ExRed}; following \S\ref{subbundles}, this
corresponds to a subbundle $F_G\subset Fr_M$.
In algebraic language this is a subsheaf ${\cF}_G\subset\mathcal{F}r_M$ on which the subsheaf $\cG_M\subset\mathcal{GL}_M$
acts locally simply transitively from the right.

The soldering form $\vartheta\in\Omega^1(F_G,V)$ is given by
 $$
\vartheta(\xi)=F^{-1}(\pi_*\xi)\;,
 $$
where $\xi\in\mathfrak X(F_G)$, $\pi=(\pi_o,\pi^*): F_G\to M$ is the natural projection and $F$ a local field of frames. More precisely, the R.H.S.\ is a short-hand for the operation detailed in the following.
\begin{lemma}
\label{lemma:soldering-G-structure}
The soldering form is a well-defined even $G$-equivariant horizontal form on $F_G$.
\end{lemma}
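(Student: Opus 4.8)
The statement to establish is Lemma~\ref{lemma:soldering-G-structure}: the soldering form $\vartheta$ is a well-defined even $G$-equivariant horizontal form on $F_G$. My plan is to address each of the three qualifiers---well-definedness, $G$-equivariance, horizontality---working on the level of the functor of points or, equivalently, in a local trivialization $\pi^{-1}(\cU)\cong\cU\times G$ where everything can be written out concretely. I would first give precise meaning to the right-hand side $F^{-1}(\pi_*\xi)$. The point is that over a superdomain $\cU_o$ on which a frame field $F\in\mathcal Fr_M(\cU_o)$ exists, a point $p$ of $F_G$ "is" (via the algebraic--geometric correspondence and the simple transitivity of the $\cG_M$-action on $\cF_G$) a choice of frame at the corresponding point of $M$; so $F$ tautologically gives an $\cA_M$-linear isomorphism $T M\to \mathcal V_M$ pulled back to $F_G$, and composing with $\pi_*=d\pi$ produces a $V$-valued $1$-form. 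I would phrase this intrinsically by saying $\vartheta$ is the composite of $d\pi\colon \mathcal T F_G\to \pi^*\mathcal T M$ with the tautological bundle isomorphism $\pi^*\mathcal T M\xrightarrow{\ \sim\ }\pi^*\mathcal V_M\cong F_G\times V$ furnished by the universal frame over $F_G$, and then check this does not depend on the chosen local frame $F$: two choices differ by a morphism $\cU\to G$, and the tautological isomorphism is defined precisely so as to absorb this change, so the composite is unchanged. Parity is immediate since $d\pi$ is even, the frame isomorphism is even, and $V$ carries its standard grading; this gives that $\vartheta$ is an \emph{even} element of $\Omega^1(F_G,V)$.

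For horizontality I would argue that $\vartheta$ kills vertical supervector fields: the fundamental vector field $\zeta_X$ attached to $X\in\fg\subseteq\fgl(V)$ projects to zero under $d\pi$ (by definition of the $G$-action on a principal bundle, $\pi\circ\alpha=\pi\circ\pr_P$, so $d\pi$ annihilates the vertical distribution spanned by the $\zeta_X$), hence $\vartheta(\zeta_X)=F^{-1}(d\pi(\zeta_X))=0$. One should say this for $S$-points or, equivalently, check it on the generators $\zeta_X$ of the vertical sheaf---both are fine because verticality of a supervector field is a pointwise-in-$M_o$ condition and the $\zeta_X$ generate the vertical $\cA$-submodule of $\mathcal T F_G$. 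For $G$-equivariance I would compute $r_g^*\vartheta$ where $r_g$ denotes the right action of $g\in G[\cU]$ on $F_G$ (for the infinitesimal/full statement one uses all $S$-points $g\in G[S]$). Under $p\mapsto p\cdot g$ the universal frame transforms by the defining right action \eqref{eq:sign-II}, i.e.\ by $g^{-1}$ on the identification $\pi^*\mathcal T M\to\pi^*\mathcal V_M$, while $\pi\circ r_g=\pi$ so $d\pi$ is unaffected; hence $r_g^*\vartheta=g^{-1}\cdot\vartheta$, which is the equivariance statement $(r_g)^*\vartheta=\rho(g)^{-1}\vartheta$ for the standard representation $\rho$ of $G$ on $V$ (with the usual sign conventions from \S\ref{sec:frame-bundles}).

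The main obstacle is bookkeeping rather than conceptual: one must make sure all of this is carried out in a way that is legitimate in the Berezin--Kostant--Leites category, i.e.\ either purely sheaf-theoretically (defining $\vartheta$ as a morphism of sheaves of $\cA_{F_G}$-modules $\mathcal T F_G\to \mathcal O_{F_G}\otimes V$ and checking the three properties as identities of such morphisms) or via the functor of points (testing on $S$-points for all supermanifolds $S$), and that the sign rule is tracked consistently---in particular the minus sign in \eqref{eq:sign-II} must feed correctly into the equivariance computation so that one genuinely gets a right action and the stated transformation law. I would therefore organize the proof as: (1) intrinsic definition of $\vartheta$ via the tautological frame isomorphism over $F_G$ and verification that it is independent of the local frame, giving well-definedness and evenness; (2) horizontality via $d\pi(\zeta_X)=0$; (3) equivariance via $\pi\circ r_g=\pi$ together with \eqref{eq:sign-II}. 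Uniqueness/gluing over an open cover is automatic because each step is checked on a cover and the resulting local forms agree on overlaps by construction.
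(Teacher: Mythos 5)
Your proposal is correct and takes essentially the same route as the paper: the crux in both is to interpret $\pi_*\xi$ as a section of the inverse image sheaf $\pi^*\mathcal{T}M$ (a $\pi$-superderivation) and then apply the canonical, frame-independent identification of $\pi^*\mathcal{T}M$ with $\cA_{F_G}\otimes V$ --- your ``tautological isomorphism furnished by the universal frame'' is precisely the paper's reading of $F^{-1}X_i$ as a $G$-equivariant morphism $F_G\to V\oplus\Pi V$, verified stalkwise to be independent of the chosen expression $\pi_*\xi=\sum f^i(\pi^*\circ X_i)$. The paper then declares evenness, horizontality and $G$-equivariance obvious (via Yoneda), which you merely spell out with the standard arguments $d\pi(\zeta_X)=0$ and $\pi\circ r_g=\pi$.
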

\begin{proof}
We work by localizing at a point $p\in(F_G)_o$, that is, we consider $\xi\in(\mathcal T F_G)_p$ and its image via the push-forward $\Xi=\pi_*\xi=\xi\circ\pi^*:(\cA_{M})_{\pi_o(p)}\to(\cA_{F_G})_p$. The push-forward $\Xi$ is a $\pi$-superderivation, i.e., it satisfies
$\Xi(fg)=\Xi(f)\pi^*(g)+(-1)^{|\Xi||f|}\pi^*(f)\Xi(g)$ for all $f,g\in (\cA_{M})_{\pi_o(p)}$.

Now, the sheaf of $\pi$-superderivations is isomorphic to the inverse image sheaf
$$\pi^*\mathcal T M=\mathcal A_{F_G}\otimes_{\pi_o^{-1}\cA_M}\pi_o^{-1}\mathcal T M$$
whose stalk at $p$ is
$(\mathcal A_{F_G})_p\otimes_{(\cA_M)_{\pi_o(p)}}\mathcal T M_{\pi_o(p)}$,
so we may express $\Xi$ as follows:
\begin{equation}
\Xi=\sum_{i=1}^{q}f^i(\pi^*\circ X_i)\;,
\label{eq:pi-derivation}
\end{equation}
with $f^i\in (\mathcal A_{F_G})_p$ and $X_i\in\mathcal T M_{\pi_o(p)}$. We then have
\begin{equation}
\label{eq:final-definition-soldering}
\begin{aligned}
\vartheta(\xi)&=F^{-1}(\Xi)=\sum_{i=1}^{q}f^i (F^{-1}X_i)_p\;,
\end{aligned}
\end{equation}
where we identified $X_i$ with the associated $G$-equivariant morphism
$F^{-1}X_i:F_G\to V\oplus \Pi V$, i.e., with an element of $V\otimes \cA(F_G)\cong \cA(F_G)\otimes V$.
Therefore \eqref{eq:final-definition-soldering} is an element of $(\mathcal A_{F_G})_p\otimes V$, as expected, and it is easy to see that it does not depend on the fixed expression \eqref{eq:pi-derivation}.

The other claims are obvious --  e.g., $G$-equivariancy can be checked as in the classical case thanks to Yoneda lemma.
%
\end{proof}
The following exact sequence defines the first prolongation of $\g\subset\mathfrak{gl}(V)$:
 \begin{equation}
\label{eq:first-prol-algebra}
0\to \g^{(1)}\longrightarrow \g\otimes V^*
\stackrel{\delta}\longrightarrow V\otimes\Lambda^2V^* \to0\;,
 \end{equation}
with $\delta:V\otimes V^*\otimes V^*\to V\otimes \Lambda^2V^*$ the Spencer skew-symmetrization operator (in the super-sense),
that is $\delta(w\otimes\alpha\otimes\beta)=w\otimes(\alpha\otimes\beta-(-1)^{|\alpha|\,|\beta|}\beta\otimes\alpha)$, and
$\g^{(1)}=\mathop{Ker}(\delta)=\g\otimes V^*\cap V \otimes S^2 V^*$.
If $\g_{F_G}= F_G\times \g\to F_G$ is the trivial vector bundle over $F_G$ with fiber the Lie superalgebra
$\g$ and, by abuse of notation, we denote the corresponding locally free sheaf on $(F_G)_o$ with the same symbol, then we have the exact sequence of sheaves
 \begin{equation}
\label{eq:first-prol-geometry}
0\to \g^{(1)}_{F_G}\longrightarrow \g_{F_G}\otimes \cV_{F_G}^*
\stackrel{\delta}\longrightarrow \cV_{F_G}\otimes\Lambda^2\cV_{F_G}^* \to0\;.
 \end{equation}
Let $\pi_*:\mathcal{T}F_G\to \pi^*\mathcal{T}M$ be the differential of $\pi:F_G\to M$, where
$\pi^*\mathcal T M=\mathcal A_{F_G}\otimes_{\pi_o^{-1}\cA_M}\pi_o^{-1}\mathcal T M$ is the sheaf of $\pi$-superderivations.
\begin{definition}
\hfill\par
\begin{enumerate}	
\item A {\em horizontal distribution} is a subsheaf $\mathcal{H}\subset \mathcal{T}F_G$ on $(F_G)_o$ of $\cA_{F_G}$-modules
that is complementary to the subsheaf of $\cA_{F_G}$-modules $\mathop{Ker}(\pi_*)\subset\mathcal{T}F_G$.
	\item A {\em normalization} is a supervector space $N\subset V\otimes\Lambda^2V^*$ that is complementary to $\Im(\delta)$ in \eqref{eq:first-prol-algebra}. A similar terminology is used for the associated subsheaf $\mathcal{N}\subset \mathcal{V}_{F_G}\otimes\Lambda^2\mathcal{V}_{F_G}^*$of $\cA_{F_G}$-modules on $(F_G)_o$.
\end{enumerate}
\end{definition}
Since $\mathop{Ker}(\pi_*)$ and $\mathop{Im}(\delta)$ are locally free sheaves, all normalizations and horizontal distributions are as well, see \S\ref{subbundles}. Any horizontal distribution gives an isomorphism $\mathcal H\cong \pi^*\mathcal{T}M$.

As
$\pi_o^{-1}\mathcal TM\subset\pi^*\mathcal{T}M$ naturally via $X\mapsto \pi^*\circ X$,
we have a morphism $\phi_\mathcal{H}:\pi_o^{-1}\mathcal{T}M\to\mathcal{T}F_G$: the horizontal lift $X\mapsto\phi_{\mathcal H}X\in\Gamma(\mathcal{H})$
is the right-inverse to the projection $\pi_*$.
The torsion 
of the horizontal distribution $\mathcal H$ is then defined by
 $$
T_{\mathcal{H}}(X_1,X_2)=d\vartheta(\phi_\mathcal{H}X_1,\phi_\mathcal{H}X_2)
 $$
for all $X_1,X_2\in \pi_o^{-1}\mathcal T M$, where the differential $d\vartheta\in\Omega^2(F_G,V)$ can be computed  by
the Cartan formula. 
In other words, we have a morphism of sheaves from
$\Lambda^2\pi_o^{-1}\mathcal{T}M$ to $\mathcal V_{F_G}$, which clearly extends to $\Lambda^2\pi^*\mathcal{T}M$. Since $\mathcal H\cong \pi^*\mathcal{T}M$ and the soldering form $\vartheta\in\Omega^1(F_G,V)$ induces an isomorphism of sheaves $\vartheta|_{\cH}:\cH\to \mathcal V_{F_G}$, the torsion can be in turn identified with a global even section of the trivial vector bundle
over $F_G$ with the fiber $V\otimes\Lambda^2V^*$.

Let $Fr_0=F_G$, $\mathcal{F}r_0=\mathcal{F}_G$, and define $\mathcal{F}r_1:(F_G)_o\supset \cV_o\mapsto \mathcal{F}r_1(\cV_o)$ to be the sheaf on $(Fr_0)_o$
given by
\begin{equation}
\label{eq:definition-prolongation}
\mathcal{F}r_1(\cV_o)=\big\{\cH(\cV_o)\mid \cH=\text{horiz. distrib. contained in}\;\mathcal{T}Fr_0|_{\cV_o}\;\text{such that}\;T_{\mathcal{H}}\in\mathcal{N}|_{\cV_o}\big\}\;,
\end{equation}
for any open subset $\cV_o$ of $(Fr_0)_o$.
The sheaf of Abelian groups $\mathcal{G}^{(1)}_{F_G}:\cV_o\mapsto\g^{(1)}[\cV]$ on $(Fr_0)_o$ acts
simply-transitively on $\mathcal Fr_1$ from the right, so this gives an affine bundle $Fr_1\to Fr_0$ by the geometric-algebraic correspondence.

Further prolongations follow the same scheme (literally as in \cite{St})
and yield the tower of prolongations
 \begin{equation}\label{GstrPB}
M\leftarrow Fr_0\leftarrow Fr_1\leftarrow Fr_2\leftarrow\dots.
 \end{equation}
A $G$-structure $F_G$ is called of {\em finite type} if this
tower stabilizes.

 \begin{rk}\label{LieEq}
Alternatively, a geometric structure can be defined via its Lie equations
\cite{K2}: instead of frames one considers the supermanifold $J^1(V,M)$ of $1$-jets of maps $V\to M$,
and the defining equation is a subsupermanifold $\mathcal{E}_1\subset J^1(V,M)$ which is in bijective correspondence with $F_G$ through
$V$-translations. Prolongations are defined as differential ideals
$\mathcal{E}_k=\{D_\sigma f_i=0 : |\sigma|<k\}$, where the $f_i$ are defining equations of $\mathcal{E}_1$, and $D_\sigma$ are iterated total derivatives.
 \end{rk}

Symmetries of $G$-structures may be introduced via automorphism supergroups as in \S\ref{sec:supergroups}, but a more
concrete description is in terms of super Harish-Chandra pairs. To this, we recall that
the differential $\varphi_*:\mathcal{T}M\to(\varphi_o)_*^{-1}\mathcal{T}M$ of
any automorphism $\varphi=(\varphi_o,\varphi^*)\in\Aut(M)_{\bar0}$
induces an isomorphism
$\varphi_*:\mathcal{F}r_M\to(\varphi_o)_*^{-1}\mathcal{F}r_M$
and we note that the Lie superalgebra
$\fg\otimes \cO(\cU_o)$ acts from the right on
$\mathcal{T}_M^{m|n}(\cU)$.

 \begin{definition}
\hfill
\begin{itemize}
	\item[$(i)$] An {\em automorphism} of ${\cF}_G$ is a
$\varphi\in\Aut(M)_{\bar0}$ such that
 $
\varphi_*({\cF}_G)\subset(\varphi_o)_*^{-1}{\cF}_G
 $;
\item[$(ii)$] An {\em infinitesimal automorphism} of ${\cF}_G$ on a superdomain
$\cU\subset M$ is a supervector field $X\in\mathfrak X(\cU)$ such that
 $
\mathcal L_{X}\big({\cF}_G(\cU_o)\big)\subset{\cF}_G(\cU_o)\cdot \big(\fg\otimes \cO_M(\cU_o)\big)\subset\mathcal{T}_M^{m|n}(\cU_o)
 $.
\end{itemize}
\end{definition}

The symmetries of $G$-structures are majorized by the tower of principal
bundles \eqref{GstrPB} by the classical construction of Sternberg \cite{St} (see also \cite{Gu}),
extended to the supercase in \cite{O}:
it is proven there that automorphisms of a finite type $G$-structure $F_G$ on a supermanifold $M$
form a Lie supergroup $\Aut(M,F_G)$. We will generalize this in what follows.
\subsection{Superdistributions and algebraic prolongations}
\label{sec:superdistributions}
A distribution on a supermanifold $M$ is a graded
$\cO_M$-subsheaf $\cD=\cD_{\bar0}\oplus\cD_{\bar1}$
of the tangent sheaf $\mathcal{T}M$ that is locally a direct factor. As explained in \S\ref{subbundles}, any such sheaf is locally free, so we may consider the associated
vector bundle $D$ over $M$. The latter induces a reduced subbundle $D|_{M_o}\subset TM|_{M_o}$, but as usual
with evaluations, $D|_{M_o}$ does not determine $\cD$.
We focus here on the algebraic perspective.

The weak derived flag of $\cD$ is defined as follows:
 \begin{equation}
\label{eq:weakderivedflag}
\cD^{1}=\cD \subset \cD^{2}\subset\cdots\subset\cD^i\subset\cdots\;,\qquad
\cD^i = [\cD, \cD^{i-1}]\;,
 \end{equation}
where each term is a graded $\cA_M$-subsheaf of $\mathcal TM$.
We assume the {\em bracket-generating} property $\cD^\mu=\mathcal TM$
for some $\mu>0$, and also that $\cD$ is {\em regular}, i.e.,
all subsheaves $\cD^i$ are locally direct factors in $\mathcal TM$.
\begin{example}\label{Ex35}
For many examples of (strongly) regular superdistributions, see \cite{KST}. We give here a superdistribution that is {\it not} regular.
It is a superextension of the Hilbert--Cartan equation depending on two odd variables. (In \cite{KST}, we discussed a more general extension
with $G(3)$-symmetry.)

Consider the supermanifold $\R^{5|2}$ with coordinates $(x,u,p,q,z\,|\,\theta,\nu)$, endowed with the following superdistribution of rank $(2|1)$:
 $$
\cD=\langle D_x=\p_x+p\p_u+ q\p_p+q^2\p_z,\,\p_q\,|\,D_\theta=\p_\theta+q\p_\nu+\theta\p_p+2\nu\p_z
\rangle.
 $$
We directly compute
 \begin{gather*}
[\p_q,D_x]=\p_p+2q\p_z\;,\;[\p_q,D_\theta]=\p_\nu,\\
[D_x,D_\theta]=-\theta\p_u\;,\;
[D_\theta,D_\theta]=2(\p_p+2q\p_z)\;,
\end{gather*}
so that
 $$
\cD^{2}=\langle D_x,\,\p_q,\,\p_p+2q\p_z\,|\,D_\theta,\, \p_\nu,\, \theta\p_u
\rangle.
 $$
The latter is clearly not a superdistribution, due to the presence of the supervector field $\theta\p_u$.
\end{example}

In the case of a regular $\cD$ we get an increasing filtration $\cD^i$ of $\mathcal{T}M$ by superdistributions,
which is compatible with brackets of supervector fields:
for each superdomain $U\subset M$ we have
 $$
[\cD^i(\cU),\cD^j(\cU)]\subset\cD^{i+j}(\cU)\quad \forall\ i,j>0\;,
 $$
as follows easily by induction from the Jacobi superidentities.
Note that the bracket is only $\R$-linear and it
satisfies the Leibniz superidentity as a module over $\cO_M$.
Clearly, we also have the increasing filtration of $TM|_{M_o}$ given by the classical $\mathbb Z_2$-graded vector bundles $D^i|_{M_o}$.

Setting $\opp{gr}(\mathcal T M)_{-i}=\cD^i/\cD^{i-1}$ for any $i>0$, we get a locally free sheaf  of $\cA_M$-modules $\opp{gr}(\mathcal T M)=\bigoplus_{i<0}\opp{gr}(\mathcal T M)_{i}$ over $M_o$. It has a natural structure of a sheaf of
negatively-graded Lie superalgebras over $\cA_M$:
if $v_k\in\cD^k$ with associated quotient $\bar v_k\in\cD^k/\cD^{k-1}$ for any $k>0$, then
 $
[\bar v_i,\bar v_j]\in \cD^{i+j}/\cD^{i+j-1}
$
and
$
[\bar v_i,f\bar v_j]=f[\bar v_i,\bar v_j]$ for all $f\in\cO_M$.

In particular, the bracket on
$\opp{gr}(\mathcal T M)$ is $\cO_M$-linear and thus descends to a Lie superalgebra bracket on the supervector space $\fm(x)=\bigoplus_{i<0}\g_{i}(x)$, $\g_{i}(x)=D^{-i}|_x/D^{-i-1}|_x$ for any $x\in M_o$. We shall set $\opp{gr}(TM|_{M_o})=\bigoplus_{x\in M_o}\fm(x)$, which is a classical vector bundle over $M_o$. (Indeed, this is nothing but the reduction of the vector bundle over $M$ associated to the sheaf $\opp{gr}(\mathcal T M)$.) Since supervector fields are not determined by their values at points of $M_o$, the reduction map $\opp{ev}:\opp{gr}(\mathcal T M)\to \opp{gr}(TM|_{M_o})$ can lose information.
However the entire information is recoverable in the case of
strongly regular distributions, whose correct generalization to the supercase is given in terms of the stalks:

\begin{definition}
Let $\cD$ be a regular distribution on a supermanifold $M=(M_o,\cO_M)$ that is
bracket-generating of depth $\mu$. Then $\cD$ is {\em strongly regular} if there exists a negatively-graded Lie superalgebra $\fm=\bigoplus_{-\mu\leq i<0}\g_{i}$ such that $\opp{gr}(\mathcal{T}_xM)\cong(\cO_M)_x\otimes \fm$ at any $x\in M_o$, as graded Lie superalgebras over $(\cO_M)_x$.
In this case, $\fm$ is called the {\it symbol} of $\cD$.
\end{definition}

Concretely, a regular superdistribution is strongly regular if it has a local basis of supervector fields adapted to the weak derived flag and whose brackets, after the appropriate quotients, are given by the structure constants of $\fm$ (which are real constants).

From now on, we assume all arising superdistributions to be strongly regular.
Note that by construction $\fm$ is {\it fundamental}, i.e., generated
by $\fm_{-1}$. We will also assume that $\fm$ is {\it non-degenerate},
i.e., $\g_{-1}$ contains no central elements of $\fm$ if $\mu>1$.
(Typically, one has $\mathfrak{z}(\fm)=\g_{-\mu}$.) The {\em Tanaka--Weisfeiler prolongation} of $\fm$
is the maximal $\mathbb Z$-graded Lie superalgebra $\g=\bigoplus_{i\in\mathbb Z}\g_i$ such that
 \begin{itemize}
\item[$(i)$] $\g_-=\fm$,
\item[$(ii)$] $\opp{Ker}\bigl(\opp{ad}(\g_{-1})\,|_{\g_i}\bigr)=0$ $
    \forall i\ge0$.
 \end{itemize}
It is denoted $\g=\opp{pr}(\fm)$.
The proof of the existence and uniqueness of $\opp{pr}(\fm)$
from \cite{T, W} extends verbatim to the Lie superalgebra case.
Concretely $\fg_0=\der_{gr}(\fm)$ and $\g_i$ for $i>0$ are defined
recursively by the condition (applies also for $i=0$)
 \begin{equation}\begin{split}\label{prolg}
\g_i=& \Bigl\{u:\bigoplus_{j>0}\g_{-j}\to\bigoplus_{j>0}\g_{i-j}\;\text{of $\mathbb Z$-degree}\;i
\ \text{(identified with $\opp{ad}_u=[u,\cdot]$) such that}\\
& \qquad [u,[v,w]]=[[u,v],w]+(-1)^{|u||v|}[v,[u,w]]\
\forall v,w\in\fm\Bigr\}.
 \end{split}\end{equation}
It is easy to verify that $\opp{pr}(\fm)=\bigoplus_{i\geq-\mu}\g_i$ is a Lie superalgebra.

There are several variations on this construction.
The most popular one is related to a reduction to a subalgebra
$\fg_0\subset\der_{gr}(\fm)$. Then $(i)$ in the definition
of the prolongation is changed to $\g_{\leq0}=\fm\oplus\g_0$
and $(ii)$ remains with the same formula but $\forall i>0$.
The resulting prolongation superalgebra is denoted by $\g=\opp{pr}(\fm,\g_0)$.
A more sophisticated reduction is as follows.
Assume we have already computed the prolongation to the level $\ell>0$ and
let $\g_\ell$ as $\g_0$-module be reducible:
$\g_\ell=\g_\ell'\oplus\g_\ell''$. Let also
$[\g_j,\g_{\ell-j}]\subset\g_\ell'$ for all $1\leq j\leq \ell-1$.
Then we can reduce
$\g_{-\mu}\oplus\dots\oplus\g_{\ell-1}\oplus\g_{\ell}$ to $\g_{-\mu}\oplus\dots\oplus\g_{\ell-1}\oplus\g_\ell'$
and prolong for $i>\ell$ by adapting the range of the map $u$ in \eqref{prolg}.
The result will be denoted by $\opp{pr}(\fm,\dots,\g_\ell',\dots)$,
where we list all reductions, or simply $\g=\oplus_{i=-\mu}^\infty\g_i$ if no confusion arises. An example of this higher order reduction
is projective geometry, cf.\ the classical case in \cite[Example 3]{K2},
which we will also discuss in the super-setting in \S\ref{Sproj}.

The generalized Spencer complex of a reduced prolongation algebra $\g=\oplus_{i=-\mu}^\infty\g_i$
is the Lie superalgebra cohomology complex
$\Lambda^\bullet\fm^*\otimes\g$ with the Chevalley--Eilenberg differential $\delta$:
 $$
H^{j}(\fm,\g)=H^\bullet\bigr(\Lambda^{j-1}\fm^*\otimes\g
\stackrel{\delta}\to\Lambda^j\fm^*\otimes\g
\stackrel{\delta}\to\Lambda^{j+1}\fm^*\otimes\g\bigl)\;.
 $$
It is naturally bi-graded $H^{\bullet}(\fm,\fg)=\oplus_d H^{d,\bullet}(\fm,\fg)$, where $d$ is the $\mathbb Z$-degree of a cochain, and it also admits a parity decomposition into even and odd parts as a supervector space.
It follows from definitions that $H^{i,1}(\fm,\g)=0$ if and only if $\g_{i}$ is the full prolongation of $\fm\oplus\g_{0}\oplus\cdots\oplus\g_{i-1}$,
therefore $H^{\geq 0,1}(\fm,\g)=\oplus_{i\geq 0}H^{i,1}(\fm,\g)$ encodes all possible reductions.

\subsection{Filtered geometric structures}\label{FGS}
Now we superize the notion of filtered geometric structure
as developed in \cite{T, Mo,K2}. Let $\cD$ be a strongly regular, fundamental, non-degenerate distribution on a supermanifold $M$.
The corresponding zero-order frame bundle is a principal bundle $\pi:Fr_0=\opp{Pr}_0(M,\cD)\to M$ defined via
the geometric-algebraic correspondence as the sheaf $\mathcal{F}r_0:M_o\supset\mathcal{U}_o\mapsto\mathcal{F}r_0(\mathcal{U}_o)$
of $\mathcal{A}_M$-linear Lie superalgebra isomorphisms from $\fm_M$ to $\opp{gr}(\mathcal T M)$:
\begin{equation}
\label{eq:frame-sheaf-graded}
\mathcal{F}r_0(\mathcal{U}_o)=\big\{\text{$\cA|_{\cU_o}$-linear Lie superalgebra isomorphism}\;F:\fm_M|_{\cU_o}\to\opp{gr}(\mathcal T M)|_{\cU_o}\big\}\,.
\end{equation}
Here we denoted by $\fm_M= M\times \fm\to M$ the trivial vector bundle over $M$ with the fiber
$\fm$ and, by abuse of notation, the associated locally free sheaf with the same symbol. The structure group of the bundle is
the Lie supergroup $G_0=\opp{Aut}_{gr}(\fm)$ which, by the Harish-Chandra construction, can be identified with the pair $\bigl(\opp{Aut}_{gr}(\fm)_{\bar0},\opp{der}_{gr}(\fm)\bigr)$
formed by the Lie group of degree zero automorphisms of $\fm$ and the Lie superalgebra of degree zero superderivations of $\fm$.
Since $\fm$ is fundamental, 
the structure group $G_0$ embeds into the Lie supergroup $\GL(\g_{-1})$ and
$\mathcal{F}r_0$ can be realized as a sheaf of special
$\mathcal{A}_M$-linear isomorphisms from $(\g_{-1})_M$ to $\cD$. 

More generally a first-order reduction is given by a $G_0$-reduction $F_0\subset\opp{Pr}_0(M,\cD)$ with structure group a Lie subsupergroup
$G_0\subset\opp{Aut}_{gr}(\fm)$,
which again can be thought of as an inclusion of
super Harish-Chandra pairs $\bigl((G_0)_{\bar0},\g_0\bigr)\subset \bigl(\opp{Aut}_{gr}(\fm)_{\bar0},\opp{der}_{gr}(\fm)\bigr)$.
(Often such reductions are given by order 1 invariants,
e.g., tensors or their spans. As first example, an $OSp(m|2n)$-reduction
in the case $\cD=\mathcal{T}M$ corresponds to an even supermetric $q\in S^2\mathcal{T}^*M$.)

In the next section we will construct higher order frame bundles
$Fr_i=\opp{Pr}_i(M,\cD)$, which fit into a tower of
principal bundles with projections
 \begin{equation}
\label{tower}
M\leftarrow Fr_0\leftarrow Fr_1\leftarrow Fr_2\leftarrow\dots\;,
 \end{equation}
where the principal bundle $Fr_{i}\rightarrow Fr_{i-1}$ has Abelian structure group $\g_{i}$, for all $i>0$.

The bottom projections have the structure of fiber bundles over $M$:
$Fr_1\to M$ with fiber $G^1=G_0\times\g_1$,
$Fr_2\to M$ with fiber $G^2=G^1\times\g_2$, etc,
but in general these are not principal bundles.
Similiar to the embedding $\mathcal Fr_M \subset\mathcal{T}_M^{m|n}$ described in \S\ref{sec:frame-bundles},
the higher order frame bundle $\mathcal{F}r_i$ succesively embeds into
a locally free sheaf $\widetilde{\mathcal{F}r}_i$ of
$\cA_{\mathcal{F}r_{i-1}}$-modules
 \begin{equation}
\begin{aligned}
\label{eq:locally-free-frames}
\widetilde{\mathcal{F}r}_i(\cU_o)&= \bigl\{\text{even and odd $\cA|_{\cU_o}$-linear maps}\; u:\fm_{\cU_o}\to
\mathcal{T}\mathcal{F}r_{i-1}|_{\cU_o}\bigr\}
\end{aligned}
 \end{equation}
for every superdomain $\cU\subset Fr_{i-1}$.
 \comm{
 \begin{equation*}
\begin{aligned}
\widetilde{\mathcal{F}r}_i(\cU_o)&= \bigl\{\text{even and odd $\cA|_{\cU_o}$-linear maps}\; u:\bigoplus_{j>0}(\g_{-j})_M|_{\cU_o}\to
\bigoplus_{j>0}\cD^j/\cD^{j-i-1}|_{\cU_o}\bigr\}.
\end{aligned}
 \end{equation*}
 }
This corresponds to a vector bundle, whose fiber can be further reduced
but it is not relevant here.

For any first-order reduction $F_0\subset\opp{Pr}_0(M,\cD)$
we denote the prolongation bundles by
$$F_0^{(i)}=\opp{Pr}_i(M,\cD,F_0)\subset Fr_i\;,$$ for all $i>0$.
They also fit into a tower of principal bundles analogous to \eqref{tower}.
For higher-order reductions, we restrict to a subbundle
$F_i\subset F_0^{(i)}$ for some $i>0$, and the geometric object $q$ responsible for this reduction
will have higher order. (E.g., a projective superstructure is given
by an equivalence class of superconnections. The associated Lie equations for symmetry, cf. Remark \ref{LieEq},
are of second order.) Further reductions can be imposed in a similar way on the prolongations $F_j^{(i)}$.
In the rest of the paper, in order not to overload
notations, we will mostly concentrate on pure prolongations or
first order reductions. However, the results apply in the general situation.

 \begin{definition}
A {\em filtered geometric structure} $(M,\cD,F)$ on a supermanifold $M$ consists of a
strongly regular, fundamental, non-degenerate distribution $\cD$ on $M$
and possibly some reductions $F$ of the tower \eqref{tower}. If $F$ are encoded
 by a tensorial or higher-order structure $q$, we will also use the notation $(M,\cD,q)$.
\end{definition}

\subsection{Geometric Prolongation}

Now we shall construct the higher (super) frame bundles partially following
the revision by Zelenko \cite{Z} of the constructions by
Sternberg \cite{St} and Tanaka \cite{T}
 (beware: our notations differ from theirs).
Our approach is novel in the following: we construct
the tower of bundles $\mathcal{F}_\ell$, $\ell\geq 0$,
and the frames $\varphi_{\mathcal H_\ell}$ on them,
using the entire Spencer differential (instead of a reduced one)
and recognize the choices of complements as the space of $0$- and $1$-cochains therein (with freedom being co-boundaries).

\newcommand\cT{\mathcal{T}}

\subsubsection{First prolongation}
\label{S:first-prolong}

Thanks to \S\ref{FGS}, we assume that the bundle $\pi_0 : F_0 \to M$ is already constructed.  Via pullback by $d\pi_0$, the filtration on $\cT M$ induces a filtration on $\cT F_0$:
 \begin{align}
\label{eq:filtrationTF_0}
 \vcenter{\vbox{
 \xymatrixcolsep{0.5pc}
 \xymatrix{
 \cT F_0 = & \cT^{-\mu} F_0 \ar[d]
 & \supset & \ldots
 & \supset & \cT^{-1} F_0 \ar[d]
 & \supset & \cT^0 F_0 := \ker(d\pi_0) \\
\cT M = &  \cT^{-\mu} M
 & \supset & \ldots
 & \supset &  \cT^{-1} M}}}
 \end{align}
where we also set $\cT^k F_0 = \mathcal T F_0$ for all $k<-\mu$, $\cT^k F_0 = 0_{F_0}$ for all $k> 0$ and, for simplicity, omit the inverse image symbol $\pi_0^*$ in front of each of the sheaves on $M$ in the bottom row of \eqref{eq:filtrationTF_0}. Via $d\pi_0$, we then have $$d\pi_0:\gr_-(\cT F_0) \stackrel{\cong}{\longrightarrow} \pi_0^* \gr(\cT M)$$ as sheaves of negatively $\mathbb{Z}$-graded Lie superalgebras on $F_0$, with $\pi_0^* \gr(\cT M)$ referring to the inverse image sheaf.  There is the canonical ($\cO_{F_0}$-linear, even) {\em vertical 0-trivialization}
\begin{equation}
\label{eq:vertical0frame}
\gamma_0 : \cT^0 F_0 \stackrel{\cong}{\to} \cO_{F_0}\otimes\g_0
\end{equation}
given on fundamental vector fields by $\gamma_0(\zeta_X) = X$ for all $X \in \fg_0$.

Let $\cU \subset M$ be a superdomain and consider a section of $F_0$ on $\cU$, which is identified with an $\cA_M|_{\cU_o}$-linear isomorphism $\varphi_0 = \{ \varphi^i_0 \}_{i < 0}:\fm_M|_{\cU_o} \to \gr(\cT M)|_{\cU_o}$ of zero $\mathbb Z$-degree, cf. \eqref{eq:frame-sheaf-graded}. We also call it a {\em horizontal 0-frame}. Working with stalks of inverse image sheaves as in the proof of Lemma \ref{lemma:soldering-G-structure}, one easily checks that the following is well-defined.

\begin{definition}
The tuple $\vartheta_0= \{ \vartheta^i_0 \}_{i < 0} \in \Hom(\gr_-(\cT F_0),\fm_{F_0})$ of morphisms of $\cA_{F_0}$-modules
defined by $\vartheta_0 = (\varphi_0)^{-1} \circ d\pi_0$ is called the {\em soldering form} of $F_0$.
\end{definition}
Concretely, one may compute $\vartheta_0$ by restricting to a superdomain
$\cU\times G_0\cong\pi_0^{-1}(\cU)\subset F_0$ trivialized by a fixed frame
$\varphi_0$ and recalling that all other frames are obtained by the action of $G_0$:
$\varphi_0\cdot g:=\alpha\circ\big(\varphi_0, g\big)$ for any morphism $g:\cU\to G_0$, with $\alpha:F_0\times G_0\to F_0$ the right action. The soldering form is $G_0$-equivariant. We also note that
\begin{equation}
\label{eq:solderingcomponent}
\vartheta^i_0\in\Hom(\mathcal T^i F_0/\mathcal T^{i+1} F_0,(\fm_i)_{F_0})
\end{equation}
for $i<0$, is invertible,
and that $\vartheta_0=\{ \vartheta^i_0 \}_{i < 0}$ is an isomorphism of sheaves of $\mathbb{Z}$-graded Lie superalgebras over $\cA_{F_0}$.
In particular, using $\vartheta_0$ and $\gamma_0$, we obtain a full frame of $\gr(\cT F_0)$. (We caution the reader that this does not identify $\gr(\cT F_0)$ with $\cA_{F_0}\otimes (\fm\oplus\fg_0)$ as Lie superalgebras, as the bracket on
 $\gr(\cT F_0)$ is not $\cA_{F_0}$-linear if at least one entry is a vertical supervector field.)


 For each $k\in\mathbb Z$, we set $\cT^k_0 := \cT^k F_0$. Let $i<0$  and consider the following exact sequence of sheaves on $F_0$:
 \begin{equation}\label{3seq}
0\to
\mathcal{T}_0^{i+1}/\mathcal{T}_0^{i+2}\stackrel{\iota_0^i}\longrightarrow
\mathcal{T}_0^i/\mathcal{T}_0^{i+2}\stackrel{\jmath_0^i}\longrightarrow
\mathcal{T}_0^i/\mathcal{T}_0^{i+1}\to
0.
 \end{equation}
Because all $\mathcal{T}_0^i$ are superdistributions, the image of $\iota_0^i$ in
\eqref{3seq} is a direct factor, i.e., there exists a complementary
subsheaf
$\mathcal{H}^i_0\subset\mathcal{T}_0^i/\mathcal{T}_0^{i+2}$ so that we have a splitting
 \begin{align}
 \mathcal{T}_0^i/\mathcal{T}_0^{i+2}=\mathcal{H}^i_0\oplus
\mathcal{T}_0^{i+1}/\mathcal{T}_0^{i+2}.
 \end{align}
The sequence \eqref{3seq} makes sense for $i=0$ as well, in which case it simply says that $\mathcal{H}^0_0:=\mathcal{T}_0^0$.
By the splitting lemma, $\cH^i_0$ is the kernel of a left-inverse $h^i_0$ to $\iota_0^i$
or, equivalently, the image of the right-inverse
$k^i_0 := (\opp{id}-\,\iota_0^i\circ h^i_0)\circ(\jmath_0^i)^{-1}$ to $\jmath_0^i$. We will write $h_0 = \{ h^i_0 \}_{i \leq 0}$
and $k_0 = \{ k^i_0 \}_{i \leq 0}$.

Set now $\gr^{[1]}(\cT F_0)= \bigoplus_{i \leq 0} \cT^i_0 / \cT^{i+2}_0$. Given a fixed choice of complements $\cH_0 = \{ \cH^i_0 \}_{i \leq 0}$
as above, we define a {\em 1-frame}
 \begin{align}
\label{eq:horizontal-1-frames}
 \varphi_{\cH_0} : (\fg_{\leq 0})_{F_0} \to \gr^{[1]}(\cT F_0)
 \end{align}
as the map of zero $\mathbb Z$-degree  with the components $\varphi_{\cH_0}^i:(\fg_i)_{F_0}\rightarrow \mathcal{T}_0^i/\mathcal{T}_0^{i+2}$ determined by the soldering form via the equation $\varphi_{\cH_0}^i\circ\vartheta_0^i= k^i_0$ for all $i<0$, and
 $\varphi_{\cH_0}^i\circ\gamma_0= k^i_0=\1_{\cT^0_0}$ for $i=0$.
Note that $\varphi_{\cH_0}$  is an isomorphism with image $\operatorname{im}(\varphi_{\cH_0}^i)=\operatorname{im}(k^i_0)=\cH^i_0$ for all $i\leq 0$.

In terms of the maps $h_0 = \{ h^i_0 \}_{i \leq 0}$ determined by $\cH_0$, we define the {\em 1st structure function}
$$c_{\cH_0} \in(\Lambda^2\fm_{F_0}^*\otimes_{\cO_{F_0}}\fm_{F_0})=\cO_{F_0}\otimes(\Lambda^2\fm^*\otimes\fm)$$
on the entries $v_k\in\g_k$, $k<0$, by
 \begin{align}
c_{\cH_0}(v_i,v_j)=\vartheta_0 \Bigl(h_0\bigl(\bigl[
\varphi_{\cH_0}(v_i),\varphi_{\cH_0}(v_j)\bigr]
\,\opp{mod}\mathcal{T}_0^{i+j+2}\bigr)\Bigr)
 \end{align}
and then extend by $\cA_{F_0}$-linearity to $(\g_k)_{F_0}=\cA_{F_0}\otimes \fg_k$.
 Since the filtration on $\cT F_0$ is respected by the Lie bracket, then the input of $\vartheta_0$ above lies in $\gr_{i+j+1}(\cT F_0) = \cT^{i+j+1}_0 / \cT^{i+j+2}_0$, which is mapped by $\vartheta_0$ to $\fg_{i+j+1}$.  In particular, $c_{\cH_0}$ is well-defined, it has even parity and {\em $\mathbb Z$-degree 1}, i.e., it maps $\fg_i \otimes \fg_j$ to $\fg_{i+j+1}$.
We let $\Lambda^2 \fm^*\otimes\fg=\bigoplus_{k\in\mathbb Z}(\Lambda^2 \fm^*\otimes\fg)_k$ be the natural decomposition of
$\Lambda^2 \fm^*\otimes\fg$ into $\mathbb Z$-graded components, so that $c_{\cH_0}\in \cO_{F_0}\otimes(\Lambda^2 \fm^*\otimes\fg)_1$.
The space $\fm^*\otimes\g$  has an analogous decomposition and clearly
$(\fm^*\otimes\g\bigr)_1\subset \fm^*\otimes(\fm\oplus\g_0)$.

Let us take another complement $\widetilde{\mathcal{H}_0}=\{ \widetilde\cH^i_0 \}_{i \leq 0}$ and the $1$-frame $ \varphi_{\widetilde\cH_0}$. By construction, for any $v_i\in(\g_i)_{F_0}$ with $i<0$, we have that $\varphi_{\widetilde\cH_0}(v_i)-\varphi_{\cH_0}(v_i)$
is an element of $\mathcal{T}_0^{i+1}/\mathcal{T}_0^{i+2}$, hence
\begin{align}
\label{eq:firstdifferencehorizontalframes}
\vartheta^{i+1}_0\big(\varphi_{\widetilde\cH_0}(v_i)-\varphi_{\cH_0}(v_i)\big)&=\psi(v_i)\;\;\text{for}\;\;i<-1\;,\\
\label{eq:firstdifferencehorizontalframesII}
\gamma_0\big(\varphi_{\widetilde\cH_0}(v_i)-\varphi_{\cH_0}(v_i)\big)&=\psi(v_i)\;\;\text{for}\;\;i=-1\;,
\end{align}
for some morphism $\psi:\fm_{F_0}\rightarrow(\fm\oplus\g_0)_{F_0}$ of sheaves of $\cA_{F_0}$-modules. It is clear that $\psi$ has $\mathbb Z$-degree $1$, in other words, it is an element of even parity of $\cO_{F_0}\otimes\bigl(\fm^*\otimes\g\bigr)_1$. Conversely,
given any such $\psi$, there is a unique complement $\widetilde{\mathcal{H}_0}=\{ \widetilde\cH^i_0 \}_{i \leq 0}$ for which \eqref{eq:firstdifferencehorizontalframes}-\eqref{eq:firstdifferencehorizontalframesII} hold.
\begin{lem}\label{Ldel}
Under a change of the complement,
the structure function transforms as
 $
c_{\widetilde{\mathcal{H}_0}}=c_{\mathcal{H}_0}+\delta\psi
 $,
where $\delta$ is the Chevalley--Eilenberg differential from $C^{1,1}(\fm,\fg)_{F_0}$ to $C^{1,2}(\fm,\fg)_{F_0}$.
 \end{lem}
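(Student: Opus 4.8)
The plan is to compute the difference $c_{\widetilde{\cH_0}} - c_{\cH_0}$ directly from the defining formula for the structure function, tracking how the $1$-frames $\varphi_{\widetilde\cH_0}$ and $\varphi_{\cH_0}$ differ by the degree-$1$ cochain $\psi$, and to recognize the leading-order correction as precisely $\delta\psi$. First I would fix $v_i \in (\g_i)_{F_0}$ and $v_j \in (\g_j)_{F_0}$ with $i,j<0$ and write $\varphi_{\widetilde\cH_0}(v_k) = \varphi_{\cH_0}(v_k) + \widehat{\psi(v_k)}$, where $\widehat{\psi(v_k)} \in \mathcal T_0^{k+1}/\mathcal T_0^{k+2}$ is the lift characterized by \eqref{eq:firstdifferencehorizontalframes}--\eqref{eq:firstdifferencehorizontalframesII}; concretely $\widehat{\psi(v_k)} = \varphi_{\cH_0}(\psi(v_k))$ since $\psi(v_k)\in(\fm\oplus\g_0)_{k+1}$ and $\varphi_{\cH_0}$ is the chosen frame in that degree. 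Then I expand the bracket $\bigl[\varphi_{\widetilde\cH_0}(v_i),\varphi_{\widetilde\cH_0}(v_j)\bigr]$ bilinearly into four terms modulo $\mathcal T_0^{i+j+2}$.

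The key step is the bookkeeping on filtration degrees. The term $\bigl[\varphi_{\cH_0}(v_i),\varphi_{\cH_0}(v_j)\bigr]$ gives back $c_{\cH_0}(v_i,v_j)$ after applying $\vartheta_0\circ h_0$. The cross terms $\bigl[\varphi_{\cH_0}(v_i),\varphi_{\cH_0}(\psi(v_j))\bigr]$ and $\bigl[\varphi_{\cH_0}(\psi(v_i)),\varphi_{\cH_0}(v_j)\bigr]$ live a priori in filtration degree $i+j+1$, so modulo $\mathcal T_0^{i+j+2}$ they contribute exactly their leading symbols, which by the identification $\gr_-(\cT F_0)\cong \pi_0^*\gr(\cT M)$ and the vertical trivialization $\gamma_0$ are the Lie-superalgebra brackets $[v_i,\psi(v_j)]$ and $[\psi(v_i),v_j]$ in $\fg$ — here one uses that $\varphi_{\cH_0}$ is a graded-Lie-superalgebra frame (as recorded after \eqref{eq:solderingcomponent}). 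The remaining term $\bigl[\varphi_{\cH_0}(\psi(v_i)),\varphi_{\cH_0}(\psi(v_j))\bigr]$ lies in filtration degree $i+j+2$, hence vanishes modulo $\mathcal T_0^{i+j+2}$. Assembling and applying $\vartheta_0 \circ h_0$ — noting that on the degree-$(i+j+1)$ part $h_0$ is just the identification with $\gr_{i+j+1}$ and $\vartheta_0$ identifies this with $\fg_{i+j+1}$, while $h_0$ kills the degree-$(i+j+1)$ piece coming from $[v_i,v_j]$'s already-accounted contribution — I get
$$
c_{\widetilde\cH_0}(v_i,v_j) = c_{\cH_0}(v_i,v_j) + [v_i,\psi(v_j)] + (-1)^{|v_i||\psi|}\text{(sign)}[\psi(v_i),v_j] - \psi([v_i,v_j]),
$$
and the sum of the last three terms is by definition $(\delta\psi)(v_i,v_j)$ for the Chevalley--Eilenberg differential on $C^{1,1}(\fm,\fg)_{F_0}$, since $\psi$ has even parity and $\fg$-degree $1$ so the adjoint-action and bracket-pullback terms are precisely the two summands of $\delta$. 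Extending by $\cA_{F_0}$-linearity finishes the identity $c_{\widetilde\cH_0} = c_{\cH_0} + \delta\psi$.

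The main obstacle I anticipate is getting the signs and the precise form of $\delta$ right in the super-setting: one must be careful that $\psi$ being even of $\ZZ$-degree $1$ is what makes the two terms of the Chevalley--Eilenberg differential $\delta\psi(v,w) = [v,\psi(w)] - (-1)^{|v||w|}[w,\psi(v)] - \psi([v,w])$ come out with exactly the Koszul signs produced by expanding the bracket of the perturbed frames, and that the cross-terms are not contaminated by contributions of $\psi(v_i)$ and $\psi(v_j)$ sitting in $\g_0$ acting on each other — which is excluded only because such a product lands in filtration degree $i+j+2$. A secondary subtlety is justifying that $\widehat{\psi(v_k)} = \varphi_{\cH_0}(\psi(v_k))$, i.e., that the difference of $1$-frames is realized by applying the \emph{same} frame $\varphi_{\cH_0}$ to the cochain $\psi$; this follows from \eqref{eq:firstdifferencehorizontalframes}--\eqref{eq:firstdifferencehorizontalframesII} together with the defining relations $\varphi_{\cH_0}^i \circ \vartheta_0^i = k_0^i$ and $\varphi_{\cH_0}^0\circ\gamma_0 = \one$, after projecting onto the relevant graded pieces. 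All of this is routine once the filtration degrees are laid out, so the proof reduces to careful diagram-chasing in $\gr^{[1]}(\cT F_0)$.
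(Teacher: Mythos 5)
Your plan coincides with the paper's proof in outline: perturb the frames by $\Psi$, expand the bracket bilinearly, drop the quadratic term $[\Psi(v_i),\Psi(v_j)]$, and read off the two adjoint terms from the cross brackets. But there is a genuine gap exactly where the cocycle term $-\psi([v_i,v_j])$ enters. By definition \eqref{eq:definition-prolongation-Tanaka} (and the formula for $c_{\cH_0}$), the structure function $c_{\widetilde{\cH_0}}$ is computed with the splitting $\widetilde h_0$ determined by $\widetilde{\cH_0}$, not with $h_0$: it extracts the component of the bracket in $\cT_0^{i+j+1}/\cT_0^{i+j+2}$ along the \emph{new} complement $\widetilde\cH_0^{i+j}$ at the output degree $i+j$. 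Since the bracket has a nonzero leading symbol, namely $[v_i,v_j]$, changing the complement at that degree shifts this component, and this is precisely what produces $-\psi([v_i,v_j])$; the paper's first step is the transformation rule $\vartheta_0^{k+1}\circ\widetilde h_0=\vartheta_0^{k+1}\circ h_0-\psi\circ\vartheta_0^{k}\circ\jmath_0^{k}$, obtained from \eqref{eq:firstdifferencehorizontalframes}. In your expansion you apply the \emph{old} $\vartheta_0\circ h_0$ to the perturbed bracket, which yields only the two adjoint terms; the $-\psi([v_i,v_j])$ in your displayed formula is then unaccounted for, and the parenthetical justification (``$h_0$ kills the degree-$(i+j+1)$ piece coming from $[v_i,v_j]$'s already-accounted contribution'') is not correct: $h_0$ is the projection \emph{onto} that graded piece, and the relevant phenomenon is the difference $\widetilde h_0-h_0$ evaluated on an element whose symbol is $[v_i,v_j]$. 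Without this observation the computation as described produces $c_{\cH_0}+[v_i,\psi(v_j)]\pm[\psi(v_i),v_j]$ and does not close up to $\delta\psi$.

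A secondary point: to identify the cross terms with algebraic brackets in the case $j=-1$, where $\Psi(v_j)$ is a vertical field valued via $\gamma_0$ in $\g_0$, you appeal to ``$\varphi_{\cH_0}$ being a graded Lie superalgebra frame''; but the paper explicitly cautions after \eqref{eq:solderingcomponent} that $\gr(\cT F_0)$ is \emph{not} identified with $\cA_{F_0}\otimes(\fm\oplus\g_0)$ as Lie superalgebras once a vertical entry is involved. The correct justification, which the paper invokes at the end of its proof, is the $G_0$-equivariance of the soldering form (brackets with fundamental vector fields reproduce the $\g_0$-action on $\fm$). With the transformation rule for $h_0$ added and the equivariance argument substituted, your computation becomes the paper's proof.
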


 \begin{proof}
One directly infers from \eqref{eq:firstdifferencehorizontalframes} 
that $\vartheta^{k+1}_0\circ\widetilde{h_0}=\vartheta^{k+1}_0\circ h_0-\psi\circ\vartheta^k_0\circ\jmath^k_0$ for all $k<-1$. Suppressing upper indices for simplicity and denoting by $\Psi:\fm_{F_0}\rightarrow\gr(\cT F_0)$ the morphism obtained composing $\psi$ with the identifications \eqref{eq:vertical0frame}-\eqref{eq:solderingcomponent}, we get for all $v_l\in\fg_l$, $l\leq -1$:
 \begin{align*}
c_{\widetilde{\mathcal{H}_0}}(v_i,v_j)
&=\vartheta_0
\Bigl(\widetilde h
_0\bigl(\bigl[
\varphi_{\mathcal{\widetilde H}_0}(v_i),
\varphi_{\mathcal{\widetilde H}_0}(v_j)\bigr]
\,\opp{mod}\mathcal{T}_0^{i+j+2}\bigr)\Bigr)\\
&=
\vartheta_0
\Bigl(h
_0\bigl(\bigl[
\varphi_{\mathcal{\widetilde H}_0}(v_i),
\varphi_{\mathcal{\widetilde H}_0}(v_j)\bigr]
\,\opp{mod}\mathcal{T}_0^{i+j+2}\bigr)\Bigr)\\
&\;\;\;\;
-\psi\circ\vartheta_0\circ\jmath_0
\Bigl(\bigl[
\varphi_{\mathcal{\widetilde H}_0}(v_i),
\varphi_{\mathcal{\widetilde H}_0}(v_j)\bigr]
\,\opp{mod}\mathcal{T}_0^{i+j+2}\Bigr)\\
&=\vartheta_0
\Bigl(h
_0\bigl(\bigl[
\varphi_{\mathcal{H}_0}(v_i)+\Psi(v_i),
\varphi_{\mathcal{H}_0}(v_j)+\Psi(v_j)\bigr]
\,\opp{mod}\mathcal{T}_0^{i+j+2}\bigr)\Bigr)\\
&\;\;\;\;
-\psi\circ\vartheta_0\circ\jmath_0
\Bigl(\bigl[
\varphi_{\mathcal{H}_0}(v_i)+\Psi(v_i),
\varphi_{\mathcal{H}_0}(v_j)+\Psi(v_j)\bigr]
\,\opp{mod}\mathcal{T}_0^{i+j+2}\Bigr)\\
&=
c_{\mathcal{H}_0}(v_i,v_j)
+\vartheta_0\Bigl(\bigl[
\varphi_{\mathcal{H}_0}(v_i),
\Psi(v_j)\bigr]
\,\opp{mod}\mathcal{T}_0^{i+j+2}\Bigr)
+\vartheta_0\Bigl(\bigl[
\Psi(v_i),
\varphi_{\mathcal{H}_0}(v_j)\bigr]
\,\opp{mod}\mathcal{T}_0^{i+j+2}\Bigr)\\
&\;\;\;\;
-\psi\circ\vartheta_0
\Bigl(\bigl[
\varphi_{\mathcal{H}_0}(v_i),
\varphi_{\mathcal{H}_0}(v_j)\bigr]
\,\opp{mod}\mathcal{T}_0^{i+j+1}\Bigr)\\
&=
c_{\mathcal{H}_0}(v_i,v_j)
+\underbrace{[v_i,\psi(v_j)]-(-1)^{|v_i|\,|v_j|}
[v_j,\psi(v_i)]
-\psi([v_i,v_j])}_{\delta\psi(v_i,v_j)}\;,
 \end{align*}
where the last equality follows from the definition of structure function and the fact that the soldering form $\vartheta_0$ is a $G_0$-equivariant morphism of Lie superalgebras.
\end{proof}
 This gives the following method to restrict the $\mathcal{H}_0$'s.
Take a complement $N_1\subset(\Lambda^2\fm^*\otimes\g)_1$
to $\delta(\fm^*\otimes\g\bigr)_1$ and denote the corresponding sheaf over $F_0$ by $\mathcal{N}_1=\cA_{F_0}\otimes N_1$. Then we
define the sheaf $\opp{Pr}_1(M,\cD,F_0)$ over $F_0$ by
\begin{equation}
\label{eq:definition-prolongation-Tanaka}
\opp{Pr}_1(M,\cD,F_0)(\cV_o)=\big\{\cH_0(\cV_o)\mid \cH_0=\{ \cH^i_0 \}_{i \leq 0}\;\text{on $\cV_0$}\;\text{such that}\;
c_{\mathcal{H}_0}\in \mathcal{N}_1|_{\cV_o}
\big\}\;,
\end{equation}
equivalently the collection of the associated $1$-frames \eqref{eq:horizontal-1-frames}. By \eqref{eq:firstdifferencehorizontalframes}-\eqref{eq:firstdifferencehorizontalframesII} and Lemma \ref{Ldel}, this is a
principal bundle $\pi_{1}:\opp{Pr}_1(M,\cD,F_0)\to F_0$ over $F_0$ with Abelian structure group $G_1=\exp(\fg_1)$ consisting of all
$\psi\in \bigl(\fm^*\otimes\g\bigr)_1$ in the kernel of the Spencer operator $\delta$, i.e., of all elements of the first prolongation $\fg_1=\fg_0^{(1)}$.

The affine bundle $\opp{Pr}_1(M,\cD,F_0)$ may have a further reduction resulting in the first frame bundle
$F_1\subseteq\opp{Pr}_1(M,\cD,F_0)$.
By \eqref{eq:horizontal-1-frames},
a section $\varphi_1$ of $F_1$ over $\cV\subset F_0$ can be equivalently thought
as an element $\varphi_{1} : (\fg_{\leq 0})_{F_0} \to \gr^{[1]}(\cT F_0)$ such that
$\mathcal{H}_0=\opp{Im}(\varphi_1)$.

\smallskip

 \subsubsection{Higher frame bundles}
\label{subsec:hfb}
The higher frame bundles are constructed similarly.
We will not specify structure reductions anymore, denoting
(reduced or non-reduced) frame bundles by the same symbol $F_i$.

The construction is inductive.  For $\ell \geq 1$, suppose that we have constructed:
\begin{enumerate}
\item the affine bundle $\pi_{\ell} : F_\ell \to F_{\ell-1}$ with Abelian structure group $G_\ell$ with associated
Lie superalgebra $\fg_\ell = \fg_{\ell-1}^{(1)}$;
\item a decreasing filtration
\begin{equation}
\label{eq:dec-filtration}
\cT F_{\ell-1}=\cT^{-\mu} F_{\ell-1} \supset \ldots \supset \cT^{\ell-1} F_{\ell-1}=\ker(d\pi_{\ell-1})
\end{equation}
on $F_{\ell-1}$ with associated soldering form and vertical $(\ell-1)$-trivialization
\begin{equation}
\label{eq:solderingandvertical-l-1}
\begin{aligned}
\vartheta_{\ell-1}&= \{ \vartheta^i_{\ell-1} \}_{i < \ell-1} \in \Hom(\gr_{<\ell-1}(\cT F_{\ell-1}),(\fg_{<\ell-1})_{F_{\ell-1}})\;,\\
\gamma_{\ell-1}&: \cT^{\ell-1} F_{\ell-1} \stackrel{\cong}{\to} \cO_{F_{\ell-1}}\otimes\g_{\ell-1}\;.
\end{aligned}
\end{equation}
Henceforth, we write $\cT^i_j := \cT^i F_j$, with the understanding that  $\cT^i F_j = \mathcal T F_j$ for $i<-\mu$ and $\cT^i F_j = 0_{F_j}$ for $i> j$;
\item an $\ell$-frame, which is an injective morphism of sheaves of $\cA_{F_{\ell-1}}$-modules
\begin{align}
 \varphi_\ell: (\fg_{\leq \ell-1})_{F_{\ell-1}} \to \gr^{[\ell]}(\cT F_{\ell-1}),
 \end{align}
 where $\gr^{[\ell]}(\cT F_{\ell-1}) := \bigoplus_{i \leq \ell-1} \cT^i_{\ell-1} / \cT^{i+\ell+1}_{\ell-1}$.  (Equivalently, we have a section $\varphi_\ell$ of $\pi_{\ell} : F_\ell \to F_{\ell-1}$ over a superdomain $\cV\subset F_{\ell-1}$.)  The $\ell$-frame  $\varphi_\ell = \{ \varphi_\ell^i \}_{i \leq\ell-1}$ selects horizontal subspaces $\cH_{\ell-1} = \{ \cH^i_{\ell-1} \}_{i \leq \ell-1} = \opp{Im}(\varphi_\ell)$, with
 \begin{align}
 \cH_{\ell-1}^i \subset \cT^i_{\ell-1} / \cT^{i+\ell+1}_{\ell-1}
 \end{align}
for $i<0$, and
 \begin{align}
 \cH_{\ell-1}^i \subset \cT^i_{\ell-1}
 \end{align}
for $0\leq i\leq\ell-1$, with $\cH_{\ell-1}^{\ell-1}=\cT^{\ell-1}_{\ell-1}$. The component $\varphi^{\ell-1}_\ell$ is the identification of $(\fg_{\ell-1})_{F_{\ell-1}}$ with $\cT^{\ell-1}_{\ell-1}$ given by the vertical $(\ell-1)$-trivialization and the component $\varphi^{i}_\ell$ does not vary upon the action of the structure group $G_\ell$, for any $i\geq 0$.

Since the framework of supermanifolds does not allow to work at a fixed point, what we will really need is
the pull-back bundle $\pi_\ell^* F_\ell\to F_\ell$ with its canonical section. In other words $\varphi_\ell: (\fg_{\leq\ell-1})_{F_{\ell}} \to \pi_\ell^*\gr^{[\ell]}(\cT F_{\ell-1})$ and the sheaf $\cH_{\ell-1}^i$ is a subsheaf of $\pi^*_\ell\big(\cT^i_{\ell-1} / \cT^{i+\ell+1}_{\ell-1}\big)$ and $\pi^*_\ell\cT^i_{\ell-1}$ for, respectively, negative and non-negative indices.
\end{enumerate}
In this subsection, we construct the new horizontal subspaces $\cH_\ell = \{ \mathcal{H}^i_\ell \}_{i \leq\ell}$, which includes the construction for the non-negative indices $0\leq i \leq \ell$.

Via pullback by $d\pi_{\ell}$, the filtration \eqref{eq:dec-filtration} on $\cT F_{\ell-1}$ induces a filtration on $\cT F_\ell$:
\begin{align*}\xymatrixcolsep{0.5pc}
 \xymatrix{
 \cT F_\ell = & \cT^{-\mu}_\ell \ar[d]
 & \supset & \ldots
 & \supset & \cT^{\ell-1}_\ell \ar[d]
 & \supset & \cT^\ell_\ell = \ker(d\pi_{\ell})\\
 \cT F_{\ell-1} = & \cT^{-\mu}_{\ell-1}
 & \supset & \ldots
 & \supset & \cT^{\ell-1}_{\ell-1}}
 \end{align*}
where, as usual, we omit inverse image sheaf symbol $\pi_{\ell}^*$ for the sheaves in the bottom row.
It is important to note for later use that the filtration on $\cT F_\ell$ is respected by the Lie bracket only for {\it non-positive} filtration indices
(because of the Leibniz rule). For instance, the vertical subbundle $\cT^\ell_\ell$ is integrable and it also preserves all $\cT^i_\ell$ for $-\mu\leq i\leq\ell-1$, since the latter bundle is induced via pull-back. Similarly one has
\begin{equation}
\label{Liebracket-positive-smaller}
[\cT^{m}_\ell,\cT_\ell^{n}]\subset \cT^{n}_\ell
\end{equation}
for all $0\leq m\leq \ell$ and $-\mu\leq n\leq m$.

Note the isomorphism $\gr_{<\ell}(\cT F_\ell) \stackrel{\cong}{\to} \pi_{\ell}^*\gr(\cT F_{\ell-1})$ as sheaves
of $\cA_{F_\ell}$-modules.
The {\em soldering form} $\vartheta_\ell= \{ \vartheta^i_\ell \}_{i < \ell} \in \Hom(\gr_{<\ell}(\cT F_\ell), (\fg_{<\ell})_{F_{\ell}})$ on $F_\ell$ is defined by composing this isomorphism with the soldering form and the vertical trivialization on $F_{\ell-1}$, i.e., it is the pull-back via $\pi_\ell$ of the forms \eqref{eq:solderingandvertical-l-1}. We also have a canonical {\em vertical $\ell$-trivialization}
$\gamma_\ell: \cT^\ell_\ell\to \cO_{F_\ell}\otimes\g_\ell$.

 Consider the following two exact sequences
of sheaves over $F_\ell$ (the sequence over
$F_{\ell-1}$ lifts via the inverse image operation, 
the notation
of which we suppress again), with $i<0$:
\[
\xymatrix{
0 \ar[r] & \mathcal{T}_\ell^{i+\ell+1}/\mathcal{T}_\ell^{i+\ell+2}
\ar@/^/[r]^>>>{\iota^i_\ell} \ar@{.>}[d]|{0} &
\mathcal{T}_\ell^i/\mathcal{T}_\ell^{i+\ell+2}
\ar@/^/[r]^>>>{\jmath^i_\ell}\ar@{-->}[l]^{}
\ar[d]_{b^i_\ell} &
\mathcal{T}_\ell^i/\mathcal{T}_\ell^{i+\ell+1} \ar[r]\ar@{-->}[l]
\ar[ld]_{a^i_\ell} & 0 \\
0 \ar[r] & \mathcal{T}_{\ell-1}^{i+\ell}/\mathcal{T}_{\ell-1}^{i+\ell+1}
\ar@/^/[r]^>>>{\iota^i_{\ell-1}} &
\mathcal{T}_{\ell-1}^i/\mathcal{T}_{\ell-1}^{i+\ell+1}
\ar@/^/[r]^>>>{\jmath^i_{\ell-1}}\ar@{-->}[l]^{
}  &
\mathcal{T}_{\ell-1}^i/\mathcal{T}_{\ell-1}^{i+\ell} \ar[r]\ar@{-->}[l] & 0 \\
}
 \]
For all $i<0$, the differential $d\pi_{\ell}$ induces the map
$a^i_\ell : \cT^i_\ell / \cT^{i+\ell+1}_\ell \to \pi_\ell^*\big(\cT^i_{\ell-1} / \cT^{i+\ell+1}_{\ell-1}\big)$, which is an isomorphism. We then define the middle vertical map by $b^i_\ell := a^i_\ell \circ \jmath^i_\ell$. As in \S\ref{S:first-prolong}, we will later see in Lemma \ref{lem:splitting} that these sequences split, with dashed lines indicating left-inverses $h^i_j$ to $\iota^i_j$ and right-inverses $k^i_j := (\opp{id}-\,\iota_j^i\circ h^i_j)\circ(\jmath_j^i)^{-1}$ to $j^i_j$. We consider $\cH^i_\ell$ satisfying
 \begin{align}
\label{eq:choose-complement}
 \cT^i_\ell / \cT^{i+\ell+2}_\ell \supset (b^i_\ell)^{-1} \big(\cH^i_{\ell-1}\big) =\cH^i_\ell \oplus \opp{Im}(\iota^i_\ell)\;,
 \end{align}
so that the restriction of $b^i_\ell$ to $\cH^i_\ell$ defines an isomorphism $\cH^i_\ell\stackrel{\cong}{\to}\cH^i_{\ell-1}$. (We recall for reader's convenience that by definition
$\cH_{\ell-1}^i \subset \pi^*_\ell\big(\cT^i_{\ell-1} / \cT^{i+\ell+1}_{\ell-1}\big)$  and that $\opp{Ker}(b^i_\ell)=\opp{Ker}(\jmath^i_\ell)=\opp{Im}(\iota^i_\ell)$.)

For all $0\leq i\leq \ell-1$, we let $b_\ell^i:
\mathcal{T}_\ell^i\to\pi_{\ell}^*\mathcal{T}_{\ell-1}^i$
be the projection induced by the differential, whose kernel
is $\mathcal{T}_\ell^\ell$.
We then have the following exact sequences
 \[
\xymatrix@C+1pc{
0 \ar[r] & \mathcal{T}_\ell^{\ell}
\ar@/^/[r]^>>>{\iota^i_\ell} \ar@{.>}[d]|{0} &
\mathcal{T}_\ell^i
\ar@/^/[r]^>>>{\jmath^i_\ell}\ar@{-->}[l]^{}
\ar[d]_{b^i_\ell} &
\mathcal{T}_\ell^i/\mathcal{T}_\ell^{\ell} \ar[r]\ar@{-->}[l]
\ar[ld]_{a^i_\ell} & 0 \\
0 \ar[r] & \mathcal{T}_{\ell-1}^{\ell-1}
\ar@/^/[r]^>>>{\iota^i_{\ell-1}} &
\mathcal{T}_{\ell-1}^i
\ar@/^/[r]^>>>{\jmath^i_{\ell-1}}\ar@{-->}[l]^{
}  &
\mathcal{T}_{\ell-1}^i/\mathcal{T}_{\ell-1}^{\ell-1} \ar[r]\ar@{-->}[l] & 0 \\
}
 \]
with $a^i_\ell$ the isomorphism induced by $b^i_\ell$ on the quotient.
We choose a complement $\cH^i_\ell$ to $\mathcal{T}_\ell^\ell$ in
$(b_\ell^i)^{-1}(\cH^i_{\ell-1})$ for every $0\leq i\leq\ell-1$ as before in \eqref{eq:choose-complement}, namely
 \begin{align}
\label{eq:choose-complement+}
\cT^i_\ell \supset (b^i_\ell)^{-1} \big(\cH^i_{\ell-1}\big) =\cH^i_\ell \oplus \cT^\ell_\ell\;,
 \end{align}
and set $\cH_{\ell}^{\ell}=\cT^{\ell}_{\ell}$. Dashed lines indicate
the respective inverses $h^i_j$ and $k^i_j$ to $\iota^i_j$ and $j^i_j$.

Note that $\cH^i_\ell\stackrel{\cong}{\to}\cH^i_{\ell-1}$ via $b^i_\ell$ for all $i\leq \ell-1$, the inverse of which we denote by $(b^i_\ell)^{-1}$.
We set
$$\gr^{[\ell+1]}(\cT F_\ell) := \bigoplus_{i\leq\ell} \cT^i_\ell / \cT^{i+\ell+2}_\ell$$ and define an $(\ell+1)$-frame
$
 \varphi_{\ell+1} : (\fg_{\leq\ell})_{F_\ell} \to \gr^{[\ell+1]}(\cT F_\ell)
$ by $\varphi^i_{\ell+1} := (b^i_\ell)^{-1}\circ \varphi^i_{\ell}$ for $i\leq \ell-1$ and using the principal bundle structure
via $\varphi^i_{\ell+1}:=(\gamma_\ell)^{-1}$ for $i=\ell$. We also note that $\cH_\ell =\{\cH^i_\ell\}_{i\leq\ell}=\opp{Im}(\varphi_{\ell+1})$ and that each component
$$\varphi^i_{\ell+1}:\cA_{F_\ell}\otimes\g_i\to\mathcal{T}_{\ell}^i/\mathcal{T}_{\ell}^{i+\ell+2}$$
is an embedding that projects to an isomorphism $\cA_{F_\ell}\otimes\g_i\cong
\mathcal{T}_{\ell}^i/\mathcal{T}_{\ell}^{i+1}$. (Because $\mathcal{T}^k_\ell=0_{F_\ell}$ for $k>\ell$, there is no truncation for $i\geq -1$, that is $\varphi^{i}_{\ell+1}(v)$ is a vector field on $F_\ell$ for any $v\in\g_i$ with $i\geq-1$.)
\begin{lemma}
\label{lem:complements}
Given $\cH_\ell = \{ \mathcal{H}^i_\ell \}_{i \leq\ell}$, we have
\begin{itemize}
	\item[$(i)$] $\cT^i_\ell/\cT^{i+\ell+2}_\ell=\cT^{i+1}_\ell/\cT^{i+\ell+2}_\ell\oplus\cH_\ell^i$ for all $i\leq \ell$,
	\item[$(ii)$] $\cT^{i+s}_\ell/\cT^{i+\ell+2}_\ell=\cT^{i+s+1}_\ell/\cT^{i+\ell+2}_\ell\oplus \pi^{s}_\ell(\cH^{i+s}_\ell)$ for all $0\leq s\leq \ell+1$ and $i+s\leq \ell$,
\end{itemize}
where $\pi_\ell^s:\cT^{i+s}_\ell/\cT^{i+s+\ell+2}_\ell\to\cT^{i+s}_\ell/\cT^{i+\ell+2}_\ell$ is the natural projection.
\end{lemma}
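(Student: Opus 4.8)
The plan is to derive part $(i)$ directly from the construction of the $(\ell+1)$-frame $\varphi_{\ell+1}$ together with the inductive hypotheses, and then to obtain part $(ii)$ from part $(i)$ by pushing the resulting splitting forward along the natural projection $\pi^s_\ell$. All sheaves in sight are locally free and the relevant subsheaves are locally direct factors (cf.\ \S\ref{subbundles}), so every ``$\oplus$'' assertion below may be verified fibrewise; once this reduction is in place the proof is pure index bookkeeping in finite-dimensional linear algebra, with parity and $\cA_{F_\ell}$-linearity playing no role. First I would dispose of the top index $i=\ell$ in $(i)$: there $\cH^\ell_\ell=\cT^\ell_\ell$ by definition while $\cT^{\ell+1}_\ell=0_{F_\ell}=\cT^{2\ell+2}_\ell$, so the claimed decomposition is just $\cT^\ell_\ell=0\oplus\cT^\ell_\ell$. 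For $i\le\ell-1$, recall $\cH^i_\ell=\opp{Im}(\varphi^i_{\ell+1})$, and that $\varphi^i_{\ell+1}\colon\cA_{F_\ell}\otimes\fg_i\to\cT^i_\ell/\cT^{i+\ell+2}_\ell$ is an embedding whose composition with the projection onto $\cT^i_\ell/\cT^{i+1}_\ell$ is an isomorphism; since the kernel of that projection is precisely $\cT^{i+1}_\ell/\cT^{i+\ell+2}_\ell$, this exactly says that $\cH^i_\ell$ is a complement to $\cT^{i+1}_\ell/\cT^{i+\ell+2}_\ell$, which is $(i)$.

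If one prefers to argue from the defining relations \eqref{eq:choose-complement} and \eqref{eq:choose-complement+} rather than from the stated property of $\varphi_{\ell+1}$, the same conclusion follows by a ``preimage of a complement'' argument. The map $b^i_\ell$ is surjective (being $a^i_\ell\circ\jmath^i_\ell$, an isomorphism followed by a quotient map) with kernel $\opp{Im}(\iota^i_\ell)=\cT^{i+\ell+1}_\ell/\cT^{i+\ell+2}_\ell$ for $i<0$, resp.\ $\cT^\ell_\ell$ for $0\le i\le\ell-1$; in either regime the kernel is contained in $\cT^{i+1}_\ell/\cT^{i+\ell+2}_\ell$ (resp.\ $\cT^{i+1}_\ell$), using only $\ell\ge0$ (resp.\ $i\le\ell-1$). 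Moreover $(b^i_\ell)^{-1}$ sends the subsheaf $\cT^{i+1}_{\ell-1}/\cdots$ of $\cT^i_{\ell-1}/\cdots$ onto $\cT^{i+1}_\ell/\cdots$ (because the filtration on $\cT F_\ell$ is the $d\pi_\ell$-pullback of that on $\cT F_{\ell-1}$), and $(b^i_\ell)^{-1}(\cH^i_{\ell-1})=\cH^i_\ell\oplus\opp{Ker}(b^i_\ell)$ by \eqref{eq:choose-complement}--\eqref{eq:choose-complement+}. Feeding in the inductive splitting $\cT^i_{\ell-1}/\cdots=\cT^{i+1}_{\ell-1}/\cdots\oplus\cH^i_{\ell-1}$ (the corresponding property of the $\ell$-frame $\varphi_\ell$) and using that preimages under a surjection respect sums and intersections, one obtains $\cT^i_\ell/\cT^{i+\ell+2}_\ell=\cT^{i+1}_\ell/\cT^{i+\ell+2}_\ell\oplus\cH^i_\ell$, absorbing $\opp{Ker}(b^i_\ell)$ into the first summand and using $\cH^i_\ell\cap\opp{Ker}(b^i_\ell)=0$ for directness.

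For $(ii)$, fix $0\le s\le\ell+1$ with $j:=i+s\le\ell$ and apply $(i)$ at the index $j$, getting $\cT^j_\ell/\cT^{j+\ell+2}_\ell=\cT^{j+1}_\ell/\cT^{j+\ell+2}_\ell\oplus\cH^j_\ell$. Then push this forward along the natural projection $\pi^s_\ell\colon\cT^j_\ell/\cT^{j+\ell+2}_\ell\to\cT^j_\ell/\cT^{i+\ell+2}_\ell$, whose kernel is $\cT^{i+\ell+2}_\ell/\cT^{j+\ell+2}_\ell$. The crucial observation is that this kernel lies inside the first summand $\cT^{j+1}_\ell/\cT^{j+\ell+2}_\ell$: indeed $i+\ell+2\ge j+1$ is equivalent to $s\le\ell+1$, which is the standing hypothesis. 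Hence $\pi^s_\ell$ is injective on $\cH^j_\ell$ and maps the first summand onto $\cT^{j+1}_\ell/\cT^{i+\ell+2}_\ell$, so the decomposition descends to $\cT^j_\ell/\cT^{i+\ell+2}_\ell=\cT^{j+1}_\ell/\cT^{i+\ell+2}_\ell\oplus\pi^s_\ell(\cH^j_\ell)$, which is $(ii)$ (the case $s=0$, $\pi^0_\ell=\1$, recovering $(i)$).

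The part deserving the most care—and essentially the only obstacle—is the bookkeeping forced by the two regimes of filtration indices: for $i<0$ the three-term sequences carry $\opp{Im}(\iota^i_\ell)=\cT^{i+\ell+1}_\ell/\cT^{i+\ell+2}_\ell$ in the slot $\opp{Ker}(b^i_\ell)$, whereas for $0\le i\le\ell$ this kernel is the honest vertical sheaf $\cT^\ell_\ell$ and no truncation occurs; in each case one must separately check that $\opp{Ker}(b^i_\ell)\subseteq\cT^{i+1}_\ell/\cdots$ (resp.\ $\subseteq\cT^{i+1}_\ell$) and that $\cH^i_\ell$ meets it trivially, and in $(ii)$ that $\ker(\pi^s_\ell)$ sits in the first summand of $(i)$. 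Once these containments are tracked, there is no further difficulty: everything reduces, fibrewise, to elementary linear algebra of complements under surjections and quotients.
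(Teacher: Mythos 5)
Your proof is correct and follows essentially the same route as the paper: part $(ii)$ is deduced from $(i)$ exactly as there, by applying $(i)$ at the index $i+s$ and quotienting by $\cT^{i+\ell+2}_\ell/\cT^{i+s+\ell+2}_\ell$, noting $\ker(\pi^s_\ell)$ sits in the first summand so that $\pi^s_\ell|_{\cH^{i+s}_\ell}$ is injective. For $(i)$ the paper omits its ``proof by induction'', and your second argument (preimage of the level-$(\ell-1)$ splitting under $b^i_\ell$, using \eqref{eq:choose-complement}--\eqref{eq:choose-complement+}) supplies precisely that induction, while your first argument is just the shortcut via the noted projection property of $\varphi^i_{\ell+1}$; the only blemish is the harmless slip describing $b^i_\ell=a^i_\ell\circ\jmath^i_\ell$ as ``an isomorphism followed by a quotient map'' rather than the reverse.
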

We omit the proof by induction of $(i)$ for the sake of brevity. Claim $(ii)$ follows from $(i)$ considering $i+s$ instead of $i$ and taking the quotient by $\cT^{i+\ell+2}_{\ell}/\cT^{i+s+\ell+2}_\ell$. We note that $\pi^{s}_\ell(\cH^{i+s}_\ell)\cong \cH^{i+s}_\ell$ in $(ii)$.
The following result is then a straightforward consequence of $(ii)$.
 \begin{proposition}
\label{lem:splitting}
Given $\cH_\ell = \{ \mathcal{H}^i_\ell \}_{i \leq\ell}$, we have
\begin{equation}
\label{eq:splitting-at-level-l}
\begin{aligned}
\cT^i_\ell/\cT^{i+\ell+2}_\ell&=\bigoplus_{0\leq s\leq\ell} \pi_\ell^{s}(\cH^{i+s}_\ell)\oplus \cT^{i+\ell+1}_\ell/\cT^{i+\ell+2}_\ell\\
&\cong \bigoplus_{0\leq s\leq\ell} \cH^{i+s}_\ell\oplus \cT^{i+\ell+1}_\ell/\cT^{i+\ell+2}_\ell
\end{aligned}
\end{equation}
for all $i\leq\ell-1$, where $\pi_\ell^s:\cT^{i+s}_\ell/\cT^{i+s+\ell+2}_\ell\to\cT^{i+s}_\ell/\cT^{i+\ell+2}_\ell$ is the natural projection.
In particular
\begin{equation}
\opp{Ker}(h^i_\ell) = \opp{Im}(k^i_\ell)\cong
\begin{cases}
\bigoplus_{i\leq s\leq \ell+i} \cH^{s}_\ell&\text{if}\;\;i<0\;,\\
\bigoplus_{i\leq s\leq\ell-1} \cH^{s}_\ell&\text{if}\;\;0\leq i\leq \ell-1\;,\\
\end{cases}
\end{equation}
are the complements to $\cT^{i+\ell+1}_\ell/\cT^{i+\ell+2}_\ell$ and $\cT^\ell_\ell$, respectively.
\end{proposition}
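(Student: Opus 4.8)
The plan is to obtain \eqref{eq:splitting-at-level-l} by iterating telescopically the one-step splitting of Lemma~\ref{lem:complements}$(ii)$, and then to extract the description of $\opp{Ker}(h^i_\ell)=\opp{Im}(k^i_\ell)$ directly from the short exact sequences that define $h^i_\ell$ and $k^i_\ell$.

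First I would fix $i\leq\ell-1$ and apply Lemma~\ref{lem:complements}$(ii)$ with this $i$ and the successive values $s=0,1,\dots,\ell$. The instance $s=0$ reads $\cT^i_\ell/\cT^{i+\ell+2}_\ell=\cT^{i+1}_\ell/\cT^{i+\ell+2}_\ell\oplus\pi^0_\ell(\cH^i_\ell)$; substituting the first summand using the instance $s=1$, then the result using $s=2$, and so on, after $\ell+1$ steps I reach
\[
\cT^i_\ell/\cT^{i+\ell+2}_\ell=\Bigl(\bigoplus_{0\leq s\leq\ell}\pi^s_\ell(\cH^{i+s}_\ell)\Bigr)\oplus\cT^{i+\ell+1}_\ell/\cT^{i+\ell+2}_\ell,
\]
which is the first line of \eqref{eq:splitting-at-level-l}; here the successive substitutions automatically make the sum direct inside the common ambient sheaf $\cT^i_\ell/\cT^{i+\ell+2}_\ell$. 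The second line of \eqref{eq:splitting-at-level-l} then follows from the observation recorded just before the Proposition that each $\pi^s_\ell$ restricts to an isomorphism $\cH^{i+s}_\ell\stackrel{\cong}{\to}\pi^s_\ell(\cH^{i+s}_\ell)$. Along the way one records the truncations: a summand with $i+s>\ell$ is zero since $\cT^{i+s}_\ell=0_{F_\ell}$, and for $0\leq i\leq\ell-1$ one has $\cT^{i+\ell+1}_\ell=\cT^{i+\ell+2}_\ell=0_{F_\ell}$, so the decomposition collapses to $\cT^i_\ell=\bigoplus_{i\leq t\leq\ell}\cH^t_\ell$ with $\cH^\ell_\ell=\cT^\ell_\ell$.

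For the final assertion, recall that $h^i_\ell$ is a left inverse of $\iota^i_\ell$ and $k^i_\ell=(\opp{id}-\iota^i_\ell\circ h^i_\ell)\circ(\jmath^i_\ell)^{-1}$, so by the splitting lemma $\opp{Ker}(h^i_\ell)=\opp{Im}(k^i_\ell)$ is precisely the complement to $\opp{Im}(\iota^i_\ell)$ selected inside the middle term of the relevant sequence. For $i<0$ this is the short exact sequence preceding \eqref{eq:choose-complement}, where $\opp{Im}(\iota^i_\ell)=\cT^{i+\ell+1}_\ell/\cT^{i+\ell+2}_\ell$; comparing with \eqref{eq:splitting-at-level-l}, the complement is $\bigoplus_{0\leq s\leq\ell}\pi^s_\ell(\cH^{i+s}_\ell)\cong\bigoplus_{i\leq s\leq\ell+i}\cH^s_\ell$. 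For $0\leq i\leq\ell-1$ this is the short exact sequence preceding \eqref{eq:choose-complement+}, where $\opp{Im}(\iota^i_\ell)=\cT^\ell_\ell=\cH^\ell_\ell$, which occurs as the top term of the (now finite) decomposition $\cT^i_\ell=\bigoplus_{i\leq t\leq\ell}\cH^t_\ell$; hence the complement is $\bigoplus_{i\leq s\leq\ell-1}\cH^s_\ell$, as claimed.

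There is no genuinely hard step here -- the content is entirely bookkeeping with the filtration -- but I expect the points requiring care to be: (a) checking that each invocation of Lemma~\ref{lem:complements}$(ii)$ in the telescoping either falls within its stated range $0\leq s\leq\ell+1$, $i+s\leq\ell$ or is trivially valid because $\cT^k_\ell=0_{F_\ell}$ for $k>\ell$; and (b) keeping the two short exact sequences preceding \eqref{eq:choose-complement} (used for $i<0$) and \eqref{eq:choose-complement+} (used for $0\leq i\leq\ell-1$) correctly paired with the two index ranges in the statement, so that $\opp{Im}(\iota^i_\ell)$ is identified with the correct summand of \eqref{eq:splitting-at-level-l} in each case.
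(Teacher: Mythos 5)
Your proposal is correct and follows essentially the paper's own route: the paper simply declares the Proposition "a straightforward consequence" of Lemma~\ref{lem:complements}$(ii)$, and your telescoping of that splitting for $s=0,\dots,\ell$ (using $\cT^k_\ell=0_{F_\ell}$ for $k>\ell$ to handle the truncations, and the noted isomorphism $\pi^s_\ell(\cH^{i+s}_\ell)\cong\cH^{i+s}_\ell$) is exactly the intended argument. Your identification of $\opp{Ker}(h^i_\ell)=\opp{Im}(k^i_\ell)$ with the complement to $\opp{Im}(\iota^i_\ell)$ in the two short exact sequences, matched to the cases $i<0$ and $0\leq i\leq\ell-1$, likewise agrees with the paper.
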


%


Let us take another complement $\widetilde{\mathcal{H}_\ell}=\{ \widetilde\cH^i_\ell \}_{i \leq \ell}$ constructed as before and the associated $(\ell+1)$-frame $ \widetilde\varphi_{\ell+1}$. By construction, for any $v_i\in(\g_i)_{F_\ell}$, we have that $\widetilde\varphi_{\ell+1}(v_i)-\varphi_{\ell+1}(v_i)$
is an element of $\mathcal{T}_\ell^{i+\ell+1}/\mathcal{T}_\ell^{i+\ell+2}$ if $i<0$
and $\cT^\ell_\ell$ if $0\leq i\leq \ell-1$. Hence
\begin{align}
\label{eq:firstdifferencehorizontalframes-l}
\vartheta^{i+\ell+1}_\ell\big(\widetilde\varphi_{\ell+1}(v_i)-\varphi_{\ell+1}(v_i)\big)&=\psi(v_i)\;\;\text{for}\;\;i<-1\;,\\
\label{eq:firstdifferencehorizontalframesII-l}
\gamma_\ell\big(\widetilde\varphi_{\ell+1}(v_i)-\varphi_{\ell+1}(v_i)\big)&=\psi(v_i)\;\;\text{for}\;\;-1\leq i\leq \ell-1\;,
\end{align}
for some morphism $\psi:(\fg_{\leq\ell-1})_{F_\ell}\rightarrow(\fg_{\leq\ell})_{F_\ell}$ of sheaves of $\cA_{F_\ell}$-modules. It is clear that
the components
\begin{align}
\psi_{-}&:\fm_{F_\ell}\to\fg_{F_\ell}\;,\\
\psi_{+}&:(\fg_0\oplus\cdots\oplus\fg_{\ell-1})_{F_\ell}\to(\fg_{\ell})_{F_\ell}
\end{align}
are elements of even parity, with the first component having $\mathbb Z$-degree $(\ell+1)$. In other words $\psi_{-}$ is an even element of $\cO_{F_\ell}\otimes\bigl(\fm^*\otimes\g\bigr)_{\ell+1}$ and $\psi_{+}$ of
$\cO_{F_\ell}\otimes\bigl((\fg_{\leq \ell-1}^+)^*\otimes\g_{\ell}\bigr)$.
Conversely,
given any such $$\psi=\psi_{-}+\psi_{+}$$ there is a unique complement $\widetilde{\mathcal{H}_\ell}=\{ \widetilde\cH^i_\ell \}_{i \leq \ell}$ with the required properties.
\subsubsection{Normalization conditions}
\label{subsec:normcond}
In this section, we detail the normalization conditions to be enforced on the $(\ell+1)$-frames.
Since the Lie bracket is compatible with the filtration on $\cT F_\ell$ only for non-positive filtration indices,
we first need to collect some finer properties satisfied by the frames.
\begin{lemma}
\label{lemma:specialbracketsI}
Let $\zeta\in\cT^{k}_{\ell}$ with $0\leq k\leq \ell$ and $\varphi_{\ell+1} : (\fg_{\leq\ell})_{F_\ell} \to \gr^{[\ell+1]}(\cT F_\ell)$ be an $(\ell+1)$-frame. Then, for $v_i\in\fg_i$, $i<0$:
\begin{equation}
\begin{aligned}
\;[\zeta,\varphi_{\ell+1}(v_i)]&\in
\begin{cases}
\cT^{k-1}_{\ell}\quad&\text{if}\;i=-1;\\
\cT^{i+k}_{\ell}/\cT^{k}_{\ell}\quad&\text{if}\;k-\ell-2< i\leq-2;\\
\cT^{i+k}_{\ell}/\cT^{i+\ell+2}_{\ell}\quad&\text{if}\; -\mu\leq i\leq k-\ell-2.\\
\end{cases}
\end{aligned}
\end{equation}
\end{lemma}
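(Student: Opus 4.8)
The plan is to track how the filtration degrees shift under the bracket with a vertical-ish vector field $\zeta\in\cT^k_\ell$, $0\le k\le\ell$, using the two structural facts already established: (a) the filtration on $\cT F_\ell$ is respected by the bracket for \emph{non-positive} filtration indices, and (b) the refined statement \eqref{Liebracket-positive-smaller}, namely $[\cT^m_\ell,\cT^n_\ell]\subset\cT^n_\ell$ for $0\le m\le\ell$ and $-\mu\le n\le m$. The vector field $\varphi_{\ell+1}(v_i)$ represents a class in $\cH^i_\ell\subset\cT^i_\ell/\cT^{i+\ell+2}_\ell$ (no truncation when $i\ge-1$, as noted after the definition of $\varphi_{\ell+1}$), so a representative lies in $\cT^i_\ell$. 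First I would fix such a representative and compute $[\zeta,\varphi_{\ell+1}(v_i)]$ modulo the ambiguity $[\zeta,\cT^{i+\ell+2}_\ell]$, which by (a) — valid since $i+\ell+2>0$ forces us instead to invoke the pull-back description: $\cT^{i+\ell+2}_\ell$ is the pull-back $\pi_\ell^*\cT^{i+\ell+2}_{\ell-1}$ together with possibly $\cT^\ell_\ell$, and $\zeta\in\cT^k_\ell$ with $k\le\ell$ preserves each pulled-back $\cT^j_\ell$ for $j\le\ell-1$ by the very construction of the filtration via $d\pi_\ell$, while $[\cT^k_\ell,\cT^\ell_\ell]\subset\cT^\ell_\ell$ by (b) — so the ambiguity stays inside $\cT^{i+\ell+2}_\ell$ (or $\cT^k_\ell$ in the $i=-1$ regime), making the three claimed memberships well-posed as statements about quotients.

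The three cases then come from a degree count. Case $i=-1$: here $v_{-1}\in\fg_{-1}$, $\varphi_{\ell+1}(v_{-1})$ is an honest vector field in $\cT^{-1}_\ell$; since $-1\le0\le k$ and $-1\ge-\mu$, fact (b) with $m=k$, $n=-1$ gives $[\zeta,\cT^{-1}_\ell]\subset\cT^{-1}_\ell$, but we want the sharper $\cT^{k-1}_\ell$. For this I would instead argue that $\zeta$ modulo $\cT^\ell_\ell$ maps, under $d\pi_\ell$, to a section of $\cT^k_{\ell-1}$, and inductively (using the analogous statement one level down, the base case $\ell=0$ being \eqref{eq:filtrationTF_0}) the bracket of a degree-$k$ and a degree-$(-1)$ field on $F_{\ell-1}$ lands in degree $k-1$; lifting back and controlling the vertical correction by $[\cT^k_\ell,\cT^\ell_\ell]\subset\cT^\ell_\ell\subset\cT^{k-1}_\ell$ (as $k-1\le\ell$) closes it. Cases $-2\ge i>k-\ell-2$ and $-\mu\le i\le k-\ell-2$: now $i\le-2$, so $i<0$ and $[\zeta,\cdot]$ with $\zeta$ of positive degree $k$ must be handled by (b) again, giving $[\zeta,\cT^i_\ell]\subset\cT^i_\ell$; the improvement to $\cT^{i+k}_\ell$ comes, as in the $i=-1$ case, from pushing down to $F_{\ell-1}$ where the bracket does respect the filtration additively (both indices being, after truncation, within the inductively-valid range), and the split into two subcases is dictated purely by whether $i+\ell+2>k$ or $\le k$, i.e.\ whether the natural receptacle of $[\zeta,\varphi_{\ell+1}(v_i)]$ is cut off by $\cT^{i+\ell+2}_\ell$ or already by $\cT^k_\ell$.

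The main obstacle is the bookkeeping with the pull-back filtration and the failure of Leibniz-compatibility at positive indices: one cannot simply write ``$[\cT^k_\ell,\cT^i_\ell]\subset\cT^{i+k}_\ell$'' because that additive rule is false on $F_\ell$ for $k>0$; the whole point is that it becomes true after applying $d\pi_\ell$ and descending to $F_{\ell-1}$, where $k$ has effectively been shifted down by the vertical trivialization. So the real work is an induction on $\ell$ in which the statement of Lemma~\ref{lemma:specialbracketsI} for $F_\ell$ is reduced to the same statement for $F_{\ell-1}$ plus the elementary facts (a), (b), and the description of $\cT^\bullet_\ell$ as pull-back of $\cT^\bullet_{\ell-1}$ extended by the integrable vertical $\cT^\ell_\ell$. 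I expect the descent step itself to be short once the indices are lined up; the care lies entirely in verifying that every ``$\mathrm{mod}\ \cT^{\bullet}_\ell$'' appearing in the statement is legitimate, i.e.\ that the indicated bracket is independent of the representatives chosen for $\zeta$ and $\varphi_{\ell+1}(v_i)$ within the relevant quotients.
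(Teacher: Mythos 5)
The paper states this lemma without proof, so your proposal has to stand on its own; as written it has a genuine gap located exactly where the lemma has content. Your strategy is to obtain the degree shift by induction on $\ell$, pushing everything down to $F_{\ell-1}$ via $d\pi_\ell$ and controlling the remainder by the two formal facts (a) and (b). But the genuinely new directions at level $\ell$ are the vertical ones $\mathcal{T}^\ell_\ell=\ker(d\pi_\ell)$, and for these the descent gives nothing: if $\zeta$ is vertical (the case $k=\ell$, or the vertical component of a general $\zeta$ after you split off a lift), fact \eqref{Liebracket-positive-smaller} only yields $[\mathcal{T}^\ell_\ell,\mathcal{T}^i_\ell]\subset\mathcal{T}^i_\ell$, which is far weaker than the required $\mathcal{T}^{i+\ell}_\ell\subset\mathcal{T}^{i+k}_\ell$ (e.g.\ $\mathcal{T}^{\ell-1}_\ell$ when $i=-1$), and the containment you invoke instead, ``$[\mathcal{T}^k_\ell,\mathcal{T}^\ell_\ell]\subset\mathcal{T}^\ell_\ell$ by (b)'', is both a misreading of \eqref{Liebracket-positive-smaller} (which places that bracket only in the larger sheaf $\mathcal{T}^k_\ell$; brackets with vertical fields do not stay vertical) and not the bracket that actually needs controlling, which is $[\mathcal{T}^\ell_\ell,\varphi_{\ell+1}(v_i)]$. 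The missing ingredient is the construction of $\varphi_{\ell+1}$ itself: its components with $i\leq\ell-1$ are the tautological lifts $(b^i_\ell)^{-1}\circ\varphi^i_\ell$ of the canonical section of $\pi_\ell^*F_\ell$, so their variation along the fibres of $F_\ell\to F_{\ell-1}$ is governed by the $G_\ell$-action on $\ell$-frames, i.e.\ by $\psi=\ad_A|_{\fm}$ of $\mathbb Z$-degree $\ell$ (the infinitesimal version, one level down, of \eqref{eq:firstdifferencehorizontalframes-l}--\eqref{eq:firstdifferencehorizontalframesII-l}); it is precisely this equivariance that produces the shift to degree $i+\ell$, and no combination of (a), (b) and the inductive hypothesis on $F_{\ell-1}$ can substitute for it.

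Two further corrections. Your well-posedness discussion misfires: a general $\zeta\in\mathcal{T}^k_\ell$ does \emph{not} preserve the pulled-back subsheaves $\mathcal{T}^j_\ell$ for $j>k$ (only vertical fields do), and the ambiguity $[\zeta,\mathcal{T}^{i+\ell+2}_\ell]$ is controlled by \eqref{Liebracket-positive-smaller} only inside $\mathcal{T}^{\min(k,\,i+\ell+2)}_\ell$; when $i+\ell+2>k$ it lies merely in $\mathcal{T}^k_\ell$, which is exactly why the second case of the statement is a congruence modulo $\mathcal{T}^k_\ell$ rather than modulo $\mathcal{T}^{i+\ell+2}_\ell$ (your closing sentence recognizes this dichotomy, contradicting your earlier assertion), while for $i=-1$ there is no ambiguity at all since $\varphi_{\ell+1}(v_{-1})$ is an untruncated supervector field. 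Also, in the descent step the phrase that on $F_{\ell-1}$ ``the bracket does respect the filtration additively'' is false for positive indices there as well; what you may legitimately use is the lemma itself at level $\ell-1$ applied to $\varphi_\ell$, plus Leibniz bookkeeping for the non-projectable coefficients of $\zeta$ (those terms land in $\mathcal{T}^k_\ell\subset\mathcal{T}^{i+k}_\ell$, so they are harmless). That part of your plan is sound, but the induction only closes once the vertical/equivariance computation above is added.
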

Given a choice of complements $\cH_\ell=\{\cH^i_\ell\}_{i\leq\ell}$, we define the $(\ell+1)$-th {\em horizontal structure function}
$c^{-}_{\mathcal{H}_\ell}\in
\cO_{F_\ell} \otimes
(\Lambda^2\fm^*\otimes\g_{< \ell})$ on the entries $v_k\in\g_k$, $k<0$, by
 \begin{align}
\label{eq:horizoontalstructurefunction!}
c^-_{\mathcal{H}_\ell}(v_i,v_j)=\vartheta^{i+j+\ell+1}_\ell \Bigl(h^{i+j}_\ell
\bigl(\bigl[\varphi_{\ell+1}
(v_i),\varphi_{\ell+1}(v_j)\bigr]
\,\opp{mod}\mathcal{T}^{i+j+\ell+2}_\ell
\bigr)\Bigr)
 \end{align}
and extending by $\cA_{F_\ell}$-linearity to the entries from $(\g_k)_{F_\ell}=\cA_{F_\ell}\otimes \fg_k$.
Evidently $[\cT^i_\ell,\cT^j_\ell]\subset\cT^{i+j}_\ell$ as $i,j<0$. However,  the Lie bracket is compatible with the filtration on $\cT F_\ell$ only for the non-positive filtration indices, so the fact that \eqref{eq:horizoontalstructurefunction!} is well-defined deserves an additional explanation: we show that the input of $\vartheta_\ell\circ h_\ell$ above is a well-defined element in $ \cT^{i+j}_\ell / \cT^{i+j+\ell+2}_\ell$.
\begin{lemma}
The horizontal structure function $c^{-}_{\mathcal{H}_\ell}$ is well-defined.
\end{lemma}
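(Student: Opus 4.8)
The content of the lemma is that the right-hand side of \eqref{eq:horizoontalstructurefunction!} involves forming $[\varphi_{\ell+1}(v_i),\varphi_{\ell+1}(v_j)]$, yet $\varphi_{\ell+1}(v_i)$ is only a class in $\cT^i_\ell/\cT^{i+\ell+2}_\ell$ and the Lie bracket does not respect the filtration in nonnegative degrees, so this bracket is a priori ill-defined. The plan is to show that, after lifting $\varphi_{\ell+1}(v_i)$ and $\varphi_{\ell+1}(v_j)$ to genuine supervector fields $\xi_i\in\cT^i_\ell$, $\xi_j\in\cT^j_\ell$, the class of $[\xi_i,\xi_j]$ in $\cT^{i+j}_\ell/\cT^{i+j+\ell+2}_\ell$ is independent of the chosen lifts. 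Since $[\xi_i,\xi_j]\in\cT^{i+j}_\ell$ already (the filtration being a filtration of Lie superalgebras in the negative degrees), this yields a well-defined element of $\cT^{i+j}_\ell/\cT^{i+j+\ell+2}_\ell$, to which $h^{i+j}_\ell$ and then $\vartheta^{i+j+\ell+1}_\ell$ may be applied (the latter being defined since $i+j+\ell+1\leq\ell-1<\ell$). Thus $c^-_{\cH_\ell}(v_i,v_j)\in\cO_{F_\ell}\otimes\fg_{i+j+\ell+1}$, and the $\cA_{F_\ell}$-linear extension to entries from $(\fg_k)_{F_\ell}$ is then automatic. (No lifting is involved in a slot of index $-1$, where $\varphi^{-1}_{\ell+1}$ is already a genuine supervector field because $\cT^{\ell+1}_\ell=0_{F_\ell}$.)

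For the lift-independence we vary $\xi_i$ to $\xi_i+\eta$ with $\eta\in\cT^{i+\ell+2}_\ell$, and symmetrically $\xi_j$, and check that each of $[\eta,\xi_j]$, $[\xi_i,\eta']$, $[\eta,\eta']$ lies in $\cT^{i+j+\ell+2}_\ell$. If $i+\ell+2\leq0$, then $\eta$ and $\xi_j$ both sit in the nonpositive part of the filtration, where it is bracket-compatible, so $[\eta,\xi_j]\subseteq[\cT^{i+\ell+2}_\ell,\cT^j_\ell]\subseteq\cT^{i+j+\ell+2}_\ell$. If $i+\ell+2>0$, then by part $(i)$ of Lemma \ref{lem:complements} (iterated) one has $\cT^{i+\ell+2}_\ell=\bigoplus_{k=i+\ell+2}^{\ell}\cH^k_\ell$ with $\cH^k_\ell=\opp{Im}(\varphi^k_{\ell+1})$ consisting of genuine supervector fields, so $\eta=\sum_k\varphi^k_{\ell+1}(w_k)$ and $[\eta,\xi_j]=\sum_k[\varphi^k_{\ell+1}(w_k),\xi_j]$. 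Applying Lemma \ref{lemma:specialbracketsI} with $\zeta=\varphi^k_{\ell+1}(w_k)\in\cT^k_\ell$ shows that each summand lies in $\cT^{j+k}_\ell$ (or in $\cT^{k-1}_\ell$ if $j=-1$): the quotients $\cT^{j+k}_\ell/\cT^k_\ell$ and $\cT^{j+k}_\ell/\cT^{j+\ell+2}_\ell$ appearing in that lemma lift to genuine supervector fields in $\cT^{j+k}_\ell$, because $\cT^k_\ell,\cT^{j+\ell+2}_\ell\subseteq\cT^{j+k}_\ell$. As $k\geq i+\ell+2$ and $j<0$ we get $j+k\geq i+j+\ell+2$ (respectively $k-1\geq i+j+\ell+2$ when $j=-1$), whence $[\eta,\xi_j]\in\cT^{i+j+\ell+2}_\ell$. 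The term $[\xi_i,\eta']$ is symmetric, and $[\eta,\eta']$ is easier still, being controlled by \eqref{Liebracket-positive-smaller} together with the inclusions $\cT^{i+\ell+2}_\ell,\cT^{j+\ell+2}_\ell\subseteq\cT^{i+j+\ell+2}_\ell$ (valid since $i,j<0$).

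The main obstacle is exactly the non-compatibility of the Lie bracket with the nonnegative part of the filtration on $\cT F_\ell$: it rules out the simple argument of the $\ell=0$ case in \S\ref{S:first-prolong} and forces one to route every bracket of a positive-degree frame direction against a negative-degree frame component through the refined Lemma \ref{lemma:specialbracketsI}. The delicate step is therefore the bookkeeping that matches each such bracket to the correct one of the three cases of that lemma and verifies, via $k\geq i+\ell+2$ and $i,j<0$, that the resulting filtration index is $\geq i+j+\ell+2$. A secondary point is that the $w_k$ occurring in the decomposition of $\eta$ are arbitrary sections of $\fg_k$ rather than constants; this is harmless, since commuting a function past a bracket only improves the filtration degree by the Leibniz rule, so the correction terms still land in $\cT^{i+j+\ell+2}_\ell$.
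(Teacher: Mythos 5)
Your proof is correct and takes essentially the same route as the paper: the lift ambiguity is split into the three cross-terms $[\eta,\xi_j]$, $[\xi_i,\eta']$, $[\eta,\eta']$, each controlled by bracket-compatibility of the filtration at non-positive indices, by \eqref{Liebracket-positive-smaller}, and by Lemma \ref{lemma:specialbracketsI}, exactly as in the paper's argument. The only (harmless) difference is that your decomposition of $\eta\in\cT^{i+\ell+2}_\ell$ into frame components via Lemma \ref{lem:complements} is superfluous, since Lemma \ref{lemma:specialbracketsI} already applies to an arbitrary $\zeta\in\cT^{k}_\ell$ with $k=i+\ell+2$, which is how the paper proceeds.
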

\begin{proof}
Recall $i,j<0$. If both $i+\ell+2$ and $j+\ell+2$ are non-positive, the claim follows immediately from the general properties of the Lie bracket. Otherwise we may assume, say, $i+\ell+2>0$, $j\leq i$.
Now
$[\cT^{i+\ell+2}_\ell,\cT_\ell^{j+\ell+2}]\subset \cT^{j+\ell+2}_\ell\subset \cT^{i+j+\ell+2}_\ell$ by \eqref{Liebracket-positive-smaller} and
$[\cT^{i+\ell+2}_\ell,\varphi_{\ell+1}(v_j)]\equiv 0\!\!\mod \cT^{i+j+\ell+2}_\ell$
by Lemma \ref{lemma:specialbracketsI}, so we are left to deal with $[\cT^{j+\ell+2}_\ell,\varphi_{\ell+1}(v_i)]$.

If $j+\ell+2\leq 0$, then $[\cT^{j+\ell+2}_\ell,\varphi_{\ell+1}(v_i)]\equiv 0\!\!\mod \cT^{i+j+\ell+2}_\ell$
by the general property of the Lie bracket, and the same result follows from Lemma \ref{lemma:specialbracketsI} if $j+\ell+2> 0$.
\end{proof}


Note that $c^-_{\cH_\ell}$ has {\em $\mathbb Z$-degree $(\ell+1)$}, i.e., it is an element of $C^{\ell+1,2}(\fm,\fg)_{F_\ell}$. As in Lemma \ref{Ldel}:

 \begin{lem}
Under a change of complement,
the structure function transforms as
 $
\widetilde {c^-_{{\mathcal{H}_\ell}}}=c^-_{\mathcal{H}_\ell}+\delta\psi_{-}
 $,
where $\delta$ is the Chevalley--Eilenberg differential from $C^{\ell+1,1}(\fm,\fg)_{F_\ell}$ to $C^{\ell+1,2}(\fm,\fg)_{F_\ell}$.
\end{lem}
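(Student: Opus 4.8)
The plan is to follow the proof of Lemma \ref{Ldel} almost verbatim, the only genuinely new ingredient being the control of Lie brackets in positive filtration degrees. First I would record, from \eqref{eq:firstdifferencehorizontalframes-l}--\eqref{eq:firstdifferencehorizontalframesII-l}, how the left-inverses $h_\ell=\{h^i_\ell\}$ change under the passage from the complement $\cH_\ell$ to $\widetilde{\cH_\ell}$: this is the evident level-$\ell$ analogue of the identity $\vartheta^{k+1}_0\circ\widetilde h_0=\vartheta^{k+1}_0\circ h_0-\psi\circ\vartheta^k_0\circ\jmath^k_0$ ($k<-1$) used in the proof of Lemma \ref{Ldel}, now phrased through the soldering form $\vartheta_\ell$ and the vertical $\ell$-trivialization $\gamma_\ell$. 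Denoting by $\Psi:\fm_{F_\ell}\to\gr(\cT F_\ell)$ the morphism obtained by composing $\psi_-$ with the identifications coming from $\vartheta_\ell$ and $\gamma_\ell$, so that $\widetilde\varphi_{\ell+1}(v_i)=\varphi_{\ell+1}(v_i)+\Psi(v_i)$ for $v_i\in(\fg_i)_{F_\ell}$ with $i<0$, one sees that only $\psi_-$ can enter the transformation of $c^-_{\cH_\ell}$: its arguments lie in $\fm$, whereas $\psi_+$ alters $\varphi_{\ell+1}$ solely on the non-negative-degree slots, which do not appear in \eqref{eq:horizoontalstructurefunction!}.

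Next I would substitute $\widetilde\varphi_{\ell+1}=\varphi_{\ell+1}+\Psi$, together with the expression for $\widetilde h_\ell$ in terms of $h_\ell$, $\psi_-$ and $\vartheta_\ell$, into the defining formula \eqref{eq:horizoontalstructurefunction!} for $\widetilde{c^-_{\cH_\ell}}(v_i,v_j)$ and expand the bracket bilinearly. The term $[\varphi_{\ell+1}(v_i),\varphi_{\ell+1}(v_j)]$ reproduces $c^-_{\cH_\ell}(v_i,v_j)$, while the two mixed terms $[\varphi_{\ell+1}(v_i),\Psi(v_j)]$ and $[\Psi(v_i),\varphi_{\ell+1}(v_j)]$, combined with the correction coming from $\widetilde h_\ell$ versus $h_\ell$, supply the remaining pieces. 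Using that $\vartheta_\ell$ is a $G_0$-equivariant isomorphism of $\mathbb Z$-graded Lie superalgebras over $\cA_{F_\ell}$ --- exactly as in the displayed chain of equalities in the proof of Lemma \ref{Ldel} --- these reassemble into
\[
[v_i,\psi_-(v_j)]-(-1)^{|v_i|\,|v_j|}[v_j,\psi_-(v_i)]-\psi_-([v_i,v_j])=\delta\psi_-(v_i,v_j)\,,
\]
which is the asserted transformation law; since $\psi_-$ has $\mathbb Z$-degree $\ell+1$, we get $\delta\psi_-\in C^{\ell+1,2}(\fm,\fg)_{F_\ell}$.

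The main obstacle --- and the single point at which the argument genuinely departs from the $\ell=0$ case --- is to verify that every bracket occurring above is a well-defined element of the filtration piece it is claimed to occupy, because $[\cT^m_\ell,\cT^n_\ell]\not\subseteq\cT^{m+n}_\ell$ once $m$ or $n$ is positive. The delicate brackets are $[\varphi_{\ell+1}(v_i),\Psi(v_j)]$: the section $\Psi(v_j)$ is valued in $\cT^{j+\ell+1}_\ell/\cT^{j+\ell+2}_\ell$, whose degree $j+\ell+1$ may be non-negative, so ordinary filtration-compatibility of the bracket is not available. Here I would invoke Lemma \ref{lemma:specialbracketsI}, applied with $\zeta$ a representative of $\Psi(v_j)$ in $\cT^{j+\ell+1}_\ell$ when $0\leq j+\ell+1\leq\ell$ (and using ordinary filtration-compatibility when $j+\ell+1<0$), together with the already-established well-definedness of $c^-_{\cH_\ell}$: this shows both that $[\varphi_{\ell+1}(v_i),\Psi(v_j)]$ makes sense modulo $\cT^{i+j+\ell+2}_\ell$ and that it is independent of the chosen representative, the ambiguity from $\cT^{j+\ell+2}_\ell$ being annihilated modulo $\cT^{i+j+\ell+2}_\ell$ by the same lemma. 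Once these containments are in hand, the remainder is the routine bilinear bookkeeping already carried out for $\ell=0$, which I would indicate by reference to the proof of Lemma \ref{Ldel} rather than reproduce in full.
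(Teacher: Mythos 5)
Your proposal is correct and follows essentially the route the paper intends: the paper gives no separate proof here, stating only ``As in Lemma \ref{Ldel}'', and your reconstruction—substituting $\widetilde\varphi_{\ell+1}=\varphi_{\ell+1}+\Psi$, tracking the change of $h_\ell$, and using Lemma \ref{lemma:specialbracketsI} together with the well-definedness of $c^-_{\mathcal{H}_\ell}$ to control brackets in non-negative filtration degrees—is exactly the intended adaptation of the $\ell=0$ argument. Your observation that only $\psi_-$ can enter, since the arguments of $c^-_{\mathcal{H}_\ell}$ lie in $\fm$, likewise matches the paper's setup.
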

We know from Proposition \ref{lem:splitting} that for $0\leq k\leq \ell-1$, a complement of $\mathcal{T}_\ell^\ell$
in $\cT^k_\ell$ is $\oplus_{s=k}^{\ell-1}\cH^s_\ell$, so there is a projection 
$\pr_s^{k}$
from $\mathcal{T}^k_\ell$ to $\cH^{s}_\ell\cong \cA_{F_\ell}\otimes\g_s$ for any $k\leq s\leq \ell-1$, where the last isomorphism is given by the soldering form.
The analogous projection from $\mathcal{T}^k_\ell$ to $\cT^k_\ell/\cT^{k+\ell+2}_\ell$ and then to $\cH^s_\ell$ for
any $k\leq s\leq\ell+k$ is defined for all $k<0$. We note that $\opp{Ker}(\pr_s^{k})\supset \cT^{s+1}_\ell$.

Again we need some finer properties of the frames.
\begin{lemma}
\label{lemma:specialbracketsII}
Let $\varphi_{\ell+1} : (\fg_{\leq\ell})_{F_\ell} \to \gr^{[\ell+1]}(\cT F_\ell)$ be an $(\ell+1)$-frame and $i\geq 0$. We then have $[\zeta,\varphi_{\ell+1}(v_i)]\in \cT^{k}_{\ell}$ for all
$\zeta\in\cT^{k}_{\ell}$ with $i< k\leq \ell$.
\end{lemma}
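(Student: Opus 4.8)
The plan is to argue by induction on $\ell$, reducing the bracket on $F_\ell$ to a bracket on $F_{\ell-1}$ via the differential $d\pi_\ell$. The key structural fact is that for $i\geq0$ the frame component $\varphi^i_{\ell+1}$ is, by construction, a lift of the frame component $\varphi^i_\ell$ of the previous level: since $\varphi^i_{\ell+1}=(b^i_\ell)^{-1}\circ\varphi^i_\ell$ with $b^i_\ell$ the map induced by $d\pi_\ell$, the supervector field $\eta:=\varphi_{\ell+1}(v_i)$ on $F_\ell$ is $\pi_\ell$-related to $\bar\eta:=\varphi_\ell(v_i)$ on $F_{\ell-1}$, and $\bar\eta$ is again of the form to which the assertion at level $\ell-1$ applies. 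Note that the hypotheses $i\geq0$ and $i<k\leq\ell$ force $0\leq i\leq\ell-1$, so the purely vertical case $i=\ell$ (where $\varphi_{\ell+1}(v_\ell)=\zeta_{v_\ell}$) never occurs.

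First I would dispose of the case $k=\ell$, which needs no inductive hypothesis and therefore also anchors the induction. Here $\zeta\in\cT^\ell_\ell=\ker(d\pi_\ell)$ is vertical and $\eta$ is $\pi_\ell$-projectable, so for every $f\in\cA_{F_{\ell-1}}$ one computes $[\zeta,\eta](\pi_\ell^*f)=\zeta\bigl(\pi_\ell^*(\bar\eta f)\bigr)-(-1)^{|\zeta||\eta|}\eta\bigl(\zeta(\pi_\ell^*f)\bigr)=0$, since $\zeta$ annihilates pulled-back functions. Thus $d\pi_\ell([\zeta,\eta])=0$, i.e.\ $[\zeta,\eta]\in\cT^\ell_\ell$.

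For the inductive step assume $i<k\leq\ell-1$. Recall that for such $k$ the sheaf $\cT^k_\ell$ is the $d\pi_\ell$-preimage of $\pi_\ell^*\cT^k_{\ell-1}$, so it suffices to show $d\pi_\ell([\zeta,\eta])\in\pi_\ell^*\cT^k_{\ell-1}$. Localizing at a point of $(F_\ell)_o$ and writing $d\pi_\ell(\zeta)=\sum_a g_a\,(\pi_\ell^*\circ W_a)$ with $g_a\in\cA_{F_\ell}$ and $W_a$ a local frame of $\cT^k_{\ell-1}$ --- legitimate exactly as in the proof of Lemma \ref{lemma:soldering-G-structure} --- a short computation with the graded Leibniz rule, using $\eta(\pi_\ell^*g)=\pi_\ell^*(\bar\eta g)$, expresses $d\pi_\ell([\zeta,\eta])$ as an $\cA_{F_\ell}$-linear combination of the $\pi_\ell$-derivations $\pi_\ell^*\circ[W_a,\bar\eta]$ and $\pi_\ell^*\circ W_a$. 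The terms $\pi_\ell^*\circ W_a$ plainly lie in $\pi_\ell^*\cT^k_{\ell-1}$; and $[W_a,\bar\eta]=[\,W_a,\varphi_\ell(v_i)\,]$ with $W_a\in\cT^k_{\ell-1}$, $i\geq0$ and $i<k\leq\ell-1$, so the inductive hypothesis at level $\ell-1$ gives $[W_a,\bar\eta]\in\cT^k_{\ell-1}$. Hence $d\pi_\ell([\zeta,\eta])\in\pi_\ell^*\cT^k_{\ell-1}$ and $[\zeta,\eta]\in\cT^k_\ell$, completing the induction --- which terminates because each step fixes $i<k$ and lowers $\ell$, reaching the base case $k=\ell$ once $\ell$ drops to $k$.

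The work is essentially bookkeeping; the only real care is needed (i) in setting up the local presentation of $d\pi_\ell(\zeta)$ as a $\pi_\ell$-derivation so that the displayed linear combination is well-defined and independent of the presentation, mirroring Lemma \ref{lemma:soldering-G-structure}, and (ii) in tracking the parity signs from the graded Leibniz rule, which affect only the coefficients and not the membership statement. I expect step (i) to be the main, if modest, obstacle, as it is where the inverse-image-sheaf formalism for $\pi_\ell$-derivations has to be handled precisely.
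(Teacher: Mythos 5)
The paper states Lemma \ref{lemma:specialbracketsII} (like Lemma \ref{lemma:specialbracketsI}) without proof, so there is no authors' argument to compare against; judged on its own, your induction on $\ell-k$ is correct and is the natural argument implicit in the construction of \S\ref{subsec:hfb}. Both your base case $k=\ell$ and the reduction step rest on the claim that $\eta=\varphi_{\ell+1}(v_i)$, $0\le i\le\ell-1$, is $\pi_\ell$-projectable onto an honest local supervector field $\bar\eta$ on $F_{\ell-1}$, and this is the one point you should make explicit: by construction $d\pi_\ell(\eta)=\varphi^i_\ell(v_i)$ is a priori only a section of the inverse image sheaf $\pi_\ell^*\cT^i_{\ell-1}$ (the tautological $\ell$-frame lives on the pull-back bundle $\pi_\ell^*F_\ell$), and it is precisely the fact recorded in item (3) of the inductive set-up --- the components $\varphi^i_\ell$ with $i\ge0$ do not vary under the structure group $G_\ell$ --- that lets these components descend locally to the corresponding components of any local $\ell$-frame on $F_{\ell-1}$ (for $i=\ell-1$ the projection is the canonical vertical field $(\gamma_{\ell-1})^{-1}(v_{\ell-1})$). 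Granting that, your computation goes through: $d\pi_\ell([\zeta,\eta])$ is, up to Koszul signs, an $\cA_{F_\ell}$-combination of the $\pi_\ell$-derivations $\pi_\ell^*\circ[W_a,\bar\eta]$ and $\pi_\ell^*\circ W_a$; the latter lie in $\pi_\ell^*\cT^k_{\ell-1}$ by choice of the $W_a$, the former by the inductive hypothesis applied to $W_a\in\cT^k_{\ell-1}$ and the $\ell$-frame component $\bar\eta=\varphi_\ell(v_i)$ with $i<k\le\ell-1$, and since $\cT^k_\ell$ is by definition the $d\pi_\ell$-preimage of $\pi_\ell^*\cT^k_{\ell-1}$ for $0<k\le\ell-1$, the membership $[\zeta,\eta]\in\cT^k_\ell$ follows. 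Your worry (i) about independence of the local presentation is harmless, because the presentation is used only to verify membership of the intrinsically defined $d\pi_\ell([\zeta,\eta])$ in a subsheaf, exactly as in Lemma \ref{lemma:soldering-G-structure}.
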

Note that the claim of Lemma \ref{lemma:specialbracketsII} is automatically satisfied also for $k\leq i$, due to \eqref{Liebracket-positive-smaller}.
Let 
$\g_{\leq \ell-1}^+=\fg_0\oplus\cdots\oplus\fg_{\ell-1}$ as before.
The $(\ell+1)$-th {\em vertical structure function}
 $$
c_{\mathcal{H}_\ell}^+\in
\cO_{F_\ell}\otimes(\g_{\leq \ell-1}^+)^*\otimes(\fm^*\otimes\g)_{\ell}
\subset\cO_{F_\ell}\otimes\opp{Hom}(\fm\otimes\g_{\leq \ell-1}^+,\g_{\leq \ell-1})
 $$
is defined 
as the $\cA_{F_\ell}$-linear extension of the following formula
 $$
c^+_{\mathcal{H}_\ell}(v_i,v_j)=\vartheta_\ell^{i+\ell}
\Bigl(\pr^{i+j}
_{i+\ell}\bigl[
\varphi_{\ell+1}(v_i),\varphi_{\ell+1}(v_j)
\bigr]\Bigr)
$$
where $v_i\in\g_i$ with $i<0$, and $v_j\in\g_j$ with $0\le j\leq\ell-1$.
By Lemma \ref{lemma:specialbracketsI} one sees that the input of $\vartheta_\ell^{i+\ell}\circ\pr^{i+j}_{i+\ell}$ is in $\cT^{i+j}_\ell$ with some ambiguity, which in this case lies in $[\cT^{i+\ell+2}_\ell,\varphi_{\ell+1}(v_j)]$. By Lemma \ref{lemma:specialbracketsII} and \eqref{Liebracket-positive-smaller},
the input lies then in $\cT_\ell^{i+j}/\cT_\ell^{i+\ell+2}$, so that
$c^+_{\mathcal{H}_\ell}$ is well-defined.

\begin{lem}
Changing complement, the structure function transforms as
 $
\widetilde{c^+_{{\mathcal{H}_\ell}}}=c^+_{\mathcal{H}_\ell}+\bar\delta\psi_{+},
 $
where
\begin{equation}
\label{eq:tensorproductdifferential}
\bar\delta=\delta\otimes\opp{id}:
C^{\ell,0}(\fm,\fg)_{F_\ell}\otimes(\g_{\leq \ell-1}^+)_{F_\ell}^* \to C^{\ell,1}(\fm,\fg)_{F_\ell}\otimes(\g_{\leq \ell-1}^+)_{F_\ell}^*
\end{equation}
is the tensor product of the Chevalley--Eilenberg differential from $C^{\ell,0}(\fm,\fg)_{F_\ell}$ to $C^{\ell,1}(\fm,\fg)_{F_\ell}$ with the identity of $(\g_{\leq \ell-1}^+)_{F_\ell}^*$.
\end{lem}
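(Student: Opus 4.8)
The plan is to mimic, essentially verbatim, the computation in the proof of Lemma~\ref{Ldel} and of its $c^-$-analogue above, now carried out one filtration step deeper and keeping track of the vertical directions. Fix $v_i\in\g_i$ with $i<0$ and $v_j\in\g_j$ with $0\le j\le\ell-1$; since both $c^+_{\cH_\ell}$ and $\bar\delta$ are $\cA_{F_\ell}$-linear in their arguments, it suffices to verify the identity on such homogeneous entries. By \eqref{eq:firstdifferencehorizontalframes-l}--\eqref{eq:firstdifferencehorizontalframesII-l} the new $(\ell+1)$-frame is $\widetilde\varphi_{\ell+1}=\varphi_{\ell+1}+\Psi$ with $\Psi=\Psi_-+\Psi_+$, where $\Psi_-(v_i)$ is the class in $\cT^{i+\ell+1}_\ell/\cT^{i+\ell+2}_\ell$ determined by $\psi_-$ (a vertical correction $\gamma_\ell^{-1}(\psi_-(v_{-1}))\in\cT^\ell_\ell$ in the exceptional case $i=-1$), and $\Psi_+(v_j)=\gamma_\ell^{-1}(\psi_+(v_j))\in\cT^\ell_\ell$ is the vertical correction determined by $\psi_+$. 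As in Lemma~\ref{Ldel} I would also record how the projection $\pr^{i+j}_{i+\ell}$ changes when $\cH_\ell$ is replaced by $\widetilde\cH_\ell$; unlike there, where the change of left-inverse produced the $-\psi([v_i,v_j])$ term, here this change contributes nothing to $\widetilde{c^+_{\cH_\ell}}(v_i,v_j)$ for purely $\mathbb Z$-degree reasons: reshuffling the complement at a degree $m$ moves components into $\mathbb Z$-degree $m+\ell+1$ (if $m<0$) or $\ell$ (if $m\geq0$), and for $i+j\le m\le i+\ell-1$ neither of these equals $i+\ell$.

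Next I would substitute $\widetilde\varphi_{\ell+1}$ into the defining formula $c^+_{\cH_\ell}(v_i,v_j)=\vartheta^{i+\ell}_\ell\bigl(\pr^{i+j}_{i+\ell}[\varphi_{\ell+1}(v_i),\varphi_{\ell+1}(v_j)]\bigr)$ and expand bilinearly, modulo $\cT^{i+j+\ell+2}_\ell$, into the four brackets $[\varphi_{\ell+1}(v_i),\varphi_{\ell+1}(v_j)]$, $[\varphi_{\ell+1}(v_i),\Psi_+(v_j)]$, $[\Psi_-(v_i),\varphi_{\ell+1}(v_j)]$, $[\Psi_-(v_i),\Psi_+(v_j)]$. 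The quadratic term $[\Psi_-(v_i),\Psi_+(v_j)]$ lies in $\cT^{i+\ell+1}_\ell$ by \eqref{Liebracket-positive-smaller} (bracketing with the vertical $\cT^\ell_\ell$ preserves $\cT^{i+\ell+1}_\ell$, since $i+\ell+1\le\ell$). The cross term $[\Psi_-(v_i),\varphi_{\ell+1}(v_j)]$ likewise lies in $\cT^{i+\ell+1}_\ell$: when $j\ge i+\ell+1$ this follows from \eqref{Liebracket-positive-smaller}, and when $j<i+\ell+1$ from Lemma~\ref{lemma:specialbracketsII} applied with $\zeta=\Psi_-(v_i)\in\cT^{i+\ell+1}_\ell$ and the non-negative index $j$ (using $j<i+\ell+1\le\ell$). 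Since $\opp{Ker}(\pr^{i+j}_{i+\ell})\supset\cT^{i+\ell+1}_\ell$, both of these terms are killed by $\pr^{i+j}_{i+\ell}$, while the first term reproduces $c^+_{\cH_\ell}(v_i,v_j)$ (whose well-definedness modulo $\cT^{i+j+\ell+2}_\ell$ was already secured above via Lemmas~\ref{lemma:specialbracketsI}--\ref{lemma:specialbracketsII}), and the change of projection applied to it contributes nothing by the degree count of the previous paragraph.

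It remains to identify the surviving cross term $[\varphi_{\ell+1}(v_i),\Psi_+(v_j)]$. By Lemma~\ref{lemma:specialbracketsI} (with $\zeta=\Psi_+(v_j)\in\cT^\ell_\ell$, $k=\ell$) it lies in $\cT^{i+\ell}_\ell$ mod $\cT^{i+\ell+2}_\ell$, so that $\pr^{i+j}_{i+\ell}$ picks out its class modulo $\cT^{i+\ell+1}_\ell$, on which $\vartheta^{i+\ell}_\ell$ is defined. Because $\Psi_+(v_j)=\gamma_\ell^{-1}(\psi_+(v_j))$ is $\pi_\ell$-vertical and $\vartheta_\ell=\pi_\ell^*\vartheta_{\ell-1}$ is $\pi_\ell$-basic, bracketing with $\Psi_+(v_j)$ acts on the frame through the structure-group representation exactly as in the classical prolongation theory; invoking the defining property $\gamma_\ell(\zeta_X)=X$ of the vertical $\ell$-trivialization, $\vartheta^{i+\ell}_\ell\bigl(\pr^{i+j}_{i+\ell}[\varphi_{\ell+1}(v_i),\Psi_+(v_j)]\bigr)=[v_i,\psi_+(v_j)]\in\g_{i+\ell}$. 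Since $\bar\delta\psi_+(v_i,v_j)=(\delta(\psi_+(v_j)))(v_i)=[v_i,\psi_+(v_j)]$ is precisely the value of the $0$-cochain differential in the $\fm$-slot (the $(\g_{\le\ell-1}^+)^*$-slot being untouched by $\delta\otimes\opp{id}$), this yields $\widetilde{c^+_{\cH_\ell}}=c^+_{\cH_\ell}+\bar\delta\psi_+$, as claimed. I expect this last identification to be the main obstacle: the filtration on $\cT F_\ell$ is not bracket-compatible at positive indices, so extracting the level-$(i+\ell)$ component of $[\varphi_{\ell+1}(v_i),\Psi_+(v_j)]$ must be justified carefully using Lemma~\ref{lemma:specialbracketsI}, the $\pi_\ell$-basic nature of $\vartheta_\ell$, and the principal-bundle structure of $\pi_\ell:F_\ell\to F_{\ell-1}$.
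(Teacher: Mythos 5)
Your proposal is correct and follows essentially the same route as the paper's own proof: neutralize the change of projections (your degree count is the paper's observation that the differences land in $\ker(\vartheta^{i+\ell}_\ell)=\cT^{i+\ell+1}_\ell\supset\cT^{\ell}_\ell+\cT^{i+2\ell+1}_\ell$), kill the terms involving $\Psi_-$ via Lemma \ref{lemma:specialbracketsII} and \eqref{Liebracket-positive-smaller}, and identify the sole surviving bracket $[\varphi_{\ell+1}(v_i),\Psi_+(v_j)]$ with $[v_i,\psi_+(v_j)]=\bar\delta\psi_+(v_i,v_j)$ using Lemma \ref{lemma:specialbracketsI}, the verticality of $\Psi_+$ and the trivialization $\gamma_\ell$. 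The only differences are cosmetic: you split $\Psi=\Psi_-+\Psi_+$ from the outset and spell out the final identification in slightly more detail than the paper does.
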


 \begin{proof}

If $0\leq k\leq \ell-1$, it is clear from Proposition \ref{lem:splitting} and \eqref{eq:firstdifferencehorizontalframesII-l} that $\widetilde{\pr_s^{k}}(Y^k)\equiv\pr_s^{k}(Y^k)\!\!\mod \cT^{\ell}_\ell$, for all $Y^k\in\cT^{k}_\ell$. Similarly $\widetilde{\pr_s^{k}}(Y^k)\equiv\pr_s^{k}(Y^k)\!\!\mod \mathcal{T}_\ell^{s+\ell+1}/\mathcal{T}_\ell^{s+\ell+2}$ if $k<0$.
Since $\opp{Ker}(\vartheta_\ell^{i+\ell})=\cT^{i+\ell+1}_\ell\supset\cT^{\ell}_\ell+\cT_\ell^{i+2\ell+1}$, one directly infers that $\vartheta_\ell^{i+\ell}\circ\widetilde{\pr^{i+j}_{i+\ell}}=\vartheta_\ell^{i+\ell}\circ\pr^{i+j}_{i+\ell}$.

Denoting by $\Psi:(\fg_{\leq \ell-1})_{F_\ell}\rightarrow\gr(\cT F_\ell)$ the morphism obtained by composing $\psi$ with the inverses of the soldering form and vertical $\ell$-trivialization, we then have
\begin{equation}
\begin{aligned}
\bigl[\Psi(v_i),
\varphi_{\ell+1}(v_j)\bigr]&\equiv 0\mod \cT^{i+\ell+1}_\ell\\
\bigl[\Psi(v_i), \Psi(v_j)\bigr]&\equiv 0\mod \cT^{i+\ell+1}_\ell
\end{aligned}
\end{equation}
by Lemma \ref{lemma:specialbracketsII} and since bracketing with $\cT^\ell_\ell$ preserves all other bundles in the filtration.
On the other hand $[\varphi_{\ell+1}(v_i),\Psi(v_j)\bigr]\in\cT_\ell^{i+\ell}$ by Lemma \ref{lemma:specialbracketsI}, up to elements in $\cT^{i+\ell+1}_\ell$.

Since $\cT^{i+\ell+1}_\ell\subset\opp{Ker}(\pr_{i+\ell}^{i+j})$, we then have
\begin{equation}
\label{eq:vertical-structurefunction-changecomplement}
\begin{aligned}
\widetilde{c^+_{{\mathcal{H}_\ell}}}(v_i,v_j)&=
\vartheta_\ell^{i+\ell}
\Bigl(\pr^{i+j}
_{i+\ell}\bigl[
\varphi_{\ell+1}(v_i)+\Psi(v_i),
\varphi_{\ell+1}(v_j)+\Psi(v_j)\bigr]\Bigr)\\
&=c^+_{\mathcal{H}_\ell}(v_i,v_j)+
\vartheta_\ell^{i+\ell}
\Bigl(\pr^{i+j}
_{i+\ell}\bigl[
\varphi_{\ell+1}(v_i),\Psi(v_j)\bigr]\Bigr)\\
&=c^+_{\mathcal{H}_\ell}(v_i,v_j)+\bar\delta\psi(v_i,v_j)\;,
\end{aligned}
\end{equation}
where
$\bar\delta\psi(v_i,v_j)=[v_i,\psi(v_j)]$.
 \end{proof}
Choose complements
$N_{\ell+1}^-\subset C^{\ell+1,2}(\fm,\fg)$ to $\delta C^{\ell+1,1}(\fm,\fg)$,
$N^+_{\ell+1}\subset C^{\ell,1}(\fm,\fg)$ to $\delta C^{\ell,0}(\fm,\fg)$, and consider the sheaves $\mathcal{N}^-_{\ell+1}=\cA_{F_\ell}\otimes N^{-}_{\ell+1}$ and $\mathcal{N}^+_{\ell+1}=\cA_{F_\ell}\otimes N^{+}_{\ell+1}\otimes(\g_{\leq \ell-1}^+)^*$ over $F_\ell$. We then require that
\begin{equation}\label{cHell}
c^\pm_{\mathcal{H}_\ell}\in \mathcal{N}^\pm_{\ell+1}\;,
 \end{equation}
and define the sheaf $\opp{Pr}_{\ell+1}(M,\cD,F_0)$ over $F_\ell$ by
\begin{equation}
\label{eq:definition-prolongation-Tanaka-l+1}
\opp{Pr}_{\ell+1}(M,\cD,F_0)(\cV_o)=\big\{\cH_\ell(\cV_o)\mid \cH_\ell=\{ \cH^i_\ell \}_{i \leq \ell}\;\text{on $\cV_0$}\;\text{such that}\;
c^\pm_{\mathcal{H}_\ell}\in \mathcal{N}^\pm_{\ell+1}|_{\cV_o}
\big\}\;,
\end{equation}
equivalently the collection of the associated $(\ell+1)$-frames. Since the Chevalley--Eilenberg differential
$0\to\g\stackrel{\delta}\rightarrow\fm^*\otimes\g$ is injective on $\fg_{\geq 0}$, hence on $\fg_{\ell}$, also the differential \eqref{eq:tensorproductdifferential} is injective and \eqref{eq:definition-prolongation-Tanaka-l+1} is a
principal bundle $\pi_{\ell+1}:\opp{Pr}_{\ell+1}(M,\cD,F_0)\to F_\ell$ over $F_\ell$. It has Abelian structure group $G_{\ell+1}=\exp(\fg_{\ell+1})$ consisting of all elements of $C^{\ell+1,1}(\fm,\fg)$ in the kernel of the Spencer operator $\delta$, i.e., of all elements of the prolongation $\fg_{\ell+1}$.
%
\subsection{Canonical parallelism and comparison with Zelenko's approach}
\label{subsec:canpar}

 \begin{theorem}
\label{thm:absolute-parallelism}
Let $(M,\cD,q)$ be a filtered structure of finite type, i.e.,
the Tanaka prolongation stabilizes:
$\g=\g_{-\mu}\oplus\dots\oplus\g_0\oplus\dots\oplus\g_d$
(with $\g_d\neq0$, but $\g_{d+1}=0$).
Then there exists a fiber bundle $\pi:P\to M$ of $dim P=\dim\g$
and an absolute parallelism $\Phi\in\Omega^1_{\bar 0}(P,\g)$,
which is natural in the sense that any equivalence transformation
$f:M\to M'$ lifts to a unique map $F:P\to P'$
preserving the parallelisms $\Phi$ and $\Phi'$.
 \end{theorem}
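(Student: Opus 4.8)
The plan is to let $P$ be the stabilized frame bundle of the tower \eqref{tower}. Since $\g_{d+1}=0$, the Abelian structure group $G_{d+1}=\exp(\fg_{d+1})$ of the principal bundle $\pi_{d+1}:\opp{Pr}_{d+1}(M,\cD,F_0)\to F_d$ from \S\ref{subsec:normcond} is trivial, so $\pi_{d+1}$ is an isomorphism, and inductively so is every $\pi_\ell$ with $\ell>d$; the tower thus stabilizes, $F_d\cong F_{d+1}\cong\cdots$. I would fix $N$ sufficiently large (e.g.\ $N=d+\mu$) and set $P:=F_N$, with $\pi:=\pi_0\circ\cdots\circ\pi_N:P\to M$. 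By strong regularity $\dim M=\dim\fm$, while $F_\ell\to F_{\ell-1}$ has fibre $\fg_\ell$ for $1\le\ell\le d$ and $F_0\to M$ has fibre $G_0$ with $\opp{Lie}(G_0)=\fg_0$; hence $\dim P=\sum_{i=-\mu}^{-1}\dim\fg_i+\sum_{i=0}^{d}\dim\fg_i=\dim\g$, and as every identification used is $\ZZ_2$-graded this holds in the strong sense.

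Next I would build $\Phi$ out of the soldering form and the vertical trivialization at the stabilized level. For $\ell\ge d$ the triviality of $G_{\ell+1}$ forces the principal bundle $\pi_{\ell+1}:F_{\ell+1}\to F_\ell$ to carry a \emph{canonical} section, equivalently a canonical $(\ell+1)$-frame $\varphi_{\ell+1}$ and canonical complements $\cH_\ell=\opp{Im}(\varphi_{\ell+1})$. For $N$ large the vertical bundles $\ker(d\pi_\ell)$ have already died for all $d<\ell\le N$, so $\cT^k_N=0_{F_N}$ for every $k>d$ and all truncation quotients entering $\gr^{[N+1]}(\cT F_N)$ collapse; then $\cH^i_N\subset\cT^i_N$ honestly, and Proposition \ref{lem:splitting} upgrades to a genuine direct sum decomposition $\cT F_N=\bigoplus_{i=-\mu}^{d}\cH^i_N$ of sheaves of $\cA_{F_N}$-modules with $\cH^i_N\xrightarrow{\ \cong\ }\cT^i_N/\cT^{i+1}_N$. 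Composing the projection onto the $i$-th summand with the soldering component $\vartheta^i_N$ for $i<d$, and with the vertical $N$-trivialization $\gamma_N$ for $i=d$, produces even $\cA_{F_N}$-linear maps $\Phi^i:\cT F_N\to\cO_{F_N}\otimes\fg_i$; their sum $\Phi:=\bigoplus_i\Phi^i:\cT F_N\to\cO_{F_N}\otimes\g$ is then a sheaf isomorphism, which by the geometric--algebraic correspondence (and the sign rule) is an even $\g$-valued $1$-form $\Phi\in\Omega^1_{\bar 0}(P,\g)$ that is pointwise invertible, i.e.\ an absolute parallelism.

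For the naturality clause I would argue that the passage $(M,\cD,q)\leadsto(P,\Phi)$ is functorial. An equivalence $f:(M,\cD,q)\to(M',\cD',q')$ preserves the weak derived flags, so $f_*$ induces an isomorphism of the associated-graded tangent sheaves, hence an isomorphism $F_0\cong F_0'$ of zero-order frame bundles compatible with the reductions encoded by $q$. Since the prolongation bundles and the fixed normalization data $N^\pm_\ell$ depend only on the common pair $(\fm,\fg_0)$, this isomorphism lifts uniquely, level by level, to isomorphisms $F_\ell\cong F_\ell'$ intertwining $\pi_\ell$, $\vartheta_\ell$, $\gamma_\ell$ and the canonical sections; at level $N$ this is the desired $F:P\to P'$, and $F^*\Phi'=\Phi$ because $\Phi$ is assembled from data that $F$ intertwines. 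Uniqueness of $F$ among lifts of $f$ follows from the rigidity of absolute parallelisms: a $\Phi$-preserving morphism covering a prescribed $f$ is already determined by its reduction modulo the structure group of $P\to M$, which the soldering part of $\Phi$ pins down.

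The main obstacle is the passage from the associated graded to the full tangent sheaf in the construction of $\Phi$. Because the Lie bracket respects the filtration on $\cT F_\ell$ only in non-positive degrees, the canonical complements $\cH^i_\ell$ a priori live in the truncated quotients $\cT^i_\ell/\cT^{i+\ell+2}_\ell$ rather than in $\cT F_\ell$ itself; one must therefore prolong far enough that all these truncations vanish, so that Proposition \ref{lem:splitting} yields an honest splitting of $\cT F_N$. Carefully controlling the filtration indices at the stabilized level is the technical heart; the uniqueness part of naturality is then a routine rigidity argument.
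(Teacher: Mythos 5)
Your proposal is correct and takes essentially the same route as the paper: prolong past the stabilization level (the paper takes $j=d+\mu-1$, you take $N=d+\mu$) so that all truncations $\mathcal{T}^{i+\ell+2}_\ell$ vanish, and then the canonical frame at that level — equivalently its dual coframe built from the soldering form and vertical trivialization — gives the absolute parallelism on $P\cong F_d$, with $\dim P=\dim\g$. The only cosmetic differences are that the paper works directly with the frame of supervector fields $\varphi_{j+1}(v_i)$ rather than the dual $\g$-valued $1$-form, and that you spell out the naturality/uniqueness argument which the paper leaves implicit in the canonicity of the construction.
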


 \begin{proof}
Let $P=F_d$ and consider the structure $\Phi_d$
consisting of $\varphi_{d+1}(v_i)$, which are supervector fields
 for $i\ge-1$ and truncated supervector fields for $i<0$.
Let us prolong further, but first note that  the map
$F_k\to F_{k-1}$ is a principal bundle with trivial fiber $\g_k=0$ for any $k\geq d+1$, hence a diffeomorphism. Take $j=d+\mu-1$.
Then $\varphi_{j+1}|_{\fg_i}:\cO_{F_{j}}\otimes\g_i\to\mathcal{T}_{j}^i/\mathcal{T}_{j}^{i+j+2}
=\mathcal{T}_{j}^i$ because
$\mathcal{T}_{j}^{i+j+2}\cong\mathcal{T}_d^{i+j+2}=0$.
Thus we get the required non-truncated supervector fields.

The frames $\varphi_{j+1}:\cO_{F_{j}}\otimes\fg\to\mathcal T_j$ give
an absolute parallelism $\Phi$ on $P:=F_{j}\cong F_d$. Indeed a basis on $\g$, respecting the parity and $\mathbb Z$-grading, gives a basis of supervector fields on $P$.
 \end{proof}

 \begin{rk}
\label{rem:equivariancy}
In general, the fiber bundle $\pi:P\to M$ is not principal,
so the parallelism $\Phi$ lacks equivariancy and it is not
a Cartan superconnection, but it suffices for dimensional bounds.
If the normalizations can be chosen invariantly w.r.t. the Lie supergroup $G_0\ltimes\exp(\g_1\oplus\dots\oplus\g_d)$, then we expect that $\pi : P \to M$ is a principal bundle and a Cartan superconnection exists.  (We have verified that this is true in the $d=0$ case.)
In this case, the step of our construction involving vertical structure
functions and normalizations would not be required:
one may simply take the fundamental vector fields of the principal action.
 \end{rk}
An essential difference with the argument in \cite{Z} is that
this reference uses the reduced differential
$\partial:\fm^*\otimes\g\to\mathfrak{A}:=(\g_{-1}^*\wedge\fm^*)\otimes\g$
while we are using the Spencer differential\footnote{In a private discussion
with BK, Igor Zelenko confirmed that he was also considering this approach.}
$\delta:\fm^*\otimes\g\to\Lambda^2\fm^*\otimes\g$.
They are related through the restriction map
$p:\Lambda^2\fm^*\otimes\g\to\mathfrak{A}$,
$\partial=p\circ\delta$, or equivalently
$\partial\alpha=\delta\alpha|_{\g_{-1}\wedge\fm}$.
Our normalizations agree as follows.

 \begin{lem}
The map $p$ is injective when restricted to the kernel of $\delta$ on $\Lambda^2\fm^*\otimes\g$. In particular, it is injective when restricted to
$\delta(\fm^*\otimes\g)$ and $\opp{Ker}(\partial)=\opp{Ker}(\delta)$.
\end{lem}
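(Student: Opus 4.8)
The plan is to establish the first assertion---that $p$ is injective on $\opp{Ker}(\delta)\subset\Lambda^2\fm^*\otimes\g$---by an induction on filtration degree that uses only that $\fm$ is fundamental, and then to deduce the remaining two statements by a short diagram chase. So let $c\in\Lambda^2\fm^*\otimes\g$ be a $2$-cocycle, $\delta c=0$, with $p(c)=0$; concretely $p(c)=0$ means $c(a,y)=0$ for every $a\in\g_{-1}$ and every $y\in\fm$, and we must show $c=0$. Using the (super)skew-symmetry of $c$ it suffices to prove $c(x,y)=0$ whenever $x\in\g_{-k}$ and $y\in\fm$, and I will induct on $k\ge 1$. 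The case $k=1$ is precisely the hypothesis $p(c)=0$. For $k\ge 2$, fundamentality of $\fm$ gives $\g_{-k}=[\g_{-1},\g_{-k+1}]$, so by bilinearity it is enough to treat $x=[a,b]$ with $a\in\g_{-1}$ and $b\in\g_{-k+1}$.

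For such $a,b$ and arbitrary $y\in\fm$, I will expand the cocycle identity $(\delta c)(a,b,y)=0$. With the Koszul signs of the super Chevalley--Eilenberg complex of $\fm$ with coefficients in the adjoint module $\g$ (the sign convention of the proof of Lemma~\ref{Ldel}), this reads
\[
c([a,b],y)=\pm\, a\cdot c(b,y)\,\pm\, b\cdot c(a,y)\,\pm\, y\cdot c(a,b)\,\pm\, c([a,y],b)\,\pm\, c([b,y],a),
\]
all coefficients being $\pm 1$. Here $[a,y],[b,y]\in\fm$, so every term on the right makes sense, and each one vanishes: $c(b,y)=0$ by the inductive hypothesis since $b\in\g_{-k+1}$; $c(a,y)=0$ and $c(a,b)=0$ because $a\in\g_{-1}$; $c([a,y],b)=\pm\, c(b,[a,y])=0$ again by the inductive hypothesis applied to $b$; and $c([b,y],a)=\pm\, c(a,[b,y])=0$ because $a\in\g_{-1}$. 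Hence $c([a,b],y)=0$, and since $\g_{-k}$ is spanned by such brackets, $c$ vanishes on $\g_{-k}\times\fm$. This closes the induction and gives $c=0$, proving the injectivity of $p$ on $\opp{Ker}(\delta)$.

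The remaining claims are formal. Since $\delta^2=0$ we have $\delta(\fm^*\otimes\g)\subseteq\opp{Ker}(\delta)$, so $p$ is a fortiori injective on $\delta(\fm^*\otimes\g)$. For the last equality, the relation $\partial=p\circ\delta$ gives $\opp{Ker}(\delta)\subseteq\opp{Ker}(\partial)$ at once; conversely, if $\alpha\in\fm^*\otimes\g$ satisfies $\partial\alpha=0$ then $\delta\alpha\in\opp{Ker}(\delta)$ and $p(\delta\alpha)=\partial\alpha=0$, whence $\delta\alpha=0$ by the injectivity just proved, so $\alpha\in\opp{Ker}(\delta)$.

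The only delicate point I anticipate is the bookkeeping of Koszul signs and $\mathbb Z$-degrees in the super differential, together with checking that $[a,y]$ and $[b,y]$ fall in filtration degrees for which the inductive hypothesis (respectively the hypothesis $p(c)=0$) is available after reindexing by super-skew-symmetry. None of this affects the logic, since at each step we merely need a given summand to vanish; the computation is the classical Tanaka--Weisfeiler argument carried over verbatim to the super setting.
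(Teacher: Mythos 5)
Your proof is correct and follows essentially the same route as the paper: both exploit the cocycle identity with one argument in $\g_{-1}$ and fundamentality of $\fm$ to propagate the vanishing of the cochain down the grading (your induction on $k$ is just the paper's iteration over $\Pi_2,\Pi_3,\dots$ phrased explicitly), and your deduction of the two remaining claims from $\partial=p\circ\delta$ and $\delta^2=0$ is exactly what the paper leaves as ``clear''.
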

\begin{proof}
We need to show that if $\omega\in\Lambda^2\fm^*\otimes\g$ is $\delta$-closed and
$p(\omega)=0$, i.e., $\omega(\g_{-1},\cdot)=0$, then $\omega=0$.
Let $u,v\in\g_{-1}$, $w\in \fm$. Then
(up to signs, which are not essential here)
 \begin{align*}
0=\delta(\omega)(u,v,w) &=
[u,\omega(v,w)]-[v,\omega(u,w)]+[w,\omega(u,v)]\\
&\;\;\;\;-\omega([u,v],w)+\omega([u,w],v)-\omega([v,w],u)\\
&=-\omega([u,v],w)\;.
 \end{align*}
Thus $\omega(\Pi_2,\cdot)=0$ for $\Pi_2=\g_{-2}\oplus\g_{-1}$.
Applying the above formula for $v\in\Pi_2$ we now obtain
$\omega(\Pi_3,\cdot)=0$ for $\Pi_3=\g_{-3}\oplus\g_{-2}\oplus\g_{-1}$ and iterating yields our first claim.
The rest is clear.
\end{proof}
 Since $p$ is also the projection to $\mathfrak{A}$ along the $\mathbb Z$-graded complement $\mathfrak{B}=\oplus_{i,j\leq-2}(\g_i^*\wedge\g_j^*)\otimes\g$  in $\Lambda^2\fm^*\otimes\g$, the following result follows straightforwardly.
 \begin{cor}

If $Z$ is a complement to
$\opp{Im}(\partial)$ in $\mathfrak{A}$ then $N=Z\oplus\mathfrak{B}$ is a complement to
$\opp{Im}(\delta)$ in $\Lambda^2\fm^*\otimes\g$.

 \end{cor}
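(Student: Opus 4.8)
The plan is to deduce this from the preceding Lemma by a short piece of linear algebra. Recall the $\mathbb{Z}$-graded splitting $\Lambda^2\fm^*\otimes\g = \mathfrak{A}\oplus\mathfrak{B}$ with $\mathfrak{A} = (\g_{-1}^*\wedge\fm^*)\otimes\g$ and $\mathfrak{B} = \bigoplus_{i,j\leq-2}(\g_i^*\wedge\g_j^*)\otimes\g$, and that $p$ is precisely the projection onto $\mathfrak{A}$ along $\mathfrak{B}$, so that $\ker p = \mathfrak{B}$ and $p|_{\mathfrak{A}} = \id$. Since $\partial = p\circ\delta$ we have $\opp{Im}(\partial) = p(\opp{Im}(\delta))$, and the Lemma says that $p$ is injective on $\opp{Im}(\delta)$, i.e.\ $\opp{Im}(\delta)\cap\mathfrak{B} = 0$. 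I would then set $N = Z\oplus\mathfrak{B}$ and verify directly the two defining properties of a complement, namely $\opp{Im}(\delta)\cap N = 0$ and $\opp{Im}(\delta) + N = \Lambda^2\fm^*\otimes\g$; doing both avoids any dimension count, which is convenient since $\g$ need not be finite-dimensional (in any fixed $\mathbb{Z}$-degree everything in sight is finite-dimensional anyway).

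For directness, suppose $\omega\in\opp{Im}(\delta)\cap N$ and write $\omega = z + b$ with $z\in Z\subseteq\mathfrak{A}$ and $b\in\mathfrak{B}$. Applying $p$ yields $p(\omega) = z$; but also $p(\omega)\in p(\opp{Im}(\delta)) = \opp{Im}(\partial)$, so $z\in Z\cap\opp{Im}(\partial) = 0$, whence $\omega = b\in\opp{Im}(\delta)\cap\mathfrak{B}$, which vanishes by the Lemma. For spanning, take $\omega\in\Lambda^2\fm^*\otimes\g$ arbitrary, decompose $p(\omega)\in\mathfrak{A} = \opp{Im}(\partial)\oplus Z$ as $p(\omega) = \partial\alpha + z$ for suitable $\alpha\in\fm^*\otimes\g$ and $z\in Z$, and observe that $p(\omega - \delta\alpha - z) = p(\omega) - \partial\alpha - z = 0$, using $p(\delta\alpha) = \partial\alpha$ and $p|_{\mathfrak{A}} = \id$; hence $b := \omega - \delta\alpha - z\in\ker p = \mathfrak{B}$ and $\omega = \delta\alpha + (z+b)\in\opp{Im}(\delta) + N$. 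Together these give $\Lambda^2\fm^*\otimes\g = \opp{Im}(\delta)\oplus N$, which is the assertion.

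This is all the Corollary claims; applied in $\mathbb{Z}$-degree $\ell+1$ it produces, from a complement $Z$ of $\opp{Im}(\partial)$ in the degree-$(\ell+1)$ part of $\mathfrak{A}$, the complement $N^-_{\ell+1}$ of $\delta C^{\ell+1,1}(\fm,\fg)$ entering the normalization conditions. I do not expect a genuine obstacle here: the substantive input is the Lemma itself, where bracket-generation of $\fm$ enters through the degree-zero component of the Chevalley--Eilenberg cocycle identity. The only point requiring a little care is to keep the two roles of $p$ straight --- the restriction of cochains to arguments in $\g_{-1}\wedge\fm$ on the one hand, and the graded projection with kernel $\mathfrak{B}$ on the other --- but these coincide exactly because $\mathfrak{A}$ consists precisely of those graded summands of $\Lambda^2\fm^*\otimes\g$ carrying a $\g_{-1}^*$ factor.
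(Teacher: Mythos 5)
Your argument is correct and is exactly the ``straightforward'' verification the paper has in mind: it combines the Lemma (injectivity of $p$ on $\opp{Im}(\delta)$, i.e.\ $\opp{Im}(\delta)\cap\mathfrak{B}=0$) with the observation that $p$ is the projection onto $\mathfrak{A}$ along $\mathfrak{B}$, and then checks directness and spanning by elementary linear algebra. Nothing is missing, and your closing remark correctly identifies why the restriction map and the graded projection along $\mathfrak{B}$ coincide.
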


Consequently, any complement $Z$ is obtained as $Z=p(N)$
for some complement $N$.
However, it is not  true that any $N$ is of the indicated type $N=Z\oplus\mathfrak{B}$,
yet for our purposes this distinction plays no role.
In concrete cases, when the prolongation $\g_{\ge0}$ acts
completely reducibly this may turn important in finding an invariant complement.

\section{Dimension bounds for symmetry}\label{S4}
\subsection{The automorphism supergroup and dimension bounds}
\subsubsection{Basic definitions}\label{S41sub}
Symmetries of filtered supergeometries are defined as follows.

 \begin{definition}
\hfill
\begin{itemize}
\item[$(i)$] An {\em automorphism} $\varphi\in\Aut(M,\cD,F)_{\bar0}$ of a filtered structure
is an element $\varphi\in\Aut(M)_{\bar0}$ such that:
\begin{itemize}
	\item it preserves the distribution:
 $
\varphi_*(\cD)\subset(\varphi_o)_*^{-1}\cD
 $, so it induces an isomorphism of $\opp{Pr}_0(M,\cD)$ (which, by abuse of notation, we simply denote by $\varphi_*$);
	\item in the case of a
first order reduction $F=F_0\subset \opp{Pr}_0(M,\cD)$, we also require
that $\varphi_*(\mathcal{F}_0)\subset(\varphi_o)_*^{-1}\mathcal{F}_0$;
	\item then it induces an isomorphism of $F_0^{(i)}$ and,
if there are higher order reductions, we also require
that it preserves them: $
\varphi_*(\mathcal{F}_i)\subset(\varphi_o)_*^{-1}\mathcal{F}_i
 $.
\end{itemize}

\item[$(ii)$] An {\em infinitesimal automorphism} $X\in\mathfrak{inf}(M,\cD,F)$
on $M$ is a supervector field $X\in\mathfrak X(M)$ such that
 $
\mathcal L_{X}(\cD)\subset\cD
 $,
and it successively preserves the structure reductions, namely:
 $$
\mathcal L_{X}\big(\mathcal{F}_i(\cV_o)\big)\subset\mathcal{F}_i(\cV_o)\cdot
\big(\g_i\otimes\cO_{F_{{i-1}}}(\cV)\big)\subset\widetilde{\mathcal{F}r}_i(\cV_o)\;,
 $$ for any open subset $\cV_o\subset (F_{i-1})_o$. (See \eqref{eq:locally-free-frames} for the definition of $\widetilde{\mathcal{F}r}_i$.)
\item[(iii)] An infinitesimal automorphism $X\in\mathfrak{inf}(M,\cD,F)$ is {\it complete} if it is so as a supervector field, i.e.,
its local flow (in the sense of \cite{GW, MSV}) has maximal flow domain $\R^{1|1}\times M$. We denote the collection of all complete
infinitesimal automorphisms by $\mathfrak{aut}(M,\cD,F)$.
\end{itemize}
\end{definition}
We recall that a supervector field on $M=(M_o,\cA_M)$ is complete if and only if the associated vector field on $M_o$ is complete \cite{GW, MSV}. The automorphism supergroup
is in this way defined as the super Harish-Chandra pair
$\Aut(M,\cD,F):=\bigl(\Aut(M,\cD,F)_{\bar0},\mathfrak{aut}(M,\cD,F)\bigr)$.

Assume the filtered structure is of finite type.
By the naturality of the constructions, any automorphism of
the filtered structure on $M$ lifts to a symmetry
of the bundle $\pi:P\to M$ constructed in Theorem \ref{thm:absolute-parallelism} and it preserves
the absolute parallelism $\Phi$ on it. Likewise, any
infinitesimal symmetry of the filtered structure on $M$
lifts to an infinitesimal symmetry of the bundle $\pi:P\to M$
preserving the absolute parallelism $\Phi$. In particular, the dimension of
the symmetry superalgebra $\mathfrak{s}$ is bounded by the dimension of the infinitesimal symmetries of $\Phi$.
To prove $\dim\mathfrak{s}\leq\dim P$ (in the strong sense) and complete the proof of Theorems \ref{T1} and \ref{T2}, it is sufficient to establish a bound on the dimension of the symmetries of the absolute parallelism.

\subsubsection{Dimension of the symmetry superalgebra}\label{S41}

By fixing a basis of $\fg$, the absolute parallelism $\Phi$ corresponds to
a coframe field $\{\omega^\beta\}$ on $P$, where the index $\b$ run over both the even and odd indices.
Let $\{e_\alpha\}$ be the dual frame, i.e.,
$\langle e_\a,\omega^\b\rangle=(-1)^{|\a||\b|}\omega^\b(e_\a)
=\delta^\beta_\alpha$.
Here $\a$ runs through both even and odd indices as well. The following result was originally established in \cite[Lemma 13]{O}. We give here a simplified proof that does not use the concept of flow for supermanifolds.
 \begin{lem}\label{StZ}
Let $\{e_\a\}$ be a frame on a supermanifold $P=(P_o,\cA_P)$ with connected reduced manifold.  Fix a point $x \in P_0$.  Then any infinitesimal automorphism
$\upsilon\in\mathfrak{X}(P)$ of the frame is determined by its value at $x$.
 \end{lem}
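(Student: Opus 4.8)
The plan is to rewrite the infinitesimal-automorphism condition as a \emph{closed first-order linear system} for the frame components of $\upsilon$, and then to prove that a solution whose value at $x$ vanishes must vanish identically, by an induction along the nilpotent ($\mathcal J$-adic) filtration of $\cA_P$ that is seeded by the classical uniqueness statement for the reduced absolute parallelism on the connected manifold $P_o$.

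First I would expand $\upsilon=\sum_\beta f^\beta e_\beta$ with $f^\beta\in\cA_P(P_o)$ of parity $|\upsilon|+|e_\beta|$, and use that, $\{e_\alpha\}$ being a frame, there are structure functions $c^\gamma_{\alpha\beta}\in\cA_P(P_o)$ with $[e_\alpha,e_\beta]=\sum_\gamma c^\gamma_{\alpha\beta}e_\gamma$. The condition $[\upsilon,e_\alpha]=0$ for all $\alpha$ then unfolds by the Leibniz rule into a closed first-order linear system
\[
e_\alpha(f^\beta)=\sum_\gamma c^\beta_{\gamma\alpha}\,f^\gamma\qquad(\text{up to signs fixed by the parities}).
\]
Recalling that $\upsilon$ has a value $\upsilon_x\in T_xP$ at $x$, namely $\upsilon_x(g)=\opp{ev}_x(\upsilon g)=\sum_\beta\opp{ev}_x(f^\beta)\,(e_\beta)_x$, and that $\{(e_\beta)_x\}$ is a basis of $T_xP$, we see that $\upsilon_x=0$ iff $\opp{ev}_x(f^\beta)=0$ for every $\beta$. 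By $\RR$-linearity (and by decomposing $\upsilon$ into homogeneous parts, each of which is again an infinitesimal automorphism), it therefore suffices to show: an infinitesimal automorphism $\upsilon$ of the frame, of pure parity, with $\opp{ev}_x(f^\beta)=0$ for all $\beta$, is zero.

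Then I would descend the filtration $\cA_P\supset\mathcal J\supset\mathcal J^2\supset\cdots$, which terminates since $\mathcal J^{n+1}=0$ with $n$ the odd dimension of $P$. For the base step I reduce the system modulo $\mathcal J$: the even frame vectors $e_1,\dots,e_m$ reduce to an absolute parallelism $\{\overline{e_i}\}$ on the connected manifold $P_o$, while the odd ones act trivially after reduction and any odd $f^\beta$ reduces to $0$; restricting to the indices $\beta$ with $f^\beta$ even, the reductions $\overline{f^\beta}$ then satisfy a closed first-order linear system along $\{\overline{e_i}\}_{i\le m}$ — that is, they are parallel sections of an induced linear connection on a trivial bundle over $P_o$ — and they vanish at $x$; by the elementary uniqueness of parallel sections over a connected base they vanish identically, so all $f^\beta\in\mathcal J$. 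For the inductive step, assuming $f^\beta\in\mathcal J^k$ for all $\beta$ with $k\ge1$, the right-hand sides lie in $\mathcal J^k$, hence $e_\alpha(f^\beta)\in\mathcal J^k$ for every $\alpha,\beta$; passing to symbols in $\mathcal J^k/\mathcal J^{k+1}\cong\mathcal C^\infty_{P_o}\otimes\Lambda^k(\RR^n)^*$ and working in adapted local coordinates, each odd frame vector $e_\alpha=\sum_j b^j_\alpha\,\partial_{\theta^j}+\sum_i a^i_\alpha\,\partial_{x^i}$ induces on symbols, modulo higher $\mathcal J$-order, the contraction $\sum_j\overline{b^j_\alpha}\,\iota_{\partial_{\theta^j}}$, whose coefficient matrix $(\overline{b^j_\alpha})$ is pointwise invertible precisely because $\{e_\alpha\}$ is a frame. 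Thus $e_\alpha(f^\beta)\in\mathcal J^k$ for all $n$ odd frame vectors forces $\iota_{\partial_{\theta^j}}\sigma^\beta=0$ for each $j$, where $\sigma^\beta$ is the symbol of $f^\beta$; since a nonzero $k$-form with $k\ge1$ admits a nonzero contraction, $\sigma^\beta=0$, i.e.\ $f^\beta\in\mathcal J^{k+1}$. Iterating finitely many times gives $\upsilon=0$.

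I expect the inductive passage up the $\mathcal J$-adic filtration to be the main obstacle: because the Lie bracket on $\mathcal TP$ is \emph{not} compatible with this filtration — odd vector fields can lower the $\mathcal J$-degree, exactly the difficulty already encountered in \S\ref{subsec:hfb} — one cannot simply restrict the system to the graded pieces, and the symbol computation must be carried out carefully in adapted coordinates, invoking the frame property precisely as the invertibility of the odd block; this invertibility is what makes the contraction argument close. The remaining ingredients — the reformulation as a closed first-order system and the classical uniqueness for the reduced parallelism — are routine.
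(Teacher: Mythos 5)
Your proof is correct and follows essentially the same route as the paper: induction up the $\mathcal{J}$-adic filtration, with the inductive step driven by the fact that the odd frame directions lower the nilpotency degree (your contraction argument with the invertible block $(\overline{b^j_\alpha})$ is exactly the injectivity statement the paper asserts for the map $\mathcal{J}^k/\mathcal{J}^{k+1}\to\cT^*P\otimes\mathcal{J}^{k-1}/\mathcal{J}^k$), and the base step handled by the reduced linear system along the induced parallelism on the connected manifold $P_o$. The only cosmetic difference is that you treat the reduced step uniformly as a parallel-section uniqueness argument, whereas the paper splits it into the classical Kobayashi argument for the even part and a flat-connection argument for the odd part.
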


 \begin{proof}
The statement is equivalent to the claim that $\opp{ev}_x(\upsilon)=0$
implies $\upsilon=0$.

Consider the ideal $\mathcal{J}=(\cO_P)_{\bar1}^2\oplus (\cO_P)_{\bar1}$
of $\cA_P$ generated by nilpotents. Then for any $k>0$ the map
$$\mathcal{J}^k/\mathcal{J}^{k+1}\to\cT^* P\otimes\mathcal{J}^{k-1}/\mathcal{J}^k\;,\quad f\,\opp{mod}\mathcal{J}^{k+1}\mapsto\sum_{\text{odd}\;\alpha} \omega^\a\otimes e_\a(f)\,\opp{mod}\mathcal{J}^k\quad(f\in\mathcal{J}^k)$$
is injective. In other words, if $f\not\in\mathcal{J}^{k+1}$, then there exists an odd $\a$ such that $e_\a(f)\not\in\mathcal{J}^k$.
Now if $\upsilon$ is in $\mathcal{J}^k\otimes \cT P$ but not in $\mathcal{J}^{k+1}\otimes \cT P$ for some $k> 0$, then the Lie equation
$L_\upsilon e_\a=0$ cannot hold for all odd $\alpha$ because there exists one for which it is
wrong already modulo $\mathcal{J}^k\otimes \cT P$. This tells us that the evaluation map
$\opp{ev}:\mathfrak{X}(P)\to \Gamma(TP|_{P_o})$ is injective on the symmetries.

Set $\widetilde\upsilon=\opp{ev}\upsilon\in\Gamma(TP|_{P_o})$. Taking the Lie equation $L_ \upsilon e_\a=0$ modulo $\mathcal{J}\otimes\cT P$ for an even $\a$ we get a pair of reduced Lie equations
\begin{equation}
\label{eq:Lieequations-reduced}
\begin{aligned}
\opp{ev}\big(L_{\upsilon_{\bar 0}} e_\a \big)&=0\;,\\
\opp{ev}\big(L_{e_\a} \upsilon_{\bar 1})&=0\;,
\end{aligned}
\end{equation}
which depend only on $\widetilde\upsilon=\widetilde\upsilon_{\bar 0}+\widetilde\upsilon_{\bar 1}$. More precisely
$\widetilde\upsilon_{\bar{0}}$ is a classical vector field on $P_o$
and the first reduced Lie equation is $L_{\widetilde\upsilon_{\bar 0}} \widetilde e_\a=0$, where $\{\widetilde e_\a=\opp{ev}e_{\a}\}_{\a\;\text{even}}$ is the induced absolute  parallelism on $P_o$. The infinitesimal version of Lemma 1 from the proof of
\cite[Thm. 3.2]{Ko} applies: the set of critical points of
$\widetilde\upsilon_{\bar 0}$ is simultaneously closed and open, so $\widetilde\upsilon_{\bar 0}$ is determined by
its value at $x\in P_o$. On the other hand, $\widetilde\upsilon_{\bar{1}}$ is a section of the bundle
$(TP|_{P_o})_{\bar1}$ with the natural flat connection defined by
$$\nabla_{f^\a\widetilde e_\a}\widetilde\upsilon_{\bar{1}}:=f^\a\opp{ev}\big(L_{e_\alpha}\upsilon_{\bar{1}}\big)\;,$$
where $f^\a\in C^{\infty}_{P_o}$ for any even $\a$. Hence the value
of a parallel section at one point determines the section everywhere. In summary, the map
$\upsilon\mapsto\opp{ev}_x\upsilon$ is injective.
 \end{proof}

Let us now observe how the dimension of the solution space is constrained.  The structure equations 
 $$
[e_\alpha,e_\beta]=c_{\alpha\beta}^\gamma e_\gamma\quad\Leftrightarrow\quad
d\omega^\gamma=
-\frac12(-1)^{|\a||\b|}
 \big(\omega^\alpha\wedge\omega^\beta\big)c_{\alpha\beta}^\gamma
 $$
involve structure superfunctions $c_{\alpha\beta}^\gamma\in\cO_P$
of parity $|\alpha|+|\beta|+|\gamma|$.
An infinitesimal symmetry is a supervector field $\upsilon=a^\delta e_\delta\in\mathfrak{X}(P)$ such that $L_\upsilon e_\alpha=0$ for all $\alpha$. Equivalently
it must preserve the coframe, so we get
 $$
0=L_\upsilon\omega^\gamma=
d\imath_\upsilon\omega^\gamma+\imath_\upsilon d\omega^\gamma=
da^\gamma -\frac{1}{2}a^\delta(-1)^{|\a||\b|}
\imath_{e_\delta}\big(\omega^\alpha\wedge\omega^\beta\big)c_{\alpha\beta}^\gamma\;,
 $$
for all $\gamma$, which we rewrite as
 \begin{equation}
\label{Frob}
da^\gamma=(-1)^{|\b||\upsilon|}(\omega^\b)a^\a c_{\a\b}^\gamma
.
 \end{equation}
This is a complete PDE on the superfunctions $a^\gamma$'s
and the dimension bound $\dim P$ is achieved if and only if the compatibility conditions $d^2a^\gamma=0$
holds. We can see this explicitly in local coordinates $x^\a$ on $P$.
Let $e_\a=\varkappa_\a^\delta\p_\delta$, where $\p_\b=\frac{\p}{\p x^\b}$,
with the dual coframe $\omega^\b=(dx^\delta)\widetilde\varkappa_\delta^\beta$, where
$\varkappa_\alpha^\delta\widetilde\varkappa^\b_\delta=\delta^\beta_\alpha$.
Then formula \eqref{Frob} becomes
\begin{equation*}
\begin{aligned}
\p_\epsilon a^\gamma&= (-1)^{|\upsilon||\b|}\widetilde\varkappa^\b_\epsilon a^{\a}c^\gamma_{\a\b}\\
&=(-1)^{|\upsilon||\epsilon|+|\alpha||\beta|+|\alpha||\epsilon|}a^{\a}\widetilde\varkappa^\b_\epsilon c^\gamma_{\a\b}\\
&=a^\alpha\sigma^\gamma_{\epsilon\alpha}\;,
\end{aligned}
\end{equation*}
where we denoted $\sigma^\gamma_{\epsilon\a}=(-1)^{|\upsilon||\epsilon|+|\alpha||\beta|+|\alpha||\epsilon|}\widetilde\varkappa^\b_\epsilon c^\gamma_{\a\b}$. The compatibility conditions given by the vanishing of the supercommutator of $\p_{\epsilon'}$ and $\p_{\epsilon''}$ on $a^\gamma$ are
\begin{multline}
\label{eq:compatibility}
a^\a\Bigl(
(-1)^{|\a||\epsilon'|+|\upsilon||\epsilon'|}\p_{\epsilon'}\sigma^\gamma_{\epsilon''\a}-
(-1)^{|\a||\epsilon''|+|\upsilon||\epsilon''|+|\epsilon'||\epsilon''|}\p_{\epsilon''}\sigma^\gamma_{\epsilon'\a}\\
+\sigma^\nu_{\epsilon'\a}\sigma^\gamma_{\epsilon''\nu}
-(-1)^{|\epsilon'||\epsilon''|}\sigma^\nu_{\epsilon''\a}\sigma^\gamma_{\epsilon'\nu}
\Bigr)=0.
\end{multline}
If the parenthetical expression vanishes for all indices, then any
initial value for the $a^\a$'s produces a unique solution
$\upsilon$, and the dimension of the solution space
is $\dim P$.
If not, then we have to differentiate the L.H.S.\ of \eqref{eq:compatibility}, substitute
\eqref{Frob} and study the $0$-th order linear equations on the $a^\a$'s.
When the system stabilizes, the corank of the resulting matrix
(i.e., the matrix size minus the size of the largest invertible minor),
gives the dimension of the solution space.

\subsubsection{Dimension of the automorphism supergroup}\label{S42}

By the results in \cite[\S 4.2]{O}, the {\it group} of automorphisms $\Aut(\Phi)_{\bar 0}$ of an absolute parallelism $\Phi$ on a supermanifold $P=(P_o,\cA_P)$ with connected reduced manifold $P_o$ is a (finite-dimensional) Lie group. At first,
if we denote by $\Phi_{\bar 0}$ the induced absolute parallelism on $P_o$ (in the notation of \S\ref{S41}, this is $\Phi_{\bar0}=\{\opp{ev}e_{\a}\}_{\a\;\text{even}}$), then the classical argument of \cite{Ko} proves that $\Aut(\Phi_{\bar 0})$ is a Lie group:
$\opp{Aut}(\Phi_{\bar 0})$ is mapped to $P_o$ as the orbit through $x\in P_o$ and the stabilizer of a classical absolute parallelism
at any point is trivial. Then, the forgetful map $\Aut(\Phi)_{\bar 0}\to \Aut(\Phi_{\bar 0})$ is injective with closed image,
cf. \cite[Lemmas 10 and 11]{O}.

It follows from this and Lemma \ref{StZ} that the automorphism supergroup $(\Aut(\Phi)_{\bar 0}, \mathfrak{aut}(\Phi))$ is a finite-dimensional super Harish-Chandra pair, in other words a Lie supergroup. Here $\mathfrak{aut}(\Phi)$ is the Lie superalgebra of complete infinitesimal automorphisms.
We remark that for a pair of complete supervector fields
neither their linear combination nor their commutator are complete. (This holds also in the classical case.)
However the set of
complete supervector fields that are infinitesimal automorphisms of an absolute parallelism
form a supervector space, and moreover a Lie superalgebra.
This is because the sum and Lie bracket of two infinitesimal automorphisms of $\Phi_{\bar 0}$ is still complete by the classical result of \cite{Ko}
and a supervector field $\upsilon\in\mathfrak{X}(P)$
is complete if and only if the associated vector field $\opp{ev}(\upsilon_{\bar 0})\in\mathfrak{X}(P_o)$
on $P_o$ is so \cite{GW, MSV}. This shows that the ``representability issue'' of
\cite[Thm 15]{O} 
can be amended: completeness of the infinitesimal automorphisms (i.e., the requirement that $\mathfrak{inf}(\Phi)=\mathfrak{aut}(\Phi)$) is {\it not} an obstruction for the representability of the automorphism supergroup.



By the construction of the absolute parallelism $\Phi$ on $P$,
it is not difficult to see that the automorphism supergroup
$\opp{Aut}(M,\cD,q)=(\opp{Aut}(M,\cD,q)_{\bar 0},\mathfrak{aut}(M,\cD,q))$
of a non-holonomic geometric structure $(\cD,q)$ on $M$ or, more generally,
of a filtered structure, is a closed subsupergroup of
$\Aut(\Phi)=(\Aut(\Phi)_{\bar 0}, \mathfrak{aut}(\Phi))$.
Therefore it is a Lie supergroup, whose dimension is bounded by $\dim P=\dim\fg$,
and this finishes the proof of Theorem \ref{T2}.

A more careful analysis shows that
$\opp{Aut}(M,\cD,q)$ is a discrete quotient of $\Aut(\Phi)$.
Indeed, by Theorem \ref{thm:absolute-parallelism},
automorphisms in the base lift to the frame bundle.
On the other hand, for any $k$, automorphisms of the frame bundle $F_k$
preserve the fundamental fields from $\g_k$ and therefore they project
to automorphisms of a cover of the frame bundle $F_{k-1}$, namely to
the quotient of $F_k$ by the connected component of unity in
the structure group $G_k$.
We apply this for $k$ descending from $d$ to $0$ and conclude the claim.
If the structure groups $G_k$ are connected (this is usually the case for
$k>0$, if no higher order reductions are imposed, because the fibers
are affine), then we have the equality $\opp{Aut}(M,\cD,q)=\Aut(\Phi)$.

 \begin{rk}
In the case $M_o$ has finitely many connected components, say
$n\in\N$, one easily modifies the above arguments to get
the following dimension bound:
 $$
\dim\opp{Aut}(M,\cD,q)\leq\dim\fs\leq n\cdot\dim\g.
 $$
Indeed, enumerate the components $1,\dots,n$ of $M_o$ and let $\sigma\in S_n$
encodes a (possibly trivial) permutation of components.  
Then all automorphisms
are parametrized as follows: no more than
$\dim\g$ parameters for maps of the 1st component to that number
$\sigma(1)$, no more than $\dim\g$ parameters for maps of the 2nd
component to that number $\sigma(2)$, etc.
 \end{rk}

\subsection{Structure of the symmetry superalgebra and maximally symmetric spaces}\label{S43}

We now discuss the following statement, which is not primary for
the purposes of this paper.
We therefore will only sketch the proof,
referring the reader for details to the original papers.

Let $\g=\g_{-\mu}\oplus\dots\oplus\g_0\oplus\dots\oplus\g_d$
be the Tanaka algebra associated to the filtered structure $(M,\cD,q)$.
Its natural (decreasing) filtration is given by subspaces
$\g^i=\g_i\oplus\dots\oplus\g_d$, $i\ge-\mu$, which
for $\mu=1$ is the so-called filtration by stabilizers and for
$\mu>1$ is the weighted (or Weisfeiler) filtration.

 \begin{theorem}
\label{thm:filt-def}
The symmetry algebra $\fs$ of $(M,\cD,q)$ embeds into $\g$ as a filtered
subspace $\iota:\fs\to\g$ in such a way that the corresponding graded
map $\opp{gr}(\iota):\opp{gr}(\fs)\to\g$ is an injection of Lie algebras.
 \end{theorem}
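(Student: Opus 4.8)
The plan is to realize the symmetry superalgebra $\fs$ inside the Tanaka--Weisfeiler algebra $\g$ via evaluation of prolonged infinitesimal automorphisms at a chosen point of the top frame bundle, and then to track how the natural filtration interacts with the bracket. First I would fix a point $p\in(F_d)_o$ lying over a point $x\in M_o$, where $P=F_d$ is the absolute parallelism bundle from Theorem \ref{thm:absolute-parallelism}. By the naturality statement there, every $X\in\fs=\mathfrak{inf}(M,\cD,q)$ lifts to a unique infinitesimal automorphism $\widehat X\in\mathfrak X(P)$ of the parallelism $\Phi$, and the assignment $X\mapsto\widehat X$ is a Lie superalgebra homomorphism (lifts of brackets are brackets of lifts, again by naturality/uniqueness). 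Composing with $\Phi$, which identifies $\cT_p P$ with $\g$ as supervector spaces, I get a linear map $\iota:\fs\to\g$, $\iota(X)=\Phi_p(\widehat X)$. By Lemma \ref{StZ}, $\widehat X$ is determined by $\opp{ev}_p(\widehat X)$, so $\iota$ is \emph{injective}. This is the embedding claimed; the content of the theorem is that it is filtered and its associated-graded is a Lie superalgebra morphism.

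Next I would identify the filtration. On the $\fs$ side, define $\fs^i=\{X\in\fs : \text{the lift }\widehat X\text{ is tangent to the fibers of }F_{i-1}\to M \text{ at }p\text{ up to order }\dots\}$ — more precisely, following the classical picture (Tanaka, Morimoto, \cite{T,Mo,K2}), $\fs^i$ consists of those symmetries whose prolongation to $F_{i-1}$ vanishes at the image point $p_{i-1}\in(F_{i-1})_o$, equivalently whose jet data at $x$ ``starts in degree $\ge i$''. Under $\iota$ this corresponds exactly to $\g^i=\g_i\oplus\dots\oplus\g_d$: the component of $\Phi_p(\widehat X)$ in $\g_j$ for $j<i$ measures the value of the $j$-th prolongation at the base point, which vanishes precisely when $X\in\fs^i$. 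Thus $\iota(\fs^i)\subset\g^i$, i.e., $\iota$ is a filtered map. This is essentially bookkeeping once the tower of prolongations and the soldering forms $\vartheta_\ell$, $\gamma_\ell$ constructed in \S\ref{S3} are in place: the degree-$j$ part of $\Phi_p$ is read off via $\vartheta_d^j$ (for $j<0$) or $\gamma$-type trivializations (for $j\ge0$), and these are exactly the obstructions to further vanishing of the prolonged field.

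The final step is to show $\opp{gr}(\iota):\opp{gr}(\fs)\to\g$ is a Lie superalgebra injection. Injectivity of $\opp{gr}(\iota)$ on each graded piece $\fs^i/\fs^{i+1}$ follows from injectivity of $\iota$ together with the filtered property (a filtered injection between filtered spaces with $\iota(\fs^i)\subset\g^i$ need not have injective associated-graded in general, so here one uses that $\fs$ is \emph{finite-dimensional} with $\dim\fs\le\dim\g$ by Theorem \ref{T1}: a filtered injection into $\g$ whose source has the ``right'' total dimension bound forces $\opp{gr}(\iota)$ to be injective degreewise — this is the standard argument that a filtered map which is injective and whose graded has no worse kernel is automatically strict). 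That $\opp{gr}(\iota)$ respects brackets is the compatibility $[\fs^i,\fs^j]\subset\fs^{i+j}$ (immediate from $\mathcal L$ of lifts and the filtration being bracket-compatible in non-positive degrees, plus the prolongation structure in positive degrees) combined with the fact that the leading term of a bracket of two prolonged symmetries is computed by the \emph{model} bracket on $\g$ — this is precisely the defining property of the Tanaka--Weisfeiler prolongation $\g=\opp{pr}(\fm,\g_0)$ as recalled around \eqref{prolg}, since the structure functions $c_{\cH_\ell}$ were normalized away and only the constant (model) part survives at the associated-graded level.

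The main obstacle I expect is the \emph{strictness} of the filtration, i.e., proving $\opp{gr}(\iota)$ is injective (equivalently $\iota(\fs)\cap\g^{i+1}=\iota(\fs^{i+1})$ rather than merely $\supset$). Purely formally a filtered injection can collapse on the associated-graded; ruling this out requires an additional input, which in the classical theory is the observation that the prolonged symmetry field, if it vanishes to order $i$ at $p_{i-1}$, actually \emph{is} a bona fide symmetry of the reduced structure at the next level and its leading coefficient transforms as an element of $\g_i$ under $\g_0$ — so a nonzero symmetry cannot have all graded components zero. Concretely this is where one invokes that $\mathcal L_X$ preserving $\cF_i$ forces the leading jet to lie in $\g_i$, and the non-degeneracy of $\fm$ (no $\g_{-1}$-central elements), exactly as in \cite{T,Mo}. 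I would handle this by the same inductive ``integrability of the leading term'' argument used classically, transported to the super setting using the sheaf-theoretic soldering forms of \S\ref{S3} and Lemma \ref{StZ} for the base-point determinacy; no genuinely new super-phenomenon arises beyond signs, which is why only a sketch is warranted here.
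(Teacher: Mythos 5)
Your route is genuinely different from the paper's. The paper's (deliberately brief) proof works downstairs: fix $x\in M_o$ and filter the stalk $\cT M_x$ by weighted vanishing order -- the maximal-ideal filtration of $(\cA_M)_x$ refined by the weights coming from $\cD$ -- which superizes the second filtration on the symmetry sheaf from \cite{K1}; alternatively it realizes $\fs$ as the solution space of the Lie equation in weighted super-jets as in \cite{K2}. You instead work upstairs: lift each symmetry to the parallelism bundle $P=F_d$, evaluate via $\Phi$ at a fixed $p\in P_o$, and get injectivity of $\iota$ from Lemma \ref{StZ}. Your version has the merit of using only what is already built in \S\ref{S3}--\S\ref{S4} (it is the same mechanism that proves Theorems \ref{T1} and \ref{T2}) and of sidestepping weighted jets in the super context; the paper's version has the merit that the filtration is intrinsic to the base point (no choice of a point in the tower, no normalization conditions enter) and that the bracket computations are delegated to \cite{K1,K2}. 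Both arguments are sketches at a comparable level of detail.

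Two points need repair. First, your mechanism for injectivity of $\opp{gr}(\iota)$ is not correct: a filtered injection with $\iota(\fs^i)\subset\g^i$ can collapse on the associated graded, and the bound $\dim\fs\leq\dim\g$ does not prevent this (a one-dimensional $\fs$ with $\fs^0=\fs$, $\fs^1=0$ and $\iota(\fs)\subset\g^1$ is already a counterexample). The clean fix -- which also cures the super-subtlety that vanishing of a supervector field at a topological point $p_{i-1}$ of an intermediate bundle does not see its nilpotent part -- is to \emph{define} the filtration by $\fs^i:=\iota^{-1}(\g^i)$; then strictness and degreewise injectivity are tautological, and the entire content of the theorem is what you address only loosely: $[\fs^i,\fs^j]\subset\fs^{i+j}$ and the fact that $\opp{gr}(\iota)$ intertwines brackets. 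Second, the structure functions $c_{\cH_\ell}$ are not ``normalized away'': the normalization only places them in the complements $\mathcal N^\pm_{\ell+1}$, and they need not vanish. What makes the associated-graded bracket equal to the model bracket of $\g$ is that they have positive $\mathbb Z$-degree, so when you expand $\widehat{[X,Y]}=[\widehat X,\widehat Y]$ in the frame $\{e_\alpha\}$, using $[\widehat X,e_\alpha]=0$ and the Lie equation \eqref{Frob}, every correction to the constant-coefficient bracket raises the filtration degree. That expansion (the super version of Tanaka's computation) is the step your sketch still owes -- essentially the same work the paper defers to \cite{K1,K2}.
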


 \begin{proof}
Fix a point $x\in M_o$
and consider the weighted filtration of the stalk
$\cT M_x$ that refines the filtration by the maximal ideal in
$(\cA_M)_x$ using the weighted filtration induced by the distribution $\cD$.
This generalizes to the super-setting the second filtration on
the symmetry Lie algebra sheaf from \cite{K1} and gives the required
embedding $\iota:\fs\to\g$.

Alternatively, consider the Lie equation governing infinitesimal
symmetries of $(M,\cD,q)$ as a subsupermanifold embedded into
the space of weighted super-jets. This provides the solution space $\fs$
of the equation with the desired filtration, see \cite{K2}.
 \end{proof}

This theorem serves as a base to obtain submaximally symmetric models
via filtered deformations of large graded subalgebras of $\g$,
see \cite{KT} for applications in the classical case and \cite{KST}
for examples in the super case.

\begin{rem}
Spaces $(M,\cD,q)$ with
$\dim\opp{Aut}(M,\cD,q)=\dim\mathfrak{aut}(M,\cD,q)=\dim\g$
as well as spaces with $\dim\mathfrak{inf}(M,\cD,q)=\dim\g$
are non-unique but there are always two cases when
the maximal symmetry dimension is attained.

The so-called \emph{flat model} is the homogeneous space $G/H$,
with $G$ a Lie supergroup with $\opp{Lie}(G)=\g$ and $H$ its closed subsupergroup with $\opp{Lie}(H)=\g^0$.
(One can impose simply-connectedness of $G/H$
 though this is not necessary.)
The filtration $\g^i$ on $\g$ induces a left-invariant filtration
$\cF^i$ on $G$ and therefore the distribution $\cD=\cF^{-1}/\cF^0$ on $G/H$
with the desired derived flag. In addition, all the reductions are invariant
w.r.t. $G$, hence the induced filtered structure is invariant.
If $q$ encodes the filtered structure, then
$\mathfrak{inf}(G/H,\cD,q)=\g$ and
$\Aut(G/H,\cD,q)$ coincides with the supergroup $G$ or
its discrete factor.

The so-called \emph{standard model}
is obtained through a left-invariant structure $(\cD,q)$,
or more generally a filtered structure,
on the nilpotent Lie supergroup $M=\exp(\fm)$. This usually
does not have the maximal automorphism group,
but it is locally isomorphic to the flat model and hence
$\mathfrak{inf}(M,\cD,q)=\g$.
Complete description of other models with maximal symmetry dimension
can be obtained via the technique of filtered deformations of $\fs=\g$.
\end{rem}

\section{Examples and applications}\label{S5}

Here we demonstrate how our dimensional bounds work.
We emphasise that all our main results are applicable to both real smooth and complex analytic cases,
so some examples will be stated over $\R$ and some over $\bbC$.

\subsection{Holonomic structures}\label{S51}
Let us first illustrate the symmetry bounds with some particular
geometric structures on a supermanifold $M=(M_o,\cA_M)$ of $\dim M=(m|n)$
in the holonomic case $\cD=\cT M$ (thus $\fm=\g_{-1}$ in this subsection).

\subsubsection{Affine superconnections}
An affine superconnection
is an even map $\nabla:\mathfrak{X}(M)\otimes_\R\mathfrak{X}(M)\to\mathfrak{X}(M)$,
$(X,Y)\mapsto\nabla_XY$, which is $\cO_M$-linear in $X$ and satisfies $\nabla_X(fY)=(-1)^{|f||X|}f\nabla_XY+X(f)Y$. In local coordinates it is given via the Christoffel symbols
$\nabla_{\p_\a}\p_\b=\Gamma_{\a\b}^\gamma\p_\gamma$, where
$|\Gamma_{\a\b}^\gamma|=|\a|+|\b|+|\gamma|$.
From the viewpoint of $G$-structures, an affine superconnection is
a reduction of the second order, i.e., $F_0=F r_M\cong F_1$
or equivalently $\g_0=\mathfrak{gl}(V)$ and $\g_1=0$.
A choice of $\cH$ as in \S\ref{subsec:Gstru} that is equivariant under $GL(V)$ is equivalent to the choice of a connection $1$-form $\omega\in\Omega^1(F r_M,\mathfrak{gl}(V))_{\bar 0}$, $\cH=\opp{Ker}(\omega)$, which in turn is equivalent to $\nabla$.

Thus for the symmetry algebra of $\nabla$ we have:
 $$
\dim\mathfrak{s}\leq\dim\g_{-1}+\dim\g_0=(m+n^2+m^2\,|\,n+2mn).
 $$
The Lie algebra of the even part $\Aut(M,\nabla)_{\bar0}$ of the Lie supergroup of affine transformations $\Aut(M,\nabla)$ consists of supervector fields that are complete. Therefore, its dimension might be smaller than $\dim\mathfrak{s}$ in general.

\subsubsection{Super-Riemannian structures}
A super-Riemannian structure on $M$ is given by a nondegenerate even supersymmetric $\cA_M$-bilinear form $q$ on $\cT M$.  (In the real case, the even part of $q$ can have any signature.) It is a $G_0$-structure with
$G_0=\opp{OSp}(m|n)$, $n\in2\Z$. For $\g_0=\opp{Lie}(G_0)$
it is known that $\g_1=\g_0^{(1)}=0$.  The argument
straightforwardly generalizes the classical one \cite{St}, see \cite{O},
which corresponds to the analog of the Levi-Civita connection \cite{G}.
Thus $(M,q)$ determines an affine structure.

The Lie superalgebra of
Killing supervector fields satisfies
 $$
\dim\mathfrak{s}\leq\dim\g_{-1}+\dim\g_0=
\Bigl(\tbinom{m+1}2+\tbinom{n+1}2\,|\,n+mn\Bigr).
 $$
The above remark about completeness for affine structures
applies to super-Riemannian structures
and the isometry supergroup as well.

\subsubsection{Almost super-symplectic structures}
An almost super-symplectic structure on $M$ is given by a nondegenerate
even super-skewsymmetric bilinear form $q$ on $TM$. It is a
$G_0$-structure with $G_0=\opp{SpO}(m|n)$, $m\in2\Z$.
In this case $\g_0=\opp{Lie}(G_0)=\mathfrak{spo}(m|n)\cong
\mathfrak{osp}(n|m)$ but as representations on $V=\R^{m|n}\cong\Pi\R^{n|m}$
these Lie superalgebras are quite different, cf.\ Remark \ref{Pi-rep}.
In particular $\g\subset\mathfrak{gl}(V)$ is of infinite type
unless $V$ is purely odd.

 \begin{lem}
We have: $\g_i=\g_0^{(i)}=S^{i+2}V^*$ (in the super-sense),
which is nonzero $\forall i\ge0$ if $m>0$.
 \end{lem}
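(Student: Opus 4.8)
The plan is to carry out the super-analogue of the classical computation of the prolongations of the symplectic Lie algebra, $\mathfrak{sp}(2n,\R)^{(k)}=S^{k+2}(\R^{2n})^*$ (cf.\ \cite{St}), the only genuinely new point being the bookkeeping of the Koszul sign rule. First I would note that, since $\fm=\g_{-1}=V$ is abelian, the Tanaka--Weisfeiler recursion \eqref{prolg} specializes to the ordinary algebraic prolongation recursion, so $\g_i=\g_0^{(i)}$ for all $i\ge 0$, and it suffices to compute the latter. The form $q\in\Lambda^2 V^*$ being even and nondegenerate, it induces an even linear isomorphism $q^\flat:V\xrightarrow{\ \sim\ }V^*$; combined with $\mathfrak{gl}(V)=V\otimes V^*$ this yields $\mathfrak{gl}(V)\cong V^*\otimes V^*$, $A\mapsto b_A:=q(A\,\cdot\,,\cdot)$. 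A short computation using the super-skew-symmetry of $q$ shows that $A\in\mathfrak{spo}(m|n)$ precisely when $b_A$ lies in the super-symmetric square $S^2V^*\subset V^*\otimes V^*$, so $\g_0\cong S^2V^*$ in the super-sense. Carrying these identifications through, the prolongation becomes, at each step, the intersection $\g_k=(\g_{k-1}\otimes V^*)\cap(V\otimes S^{k+1}V^*)$ formed inside $V\otimes(V^*)^{\otimes(k+1)}\cong(V^*)^{\otimes(k+2)}$, with all symmetrizations taken in the super-sense.

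The core is then an induction on $k\ge 0$ showing $\g_k\cong S^{k+2}V^*$, viewed as the fully super-symmetric tensors in $(V^*)^{\otimes(k+2)}$. The base case is the identification $\g_0\cong S^2 V^*$ above. For the inductive step, assume $\g_{k-1}$ corresponds to the fully super-symmetric part of $(V^*)^{\otimes(k+1)}$; then $\g_{k-1}\otimes V^*$ is the subspace of $(V^*)^{\otimes(k+2)}$ of tensors super-symmetric in the slots $\{1,\dots,k+1\}$, while $V\otimes S^{k+1}V^*$, via $q^\flat$, is the subspace of tensors super-symmetric in the slots $\{2,\dots,k+2\}$. Their intersection is the space of invariants for the subgroup of $S_{k+2}$ generated by $S_{\{1,\dots,k+1\}}$ and $S_{\{2,\dots,k+2\}}$ acting by signed permutations; for $k\ge 1$ these two subgroups generate all of $S_{k+2}$, so the intersection is the fully super-symmetric $S^{k+2}V^*$, which closes the induction. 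For the nonvanishing, $m>0$ forces $m\ge 2$ since $m\in 2\ZZ$, hence $(V^*)_{\bar 0}$ has dimension $m\ge 1$ and $S^{i+2}V^*$ contains the nonzero subspace $S^{i+2}\big((V^*)_{\bar 0}\big)$ for every $i\ge 0$; therefore $\g_i\neq 0$ for all $i\ge 0$ and $\g\subset\mathfrak{gl}(V)$ is of infinite type.

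The step I expect to demand the most care is checking that the super-Spencer operator $\delta$ of \eqref{eq:first-prol-algebra}, together with the analogous operators implicit at higher levels, really cuts out the fully super-symmetric subspace --- equivalently, that the classical fact ``super-symmetric in the first $p$ slots and in the last $p$ slots $\Rightarrow$ totally super-symmetric'' survives passage to the signed $S_n$-representation on $(V^*)^{\otimes n}$. This holds because the signed permutation action is a genuine $S_n$-action and the two Young-type subgroups above generate $S_n$, but it is the one place where the sign conventions must actually be verified rather than imported verbatim from the purely even case.
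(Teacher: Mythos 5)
Your proposal is correct and is essentially the argument the paper has in mind: the paper omits the proof, saying it ``mimics the classical computation for almost symplectic structures'' of Sternberg, and that is exactly what you carry out — lowering an index with $q$ to identify $\fg_0\cong S^2V^*$, then inducting via $\g_k=(\g_{k-1}\otimes V^*)\cap(V\otimes S^{k+1}V^*)$ and the fact that the two signed Young subgroups generate $S_{k+2}$, with the Koszul signs checked where needed. The nonvanishing via $S^{i+2}\bigl((V^*)_{\bar 0}\bigr)\neq 0$ for $m>0$ also matches the paper's remark on the even part of $S^iV^*$.
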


The proof of this claim mimics the proof of the classical computation for
almost symplectic structure \cite{St} and will be omitted. We note that
$(S^iV^*)_{\bar0}=\oplus_{j=0}^{\max(i,n)}
S^{i-j}V^*_{\bar0}\otimes\Lambda^jV^*_{\bar1}$, where the symmetric and exterior powers in the R.H.S.\ are meant in the classical sense.

Also $\cT M^*\cong \cT M$ via $q\in\Omega^2(M)$ and, provided $dq=0$, the local symmetries are all of the form $q^{-1}dH$ for $H\in\cO_M$. Thus in this case we may have
$\dim\mathfrak{s}=\infty$ and $\Aut(M,q)$
is not necessarily a Lie supergroup. This may happen even when $dq\neq0$.

In the case $M$ is purely odd $(m=0)$, we have:
 $$
\dim\fs\leq\sum_{i=-1}^{n-2}\dim\g_i=\sum_{k=1}^n\dim\Lambda^k\Pi(V^*)=2^n-1.
 $$

\subsubsection{Periplectic-related structures}

Let $\mathfrak{P}$ be non-degenerate bilinear form on $V = \R^{n|n}$ that is odd, i.e.\ $\mathfrak{P}(V_{\bar{0}},V_{\bar{0}}) = \mathfrak{P}(V_{\bar{1}},V_{\bar{1}}) = 0$.  When $\mathfrak{P}$ is supersymmetric, i.e., $\mathfrak{P}(x,y) = (-1)^{|x||y|} \mathfrak{P}(y,x)$ for all pure parity $x,y$, we define the {\sl periplectic} Lie superalgebra by
 \begin{align} \label{E:peri}
 \fpe(n) := \{ X \in \fgl(n|n) : X^{{\rm st}}\mathfrak{P} + (-1)^{|X|}\mathfrak{P}X = 0 \},
 \end{align}
with $\fgl(n|n)$-inherited $\Z_2$-grading.
Explicitly, taking
$\mathfrak{P} = \begin{psm} 0 & \id_n\\
 \id_n & 0 \end{psm}$ yields
  \begin{align} \label{E:peri-matrix}
 \fpe(n) &= \left\{ \begin{pmatrix}
 A & B\\ C & -A^\top
 \end{pmatrix} : A,B,C \in \fgl(n) : B = B^\top, \, C = -C^\top \right\}.
 \end{align}
Above $st$ and $\top$ are the supertranspose and the usual transpose, respectively. We also define some related Lie superalgebras:
 \begin{itemize}
 \item {\sl special periplectic} $\fspe(n) := \fpe(n) \cap \fsl(n|n)$.  This is simple for $n \geq 3$.
 \item {\sl conformal (special) periplectic} $\fcpe(n) := \bbC\id_{2n} \oplus \fpe(n)$ and $\fcspe(n) := \bbC\id_{2n} \oplus \fspe(n)$.
 \item $\fspe_{a,b}(n) := \langle a\tau + bz \rangle \ltimes \fspe(n)$, where $a,b \in \bbC$, $\tau = \operatorname{diag}(\id_n,-\id_n)$, and $z = \id_{2n}$.
 \end{itemize}


%

Note that $\fspe_{a,b}(n)$ depends only on $[a:b]\in\bbC P^1$, and
$\fspe_{1,0}(n)=\fpe(n)$, $\fspe_{0,1}(n)=\fcspe(n)$; we treat separately
$\fspe_{0,0}(n)=\fspe(n)$. We have:
 $$
\dim\fspe_{a,b}(n)=(n^2|n^2),\ \dim\fspe(n)=(n^2-1|n^2),\
\dim\fcpe(n)=(n^2+1|n^2).
 $$

 \begin{theorem}
Consider an irreducible $G_0$-structure $F_0$ on a supermanifold $M$ of dimension $(n|n)$, i.e., $G_0=\exp\g_0$ acts irreducibly on $\fg_{-1}$,
where $\g_0$ is one of the Lie superalgebras above.
We get the following symmetry dimension bounds for $\fs=\mathfrak{inf}(M,F_0)$:
 \begin{enumerate}
 \item If $\fg_0 = \fpe(n),\ \fspe(n),\ \fcspe(n)$, or $\fspe_{a,b}(n)$, where $a,b \in \bbC^\times$ with
     $b\neq na$, then \linebreak
     $\dim(\fs) \leq (n|n) + \dim(\fg_0)$.
 \item If $\fg_0 = \fcpe(n)$, then $\dim(\fs) \leq \dim(\fpe(n+1)) = ((n+1)^2|(n+1)^2)$.
 \item If $\fg_0 = \fspe_{1,n}(n)$, then $\dim(\fs) \leq \dim(\fspe(n+1)) = (n^2+2n|(n+1)^2)$.
 \end{enumerate}
 \end{theorem}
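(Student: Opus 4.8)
The plan is to compute the Tanaka--Weisfeiler prolongation $\g = \opp{pr}(\fm, \fg_0)$ in each case, where $\fm = \fg_{-1} = V = \R^{n|n}$ is abelian (since $\cD = \cT M$, so $\mu = 1$), and then invoke Theorem \ref{T1} to get $\dim\fs \leq \dim\g$ in the strong sense. Thus everything reduces to the purely algebraic computation of $\fg_i = \fg_0^{(i)}$ for $i \geq 1$, exactly as in the classical periplectic story. First I would recall that since $\fm$ is abelian and concentrated in degree $-1$, the prolongation is the ordinary Sternberg prolongation of $\fg_0 \subset \fgl(V)$, with $\fg_1 = \fg_0^{(1)} = (\fg_0 \otimes V^*) \cap (V \otimes S^2 V^*)$ (super-sense), and higher $\fg_i$ defined recursively. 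So the core of the proof is: (i) show $\fpe(n)^{(1)} = 0$, hence $\fpe(n)$, $\fspe(n)$, $\fcspe(n)$ have trivial first prolongation, giving case (1) for those; (ii) for the one-parameter family $\fspe_{a,b}(n)$, determine precisely for which $[a:b]$ the first prolongation is nonzero; (iii) when it is nonzero, identify the full prolongation with a larger simple (or nearly simple) Lie superalgebra.

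The key structural input is the standard fact from the theory of $|1|$-graded (contact-type or projective-type) Lie superalgebras: $\fpe(n+1)$ and $\fspe(n+1)$ carry a $\Z$-grading of depth one, $\fpe(n+1) = \fg_{-1} \oplus \fg_0 \oplus \fg_1$ with $\fg_{-1} \cong V = \R^{n|n}$, $\fg_1 \cong V^*$ (parity-shifted appropriately), and $\fg_0$ equal to $\fcpe(n)$ up to the grading element. This is the super-analog of $\fsl(n+1) = \R^n \oplus \fgl(n) \oplus \R^{n*}$ underlying classical projective structures. So for case (2) I would exhibit this grading explicitly in block form using the matrix realization \eqref{E:peri-matrix} enlarged to size $n+1$, read off that $\fg_0$ (the block-diagonal part acting on $\fg_{-1}$) is exactly $\fcpe(n)$ acting by its standard representation on $V$, and then appeal to the maximality of the Tanaka--Weisfeiler prolongation: since $\fpe(n+1)$ is a transitive graded Lie superalgebra with these data, $\opp{pr}(V, \fcpe(n)) \supseteq \fpe(n+1)$; the reverse inclusion (that the prolongation is not strictly larger) follows because $\fpe(n+1)$ is a maximal such object — concretely, $\fg_1^{(1)} = \fpe(n+1)^{(1)} = 0$, which one checks directly, and then the recursion terminates. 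Case (3) is identical with $\fspe(n+1)$ in place of $\fpe(n+1)$: one checks that the degree-zero part of the depth-one grading of $\fspe(n+1)$ is $\fspe_{1,n}(n)$ — the specific combination $a\tau + bz$ with $[a:b] = [1:n]$ being exactly the linear span of the grading element together with the $\fsl$-constraint, which is why this particular member of the family is singled out. For case (1), once $\fpe(n)^{(1)} = 0$ is established, the same vanishing holds for the subalgebras $\fspe(n)$ and for $\fspe_{a,b}(n)$ except at the two bad values $[a:b] = [0:1]$ (giving $\fcpe(n)$, case 2) and $[a:b] = [1:n]$ (giving $\fspe_{1,n}(n)$, case 3); the hypothesis $a, b \in \bbC^\times$ with $b \neq na$ in case (1) is precisely what excludes these, so $\fg_1 = 0$ and $\dim\g = (n|n) + \dim\fg_0$.

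I would carry out the steps in the order: (a) set up the super-Sternberg prolongation formalism and record $\fm = V$ abelian; (b) prove the key lemma $\fpe(n)^{(1)} = 0$ by a direct computation with the block form \eqref{E:peri-matrix} — writing a general element of $\fpe(n) \otimes V^*$ and imposing the symmetry condition defining $S^2 V^*$, one finds the system forces it to vanish, the odd blocks $B, C$ contributing the crucial cancellations via their (anti)symmetry; (c) analyze the scalar extensions: an element $\varphi \in (\fcpe(n) \otimes V^*) \cap (V \otimes S^2 V^*)$ decomposes as a $\fpe(n)$-part (which vanishes by (b)) plus a trace/$\tau$-part, and solving for when the latter can be nonzero identifies the two exceptional gradings; (d) for those two cases, match the prolongation with $\fpe(n+1)$, resp. $\fspe(n+1)$, via the explicit depth-one grading, and verify the prolongation stabilizes there by checking the next prolongation vanishes; (e) assemble the dimension counts using the formulas $\dim\fpe(n+1) = ((n+1)^2 | (n+1)^2)$ and $\dim\fspe(n+1) = (n^2 + 2n | (n+1)^2)$ already recorded, and conclude via Theorem \ref{T1}. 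The main obstacle I expect is step (c)–(d): getting the signs and parity-shifts exactly right in the super-Sternberg prolongation so that the exceptional value $[a:b] = [1:n]$ comes out correctly (the factor $n$ is parity-sensitive and easy to get wrong as $\pm n$ or $2n$), and then confirming that the match with $\fpe(n+1)$/$\fspe(n+1)$ is an isomorphism of \emph{graded} Lie superalgebras and not merely an inclusion — i.e., ruling out any further prolongation. The vanishing of $\fpe(n)^{(1)}$ itself, while a genuine computation, should be routine once the block form is in hand, and I would simply state it as a lemma with the computation deferred, in the same spirit as the almost-symplectic lemma stated just above.
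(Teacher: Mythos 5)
Your overall reduction --- compute $\g=\opp{pr}(\fg_{-1},\fg_0)$ and feed it into Theorem \ref{T1} --- is exactly the paper's route; the difference is that the paper does not redo the algebra but quotes Poletaeva's prolongation theorem \cite[Thm.~1.2]{P}, whereas you propose to re-derive it by a super-Sternberg computation. That is legitimate in principle, but your computational plan rests on a structural premise that is false, and it is precisely the point the paper flags in its remark after the proof. You assert that $\fpe(n+1)$ (resp.\ $\fspe(n+1)$) carries a $|1|$-grading $\fg_{-1}\op\fg_0\op\fg_1$ with $\fg_{-1}\cong V=\bbC^{n|n}$, $\fg_1\cong V^*$ and $\fg_0=\fcpe(n)$ (resp.\ $\fspe_{1,n}(n)$), in analogy with projective $\fsl(n+1)$, and that one closes the argument by checking $\fg_1^{(1)}=0$. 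A dimension count already rules this out: $(n|n)+(n^2+1|n^2)+(n|n)=(n^2+2n+1\,|\,n^2+2n)$, whereas $\dim\fpe(n+1)=((n+1)^2|(n+1)^2)$, so the odd parts differ by one. The relevant grading (see \cite[Lemma~1.1]{P}, cited in the paper) has depth $1$ but \emph{height} $2$: there is a nonzero $\fg_2$ of dimension $(0|1)$ (and similarly for $\fspe(n+1)$). Consequently your step (d) would not terminate where you expect --- the honest prolongation of $\fcpe(n)$ does not stop at degree $1$, and $\fg_1$ is not $V^*$ as a module --- so as written the computation either stalls or produces a total dimension inconsistent with the bound $((n+1)^2|(n+1)^2)$ you quote in step (e).

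Two further slips in the same part of the argument. First, you identify the exceptional member $[a:b]=[0:1]$ of the family with $\fcpe(n)$ ``(case 2)''; in fact $\fspe_{0,1}(n)=\fcspe(n)$, which sits in case (1) with trivial prolongation, while $\fcpe(n)=\bbC\id_{2n}\op\fpe(n)$ is not a member of the family $\fspe_{a,b}(n)$ at all (its dimension is $(n^2+1|n^2)$, one more than $\dim\fspe_{a,b}(n)$). Getting this identification wrong would misplace which degree-zero algebras admit nontrivial prolongation. Second, in step (c) the phrase ``a $\fpe(n)$-part (which vanishes by (b))'' is too quick: for $\fg_0=\fcpe(n)$ or $\fspe_{a,b}(n)$ with $b\neq0$ the scalar part is not inside $\fpe(n)$, and the Spencer symmetry condition couples the two components, so the vanishing of $\fpe(n)^{(1)}$ does not let you kill the $\fpe(n)$-component separately --- you must solve the coupled system, which is exactly where the exceptional values $\fcpe(n)$ and $\fspe_{1,n}(n)$ emerge. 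None of this affects your case (1) for $\fpe(n)$ and $\fspe(n)$ (there monotonicity of the first prolongation plus your deferred lemma $\fpe(n)^{(1)}=0$ suffices), nor the final appeal to Theorem \ref{T1}; but cases (2)--(3), and the delineation of case (1) within the family, need the height-2 picture of \cite{P} rather than the projective-type $|1|$-grading you describe.
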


 \begin{proof} These results follow from applying our Theorem \ref{T1} 
to the prolongation results due to Poletaeva \cite[Thm.1.2]{P}: namely, she proved that $\pr(\fg_{-1},\fg_0) = \fg_{-1} \op \fg_0$ for (1), while $\pr(\fg_{-1},\fg_0) \cong \fpe(n+1)$ for (2), and $\pr(\fg_{-1},\fg_0) \cong \fspe(n+1)$ for (3).  We remark that for (2) and (3), the prolongation height is 2, which differs from its depth being 1.  See \cite[Lemma 1.1]{P} for details on this $\Z$-grading.
 \end{proof}

 Consider now the case when $\mathfrak{P}$ is skew-supersymmetric, i.e.\ $\mathfrak{P}(x,y) = - (-1)^{|x||y|} \mathfrak{P}(y,x)$ for all pure parity $x,y$.  The same formula as in \eqref{E:peri} defines the {\sl skew-periplectic} Lie superalgebra $\fpe^{sk}(n)$ and taking $\mathfrak{P} = \begin{psm}
 0 & \id_n\\
 -\id_n & 0
 \end{psm}$ gives
 \begin{align} \label{E:sk-peri}
 \fpe^{sk}(n) &= \left\{ \begin{pmatrix}
 A & C\\ B & -A^\top
 \end{pmatrix} : A,B,C \in \fgl(n) : B = B^\top, \, C = -C^\top \right\}.
 \end{align}
We may define analogous related Lie superalgebras as above. Clearly the parity change functor $V \to \Pi(V)$ on $V=\mathbb R^{n|n}$ induces an isomorphism $\fpe(n) \cong \fpe^{sk}(n)$ as Lie superalgebras, but the differing representation on $V$ is crucial as the following result shows:

 \begin{prop} Let $\fg_{-1} = V = \R^{n|n}$.  If $\fg_0 \supset \fspe^{sk}(n)$, then $\fg = \pr(\fg_{-1},\fg_0)$ has  infinite odd part.
 \end{prop}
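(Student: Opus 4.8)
The plan is to show that the Tanaka--Weisfeiler prolongation $\fg = \pr(\fg_{-1},\fg_0)$ produces infinitely many odd degrees by exhibiting, at every positive level, a nonzero odd element and then arguing these are never killed by $\ad(\fg_{-1})$. First I would reduce to the case $\fg_0 = \fspe^{sk}(n)$ itself, since enlarging $\fg_0$ only enlarges the prolongation (one gets a graded embedding $\pr(\fg_{-1},\fg_0) \subseteq \pr(\fg_{-1},\fg_0')$ when $\fg_0 \subseteq \fg_0'$, as is immediate from \eqref{prolg}). So it suffices to produce an infinite odd tower for $\pr(V,\fspe^{sk}(n))$ with $V = \R^{n|n}$.

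The key computational input is the identification of $\fg_1 = (\fspe^{sk})^{(1)} = \fg_0 \otimes V^* \cap V \otimes S^2 V^*$ in the super-sense (cf. \eqref{eq:first-prol-algebra}), and here the crucial point is the \emph{parity mismatch}. Because $\mathfrak{P}$ is skew-supersymmetric and odd, the symmetric square $S^2 V^*$ in the super-sense contains $\Lambda^2(V^*_{\bar 0})$, $S^2(V^*_{\bar 1})$ (classical), and the mixed term $V^*_{\bar 0}\otimes V^*_{\bar 1}$; the constraint of lying in $\fg_0 \otimes V^*$ cuts this down, but one checks that the purely odd component survives — concretely, using the matrix form \eqref{E:sk-peri}, a cochain valued in the "$C$-block" paired against the odd coordinates of $V^*$ satisfies the symmetrization condition. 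This is exactly parallel to how, for the ordinary symplectic case $\fspo(m|n)$, one has $\fg_i = S^{i+2}V^*$ (the Lemma in \S\ref{S51} on almost super-symplectic structures); after the parity change $V \mapsto \Pi V$ intertwining $\fpe(n)$ with $\fpe^{sk}(n)$, the relevant module becomes $S^{i+2}(\Pi V)^*$, whose odd part $\bigl(S^{i+2}(\Pi V)^*\bigr)_{\bar 1}$ is nonzero for all $i \geq -1$ once $n \geq 1$. Thus I would prove by induction on $i$ that $\fg_i \supseteq$ a copy of the odd part of this symmetric-power module and that $\ad(\fg_{-1})$ is injective on it, so the prolongation never terminates on the odd side.

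The main obstacle will be making the inductive step rigorous: unlike the genuine periplectic prolongation computed by Poletaeva \cite{P}, here $\fg_0$ is only \emph{contained in} $\fcspo$-type data after parity change, so I cannot simply quote her result, and I must instead run the Tanaka prolongation recursion \eqref{prolg} directly. The cleanest route is: (a) observe $\fpe^{sk}(n)$, as a subalgebra of $\fgl(\Pi V)$, contains the image of $\fsp$ acting on the odd coordinates; (b) invoke the almost-symplectic prolongation Lemma to get that $\pr$ of this $\fsp$-part already has $\fg_i \supseteq S^{i+2}$ of the odd summand for all $i$; (c) conclude by monotonicity of $\pr$ in $\fg_0$ that $\pr(V,\fspe^{sk}(n)) \supseteq \pr(V,\fsp\text{-part})$, so its odd part is infinite-dimensional. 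I would also need the brief check that the relevant $\fsp$ really sits inside $\fspe^{sk}(n)\cap\fsl$, which follows from the block description \eqref{E:sk-peri} by restricting to $A=0$, $C=0$ and $B$ symmetric traceless — this is a short linear-algebra verification and I would not grind through it here.
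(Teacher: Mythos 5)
Your opening reduction (monotonicity of $\pr(\fg_{-1},\fg_0)$ in $\fg_0$) is fine, but the step that is supposed to produce nonzero odd elements in every positive degree does not stand. The subspace of \eqref{E:sk-peri} you isolate ($A=0$, $C=0$, $B=B^\top$) is a purely \emph{odd, abelian} subspace of $\fgl(n|n)$; it is not a copy of $\fsp$ of anything, and no subalgebra of type $\fspo$ acting on the same $V=\R^{n|n}$ sits inside $\fpe^{sk}(n)$ (the latter stabilizes an odd form, the former an even one, and the even part of $\fpe^{sk}(n)$ is the graph $\{A\oplus(-A^\top)\}\cong\fgl(n)$, which does not contain $\fsp_n\oplus\fso_n$). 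Hence the lemma on almost super-symplectic structures in \S\ref{S51}, which computes $\pr(\R^{m|n},\fspo(m|n))$, cannot be invoked in step (b), and step (c) has nothing to inherit. The parity-change heuristic is also unsound as a transfer principle: the Tanaka prolongation depends on the representation of $\fg_0$ on $\fg_{-1}=V$, not on the abstract superalgebra, and super-symmetric powers do not commute with $\Pi$. Indeed, the very point of this proposition, set against the preceding theorem based on Poletaeva's results, is that $\fpe(n)$ and $\fpe^{sk}(n)$ --- isomorphic superalgebras whose representations differ exactly by parity change --- have finite resp.\ infinite prolongation, just as $\fspo(m|n)\cong\fosp(n|m)$ are of infinite resp.\ finite type. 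So ``the relevant module becomes $S^{i+2}(\Pi V)^*$'' is not something you may deduce from the $\fspo$ lemma. (Smaller slips: $(S^2V^*)_{\bar 0}=S^2V_{\bar 0}^*\oplus\Lambda^2V_{\bar 1}^*$, not the decomposition you wrote, and the relevant odd piece of $\fg_0$ is the symmetric $B$-block, not the skew $C$-block; also, injectivity of $\ad(\fg_{-1})$ on $\fg_i$ is automatic from \eqref{prolg} --- the issue is nonvanishing of $\fg_i$, which your induction never actually establishes.)

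The correct argument is much shorter and lives precisely in the subspace you singled out: a rank-one symmetric $B$ gives an odd element $x\otimes\omega\in\fg_0$ with $x\in V_{\bar 1}$ and $\omega\in V_{\bar 0}^*$ an \emph{even} covector, and then $\phi_k=x\otimes\omega^{k+1}$ is a nonzero odd element of $\fg_k$ for all $k>0$: contracting with $v\in V$ gives a multiple of $\omega(v)\,\phi_{k-1}$, so $\phi_k\in\fg_k$ follows by induction from \eqref{prolg}, the symmetry condition holding because $\omega$ is even (so $\omega^{k+1}\in S^{k+1}V^*$ is genuinely nonzero). This is the classical rank-one criterion for infinite type, superized, and it is exactly the paper's proof; if you wish to keep your monotonicity framing, apply it to the single element $x\otimes\omega$ inside the $B$-block rather than to a lemma that does not cover your situation.
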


 \begin{proof}
Focusing on the ``$B$-part'' of \eqref{E:sk-peri},
we see that $\fg_0$ contains a rank 1 odd element $x \otimes \omega$, where $x \in V_{\bar{1}}$ and $\omega \in V_{\bar{0}}^*$.  Considering the odd elements $\phi_k = x \otimes \omega^{k+1}$ for all $k > 0$, we inductively observe that $\phi_k \in \fg_k$ for all $k > 0$.
 \end{proof}

\subsubsection{Projective superstructures}\label{Sproj}
Classical projective structures are defined as equivalence classes of  affine connections for which geodesics differ by
a reparametrization. It is well-known that every class contains a torsion-free connection.
We here omit the discussion of what a supergeodesic is since this is not uniform in
the literature (\cite{G,LRT}) and simply follow \cite{LRT} in adapting the classical interpretation of projective
equivalence for the torsion-free connections: two torsion-free affine superconnections $\nabla$ and $\nabla'$ are equivalent if and only if
$\nabla-\nabla'=\opp{Id}\circ\omega\in\Gamma(S^2\cT ^*M\otimes \cT M)$ for an even 1-form
$\omega\in\Omega^1(M)$. (The symmetric power is meant in the super-sense.)
This is a higher order reduction of the frame bundle.
Namely, using the $\mathfrak{gl}(V)$-equivariant splitting
$\g_1=S^2V^*\otimes V=V^*\oplus(S^2V^*\otimes V)_{\text{tf}}=\g_1'\oplus\g_1''$
 (trace and trace-free parts),
the principal bundle $F_1\to F_0=F r_M$ is
reduced to the (Abelian) structure group $\g_1'$.

After this the geometric structure is prolonged.
The obtained projective structure has symmetry the entire diffeomorphism
group in the case of (even) line.

 \begin{prop}
The projective structure is of finite type for $\dim V=(m|n)\neq(1|0)$.
 \end{prop}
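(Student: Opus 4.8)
The plan is to compute the Tanaka--Weisfeiler prolongation $\g = \pr(\fm, \g_0)$ explicitly for the projective superstructure and show that it stabilises. Here $\fm = \g_{-1} = V$ with $\dim V = (m|n)$, and $\g_0$ is the image of the reduced structure algebra. Since the reduction at level $1$ is from $\g_1 = S^2 V^* \otimes V$ down to the trace part $\g_1' \cong V^*$, and since $\g_0 = \fgl(V)$ before reduction, I would first identify $\g_{\le 1}$ explicitly: $\g_{-1} = V$, $\g_0 = \fgl(V)$, $\g_1 = V^*$ (the Abelian reduced prolongation). This is precisely the negative and low-degree part of $\fsl(V \oplus \R^{1|0})$, i.e.\ $\fpgl$ or $\fsl$ of a supervector space of dimension $(m+1|n)$, realised in its standard $\Z$-grading by a $1$-dimensional even subspace. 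So the natural guess is $\g \cong \fsl(m+1|n)$ (or a central extension / quotient thereof), which is finite-dimensional, hence the structure is of finite type.

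The key steps, in order: (1) verify that $\g_0 = \fgl(V)$ acts on $\g_{-1} = V$ standardly and that the chosen reduction gives $\g_1' \cong V^*$ with the bracket $[\,\cdot\,,\cdot\,] : \g_1 \otimes \g_{-1} \to \g_0$ being $\omega \otimes v \mapsto \omega \otimes v + \omega(v)\cdot \id_V$ (up to normalisation), matching the grading element structure of $\fsl(m+1|n)$; (2) compute $\g_2 = (\g_1')^{(1)}$ directly from the recursive definition \eqref{prolg}: an element $u : \g_{-1} \to \g_1'$ must satisfy the Jacobi-type identity with all $v, w \in \fm$, and one shows that the only such $u$ compatible with the already-reduced $\g_1$ is zero unless $\dim V = (1|0)$; (3) conclude $\g_i = 0$ for all $i \ge 2$ by the standard fact that once $\g_{k+1} = 0$ the prolongation terminates, so $\g = \g_{-1} \oplus \g_0 \oplus \g_1'$ is finite-dimensional; (4) invoke that finite-dimensionality of $\g$ is exactly the definition of finite type for the filtered structure, as set up in \S\ref{subsec:hfb} and Theorem \ref{thm:absolute-parallelism}.

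The cleanest route for step (2) is the super-analogue of the classical computation that $\fsl_{n+1}$ is the full prolongation of the projective grading: one writes a prospective $u \in \g_2$ as a map $V \to V^*$, imposes the cocycle condition from \eqref{prolg}, and uses non-degeneracy together with the trace reduction already imposed at level $1$ to force $u = 0$. I would lift this verbatim from \cite[Example 3]{K2} into the super-setting, being careful only about Koszul signs; the sign bookkeeping is the only place where the super-case genuinely differs. Alternatively, and perhaps more efficiently, I would identify $\g_{\le 1}$ with the corresponding piece of $\fsl(m+1|n)$ and then quote the known fact that $\fsl(m+1|n)$ (for $(m+1|n) \ne (1|0)$, i.e.\ $\dim V \ne (1|0)$) is its own Tanaka--Weisfeiler prolongation in the projective (contact-type depth-one) grading, which holds for all classical simple Lie superalgebras of this type.

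The main obstacle I anticipate is the borderline/degenerate cases: when $\dim V = (0|n)$ the supervector space $V$ is purely odd, and when $\dim V = (1|1)$ or $(0|1)$ one must check that $\fsl(m+1|n)$ behaves as expected (e.g.\ $\fsl(1|1)$ has a nontrivial centre, and $\fpsl$ may intervene). These low-rank coincidences are exactly where the classical proof has nothing to say, so I would handle $(m|n) \in \{(0|1),(1|1),(0|2),\dots\}$ by a direct finite check of the prolongation recursion, confirming in each case that $\g_2 = 0$. The generic argument then covers everything else, and the exclusion of $(1|0)$ is precisely the classical phenomenon that the projective structure on the line is infinite type (symmetry algebra all of $\mathfrak{X}(\R)$ via $q^{-1}dH$-type freedom), which the statement already flags.
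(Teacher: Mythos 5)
Your primary route is essentially the paper's own proof: the paper also reduces the claim to showing that the first prolongation of $V\oplus\fgl(V)\oplus\g_1'$ vanishes, phrased as the vanishing of $H^{2,1}(V,V\oplus\mathfrak{gl}(V)\oplus\fg_1')$, and verifies this by the direct super-analogue (with Koszul signs) of the classical trace computation, handling the purely odd case $(0|1)$ separately via $\g_1=0$ exactly as you anticipate. Your alternative shortcut of identifying $\g$ with $\fsl(m+1|n)$ and quoting its rigidity is not what the paper does and is unnecessary once the cocycle computation of your step (2) is carried out, so no gap is introduced by the low-rank subtleties ($\fsl(n|n)$ centre, $\fpsl$) you flag.
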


 \begin{proof}
In dimension $(0|1)$ we have $\fg_1=0$, so it is clear. Otherwise we claim that $\g_i=(\g_1')^{(i-1)}=0$ for all $i>1$.
Indeed 
the Spencer complex in $\mathbb Z$-degree $2$ is given by
$$
0\to V^*\otimes V^*\to \Lambda^2V^*\otimes\mathfrak{gl}(V)\to \Lambda^3V^*\otimes V\to0
$$
and its first cohomology group vanishes, i.e., $H^{2,1}(V,V\oplus\mathfrak{gl}(V)\oplus\fg'_1)=0$.
Indeed, we have
 $$
(\delta A)(u,v,u)=
A(u,v)u+(-1)^{|u||v|}A(u,u)v-(-1)^{|u||v|}A(v,u)u-(-1)^{|u||v|+|u|}A(v,u)u,
 $$
for $u,v\in V$, $A\in V^*\otimes V^*$. Taking $u,v$ independent
tells us that $A(u,u)=0$ for all $u\in V$. Considering the vanishing
of the remaining three terms gives $A=0$.
The  first cohomology groups then must vanish in higher $\mathbb Z$-gradings
too 
and $H^{\geq 2,1}(V,V\oplus\mathfrak{gl}(V)\oplus\fg'_1)=0$ is equivalent to the prolongation claim,
cf.\ \cite{T,KST}.
 \end{proof}

Consequently, the symmetry dimension of a projective structure on a
supermanifold $M$ with $\dim M\neq (1|0)$ is bounded by
 $$
\dim\mathfrak{s}\leq\dim V+\dim\mathfrak{gl}(V)+\dim\g'_1=\bigl(2m+n^2+m^2\,|\,2n+2mn\bigr).
 $$
Now we will consider examples of filtered structures in the nonholonomic case $\cD\varsubsetneq \cT M$.

\subsection{$G(3)$-supergeometries}\label{S52}

In \cite{KST}, we studied two types of $G(3)$-supergeometries,
where $G(3)$ is the exceptional simple Lie supergroup of dimension $(17|14)$.
\subsubsection{$G(3)$-contact supergeometry}

Consider a contact distribution $\cC$ of rank $(4|4)$ on a supermanifold $M$ of dimension $(5|4)$.
The induced conformally super-symplectic
structure on $\cC$ reduces the structure group to $CSpO(4|4)$ and it is still of infinite
type. A cone structure on $\cC$ is given by a field of supervarieties
in projectivized contact spaces. Namely for $x\in M_o$ the
projective superspace $\mathbb{P}\cC|_x$ contains a distinguished
subvariety $\cV|_x$ of dimension $(1|2)$ that is isomorphic to the
unique irreducible flag manifold of the simple Lie supergroup $OSp(3|2)$, namely $\cV|_x\cong OSp(3|2)/P_1^{\rm II}$,
where $\mathfrak{p}_1^{\rm II}$ is the parabolic subalgebra
 \raisebox{-0.09in}{
 $\begin{tikzpicture}
 \draw (1,0) -- (2,0);
 \node[draw,circle,inner sep=2pt,fill=mygray] at (1,0) {};
 \node[draw,circle,inner sep=2pt,fill=black] at (2,0) {};
 \node[below] at (1,-0.1) {$\times$};
 \end{tikzpicture}$
 }.
We call this subvariety the $(1|2)$-twisted cubic,
because its underlying classical manifold is a rational normal curve
of degree 3, which is ``deformed'' in 2 odd dimensions.

This cone field reduces the structure group to $COSp(3|2)\subset CSpO(4|4)$,
and now this is of finite type: if $\g_0=\mathfrak{cosp}(3|2)$
and $\g=\g(3)$ is the Lie algebra of $G(3)$ then
$H^{d,1}(\fm,\g)=0$ if $d>0$ by \cite[Theorem 3.9]{KST}, so
the maximal prolongation is $\g=\opp{pr}(\fm,\g_0)$ (Corollary 3.10 loc.cit.).
Such a geometric structure arises on the generalized flag-supervariety
$G(3)/P_1^{\rm IV}$ with marked Dynkin diagram
 \raisebox{-0.09in}{
 $\begin{tikzpicture}
 \draw (0,0) -- (1,0);
 \draw (0,0.07) -- (1,0.07);
 \draw (0,-0.07) -- (1,-0.07);
 \draw (0.4,0.15) -- (0.6,0) -- (0.4,-0.15);
 \draw (1,0.05) -- (2,0.05);
 \draw (1,-0.05) -- (2,-0.05);
 \draw (1.6,0.15) -- (1.4,0) -- (1.6,-0.15);
 \node[draw,circle,inner sep=2pt,fill=white] at (0,0) {};
 \node[draw,circle,inner sep=2pt,fill=mygray] at (1,0) {};
 \node[draw,circle,inner sep=2pt,fill=black] at (2,0) {};
 \node[below] at (0,-0.1) {$\times$};
 \end{tikzpicture}
 $}
and in \cite[Theorem 4.9]{KST} we established
that the maximal symmetry dimension (in the strong sense) of
supergeometries $(M,\cC,\cV)$ as above is $(17|14)$,
{\it under the assumption that the geometry is locally homogeneous.}
Now as a direct corollary of Theorem \ref{T1} we derive that the assumption
of local homogeneity can be removed (we fulfill thus what is written in
footnote 5 at page 54 of loc.cit.):
 \begin{theorem}
The maximal symmetry dimension of a $G(3)$-contact supergeometry $(M,\cC,\cV)$ is equal to $(17|14)$.
 \end{theorem}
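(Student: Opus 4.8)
The plan is to reduce the statement to a direct application of Theorem~\ref{T1} together with the Tanaka--Weisfeiler prolongation computation already recorded in \cite{KST}. First I would recall that a $G(3)$-contact supergeometry $(M,\cC,\cV)$ consists of the contact distribution $\cC$ of rank $(4|4)$ on the supermanifold $M$ of dimension $(5|4)$, together with the field of $(1|2)$-twisted cubics $\cV$ inside the projectivized contact spaces. The symbol $\fm$ of $\cC$ is the Heisenberg-type nilpotent Lie superalgebra $\fm=\g_{-2}\oplus\g_{-1}$ with $\dim\g_{-1}=(4|4)$ and $\dim\g_{-2}=(1|0)$; this is strongly regular, fundamental and non-degenerate, so the hypotheses of Theorem~\ref{T1} are met at the level of the underlying distribution.

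Next I would identify the first-order reduction: the conformally super-symplectic structure on $\cC$ reduces $\Aut_{gr}(\fm)$ to $CSpO(4|4)$, and the cone field $\cV$ further reduces it to $\fg_0=\mathfrak{cosp}(3|2)$ acting irreducibly on $\g_{-1}$. This is precisely a filtered $G_0$-structure in the sense of \S\ref{FGS} with $G_0$ the Lie supergroup integrating $\mathfrak{cosp}(3|2)$. Then I invoke \cite[Theorem~3.9]{KST}, which gives $H^{d,1}(\fm,\g)=0$ for all $d>0$ where $\g=\g(3)$, and the ensuing \cite[Corollary~3.10]{KST}, which states that $\g=\opp{pr}(\fm,\fg_0)$ is the full Tanaka--Weisfeiler prolongation. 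Since $\dim\g(3)=(17|14)$ is finite, the prolongation stabilizes and the structure is of finite type.

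With these inputs in place, Theorem~\ref{T1} applies verbatim: the symmetry superalgebra $\fs=\mathfrak{inf}(M,\cC,\cV)$ of any such supergeometry (on a connected reduced manifold) satisfies $\dim\fs\leq\dim\g=(17|14)$ in the strong sense, i.e.\ separately on even and odd parts. For sharpness, I would point to the homogeneous model $G(3)/P_1^{\rm IV}$ constructed in \cite[\S4]{KST}: there the distribution $\cD=\cF^{-1}/\cF^0$ and the cone field arising from the weighted filtration on $\g(3)$ are $G(3)$-invariant, so the flat-model discussion of \S\ref{S43} gives $\mathfrak{inf}(G(3)/P_1^{\rm IV},\cC,\cV)=\g(3)$, attaining the bound. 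Hence the maximal symmetry dimension is exactly $(17|14)$, and the local-homogeneity hypothesis of \cite[Theorem~4.9]{KST} is superfluous.

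The only genuine subtlety — not really an obstacle, but the point requiring care — is to confirm that the cohomological vanishing $H^{d,1}(\fm,\g)=0$ for $d>0$ from \cite{KST} is exactly the condition guaranteeing that \emph{no} further reductions occur beyond the first-order $\fg_0$, so that the prolongation bundle $P$ of Theorem~\ref{thm:absolute-parallelism} has $\dim P=\dim\g(3)$; this is the content of the remark following the generalized Spencer complex in \S\ref{sec:superdistributions}, namely that $H^{\geq0,1}(\fm,\g)$ encodes all possible reductions. Granting that, the argument is a clean corollary with no new computation required.
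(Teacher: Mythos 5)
Your proposal is correct and follows essentially the same route as the paper: identify the contact symbol $\fm=\g_{-2}\oplus\g_{-1}$ with the first-order reduction to $\fg_0=\mathfrak{cosp}(3|2)$ given by the cone field, invoke the vanishing $H^{d,1}(\fm,\g(3))=0$ for $d>0$ from \cite[Theorem 3.9, Corollary 3.10]{KST} to conclude $\opp{pr}(\fm,\fg_0)=\g(3)$, apply Theorem \ref{T1} for the bound $(17|14)$, and realize it on the flat model $G(3)/P_1^{\rm IV}$. This matches the paper's argument, including the observation that Theorem \ref{T1} removes the local-homogeneity hypothesis of \cite[Theorem 4.9]{KST}.
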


\subsubsection{Super Hilbert--Cartan geometries}
Another $G(3)$ supergeometry lives on  supermanifolds of dimensions $(5|6)$ and it
is given by a superdistribution with growth vector $(2|4,1|2,2|0)$.
The symbol of a (fundamental, nondegenerate) superdistribution with such
growth vector can be one of four types \cite[Theorem 5.1]{KST},
and just of two types if its even part is the standard symbol
as for the Hilbert--Cartan equation.
Moreover one of them is generic, hence rigid, and it is called SHC type symbol.
More explicitly, for a basis of $\fm$
(listed in the format $(\text{even}|\text{odd})$)
 $
\g_{-1}=\langle e_1,e_2\,|\,\theta_1',\theta_1'',\theta_2',\theta_2''\rangle$,
$\g_{-2}=\langle h\,|\,\varrho_1,\varrho_2\rangle$,
$\g_{-3}=\langle f_1,f_2\,|\,\cdot \rangle$,
the nontrivial commutator relations of the SHC type symbol are the following:
 \begin{multline*}
[e_1,e_2]=h,\ [e_1,h]=f_1,\ [e_2,h]=f_2,\
[\theta_1',\theta_2']=[\theta_1'',\theta_2'']=h,\\
[e_1,\theta_2']=[e_2,\theta_1'']=\varrho_1,\
[e_1,\theta_2'']=-[e_2,\theta_1']=\varrho_2,\\
[\theta_1',\varrho_1]=[\theta_1'',\varrho_2]=f_1,\
[\theta_2'',\varrho_1]=-[\theta_2',\varrho_2]=f_2.
 \end{multline*}

Such a superdistribution arises on the generalized flag-supermanifold
$G(3)/P_2^{\rm IV}$ with the marked Dynkin diagram
\raisebox{-0.09in}{$\begin{tikzpicture}
 \draw (0,0) -- (1,0);
 \draw (0,0.07) -- (1,0.07);
 \draw (0,-0.07) -- (1,-0.07);
 \draw (0.4,0.15) -- (0.6,0) -- (0.4,-0.15);
 \draw (1,0.05) -- (2,0.05);
 \draw (1,-0.05) -- (2,-0.05);
 \draw (1.6,0.15) -- (1.4,0) -- (1.6,-0.15);
 \node[draw,circle,inner sep=2pt,fill=white] at (0,0) {};
 \node[draw,circle,inner sep=2pt,fill=mygray] at (1,0) {};
 \node[draw,circle,inner sep=2pt,fill=black] at (2,0) {};
 \node[below] at (1,-0.1) {$\times$};
 \end{tikzpicture}$}\;.
For the grading corresponding to the parabolic
$P_2^{\rm IV}$ the Lie superalgebra $\g=\opp{Lie}(G(3))$ contains $\fm$
as the negative part. In \cite[Theorem 3.16]{KST} we established
$H^{d,1}(\fm,\g)=0$ for all $d\ge0$. Hence (Corollary 3.17 loc.cit.)
$\g$ is the Tanaka-Weisfeiler prolongation of $\fm$, i.e.,
$\g=\opp{pr}(\fm)$.

The methods of \cite{KST} allow to conclude that $(17|14)$ is the
maximal symmetry dimension for locally homogeneous distributions with
the SHC symbol. Using Theorem \ref{T1} of the present paper we
derive the result in full generality without the local homogeneity assumption.
 \begin{theorem}
The maximal symmetry dimension of a superdistribution with SHC symbol
is $(17|14)$.
 \end{theorem}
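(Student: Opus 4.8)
The strategy is to read the upper bound $(17|14)$ off Theorem~\ref{T1} and to attain it by the flat model, so that nothing new beyond \cite{KST} and the general machinery of this paper is needed. First I would verify that a superdistribution $\cD$ with SHC symbol satisfies the hypotheses of Theorem~\ref{T1}: by assumption $\cD$ is strongly regular with symbol $\fm=\g_{-1}\oplus\g_{-2}\oplus\g_{-3}$ of depth $\mu=3$; it is bracket-generating because $\fm$ is fundamental, which is visible from the commutator relations above ($\g_{-1}$ generates $\g_{-2}$ through $[e_1,e_2]=h$, $[\theta_1',\theta_2']=h$, $[e_1,\theta_2']=\varrho_1$, and then $\g_{-1}\oplus\g_{-2}$ generates $\g_{-3}$, e.g.\ $[e_i,h]=f_i$); and $\fm$ is non-degenerate, i.e.\ no nonzero element of $\g_{-1}$ centralises $\fm$, as one checks directly on the displayed basis, the centre of $\fm$ being $\g_{-3}$. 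Thus $(M,\cD)$ is a filtered structure in the sense of \S\ref{FGS} in its pure-prolongation form, with $\g_0=\der_{gr}(\fm)$ and $\g=\opp{pr}(\fm)=\opp{pr}(\fm,\g_0)$.

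Next I would invoke \cite[Theorem 3.16 and Corollary 3.17]{KST}: for the $\ZZ$-grading of $\g=\opp{Lie}(\GGG)$ induced by the parabolic $P_2^{\rm IV}$ one has $\g_-=\fm$ and $H^{d,1}(\fm,\g)=0$ for all $d\ge 0$, hence $\g$ is the full Tanaka--Weisfeiler prolongation $\opp{pr}(\fm)$. Since $\dim\GGG=(17|14)$, Theorem~\ref{T1} gives, for the symmetry superalgebra $\fs$ of any such $(M,\cD)$ with $M_o$ connected,
$$
\dim\fs\leq\dim\opp{pr}(\fm)=\dim\GGG=(17|14)
$$
in the strong sense, i.e.\ on even and odd parts separately; the case of finitely many connected components is handled by the Remark at the end of \S\ref{S42}.

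Finally, for sharpness I would appeal to the flat model of \S\ref{S43}: taking a Lie supergroup $G$ with $\opp{Lie}(G)=\g$ and the closed subsupergroup $H\subset G$ with $\opp{Lie}(H)=\g^0$ in the $P_2^{\rm IV}$-grading, the left-invariant filtration $\cF^i$ on $G$ induced by $\g^i$ descends to a distribution $\cD=\cF^{-1}/\cF^0$ on $G/H$ whose symbol is, by construction, precisely the SHC symbol $\fm$, and $\mathfrak{inf}(G/H,\cD)=\g$ by the discussion of maximal-supersymmetry models; equivalently, one may just cite the homogeneous realization $\GGG/P_2^{\rm IV}$ of \cite{KST}. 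Hence the bound is attained, and the maximal symmetry dimension equals $(17|14)$. I expect the only genuine point requiring care to be the first step --- fitting ``superdistribution with SHC symbol'' to the exact hypotheses of Theorem~\ref{T1} and matching the grading of \cite{KST} with the one intrinsic to $\opp{pr}(\fm)$; once this identification is made, the result is immediate.
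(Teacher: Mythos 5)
Your proposal is correct and follows essentially the same route as the paper: the upper bound comes from Theorem \ref{T1} combined with the prolongation result $H^{d,1}(\fm,\g)=0$ of \cite{KST}, which identifies $\opp{pr}(\fm)$ with the Lie superalgebra of $\GGG$ of dimension $(17|14)$, and sharpness is attained on the homogeneous (flat) model $\GGG/P_2^{\rm IV}$. Your preliminary verification that the SHC symbol is fundamental and non-degenerate, so that the pure-prolongation hypotheses of Theorem \ref{T1} apply, is a reasonable added detail that the paper leaves implicit.
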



\subsection{Super-Poincar\'e structures}\label{S53}
Let $\mathbb V$ be a complex vector space with a non-degenerate symmetric bilinear form $(\cdot,\cdot)$ and $\mathbb S$ an
irreducible module over the associated Clifford algebra. A supertranslation algebra is a $\mathbb Z$-graded
Lie superalgebra $\fm=\fm_{-2}\oplus\fm_{-1}$, where $\fm_{-2}=\fm_{\bar 0}=\mathbb V$ and $\fm_{-1}=\fm_{\bar 1}=\mathbb S\oplus\cdots\oplus\mathbb{S}$ is the direct sum of an arbitrary number $N\geq 1$ of copies of $\mathbb S$, whose bracket $\Gamma:\fm_{-1}\otimes\fm_{-1}\rightarrow\fm_{-2}$ is of the form
$(\Gamma(s,t),v)=\mathfrak{B}(v\cdot s,t)$ for $v\in \mathbb V$, $s,t\in \fm_{-1}$. Here $\mathfrak{B}$ is a non-degenerate bilinear form on $\fm_{-1}$, which is admissible in the sense of  \cite{AC}. We note that $\Gamma$ is $\fso(\mathbb V)$-equivariant, so the semidirect sum $\fp=\fm\rtimes\mathfrak{so}(\mathbb V)$ is a Lie superalgebra, usually referred to as Poincar\'e superalgebra (complex, N-extended, in dimension $\dim\mathbb V$).

Real supermanifolds $M$ endowed with a strongly-regular odd distribution $\cD\subset\cT M$ whose complexified symbol is $\fm$ appear naturally in ``super-space'' approaches to supergravity  and rigid supersymmetric field theories (see \cite{SS1,SS2, FSCMP, FSATMP, dFS1, dFS2} and references therein).
The superdistribution $\cD$ has been called a super-Poincar\'e structure in \cite{AS} and the main result of that paper is the explicit description of the maximal transitive prolongation of $\fm$. Here we recall it for the reader's convenience:
\begin{theorem}
\label{thm:superPoincare}
{\normalsize If $\dim \mathbb V=1,2$, the prolongation $\fg$ of $\fm$ is infinite-dimensional. If $\dim \mathbb V\geq 3$, it is finite-dimensional and
$\fg_{\leq 0}=\fp\oplus\mathbb C Z\oplus\fh_0$ as the vector space direct sum of the Poincar\'e superalgebra, the grading element $Z$  and the algebra
$\fh_0=\{D\in \fg_0| [D,{\fm_{-2}}]=0\}$
 of the internal symmetries of $\fm_{-1}$.
If $\dim \mathbb V\geq 3$, then $\fg_{p}= 0$ for all $p\geq 1$ in all cases except those listed in Table  \ref{tabintroduction}.}

{{\small
\begin{table}[H]
 \begin{centering}
\begin{tabular}{|c|c|c|c|c|c|c|}
\hline
$\fg$ & $\dim\fg$ & Dynkin diagram & $\dim \mathbb V$ & $\dim \mathbb S$ & $N$ & $\fh_0$\\
\hline
\hline
&&&&&\\[-3mm]

$\mathfrak{osp}(1|4)$& $(10|4)$
&
\begin{tikzpicture}
\node[root]   (3) [label=]      {} ;
\node[broot]   (4) [right=of 3] {} [label=]     edge [-] (3);
\end{tikzpicture}&
$3$ & $2$ & $1$ & $0$
\\
\hline
&&&&&\\[-3mm]

$\begin{gathered}\mathfrak{osp}(2m+1|4)\\ m\geq1\end{gathered}$&
$(2m^2+m+10|8m+4)$
&
\begin{tikzpicture}
\node[root]   (3) [label=] {};
\node[root, fill=mygray]   (4) [right=of 3] {} [label=]  edge [-] (3);
\node[]   (5) [right=of 4] {$\;\cdots\,$} edge [-] (4);
\node[root]   (7) [right=of 5] {} [label=]  edge [rdoublearrow] (5);
\end{tikzpicture}&
$3$ & $2$ & $2m+1$ & $\mathfrak{so}(2m+1)$
\\
\hline
&&&&&\\[-3mm]

$\begin{gathered} \mathfrak{osp}(2|4) \\ \\ \end{gathered}$& $\begin{gathered} (11|8) \\ \\ \end{gathered}$
&
\begin{tikzpicture}
\node[root]   (3) [label=] {} ;
\node[root, fill=mygray]   (4) [above right=of 3] [label=] {} edge [-] (3);
\node[root, fill=mygray]   (5) [below right=of 3] [label=] {} edge [-] (3) edge [doublenoarrow] (4);
\end{tikzpicture}&
$\begin{gathered} 3\\ \\ \end{gathered}$ & $\begin{gathered} 2\\ \\\end{gathered}$ & $\begin{gathered} 2\\ \\\end{gathered}$ & $\begin{gathered} \mathfrak{so}(2)\\ \\ \end{gathered}$
\\
\hline
&&&&&\\[-3mm]

$\begin{gathered}\mathfrak{osp}(2m|4)\\ m\geq2 \\ \\ \end{gathered}$& $\begin{gathered} (2m^2-m+10|8m)\\ \\ \end{gathered}$
&
\begin{tikzpicture}
\node[root]   (3)[label=] {} ;
\node[root, fill=mygray]   (4) [right=of 3][label=] {} edge [-] (3);
\node[]   (6) [right=of 4] {$\;\cdots\quad$} edge [-] (4);
\node[root]   (7) [above right=of 6] [label=]{} edge [-] (6);
\node[root]   (8) [below right=of 6] [label=]{} edge [-] (6);
\end{tikzpicture}&
$\begin{gathered} 3\\ \\ \end{gathered}$ & $\begin{gathered} 2\\ \\\end{gathered}$ & $\begin{gathered} 2m\\ \\\end{gathered}$ & $\begin{gathered} \mathfrak{so}(2m)\\ \\ \end{gathered}$
\\
\hline
&&&&&\\[-3mm]

$\begin{gathered}\mathfrak{sl}(m+1|4)\\m\neq3\end{gathered}$& $(m^2+2m+16|8m+8)$
&
\begin{tikzpicture}
\node[root]   (1)       [label=]              {};
\node[root, fill=mygray] (2) [right=of 1] {}[label=] edge [-] (1);
\node[]   (3) [right=of 2] {$\;\cdots\,$} edge [-] (2);
\node[root, fill=mygray]   (4) [right=of 3] {}[label=] edge [-] (3);
\node[root]   (5) [right=of 4] {}[label=] edge [-] (4);
\end{tikzpicture}&
$4$ & $4$ & $m+1$ & $\mathfrak{gl}(m+1)$
\\
\hline
&&&&&\\[-3mm]

$\mathfrak{pgl}(4|4)$& $(30|32)$
&
\begin{tikzpicture}
\node[root]   (1)       [label=]              {};
\node[root, fill=mygray] (2) [right=of 1] {}[label=] edge [-] (1);
\node[]   (3) [right=of 2] {$\;\cdots\,$} edge [-] (2);
\node[root, fill=mygray]   (4) [right=of 3] {}[label=] edge [-] (3);
\node[root]   (5) [right=of 4] {}[label=] edge [-] (4);
\end{tikzpicture}&
$4$ & $4$ & $4$ & $\mathfrak{gl}(4)$
\\
\hline
&&&&&\\[-3mm]

$F(4)$& $(24|16)$
&
\begin{tikzpicture}
\node[root]   (1)        [label=]             {};
\node[root, fill=mygray] (2) [right=of 1] {} [label=]  edge [rtriplearrow] (1) edge [-] (1);
\node[root]   (3) [right=of 2] {} [label=] edge [-] (2);
\node[root]   (4) [right=of 3] {} [label=] edge [doublearrow] (3);
\end{tikzpicture}&
$5$ & $4$ & $2$ & $\mathfrak{sl}(2)$\\
\hline
\end{tabular}
 \end{centering}
\caption[]{Exceptional prolongations of super-Poincar\'e algebras.}
\label{tabintroduction}
\end{table}
\vskip-15pt
}}

\noindent
Here the simple roots of degree $1$ coincide with the odd simple roots, i.e., those associated to black and gray nodes on the Dynkin diagram.
\end{theorem}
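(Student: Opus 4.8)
The plan is to prove this by computing the Tanaka--Weisfeiler prolongation $\fg=\opp{pr}(\fm)$ directly from the recursion \eqref{prolg}, exploiting the defining identity $(\Gamma(s,t),v)=\mathfrak B(v\cdot s,t)$ together with the admissibility properties of $\mathfrak B$ established in \cite{AC} (its symmetry, its Clifford type, and the resulting rule relating $\mathfrak B(v\cdot s,t)$ to $\mathfrak B(s,v\cdot t)$). The first step is to pin down $\fg_0=\der_{gr}(\fm)$. Since $\fm$ is fundamental, any $A\in\fg_0$ is determined by $A|_{\fm_{-1}}\in\fgl(\fm_{-1})$; a degree considerations argument shows $\fg_0$ is purely even, and the derivation property forces the existence of $A'\in\fgl(\mathbb V)$ with $\Gamma(As,t)+\Gamma(s,At)=A'\Gamma(s,t)$. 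Using surjectivity of $\Gamma$ and the Clifford relation one checks that $A'$ must lie in $\fso(\mathbb V)\oplus\mathbb C\,\id_{\mathbb V}$, so that $\fg_0=\fso(\mathbb V)\oplus\mathbb C Z\oplus\fh_0$ with $\fh_0=\{A:A'=0\}$ the algebra of internal symmetries; this already yields $\fg_{\leq 0}=\fm\oplus\fg_0=\fp\oplus\mathbb C Z\oplus\fh_0$ as asserted.

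The core of the argument is the analysis of $\fg_1$ (which is purely odd, the induced $\mathbb Z$-grading on $\fg$ being consistent) and, where it does not vanish, of the higher $\fg_p$. An element $u\in\fg_1$ is a pair of odd maps $u|_{\fm_{-2}}:\mathbb V\to\fm_{-1}$ and $u|_{\fm_{-1}}:\fm_{-1}\to\fg_0$, and since $\fm$ is generated by $\fm_{-1}$ the Jacobi conditions of \eqref{prolg} collapse to two relations: a ``generalized moment map'' equation
\[
u(\Gamma(v,w)) = u(v)\cdot w + u(w)\cdot v ,\qquad v,w\in\fm_{-1},
\]
together with a relation recovering $u|_{\fm_{-2}}$ from $u|_{\fm_{-1}}$ via $\Gamma$ and the nondegeneracy of the form on $\mathbb V$. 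Substituting the admissibility identities turns this into an explicit linear system for $\fso(\mathbb V)$- and $\fh_0$-equivariant maps built from Clifford multiplication, which one solves using Fierz-type identities: for $\dim\mathbb V=1,2$ one produces an infinite tower of solutions, in the spirit of the $\phi_k$ in the skew-periplectic proposition of \S\ref{S51}, so $\fg$ is infinite-dimensional; for $\dim\mathbb V\geq 3$ the system has only the trivial solution except for the finitely many numerical coincidences catalogued in Table \ref{tabintroduction}.

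In each exceptional case one then identifies $\fg$ with the Lie superalgebra listed. The pairing $\fg_{-1}\otimes\fg_1\to\fg_0$ is nondegenerate and $\fg_{-1}$ is $\fg_0$-irreducible, so $\fg$ is a finite-dimensional, consistently $\mathbb Z$-graded simple Lie superalgebra -- or such a thing modulo a one-dimensional centre, as in the $\mathfrak{pgl}(4|4)$ entry. Matching this against Kac's classification of simple Lie superalgebras and the description of their consistent gradings of depth $2$ with $\fg_{-2}=\mathbb V$ of dimension $3$, $4$ or $5$, and using that $\fso(\mathbb V)$ sits inside $\fg_{\bar 0}$ (forcing an $\fsp(4)$- or $\fsl(2)$-factor), narrows the candidates to $\fosp(M|4)$, $\fsl(m+1|4)$, $\mathfrak{pgl}(4|4)$ and $F(4)$ with the indicated parabolics; comparing $\dim\fg$ and the $\fh_0$-action then fixes $M$, $m$, $N$ and reconstructs the table. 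Alternatively one can remain entirely inside the prolongation formalism and finish by computing $\fg_2$ with the same machinery and verifying $\fg=\fg_{\leq 2}$ (equivalently $H^{d,1}(\fm,\fg)=0$ for $d\geq 1$, as in the $G(3)$ examples of \S\ref{S52}).

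The step I expect to be the main obstacle is the bookkeeping in the $\fg_1$ computation: the structure of the irreducible Clifford module $\mathbb S$ and of the admissible forms $\mathfrak B$ depends on $\dim\mathbb V\bmod 8$ and on whether $\mathbb S$ is a sum of half-spinor modules, and the $R$-symmetry permuting the $N$ summands of $\fm_{-1}$ must be handled uniformly in $N$ while still isolating the sporadic pairs $(\dim\mathbb V,N)$. That is where the Clifford and Fierz identities are most delicate, and where essentially all of the case-by-case work lives.
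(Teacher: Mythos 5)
You should first be aware that the paper does not prove this statement at all: Theorem \ref{thm:superPoincare} is recalled verbatim from Altomani--Santi \cite{AS}, whose main result it is, and the present paper only cites it. So the relevant comparison is between your outline and the proof in \cite{AS}, and there your proposal is a reasonable sketch of the right strategy (compute $\der_{gr}(\fm)$, then analyse $\fg_1$ via the defining identity $(\Gamma(s,t),v)=\mathfrak{B}(v\cdot s,t)$ and the admissibility properties of $\mathfrak{B}$ from \cite{AC}, then identify the nonzero prolongations inside Kac's classification) but not a proof. The decisive content of the theorem --- that for $\dim\mathbb V\geq 3$ the system for $\fg_1$ has only the trivial solution except for the precise list of pairs $(\dim\mathbb V,N)$ with the stated $\fh_0$ and $\fg$ in Table \ref{tabintroduction}, and that for $\dim\mathbb V=1,2$ an infinite tower exists --- is exactly the part you assert and defer (``one solves using Fierz-type identities\dots''). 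Nothing in your text actually produces the linear system, solves it uniformly in $\dim\mathbb V \bmod 8$, in $N$, and in the choice of admissible $\mathfrak{B}$, or exhibits the infinite-dimensional solutions in low dimension; you acknowledge this yourself in the last paragraph. As it stands the argument establishes only the easy structural facts ($\fg_0$ purely even, $\fg_{\leq 0}=\fp\oplus\mathbb C Z\oplus\fh_0$, $\fg_1$ purely odd), which is far short of the classification.

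Two further points in the identification step need repair even as a plan. First, the claim that $\fg_{-1}$ is $\fg_0$-irreducible is false in general (for $\dim\mathbb V$ even, $\mathbb S$ splits into half-spinor modules and $\fm_{-1}\cong\mathbb S\otimes\mathbb C^N$ decomposes accordingly), so simplicity of the prolongation cannot be read off this way; one must argue transitivity plus nondegeneracy of the bracket $\fg_{-1}\otimes\fg_1\to\fg_0$ more carefully, and handle the non-simple outcome $\mathfrak{pgl}(4|4)$ systematically rather than as an aside. Second, matching against Kac's list \cite{Kac} requires classifying the consistent depth-$2$ gradings of the candidate superalgebras with $\fg_{-2}\cong\mathbb V$ and recovering $N$, $\dim\mathbb S$ and $\fh_0$ from the grading data; this is itself a nontrivial verification that your sketch compresses into ``comparing $\dim\fg$ and the $\fh_0$-action''. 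In short: the route is broadly the one taken in \cite{AS}, but the proposal leaves the case-by-case Clifford/Fierz analysis and the grading classification --- i.e., essentially the whole proof --- unexecuted, so it cannot be accepted as a proof of the theorem.
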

Since the complexification of the prolongation of a real symbol is the prolongation of the complexified symbol, one may
combine Theorems \ref{T1} and \ref{thm:superPoincare} to get the bound on the dimension of the symmetry superalgebra of a super-Poincar\'e structure in dimension $\dim\mathbb V\geq 3$. For the exceptional cases with $\fg_1\neq 0$, it  is provided by the second column in  Table  \ref{tabintroduction}, in all other cases it is given by
 $$
 \dim\fs\leq
\Bigl(\tfrac{d(d+1)}{2}+1+\dim\fh_0\,|\,N 2^{[d/2]}\Bigr)\;,
 $$
where $d=\dim\mathbb V$ and square brackets refer to the integer part.
Furthermore, the subalgebra $\fh_0$ of the internal symmetries can be easily described on a case-by-case basis. It splits into the sum of its symmetric part $\fh_0^{s}$ and skew-symmetric part $\fh_0^{a}$ with respect to $\mathfrak B$
and the condition that elements of $\fh_0$ act as derivations of $\fm$ yields:
\begin{equation*}\label{eeeeaaaa}
\begin{aligned}
\fh_0^{a}&=\{D\in\mathfrak{gl}(\fm_{-1})\mid \mathfrak{B}(Ds,t)=-\mathfrak{B}(s,Dt),\     D(v\cdot s)=v\cdot Ds\ \forall v\in \mathbb V,\ s,t\in \fm_{-1}\}\,,\\
\fh_0^{s}&=\{D\in\mathfrak{gl}(\fm_{-1})\mid \mathfrak{B}(Ds,t)=\mathfrak{B}(s,Dt),\ D(v\cdot s)=-v\cdot Ds\ \forall v\in \mathbb V,\ s,t\in \fm_{-1}\}\,.
\end{aligned}
\end{equation*}
It is well-known that $\mathbb S$ is $\fso(\mathbb V)$-irreducible if $d$ is odd and the direct sum of two inequivalent $\fso(\mathbb V)$-irreducible submodules if $d$ is even. By $\fso(\mathbb V)$-equivariancy, a uniform (but not sharp) bound on $\dim\fh_0$ is thus given by $N^2$ if $d$ is odd and $2N^2$ if $d$ is even.

\smallskip

We conclude this subsection with the following direct consequence of \S\ref{subsec:canpar}.
Consider, for instance, the $4$- and $11$-dimensional vector spaces $V$ in Lorentzian signature. The real spinor module $S$ is an irreducible module for the Lorentz algebra $\fso(V)$ and it is of Clifford real type (i.e., $S\otimes\mathbb C=\mathbb S$). It follows from Theorem \ref{thm:superPoincare} that, if we reduce the structure algebra to $\fso(V)$, the prolongation of the real $N=1$ Poincar\'e superalgebra
$$\fp=\fp_{-2}+\fp_{-1}+\fp_0=V+S+\mathfrak{so}(V)$$ is just $\fp$. Theorem \ref{thm:absolute-parallelism} and Remark
\ref{rem:equivariancy} then imply that any super-Poincar\'e structure $\cD$ with reduced structure group $P_0=\Spin(V)$ has associated a Cartan superconnection on a $P_0$-principal bundle $\pi:P\to M$. This bridges from the ``super-space approach'' to the so-called ``rheonomic approach'' of supergravity and supersymmetric field theories (see, e.g., the nice reviews \cite{DAuria, Cas}).
We stress that in the rheonomic approach the axioms of a Cartan superconnection follows from a Lagrangian principle on the absolute parallelism $\Phi$, whereas our general construction affords the existence of the Cartan superconnection, from purely geometric arguments.

It would be interesting to study the normalization conditions on the Cartan superconnection in the cohomological spirit of \S\ref{subsec:normcond} and compare them with those traditionally obtained in the rheonomic approach via Lagrangian principles.

 \subsection{Odd Ordinary Differential Equations}

 \subsubsection{Review of some classical ODE}

 Classically, ODE are geometrically viewed as submanifolds of a jet space with the inherited structure (via pullback along the inclusion map).  This leads to formulating these as manifolds $M$ with a rank 2 distribution $C \subset TM$ (having specific symbol $\fm$) and a splitting into line fields $C = E \op V$:
 \begin{itemize}
 \item {\em 2nd order ODE $y'' = f(x,y,y')$ (up to point transformations)}: Introduce local coordinates $(x,y,p)$ on $M$ with $C = E \op  V = \langle \partial_x + p\partial_y + f\partial_p \rangle \op \langle \partial_p \rangle$.  Then $C$ has symbol $\fm = \fg_{-1} \op \fg_{-2} = \langle X,e_1 \rangle \op \langle e_2 \rangle$ with non-trivial bracket $[X,e_1] = e_2$.  ($C$ is a contact distribution.)
 \item {\em 3rd order ODE $y''' = g(x,y,y',y'')$ (up to contact transformations)}: Introduce local coordinates $(x,y,p,q)$ on $M^4$ with $C = E \op V = \langle \partial_x + p\partial_y + q\partial_p + g\partial_p \rangle \op \langle \partial_q \rangle$.  Then $C$ has symbol $\fm = \fg_{-1} \op \fg_{-2} \op \fg_{-3} = \langle X, e_1 \rangle \op \langle e_2 \rangle \op \langle e_3 \rangle$ with non-trivial brackets $[X,e_1] = e_2$, $[X,e_2] = e_3$.  ($C$ is an Engel distribution.)
 \end{itemize}
 The splitting indicates a reduction $\fg_0 \hookrightarrow \mathfrak{der}_{{\rm gr}}(\fm)$.  In both cases, $\dim(\fg_0) = 2$ with $\fg_0 \hookrightarrow \fgl(\fg_{-1})$ corresponding to scalings along the two distinguished directions in $\fg_{-1}$.

There are well-known $\mathbb{Z}$-gradings of $A_2 \cong \fsl_3$ and $B_2 \cong \fso_{2,3}$ for which the negative parts $\fg_-$ are the indicated symbol algebras above and the non-negative parts $\fp = \fg_{\geq 0}$ are the respective Borel subalgebras.  This implies inclusions of $\fsl_3$ and $\fso_{2,3}$ into the respective Tanaka prolongations $\operatorname{pr}(\fg_-,\fg_0)$.
One can show that these are, in fact, equalities by verifying that
$H^{+,1}(\fg_-,\fg) = 0$, as was done by Yamaguchi \cite{Y} using
Kostant's theorem. (Previously this was done by
Tresse, Cartan and Chern using geometric methods.)

 \subsubsection{2nd order odd ODE}

Consider a 2nd order {\sl odd} ODE $\xi'' = \fF(x,\xi,\xi')$, where $\xi$ is an odd function of the even variable $x$, and $\fF$ is an odd function.  As in the classical case, the space $M=\RR^{1|2}(x,\xi,\xi')$
is equipped with a distribution $C = E \op V \subset TM$, where $E = \langle \partial_x + \xi' \partial_{\xi} + \fF \partial_{\xi'} \rangle$ is {\sl even} and $V = \langle \partial_{\xi'} \rangle$ is {\sl odd}.  The symbol is $\fm = \fg_{-1} \op \fg_{-2} = (\langle X \rangle \op \langle \theta_1 \rangle) \op \langle \theta_2 \rangle$, for even $X$ and odd $\theta_1,\theta_2$ satisfying $[X,\theta_1] = \theta_2$.

Consider $\fg = \fsl(2|1)$, i.e.\ supertrace-free $3\times 3$ matrices, with $\mathbb{Z}_2$-grading induced from $\R^{2|1}$.  Write $\fh = \langle \sfZ_1, \sfZ_2 \rangle$, where $\sfZ_1 = \operatorname{diag}(0,-1,-1)$ and $\sfZ_2 = \operatorname{diag}(-1,-1,-2)$.  Defining $\epsilon_i \in \fh^*$ by $\epsilon_i( \operatorname{diag}(h_1,h_2,h_3)) = h_i$, we have $\{ \sfZ_1, \sfZ_2 \}$ being the dual basis to $\{ \alpha_1 := \epsilon_1 - \epsilon_2, \alpha_2 := \epsilon_2 - \epsilon_3 \}$.  Letting $E_{ij}$ be the matrix with a 1 in the $(i,j)$-position and $0$ elsewhere, we use $(\sfZ_1,\sfZ_2)$ to induce a bigrading on $\fg$, and let $\sfZ = \sfZ_1 + \sfZ_2$ be the induced grading on $\fg$.  In particular:
 \begin{align}
 \fg_- = \fg_{-1} \op \fg_{-2} = (\fg_{-1,0} \op \fg_{0,-1}) \op \fg_{-1,-1} = (\langle E_{21} \rangle \op \langle E_{32}) \rangle \op \langle E_{31} \rangle.
 \end{align}
Here $E_{21}$ is even, while $E_{31}, E_{32}$ are odd.  The only non-trivial bracket on $\fg_-$ is $[E_{32}, E_{21}] = E_{31}$.  We conclude that $\fsl(2|1)$ includes into $\operatorname{pr}(\fg_-,\fg_0)$.  From Table \ref{F:odd-2ODE}, we use the differentials $\delta_k : C^k(\fg_-,\fg) \to C^{k+1}(\fg_-,\fg)$ to conclude that $H^{+,1}(\fg_-,\fg) = 0$, whence that $\operatorname{pr}(\fg_-,\fg_0) \cong \fsl(2|1)$.

  \begin{table}[h]
\[
 \begin{array}{cc}
 \begin{array}{|c|l|c|c|c} \hline
 \mbox{Bi-grading} & \mbox{Basis} & \ker(\delta_1) \\ \hline\hline
 (2,2) & E_{31}^* \otimes E_{13} & \cdot \\ \hline

 (2,1) & E_{31}^* \otimes E_{12}, \quad E_{21}^* \otimes E_{13} & \cdot \\ \hline

 (2,0) & E_{21}^* \otimes E_{12} & \cdot \\ \hline
 (1,2) & E_{32}^* \otimes E_{13}, \quad E_{31}^* \otimes E_{23} & \cdot \\ \hline

 (1,1) & E_{21}^* \otimes E_{23}, \quad E_{32}^* \otimes E_{12}, \quad E_{31}^* \otimes Z_1, \quad E_{31}^* \otimes Z_2 & \delta_0(E_{13})\\ \hline

 (1,0) & E_{21}^* \otimes Z_1, \quad E_{21}^* \otimes Z_2, \quad E_{31}^* \otimes E_{32} & \delta_0(E_{12})\\ \hline
 (0,2) & E_{32}^* \otimes E_{23} & \cdot \\ \hline

 (0,1) & E_{32}^* \otimes Z_1, \quad E_{32}^* \otimes Z_2, \quad E_{31}^* \otimes E_{21} & \delta_0(E_{23})\\ \hline
 \end{array}
 \end{array}
 \]
 \caption{Confirmation of $H^{+,1}(\fg_-,\fg) = 0$ for 2nd order odd ODE}
 \label{F:odd-2ODE}
 \end{table}

When $\fF = 0$, i.e.\ $\xi'' = 0$, we have the prolongation $X$ of $\bS_f$ (satisfying $\cL_X E \subset E$ and $\cL_X V \subset V$), expressed in terms of a generating superfunction $f$ (see Appendix \ref{S:odd-prolong}).
 \begin{align}
 \begin{footnotesize}
 \begin{array}{|c|c|c|c|c|} \hline
 & \multicolumn{2}{c|}{\mbox{Even part}} & \multicolumn{2}{c|}{\mbox{Odd part}}\\ \hline
 \mbox{Grading} & f & \mbox{Prolongation of } \bS_f & f & \mbox{Prolongation of } \bS_f \\ \hline\hline
 +2 & \cdot & \cdot & x\xi \xi' &
 -x\xi\partial_x +\xi\xi'\partial_{\xi'} +(2\xi+x\xi')\xi''\partial_{\xi''}
 \\
 +1 & x\xi -x^2 \xi' & x^2 \partial_x + x\xi\partial_\xi + (\xi - x\xi') \partial_{\xi'} - 3x\xi'' \partial_{\xi''} & \xi' \xi& \xi\partial_x - \xi' \xi'' \partial_{\xi''} \\
 0 & \begin{array}{c} x\xi' \\ \xi \end{array} & \begin{array}{c}
  -x\partial_x + \xi' \partial_{\xi'} + 2\xi'' \partial_{\xi''} \\
  \xi\partial_\xi + \xi' \partial_{\xi'} + \xi'' \partial_{\xi''}
  \end{array} & \cdot & \cdot\\
 -1 & \xi' & -\partial_x & x & x\partial_\xi + \partial_{\xi'}\\
 -2 & \cdot & \cdot & 1 & \partial_\xi\\ \hline
 \end{array}
 \end{footnotesize}
 \end{align}
These symmetries are all projectable over $(x,\xi)$-space, i.e.\ they are (prolonged) {\sl point} symmetries.  (Equivalently, their generating functions are linear in $\xi'$.) This symmetry superalgebra is indeed $\fsl(2|1)$.  In stark contrast to the classical case, 2nd order odd ODE do not admit non-trivial deformations:
 \begin{prop}
Any 2nd order odd ODE $\xi'' = \fF(x,\xi,\xi')$ is locally equivalent to the trivial equation $\xi'' = 0$ via a point transformation,
and thus
has symmetry dimension $(4|4)$.
 \end{prop}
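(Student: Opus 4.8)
The plan is to exploit the fact that an \emph{odd} second order ODE is automatically \emph{linear}. On $M=\RR^{1|2}$ with coordinates $(x,\xi,\xi')$, the odd part of the structure sheaf is freely generated over $C^\infty(x)$ by $\xi$ and $\xi'$, so every odd superfunction has the form $\fF(x,\xi,\xi')=a(x)\,\xi+b(x)\,\xi'$ for ordinary smooth functions $a,b$. Hence the equation is $\xi''=a(x)\xi+b(x)\xi'$, and everything reduces to showing that a linear second order odd ODE is point--equivalent to $\xi''=0$. This is precisely the source of the contrast with the classical case: oddness of $\fF$ already forces linearity, leaving no room for a Tresse--type point invariant.

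To trivialize $\xi''=a\xi+b\xi'$ I would imitate the classical projective normalization of linear scalar ODEs. Consider the auxiliary \emph{even} linear ODE $u''=a(x)u+b(x)u'$ and, near a fixed point $x_0$, choose two solutions $\phi_1,\phi_2$ normalized so that $\phi_2(x_0)\neq 0$ and the Wronskian $W=\phi_1\phi_2'-\phi_1'\phi_2$ satisfies $W(x_0)\neq 0$; since $W'=bW$, the Wronskian is nowhere zero on the common domain. Then
\[
\Phi:(x,\xi)\longmapsto(\tilde x,\tilde\xi)=\Bigl(\tfrac{\phi_1(x)}{\phi_2(x)},\ \tfrac{1}{\phi_2(x)}\,\xi\Bigr)
\]
is an even morphism of $\RR^{1|1}$ which is a local diffeomorphism near $x_0$, because $d\tilde x/dx=-W/\phi_2^2$ and $1/\phi_2$ are nonvanishing there. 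The next step is to prolong $\Phi$ to the jet level and verify that it carries the split distribution $C=E\op V$ of $\xi''=a\xi+b\xi'$ to the one of $\xi''=0$; equivalently, that the prolongation maps the equation submanifold onto $\{\tilde\xi''=0\}$. Computing $\tilde\xi'$ and $\tilde\xi''$ by applying the total derivative $D=\p_x+\xi'\p_\xi+(a\xi+b\xi')\p_{\xi'}$ shows that $\tilde\xi''$ vanishes identically precisely because $\phi_1,\phi_2$ solve $u''=au+bu'$; invariantly, $\Phi$ sends the general solution $\xi=\epsilon_1\phi_1+\epsilon_2\phi_2$ (with odd constants $\epsilon_i$) to the affine function $\tilde\xi=\epsilon_1\tilde x+\epsilon_2$, so solutions go to solutions and the two equation submanifolds are exchanged; since a prolonged point transformation automatically preserves the Cartan distribution and the fiber direction $V=\langle\p_{\xi'}\rangle$, the splitting $E\op V$ is preserved as well.

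Granting the trivialization, $(M,C,E\op V)$ is locally equivalent to the model built on $\xi''=0$. For that model the symbol is $\fm$ with the reduction $\fg_0$ described above, and the vanishing $H^{+,1}(\fg_-,\fg)=0$ (verified via Table~\ref{F:odd-2ODE}) shows that the Tanaka--Weisfeiler prolongation is the full $\operatorname{pr}(\fg_-,\fg_0)\cong\fsl(2|1)$, of dimension $(4|4)$. Theorem~\ref{T1} then gives $\dim\fs\leq\dim\fsl(2|1)=(4|4)$ in the strong sense, and the explicit list of prolonged point symmetries of $\xi''=0$ displayed above attains this bound. Therefore every second order odd ODE has symmetry dimension exactly $(4|4)$.

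The step I expect to be the main obstacle is the jet--prolongation bookkeeping in the second paragraph: one must check carefully --- keeping track of signs and of which jet coordinates are odd --- that $\Phi$ prolongs to an equivalence of the two geometric structures $(C,E,V)$, not merely of the underlying equations. Everything else is either the elementary structure of odd superfunctions or a direct appeal to Theorem~\ref{T1} together with the cohomology computation already carried out.
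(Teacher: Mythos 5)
Your proposal is correct and follows essentially the same route as the paper: parity of $\fF$ forces the equation to be linear, $\xi''=\fF_0(x)\xi+\fF_1(x)\xi'$, and it is then trivialized by a point transformation of the form $(x,\xi)\mapsto(a(x),b(x)\xi)$. The only difference is that you exhibit $a=\phi_1/\phi_2$, $b=1/\phi_2$ explicitly from a fundamental system of the auxiliary even ODE $u''=\fF_0u+\fF_1u'$ (one checks these indeed satisfy the second order system for $(a,b)$ that the paper writes down and solves by existence of solutions), after which the appeal to the $\fsl(2|1)$ prolongation and Theorem \ref{T1} is the same as in the text.
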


 \begin{proof}
Since $\fF$ and $\xi,\xi'$ are odd, then any 2nd order odd ODE must be of the form:
 \begin{align}\label{xiFF}
\xi'' = \fF_0(x) \xi + \fF_1(x) \xi'.
 \end{align}
Let $(\widetilde{x},\widetilde\xi) := (a(x),b(x)\xi)$,
$a'(x)\neq0\neq b(x)$, which induces
$\frac{d\widetilde\xi}{dx} = \frac{d\widetilde\xi}{d\widetilde{x}} a'
= b' \xi + b \xi'$ and
$\frac{d^2\widetilde\xi}{dx^2} = \frac{d^2\widetilde\xi}{d\widetilde{x}^2} (a')^2 + \frac{d\widetilde\xi}{d\widetilde{x}} a''
= b'' \xi + 2 b' \xi' + b \xi''$.  We find that $\frac{d^2\widetilde\xi}{d\widetilde{x}^2} = \widetilde\fF_0 \widetilde\xi + \widetilde\fF_1 \frac{d\widetilde\xi}{d\widetilde{x}}$, where
 \begin{align*}
\widetilde\fF_0 = \tfrac{(b''+b\fF_0)b-(2b'+b\fF_1)b'}{(a')^2b^2}, \quad
\widetilde\fF_1 = \tfrac{(2 b' + b \fF_1) a'  - ba''}{(a')^2b}.
 \end{align*}
This vanishes for solutions of the even 2nd order ODE system
 $$
a''=\Bigl(2\tfrac{b'}{b}+\fF_1\Bigr)a',\quad
b''=\tfrac{2}{b}b'^2+\fF_1b'-\fF_0b,
 $$
and this trivializes the odd ODE \eqref{xiFF}.
 \end{proof}


 \subsubsection{3rd order odd ODE}

Consider a 3rd order {\sl odd} ODE $\xi''' = \mathfrak{G}(x,\xi,\xi',\xi'')$, where $\xi$ is an odd function of the even variable $x$, and $\mathfrak{G}$ is an odd function.  As in the classical case, the space $M=\RR^{1|3}(x,\xi,\xi',\xi'')$
is equipped with a distribution $C = E \op V \subset TM$, where $E = \langle \partial_x + \xi' \partial_{\xi} + \xi'' \partial_{\xi'} + \mathfrak{G} \partial_{\xi''} \rangle$ is {\sl even} and $V = \langle \partial_{\xi''} \rangle$ is {\sl odd}.  The symbol is $\fm = \fg_{-1} \op \fg_{-2} \op \fg_{-3} = (\langle X \rangle \op \langle \theta_1 \rangle) \op \langle \theta_2 \rangle \op \langle \theta_3 \rangle$, for even $X$ and odd $\theta_1,\theta_2, \theta_3$ satisfying $[X,\theta_1] = \theta_2$, $[X,\theta_2] = \theta_3$.  Let us now compute the prolongation directly.

Since $\fg_{-1}$ has a splitting into distinguished lines, then $\fg_0 = \langle T_1, T_2 \rangle \hookrightarrow \mathfrak{der}_{gr}(\fm)$ is even with $T_1 = \operatorname{diag}(1,0,1,2)$ and $T_2 = \operatorname{diag}(0,1,1,1)$, expressed in the $\{ X, \theta_1, \theta_2, \theta_3 \}$ basis.  Interestingly, the height of the prolongation differs from the depth of $\fm$.

 \begin{prop}
 $\dim(\fg_1) = (1|0)$, $\dim(\fg_2) = (0|1)$, while $\dim(\fg_k) = (0|0)$ for all $k \geq 3$.
 \end{prop}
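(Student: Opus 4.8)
The plan is to compute the Tanaka--Weisfeiler prolongation $\fg=\opp{pr}(\fm,\fg_0)$ degree by degree using the recursive description \eqref{prolg}, exploiting the fine $\ZZ^2$-bigrading that $\fg_0=\langle T_1,T_2\rangle$ induces on everything, exactly as was done for the 2nd order case via Table \ref{F:odd-2ODE}. Recall $\fm=\fg_{-1}\oplus\fg_{-2}\oplus\fg_{-3}=(\langle X\rangle\oplus\langle\theta_1\rangle)\oplus\langle\theta_2\rangle\oplus\langle\theta_3\rangle$ with $X$ even, $\theta_i$ odd, and the only nontrivial brackets $[X,\theta_1]=\theta_2$, $[X,\theta_2]=\theta_3$. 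Assign bidegrees so that $X$ has bidegree $(-1,0)$ and $\theta_1$ has bidegree $(0,-1)$; then $\theta_2$ has bidegree $(-1,-1)$ and $\theta_3$ has bidegree $(-2,-1)$, and $T_1,T_2$ are the corresponding grading elements of bidegree $(0,0)$. The total $\ZZ$-degree is the sum of the two components. Since $\fg_0$ acts diagonally, each prolongation space $\fg_k$ decomposes into bigraded pieces, and an element $u\in\fg_k$ is specified by its action $u\colon\fg_{-1}\to\fg_{k-1}$ subject to the Jacobi/cocycle condition in \eqref{prolg}, which here reduces to a handful of linear conditions because $\fm$ is so small.

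First I would compute $\fg_1$. An element $u\in\fg_1$ is an even or odd map lowering total degree by $-1$; writing $u$ via its values $u(X)\in\fg_0$ and $u(\theta_1)\in\fg_0$ (the only possibilities in degree $0$ of the right bidegree), the cocycle condition applied to the pair $(X,\theta_1)$ forces $u([X,\theta_1])=u(\theta_2)$ to be consistent with $[u(X),\theta_1]+[X,u(\theta_1)]$, and applied to $(\theta_1,\theta_1)$, $(X,X)$, etc.\ it pins down the remaining freedom. A short linear-algebra computation — the super-analogue of the classical $\fsl_3$/$\fso_{2,3}$ story — shows the solution space is $1$-dimensional and \emph{even}: concretely it is spanned by the obvious degree-$1$ prolongation one reads off from the flat model $\xi'''=0$ (the generator of bidegree, say, $(1,1)$ coming from the $x^2$-type vector field), giving $\dim\fg_1=(1|0)$. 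Then I would compute $\fg_2$: an element $u\in\fg_2$ is determined by $u(X)\in\fg_1$, $u(\theta_1)\in\fg_1$, and since $\dim\fg_1=(1|0)$ is purely even and of a fixed bidegree, the bidegree bookkeeping shows only $u(\theta_1)$ can be nonzero (landing in $\fg_1$ requires shifting $\theta_1$'s bidegree $(0,-1)$ up by the bidegree of $u$, and only one such $u$ is bidegree-compatible), and it is \emph{odd}; the cocycle conditions with $(X,\theta_1)$ and $(\theta_1,\theta_1)$ are checked to be satisfiable with a one-parameter family, giving $\dim\fg_2=(0|1)$ — this is the generator whose prolongation one could exhibit as an explicit odd vector field on $\RR^{1|3}$, analogous to the $x\xi\xi'$ entry in the 2nd-order table.

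Finally, for $k\geq 3$ I would argue vanishing. The cleanest route is the cohomological one used throughout \S\ref{S5}: $\fg_k$ is the full prolongation of $\fg_{\leq k-1}$ iff $H^{k,1}(\fm,\fg)=0$, and since the prolongation stabilizes once two consecutive degrees vanish (standard Tanaka fact, \cite{T}), it suffices to show $\fg_3=(0|0)$ directly and then invoke stabilization, or equivalently to tabulate $H^{\geq 3,1}(\fm,\fg)=0$ as in Table \ref{F:odd-2ODE}. Directly: an element $u\in\fg_3$ is determined by $u(\theta_1)\in\fg_2$ (again bidegree constraints kill $u(X)$), and the cocycle condition applied to the triple coming from $[X,[X,\theta_1]]=\theta_3$ — i.e.\ compatibility of $u$ with the two-step bracket generating $\fg_{-3}$ — together with the already-established brackets $[\fg_1,\fg_2]$ and $[\fg_2,\fg_2]$ inside $\fg$ forces $u=0$. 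Since $\fg_3=\fg_4=(0|0)$ (the second is then automatic), the prolongation terminates, so $\fg=\fsl(2|1)\oplus\fg_1\oplus\fg_2$ with $\dim\fg_k=(0|0)$ for all $k\geq 3$.

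The main obstacle I anticipate is the bookkeeping in degree $2$: unlike the 2nd-order case where the prolongation is a classical simple Lie superalgebra $\fsl(2|1)$ and stabilizes at height $0$, here the height exceeds the depth, so one must be careful that the odd class in $\fg_2$ genuinely satisfies the full Jacobi identity against \emph{all} of $\fm$ (not just the generators) and that the induced brackets $[\fg_{-1},\fg_2]\subset\fg_1$, $[\fg_{-2},\fg_2]\subset\fg_0$ are consistent — the sign conventions in the super-Jacobi identity \eqref{prolg} are where errors creep in. Once the degree-$2$ piece is correctly identified, the vanishing for $k\geq 3$ is a routine finite check, best organized as a bigraded cohomology table mirroring Table \ref{F:odd-2ODE}.
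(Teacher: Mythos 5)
Your overall route is the same as the paper's: a direct, degree-by-degree computation of $\fg_1,\fg_2,\fg_3$ from the superderivation (cocycle) conditions of the Tanaka prolongation, followed by stabilization, and all the dimensions you state agree with the paper. (One small simplification: since the prolongation satisfies $\opp{Ker}(\opp{ad}(\fg_{-1})|_{\fg_i})=0$ for $i\ge 0$ and $\fm$ is fundamental, a \emph{single} vanishing degree already forces all higher ones to vanish, so checking $\fg_3=0$ suffices; your ``two consecutive degrees'' remark is harmless but not needed.)

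There is, however, one concrete flaw in the way you propose to shortcut the computations: the claim that ``bidegree bookkeeping'' kills $u(X)$ in degrees $2$ and $3$ is false, and the candidates it is supposed to eliminate are exactly the ones that require the cocycle conditions. In $\fg_2$, parity and the $\langle T_1,T_2\rangle$-weights do \emph{not} rule out an even element $B$ with $BX=bA$ and $B\theta_1=0$, where $A$ is the even generator of $\fg_1$ (its weight is perfectly compatible); this candidate dies only after computing $B\theta_3=B[X,\theta_2]=-b\,\theta_1$ and imposing $0=B[X,\theta_3]=-2b\,\theta_2$. Similarly, in $\fg_3$ an odd element $C$ with $CX=cB$ and $C\theta_1=0$ (where $B$ spans $\fg_2$) is parity- and weight-admissible, and is eliminated only via $C\theta_2=C[X,\theta_1]=cA$ together with $0=C[\theta_2,\theta_2]=2c[A,\theta_2]=-2c\,\theta_1$. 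So the even part of $\fg_2$ and the odd part of $\fg_3$ cannot be dismissed by grading alone: if you skip those checks, the dimension counts $(0|1)$ and $(0|0)$ are unproved (though, as it happens, still correct). Once you replace the bidegree shortcut by these explicit cocycle computations — and verify, as you rightly anticipate, that the odd generator $B$ of $\fg_2$ is a genuine superderivation against \emph{all} of $\fm$ — your argument coincides with the paper's proof.
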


 \begin{proof}
  Let $A \in \fg_1$ be odd, so $AX = 0$ and $A\theta_1 = a_1T_1 + a_2T_2$.  We find $A=0$ from:
 \begin{align*}
 A \theta_2 = A[X,\theta_1] = -a_1X, \quad
 A \theta_3 = A[X,\theta_2] = 0, \quad
 0 = A[\theta_1,\theta_1] = 2a_2 \theta_1, \quad
 0 = A[\theta_2,\theta_2] = -2a_1 \theta_3.
 \end{align*}
 Now let $A \in \fg_1$ be even.  As a map $\fg_{-1} \to \fg_0$, we have $AX = a_1T_1 + a_2T_2$ and $A\theta_1 = 0$.  Then
 \begin{align*}
 A\theta_2 = A[X,\theta_1] = a_2\theta_1, \quad
 A\theta_3 = A[X,\theta_2] = (a_1+2a_2)\theta_2, \quad
 0 = A[X,\theta_3] = 3(a_1+a_2)\theta_3,
 \end{align*}
 so $a_2=-a_1$.  Taking $-a_2=a_1=1$ yields a specific (even) generator for $\fg_1$, which we henceforth label as $A$.  (Since all odd-odd brackets vanish, then $A$ is indeed a superderivation.)

Let $B \in \fg_2$ be even.  Write $BX = bA$, $B\theta_1 = 0$.  We find $B=0$ from:
 \begin{align*}
 B\theta_2 = B[X,\theta_1] = 0, \quad
 B\theta_3 = B[X,\theta_2] = -b\theta_1, \quad
 0 = B[X,\theta_3] = -2b\theta_2.
 \end{align*}
 Now let $B \in \fg_2$ be odd.  Write $BX=0$, $B\theta_1 = bA$.  We obtain:
 \begin{align*}
 B\theta_2 = B[X,\theta_1] = -b(T_1 - T_2), \quad
 B\theta_3 = B[X,\theta_2] = bX.
 \end{align*}
A direct check shows that all conditions resulting from
$B[X,\theta_3]=0=B[\theta_i,\theta_j]$
are satisfied, so $B$ is indeed a superderivation.  Take $b=1$ above yields a specific (odd) generator for $\fg_2$ that we henceforth label as $B$.

Let $C \in \fg_3$ be even.  Write $C X = 0$, $C \theta_1 = cB$.  We find $C=0$ from:
 \begin{align*}
 C\theta_2 = C[X,\theta_1] = 0, \quad 0 = C[\theta_1,\theta_2] = c(T_2 - T_1).
 \end{align*}
Now let $C \in \fg_3$ be odd.  Write $C X = cB$, $C \theta_1 = 0$.  We find $C=0$ from:
 \begin{align*}
 C\theta_2 = C[X,\theta_1] = [cB,\theta_1] = cA, \quad
 0 = C[\theta_2,\theta_2] = 2[C\theta_2,\theta_2] = 2c[A,\theta_2] = -2c\theta_1.
 \end{align*}
 \end{proof}

When $\mathfrak{G} = 0$, i.e.\ $\xi''' = 0$, we have the symmetries $X = \bS_f$ (satisfying $\cL_X E \subset E$ and $\cL_X V \subset V$), expressed in terms of a generating superfunction $f$ (see Appendix \ref{S:odd-prolong}).

 \begin{align}
 \begin{footnotesize}
 \begin{array}{|c|c|c|c|c|} \hline
 &  \multicolumn{2}{c|}{\mbox{Even part}} &  \multicolumn{2}{c|}{\mbox{Odd part}}\\ \hline
 \mbox{Grading} & f & \mbox{Prolongation of } \bS_f & f & \mbox{Prolongation of } \bS_f\\ \hline\hline
 +2 & \cdot &\cdot & \xi \xi' & -\xi\partial_x + \xi'\xi'' \partial_{\xi''} + 2 \xi' \xi''' \partial_{\xi'''} \\ \hline
 +1 & x\xi - \frac{x^2}{2} \xi' & \begin{array}{l} \tfrac{x^2}{2}\partial_x + x\xi\partial_\xi + \xi\partial_{\xi'} \\
 \quad + (\xi' - x\xi'') \partial_{\xi''} - 2x\xi''' \partial_{\xi'''} \end{array} & \cdot & \cdot\\ \hline
 0 & \begin{array}{c} \xi - x\xi' \\ \xi \end{array} &\begin{array}{l} x\partial_x + \xi \partial_\xi - \xi''\partial_{\xi''} - 2\xi''' \partial_{\xi'''} \\
  \xi\partial_\xi + \xi' \partial_{\xi'} + \xi'' \partial_{\xi''} + \xi''' \partial_{\xi'''}\end{array} & \cdot & \cdot\\ \hline
 -1 & \xi' & -\partial_x & \frac{x^2}{2} & \tfrac{x^2}{2}\partial_\xi + x\partial_{\xi'} + \partial_{\xi''}\\ \hline
 -2 & \cdot & \cdot & x & x\partial_\xi + \partial_{\xi'}\\ \hline
 -3 & \cdot & \cdot & 1 & \partial_\xi\\ \hline
 \end{array}
 \end{footnotesize}
 \end{align}
These symmetries are all projectable over $(x,\xi)$-space, i.e.\ they are (prolonged) {\sl point} symmetries.  Abstractly, the symmetry superalgebra $\fg$ has derived superalgebras
 \begin{align}
 \fg^{(1)} = \langle 0 \,\,\,|\,\,\, \xi\xi' \rangle \ltimes \fg^{(2)}, \quad
 \fg^{(2)} = \left\langle \xi', \xi - x\xi', x\xi - \tfrac{x^2}{2} \xi' \,\,\,|\,\,\, 1, x, \tfrac{x^2}{2} \right\rangle.
 \end{align}
We note that $\fg^{(2)} \cong \fsl(2,\R) \ltimes S^2 \R^2$, where the even part is $\fsl(2,\R)$ and odd part is $S^2 \R^2$ (abelian), with even-odd brackets given by the naturally induced representation.  Alternatively, $\fg^{(2)}$ is the Euclidean Lie algebra $\fe(3,\R) := \fso(3,\R) \ltimes \R^3$ regarded as a Lie superalgebra with even part $\fso(3,\R)$ and odd part $\R^3$ (abelian).

Consequently Theorem \ref{T2} implies:

 \begin{theorem}
Any 3rd order odd ODE $\xi''' = \fG(x,\xi,\xi',\xi'')$ has contact
symmetry superalgebra of dimension at most $(4|4)$ and this bound is sharp.
 \end{theorem}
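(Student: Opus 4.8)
The plan is to recognize the 3rd order odd ODE $\xi'''=\fG$ as a filtered $G_0$-structure and then apply Theorems~\ref{T1}--\ref{T2} together with the prolongation computed just above. First I would note that the ODE equips $M=\RR^{1|3}$ with the rank $(1|1)$ distribution $\cD=C=E\op V$, whose weak derived flag has growth $(1|1,\,1|2,\,1|3)$ and symbol $\fm=\fg_{-1}\op\fg_{-2}\op\fg_{-3}$ independent of $\fG$; thus $\cD$ is strongly regular with this symbol. The symbol is fundamental, since $[X,\theta_1]=\theta_2$ and $[X,\theta_2]=\theta_3$ generate $\fg_{-2}$ and $\fg_{-3}$, and non-degenerate, since the bracket relations give $\fz(\fm)=\fg_{-3}$ so that $\fg_{-1}$ contains no central element. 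The splitting $C=E\op V$ distinguishes the lines $\langle X\rangle$ and $\langle\theta_1\rangle$ in $\fg_{-1}$, hence determines the first-order reduction $\fg_0=\langle T_1,T_2\rangle\subset\der_{gr}(\fm)$ and a filtered $G_0$-structure $(M,\cD,q)$ in the sense of \S\ref{FGS}. As in the classical case, contact symmetries of the ODE are precisely the infinitesimal automorphisms of $(M,\cD,q)$, so the contact symmetry superalgebra is $\fs=\mathfrak{inf}(M,\cD,q)$.

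Next I would feed in the Tanaka--Weisfeiler prolongation from the Proposition above: $\fg=\opp{pr}(\fm,\fg_0)=\fg_{-3}\op\fg_{-2}\op\fg_{-1}\op\fg_0\op\fg_1\op\fg_2$ with $\dim\fg_{-3}=\dim\fg_{-2}=(0|1)$, $\dim\fg_{-1}=(1|1)$, $\dim\fg_0=(2|0)$, $\dim\fg_1=(1|0)$, $\dim\fg_2=(0|1)$, and $\fg_k=0$ for all $k\ge3$. Summing parities separately gives $\dim\fg=(4|4)$; in particular $\fg$ is finite-dimensional, so since $M_o=\RR$ is connected Theorem~\ref{T1} yields $\dim\fs\le\dim\fg=(4|4)$ in the strong sense, and Theorem~\ref{T2} moreover shows that $\Aut(M,\cD,q)$ is a Lie supergroup of dimension $\le(4|4)$. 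For sharpness I would take the trivial equation $\xi'''=0$: the table of prolonged point symmetries displayed above exhibits $4$ even and $4$ odd $\RR$-linearly independent generators, so its symmetry superalgebra has dimension exactly $(4|4)$ and is in fact isomorphic to $\fg$; alternatively one may invoke the flat model $G/H$ of \S\ref{S43}.

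The steps requiring care -- and the ones I would treat explicitly -- are the two ``geometric'' reductions: checking that $\cD=E\op V$ is genuinely strongly regular with the stated constant symbol for an arbitrary odd right-hand side $\fG$, and identifying contact symmetries of the ODE with automorphisms of the filtered $G_0$-structure $(M,\cD,q)$; both mirror the classical situation. The genuinely delicate input, namely the computation of $\opp{pr}(\fm,\fg_0)$ -- in particular that it is finite-dimensional, of dimension $(4|4)$, with height differing from the depth of $\fm$ -- is already established in the preceding Proposition, so beyond the bookkeeping above nothing further is needed.
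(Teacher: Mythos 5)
Your proposal is correct and follows essentially the same route as the paper: cast the ODE as the filtered $G_0$-structure on $\RR^{1|3}$ given by $\cD=E\op V$ with the stated symbol and $\fg_0=\langle T_1,T_2\rangle$, invoke the preceding Proposition to get $\dim\opp{pr}(\fm,\fg_0)=(4|4)$, apply Theorems \ref{T1}--\ref{T2} for the bound, and realize it on $\xi'''=0$ via the displayed symmetry table. The only cosmetic remark is that your ``growth $(1|1,1|2,1|3)$'' should be read as the cumulative ranks of the weak derived flag (the increments are $(1|1,0|1,0|1)$), which does not affect the argument.
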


In contrast to the 2nd order odd ODE case,
3rd order odd ODE are not in general contact-trivializable,
i.e.\ equivalent to $\xi''' = 0$.
In fact, 3rd order odd ODE have the form
 $$
\xi'''=a(x)\xi+b(x)\xi'+c(x)\xi''+d(x)\xi\xi'\xi''
 $$
and one can verify that the term $d(x)$ is a relative invariant.
(The even part of the contact supergroup is $(x,\xi) \mapsto (\alpha(x), \beta(x) \xi)$ and the verification is straightforward;
the odd part does not contribute.)  Consequently, general 3rd order odd ODE are not linearizable.

Below we exhibit two examples of
3rd order odd ODEs that are not contact-trivializable.  Both have solvable symmetry superalgebras.
Symmetries are given in terms of their generating superfunctions (see Appendix \ref{S:odd-prolong} for the Lagrange bracket).

 \begin{align}
 \begin{array}{|c|c|c|cccc} \hline
 \mbox{Odd ODE} & \xi''' = \xi'' & \xi''' = \xi\xi'\xi''\\ \hline
 \mbox{Sym dim} & (2|3) & (2|2)\\ \hline
 \mbox{Symmetries} & \begin{array}{ll}
 \mbox{even part}: & \xi',\, \xi\\
 \mbox{odd part}: & 1,\, x,\, e^x
 \end{array} &
 \begin{array}{ll}
 \mbox{even part}: & \xi',\, x\xi'\\
 \mbox{odd part}: & \xi\xi',\, h:= 3 + x\xi\xi'
 \end{array}\\ \hline
 \begin{tabular}{c}
 Lagrange\\
 brackets
 \end{tabular} & \begin{array}{c|ccccc}
 & \xi' & \xi & 1 & x & e^x\\ \hline
 \xi' & \cdot & \cdot & \cdot & -1 & -e^x\\
 \xi & \cdot & \cdot & -1 & -x & -e^x\\
 1 & \cdot & 1 & \cdot & \cdot & \cdot\\
 x & 1 & x & \cdot & \cdot & \cdot\\
 e^x & e^x & e^x & \cdot & \cdot & \cdot
 \end{array} &  \begin{array}{c|cccc}
 & \xi' & x\xi' & \xi\xi' & h\\ \hline
 \xi' & \cdot & -\xi' & \cdot & -\xi\xi'\\
 x\xi' & \xi' & \cdot & \xi\xi' & \cdot\\
 \xi\xi' & \cdot & -\xi\xi' & \cdot & 3\xi'\\
 h & \xi\xi' & \cdot & 3\xi' & 6x\xi'
 \end{array}\\ \hline
 \end{array}
 \end{align}
More explicitly, the symmetries as (prolonged) contact vector fields are:
 \begin{itemize}
 \item $\xi''' = \xi''$:
 \begin{align}
 \begin{split}
 & -\partial_x, \quad \xi\partial_\xi + \xi'\partial_{\xi'} + \xi''\partial_{\xi''} + \xi'''\partial_{\xi'''},\\
 & \partial_\xi, \quad x\partial_\xi + \partial_{\xi'}, \quad e^x(\partial_\xi + \partial_{\xi'} + \partial_{\xi''} + \partial_{\xi'''}).
 \end{split}
 \end{align}
 \item $\xi''' = \xi\xi'\xi''$:
 \begin{align}
 \begin{split}
 &-\partial_x, \quad -x\partial_x + \xi'\partial_{\xi'} + 2\xi'' \partial_{\xi''} + 3\xi''' \partial_{\xi'''}, \quad
 -\xi\partial_x + \xi' \xi'' \partial_{\xi''} + 2\xi' \xi''' \partial_{\xi'''},\\
 & -x\xi\partial_x + 3\partial_\xi + \xi\xi' \partial_{\xi'} + (2\xi + x\xi') \xi'' \partial_{\xi''} + (3\xi'\xi'' + 3\xi\xi''' + 2x\xi'\xi''') \partial_{\xi'''}.
 \end{split}
 \end{align}
 \end{itemize}

\section*{Acknowledgments}

The research leading to these results has received funding from the Norwegian Financial Mechanism 2014-2021 (project registration number 2019/34/H/ST1/00636)
and the Troms\o{} Research Foundation (project ``Pure Mathematics in Norway'').

\section*{Conflict of Interest}
The authors declare that they have no conflict of interest.

\appendix
 \section{Generating superfunctions and an odd prolongation formula}
 \label{S:odd-prolong}

In this section, we briefly discuss the jet spaces $J^r := J^r(\R^{p|0},\R^{0|1})$ associated with $p$ even variables $(x^i)$ and one {\em odd} (dependent) variable $\xi$.  Our interest is only local, so we introduce these spaces (with {\sl Cartan distribution} $\cC_r \subset TJ^r$) from a local perspective only:
 \begin{itemize}
 \item $r=0$: Coordinates $(x^i,\xi)$.  No local structure.
 \item $r=1$: Coordinates $(x^i,\xi,\xi_i)$.  Local structure: $\cC_1 = \langle \partial_{x^i} + \xi_i \partial_\xi,\, \partial_{\xi_i} \rangle$.
 \item $r=2$: Coordinates $(x^i,\xi,\xi_i,\xi_{ij} = \xi_{ji})$.  Local structure: $\cC_2 = \langle \partial_{x^i} + \xi_i \partial_\xi + \xi_{ij} \partial_{\xi_j},\, \partial_{\xi_{ij}} \rangle$.
 \end{itemize}
(Here, $\xi,\xi_i,\xi_{ij}$, etc. are odd.)  For $r \geq 3$, we proceed similarly.  For notational convenience, we take $J^\infty$ as the inverse limit and on it we introduce
 \begin{align}
D_{x^i} = \partial_{x^i} + \xi_i \partial_\xi + \xi_{ij} \partial_{\xi_j} + \xi_{ijk} \partial_{\xi_{jk}} + ...
 \end{align}
with corresponding truncated vector field $D^{(r)}_{x^i}$ on $J^r$.
For example, $D^{(1)}_{x^i} = \partial_{x^i} + \xi_i \partial_\xi$, $D^{(2)}_{x^i} = \partial_{x^i} + \xi_i \partial_\xi + \xi_{ij} \partial_{\xi_j}$, etc.

A vector field $\bS$ on $J^r(\R^{p|0},\R^{0|1})$ is {\sl contact} if $\cL_\bS \cC_r \subset \cC_r$.  By the Lie--B\"acklund theorem,
any such vector field is projectable over $J^1$, and $\bS$ is canonically determined from this projection via {\sl prolongation}.  On $J^1$, fixing $\sigma = d\xi - (dx^i)\xi_i$ generating $\cC_1$, any contact vector field $\bS$ is uniquely determined by its {\sl generating superfunction} $f = \iota_\bS \sigma$ (which has opposite parity to $\bS$ since $\sigma$ is odd), and conversely any local superfunction $f = f(x^i,\xi,\xi_i)$ determines a contact vector field:

  \begin{prop} Given a superfunction $f = f(x^i,\xi,\xi_i)$ with pure parity $|f|$, its associated contact vector field has parity $|f|+1$ and is given by the formula
 \begin{align} \label{contact-vf}
\bS_f
&= (-1)^{|f|} (\partial_{\xi_i} f ) D^{(1)}_{x^i} + f \partial_\xi + (D^{(1)}_{x^i} f) \partial_{\xi_i}.
 \end{align}
We have $[\bS_f, \bS_g] = \bS_{[f,g]}$ with
 \begin{align} \label{lagrange-bracket}
[f,g] = f(\partial_{\xi}g)+(-1)^{|f|}(\partial_{\xi}f)g +
(D^{(1)}_{x^j}f)(\partial_{\xi_j}g) +(-1)^{|f|}(\partial_{\xi_j}f)(D^{(1)}_{x^j}g).
 \end{align}
The prolonged vector field $\bS_f^{(\infty)}$ on $J^\infty$ is obtained via the prolongation formula
 \begin{align}
 \bS_f^{(\infty)} = \bS_f + h_{jk} \partial_{\xi_{jk}} + h_{jk\ell} \partial_{\xi_{jk\ell}} + ...,
 \end{align}
 with coefficients $h_{jk} = D_{x^j} D_{x^k} f$, \,\,$h_{jk\ell} = D_{x^j} D_{x^k} D_{x^\ell} f$, etc. (with obvious truncations on each $J^r$).
 \end{prop}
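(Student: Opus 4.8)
The plan is to superize the classical derivation of the contact/generating-function correspondence on $J^1(\R^p,\R)$, with the only new feature being the Koszul signs produced by the odd fibre variable $\xi$. Recall that $\cC_1 = \ker\sigma$ for the odd $1$-form $\sigma = d\xi - (dx^i)\xi_i$, and that a vector field $\bS$ on $J^1$ is contact iff $\cL_\bS\sigma = \lambda\,\sigma$ for some superfunction $\lambda$ of parity $|\bS|$. First I would write a general $\bS = A^i\partial_{x^i} + B\partial_\xi + C_i\partial_{\xi_i}$, with $A^i$ of parity $|\bS|$ and $B, C_i$ of parity $|\bS|+1$, and compute $\cL_\bS\sigma = d(\iota_\bS\sigma) + \iota_\bS(d\sigma)$ using $d\sigma = -(dx^i)\wedge(d\xi_i)$ and the sign conventions of the paper. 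Setting $f := \iota_\bS\sigma = B - A^i\xi_i$ (up to the paper's contraction signs), the condition $\cL_\bS\sigma \in (\sigma)$ forces, comparing the coefficients of $dx^i$, $d\xi$, $d\xi_i$: $A^i = (-1)^{|f|}\partial_{\xi_i}f$, $B = f + (-1)^{|f|}(\partial_{\xi_i}f)\xi_i$, and $C_i = D^{(1)}_{x^i}f$, which upon substituting $|\bS| = |f|+1$ is exactly \eqref{contact-vf}. This simultaneously confirms that $\bS\mapsto\iota_\bS\sigma$ is a bijection from contact vector fields on $J^1$ to superfunctions $f(x^i,\xi,\xi_i)$, with parity shift by one; the Lie--B\"acklund theorem (cited in the text) then reduces the contact vector fields on every $J^r$ to those on $J^1$ via prolongation.

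For the bracket identity, I would use that the supercommutator of two contact vector fields is again contact — since $\cL_{[\bS,\bS']} = \cL_\bS\cL_{\bS'} - (-1)^{|\bS||\bS'|}\cL_{\bS'}\cL_\bS$ preserves the ideal $(\sigma)$ — so $[\bS_f,\bS_g] = \bS_h$ with $h = \iota_{[\bS_f,\bS_g]}\sigma$. Expanding $\iota_{[\bS_f,\bS_g]}\sigma = \cL_{\bS_f}(\iota_{\bS_g}\sigma) - (-1)^{(|f|+1)(|g|+1)}\iota_{\bS_g}(\cL_{\bS_f}\sigma)$, using $\iota_{\bS_g}\sigma = g$ and $\cL_{\bS_f}\sigma = \mu_f\,\sigma$ with $\mu_f$ read off from Step~1, reduces $h$ to an expression in $\bS_f(g)$ and $\mu_f\, g$. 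Plugging in the explicit form \eqref{contact-vf} and simplifying the derivative terms via the commutation identities $\partial_{\xi_i}\!\circ D^{(1)}_{x^j} = \delta^j_i\,\partial_\xi + D^{(1)}_{x^j}\!\circ\partial_{\xi_i}$ (and that $D^{(1)}_{x^j}$ is even) then yields \eqref{lagrange-bracket}.

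For the prolongation formula, the prolonged field $\bS_f^{(\infty)}$ on $J^\infty$ is the unique lift of $\bS_f$ preserving the full Cartan distribution, equivalently the contact ideal generated by the $1$-forms $\sigma_I = d\xi_I - (dx^j)\xi_{Ij}$. Writing $\bS_f^{(\infty)} = \bS_f + \sum_I h_I\,\partial_{\xi_I}$ with $h_\varnothing = f$ and $h_i = D^{(1)}_{x^i}f$ already known from Step~1, the requirement $\cL_{\bS_f^{(\infty)}}\sigma_I \equiv 0$ modulo the ideal yields the recursion $h_{Ij} = D_{x^j} h_I$; iterating gives $h_{j_1\cdots j_k} = D_{x^{j_1}}\cdots D_{x^{j_k}} f$, which is the asserted form (with the evident truncations on each $J^r$), and the parities are consistent since $D_{x^j}$ is even.

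The step I expect to be the main obstacle is the sign bookkeeping in Step~2: pinning down the precise coefficients $(-1)^{|f|}$ in \eqref{lagrange-bracket} requires keeping track of several interlocking sign conventions (contraction, Lie derivative, the pairing convention $\langle e_\a,\omega^\b\rangle = (-1)^{|\a||\b|}\omega^\b(e_\a)$). I would cross-check the outcome against direct computation with the explicit $\bS_f$ in the two separate cases $|f| = \bar 0$ (where \eqref{lagrange-bracket} should reduce to a Poisson-type bracket) and $|f| = \bar 1$, and against the $\fsl(2|1)$ Lagrange bracket tables computed in the examples, before committing to the general formula.
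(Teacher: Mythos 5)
Your first two steps are sound and are essentially the intended argument (the paper itself only points to the analogue \cite[Prop.~4.3]{KST} and leaves details as an exercise): characterizing contact fields by $\cL_\bS\sigma\equiv 0\ (\mathrm{mod}\ \sigma)$, recovering the coefficients from $f=\iota_\bS\sigma$, and obtaining the Lagrange bracket from $\iota_{[\bS_f,\bS_g]}\sigma=\cL_{\bS_f}g-(-1)^{(|f|+1)(|g|+1)}\iota_{\bS_g}(\mu_f\sigma)$ is exactly the standard route, and the sign issues you flag are real but manageable.

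The genuine gap is in your prolongation step. The recursion $h_{Ij}=D_{x^j}h_I$ is false here, because $\bS_f$ is not evolutionary: it carries the nonconstant $\partial_{x^i}$-coefficient $A^i=(-1)^{|f|}\partial_{\xi_i}f$, and imposing $\cL_{\bS_f^{(\infty)}}\sigma_I\equiv 0$ modulo the contact ideal, with $\sigma_I=d\xi_I-(dx^j)\xi_{Ij}$, produces the extra term coming from $\cL_{\bS_f}(dx^j)=d(A^j)$, i.e.\ (up to Koszul signs) $h_{Ij}=D_{x^j}h_I-(D_{x^j}A^i)\,\xi_{Ii}$. Concretely, for $p=1$ and $f=x\xi-x^2\xi'$ your recursion applied to $h_1=D^{(1)}_xf=\xi-x\xi'$ gives $h_{11}=-x\xi''$, whereas the correct coefficient is $-3x\xi''$, as recorded in the paper's own table of prolonged symmetries for $\xi''=0$; so the step would fail against the very examples you propose to cross-check. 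Note also that your iteration is internally inconsistent: starting from $h_i=D^{(1)}_{x^i}f$ it would yield $D_{x^{j_1}}\cdots D_{x^{j_{k-1}}}D^{(1)}_{x^{j_k}}f$, not the asserted $D_{x^{j_1}}\cdots D_{x^{j_k}}f$. The missing idea is to pass to the characteristic (evolutionary) form $\bS_f^{(\infty)}=(-1)^{|f|}(\partial_{\xi_i}f)\,D_{x^i}+\sum_J(D_Jf)\,\partial_{\xi_J}$ with \emph{full} total derivatives: expanding $D_{x^i}$ against $D^{(1)}_{x^i}$, the top-order term of $D_Jf$, namely $\xi_{Ji}(\partial_{\xi_i}f)$, cancels against $A^i\xi_{Ji}$ (here the odd parity of the jet variables is used), so the coefficient of $\partial_{\xi_J}$ is $D_Jf$ truncated to $J^{|J|}$ — which is precisely how the "obvious truncations" in the statement must be read, and is the point your sketch never engages with.
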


 This result is analogous to \cite[Prop.4.3]{KST}.  Details are left as an exercise for the reader since it is proved similarly.


\end{document}